\theoremstyle{remark}
\newtheorem{para}{\bf}[section]
\newtheorem{example}[para]{\bf Example}
\newtheorem{rem}[para]{\bf Remark}
\theoremstyle{definition}
\newtheorem{dfn}[para]{Definition}
\theoremstyle{plain}
\newtheorem{thm}[para]{Theorem}
\newtheorem{lemma}[para]{Lemma}
\newtheorem{conj}[para]{Conjecture}
\newtheorem{cor}[para]{Corollary}
\newtheorem{prop}[para]{Proposition}
\numberwithin{equation}{section}
\newcommand{\pair}[2]{\langle #1 , #2 \rangle}
\newcommand{\bbD}{{\mathbb D}}
\newcommand{\bbN}{{\mathbb N}}
\newcommand{\bbQ}{{\mathbb Q}}
\newcommand{\bbZ}{{\mathbb Z}}
\newcommand{\bB}{{\bf B}}
\newcommand{\bG}{{\bf G}}
\newcommand{\bH}{{\bf H}}
\newcommand{\bI}{{\bf I}}
\newcommand{\bN}{{\bf N}}
\newcommand{\bT}{{\bf T}}
\newcommand{\bV}{{\bf V}}
\newcommand{\bX}{{\bf X}}
\newcommand{\bZ}{{\bf Z}}
\newcommand{\bb}{{\bf b}}
\newcommand{\frb}{{\mathfrak b}}
\newcommand{\frd}{{\mathfrak d}}
\newcommand{\frg}{{\mathfrak g}}
\newcommand{\frh}{{\mathfrak h}}
\newcommand{\frm}{{\mathfrak m}}
\newcommand{\frn}{{\mathfrak n}}
\newcommand{\frt}{{\mathfrak t}}
\newcommand{\frx}{{\mathfrak x}}
\newcommand{\frA}{{\mathfrak A}}
\newcommand{\frX}{{\mathfrak X}}
\newcommand{\cA}{{\mathcal A}}
\newcommand{\cC}{{\mathcal C}}
\newcommand{\cD}{{\mathcal D}}
\newcommand{\cF}{{\mathcal F}}
\newcommand{\cH}{{\mathcal H}}
\newcommand{\cM}{{\mathcal M}}
\newcommand{\cO}{{\mathcal O}}
\newcommand{\cT}{{\mathcal T}}
\newcommand{\cU}{{\mathcal U}}
\newcommand{\cV}{{\mathcal V}}
\newcommand{\cW}{{\mathcal W}}
\newcommand{\ovB}{{\overline B}}
\newcommand{\ovI}{{\overline I}}
\newcommand{\ovM}{{\overline M}}
\newcommand{\ovN}{{\overline N}}
\newcommand{\ovP}{{\overline P}}
\newcommand*\bdot{{\mathpalette\bdot@{.9}}}
\newcommand*\bdot@[2]{\mathbin{\vcenter{\hbox{\scalebox{#2}{$\m@th#1\bullet$}}}}}
\newcommand{\an}{{\rm an}}
\newcommand{\BZ}{{\rm BZ}}
\newcommand{\cind}{{{\rm c}\mbox{-}{\rm Ind}}}
\newcommand{\coker}{{\rm coker}}
\newcommand{\End}{{\rm End}}
\newcommand{\GL}{{\rm GL}}
\newcommand{\Hom}{{\rm Hom}}
\newcommand{\Ind}{{\rm Ind}}
\newcommand{\la}{{\rm la}}
\newcommand{\Mod}{{\rm Mod}}
\newcommand{\pol}{{\rm pol}}
\newcommand{\Q}{{\mathbb Q}}
\newcommand{\Qp}{{\bbQ_p}}
\newcommand{\RHom}{{\rm RHom}}
\newcommand{\RiHom}{{R\underline{\mathrm{Hom}}}}
\newcommand{\rig}{{\rm rig}}
\newcommand{\sm}{{\rm sm}}
\newcommand{\Sp}{{\rm Sp}}
\newcommand{\Spa}{{\rm Spa}}
\newcommand{\Spec}{{\rm Spec}}
\newcommand{\Spf}{{\rm Spf}}
\newcommand{\Sym}{{\rm Sym}}
\newcommand{\Tor}{{\rm Tor}}
\newcommand{\wt}{{\rm wt}}
\newcommand{\Z}{{\mathbb Z}}
\newcommand{\mtwo}[4]{\begin{pmatrix}
	#1&#2\\#3&#4
\end{pmatrix}}
\begin{document}

\title[Bernstein-Zelevinsky duality for locally analytic principal series]{Bernstein-Zelevinsky duality for locally analytic principal series representations}
\author{Matthias Strauch}
\address{Indiana University, Department of Mathematics, Rawles Hall, Bloomington, IN 47405, U.S.A.}
\email{mstrauch@indiana.edu}
\author{Zhixiang Wu}
\address{Universit\"at M\"unster, Fachbereich Mathematik und Informatik,
Einsteinstrasse 62,
h48149 Münster, Germany}
\email{zhixiang.wu@uni-muenster.de}
\begin{abstract} 
    We consider certain dual of the Kohlhaase-Schraen resolutions for locally analytic principal series representations of $p$-adic Lie groups in the case of integral weights. The dual complexes calculate the expected Bernstein-Zelevinsky dual of the locally analytic representations and lead to the Grothendieck-Serre duality of coherent sheaves on patched eigenvarieties.
\end{abstract}

\maketitle

\tableofcontents

\section{Introduction}
Let $p$ be a prime number. In this paper, we determine the Bernstein-Zelevinsky dual of a locally analytic principal series representation of a split reductive $p$-adic Lie group, induced from a locally algebraic character of a maximal torus. Take $d\geq 2$ and $G=\GL_d(\Q_p)$ in this introduction.
\subsection{Motivation}
Let $\pi$ be a smooth representation of $G$ over $\mathbb{C}$, the Bernstein-Zelevinsky duality (also called cohomological duality) is given by (\cite[\S IV.5]{bernstein1992notes}, see also \cite{fargues2006dualite})
\[\pi\mapsto \mathbb{D}_{\rm BZ}(\pi):=R\Hom_G(\pi, \mathcal{C}^{\rm sm}_c(G,\mathbb{C}))\] 
where $ \mathcal{C}^{\rm sm}_c(G,\mathbb{C})$ denotes the space of compactly supported locally constant functions on $G$, which is a bimodule over $G$ via the left and right translations. An interesting property is that the duality should intertwine with the Grothendieck-Serre duality for coherent sheaves on the stack of Langlands parameters under the categorical Langlands correspondences (e.g. \cite[Conj. 4.5.1 (1)]{zhu2020coherent}).

For $p$-adic representations of $G$, notably in the categorical $p$-adic local Langlands program of Emerton-Gee-Hellmann presented in \cite{emerton2023introduction}, similar dualities are proposed for smooth representations in natural characteristics \cite[Conj. 6.1.14]{emerton2023introduction}. It is also expected that such duality exists for locally analytic representations \cite[Rem. 6.2.22]{emerton2023introduction}. Recent work of Hellmann-Hernandez-Schraen \cite{hellmann2024patching} gives strong evidence in this direction by establishing Serre duality for some sheaves on (patched) eigenvarieties. The property of these coherent sheaves is vital for \textit{loc. cit.} to produce multiplicities of $p$-adic automorphic eigenforms.

In this paper, for a locally analytic representation $\pi$ over a $p$-adic coefficient field $E$, we naively generalize the duality by 
\begin{equation}\label{equationintroductionoDBZ}
    \pi\mapsto \mathbb{D}_{\rm BZ}(\pi):= R\Hom_G(\pi, \mathcal{C}^{\rm la}_c(G,E))
\end{equation}    
where $\mathcal{C}^{\rm la}_c(G,E)$ is the space of compactly supported locally analytic functions on $G$. 
\begin{rem}
    Since (\ref{equationintroductionoDBZ}) involves the derived category of locally analytic representations (that may not be admissible in the sense of Schneider-Teitelbaum \cite{schneider2003algebras}), we can and will use the solid formalism of locally analytic representations by Rodrigues Jacinto-Rodríguez Camargo \cite{rodrigues2022solid,jacinto2023solid} to make the definition rigorous. 
\end{rem}
\begin{rem}
    Recent work of Claudius Heyer and Lucas Mann explains the cohomological duality for smooth representations using a geometric language and the six-functor formalism (see \cite[Prop. 1.4.3]{heyer20246functor}). Their approach, together with \cite{jacinto2023solid,camargo2024analytic}, should be enough to provide us with a general abstract theory of the duality for locally analytic representations.
\end{rem}
We will not consider the general theory in this paper. Rather, we would like to calculate explicitly $\bbD_{\rm BZ}(\pi)$ when $\pi$ is a locally analytic principal series representation. This will be enough to answer partially the expectation in \cite{emerton2023introduction}. Whatever the definition of $\bbD_{\rm BZ}$, we expect that (Theorem \ref{theoremsoliddualcompactinduction})
\[\bbD_{\rm BZ}(\cind_{I}^G\cW)\simeq\Hom_G(\cind_{I}^G\cW, \mathcal{C}^{\rm la}_c(G,E))=\cind_{I}^G\cW^{\vee}\] 
where $\cind_{I}^G$ denotes the compact induction when $I$ is a compact open subgroup of $G$, $\cW$ is certain locally analytic representation of $I$  such that the continuous $E$-linear dual $\cW^{\vee}$ is also a locally analytic representation. This would allow us to calculate $\mathbb{D}_{\rm BZ}(\pi)$ using a resolution of $\pi$ by compactly induced representations as done for smooth representations by Schneider-Stuhler in \cite{Schneider1997building}. 
\subsection{Duality for Kohlhaase-Schraen resolutions}
The resolution we consider is that of Kohlhaase-Schraen in \cite{kohlhaase2012homological}. Let $B$ be the Borel subgroup of $G$ of upper-triangular matrices with the maximal diagonal torus $T$. Let $\chi: T\rightarrow E^{\times}$ be a continuous character. Let $\pi=\Ind_{B}^G\chi$ be the locally analytic parabolic induction. Kohlhaase-Schraen found a presentation of $\pi$ as a (derived) quotient of certain compactly induced representation:
\[\Ind_{B}^G\chi\simeq \cind_I^G\cW_{\sharp}\otimes^L_\cH\cH/\frm.\] 
Here $I$ is the Iwahori subgroup of $G$, $\cW_{\sharp}$ is a locally analytic Banach representation of $I$ (see (\ref{equationintroductionWsharp}) for the precise definition) and $\cH=E[U_1,\cdots,U_d]$, where $U_1,\cdots,U_d\in \End_G(\cind_I^G \cW_{\sharp})$, is a polynomial (Iwahori-Hecke) algebra acting on $\cind_I^G \cW_{\sharp}$ with the maximal ideal $\frm=(U_1-1,\cdots,U_d-1)$ \footnote{The ideal $\frm$ differs from that in \cite{kohlhaase2012homological} because of our normalizations of the $U_i$-operators (Remark \ref{remarkdefinitinpsit}).}. Such presentation arises from a Koszul resolution:
\begin{equation}\label{equationintroductionresolutionprincipalseries}
    \wedge^{\bullet} \cH^d\otimes_{\cH} \cind_I^G\cW_{\sharp}:=[\cind_I^G\cW_{\sharp}\rightarrow\cdots\rightarrow \wedge^i\cH^d\otimes_{\cH}\cind_I^G\cW_{\sharp}\rightarrow \cdots\rightarrow \cind_I^G\cW_{\sharp}] \stackrel{\sim}{\rightarrow} \Ind_{B}^G\chi
\end{equation}
in the derived category. Taking account of the expectation that $\bbD_{\rm BZ}(\cind_I^G \cW_{\sharp})=\cind_I^G \cW_{\sharp}^{\vee}$, and the self-duality of the Koszul complex $\wedge^{\bullet} \cH^d$, $\bbD_{\rm BZ}(\Ind_{B}^G\chi)$ should be given by the following dual complex
\[\Hom_G(\wedge^{\bullet} \cH^d\otimes_{\cH}\cind_{I}^G\cW_{\sharp}, \mathcal{C}^{\rm la}_c(G,E))\simeq\wedge^{\bullet} \cH^d\otimes_{\cH} \cind_I^G \cW_{\sharp}^{\vee}[-d]\]
with a shift of degree $d$. Let $\overline{B}$ be the opposite Borel subgroup. Let $\mathfrak{g},\overline{\frb}$ be the Lie algebras of $G$ and $\ovB$ with the universal enveloping algebras $U(\frg), U(\overline{\frb})$. The theorem below determines this dual.
\begin{thm}
	Suppose that $\chi$ is a locally algebraic character with weight $\lambda\in\Z^n$ and let $\chi_{\rm sm}:T\rightarrow E^{\times}$ be the smooth part of $\chi$. Then there is a quasi-isomorphism 
	\[\wedge^{\bullet} \cH^d\otimes_{\cH} \cind_I^G \cW_{\sharp}^{\vee}[-d]\simeq \mathcal{F}_{\overline{B}}^G(\ovM(\lambda)^{\vee},\bbD_{\rm BZ}(\chi_{\rm sm}))\]
	where $\ovM(\lambda)^{\vee}=(U(\frg)\otimes_{U(\overline{\frb})}\lambda)^{\vee}$ is the dual Verma module in the BGG category $\cO^{\overline{\mathfrak{b}}}$ for the highest weight $\lambda$.
\end{thm}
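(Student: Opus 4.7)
The plan is to realize both sides of the claimed quasi-isomorphism as images of the Orlik–Strauch functor $\cF_{\overline{B}}^G$ applied to suitable complexes in the BGG category $\cO^{\overline{\frb}}$, and then to reduce the statement to a homological identity on the Lie-algebra side. One starts from the standard Orlik–Strauch description $\Ind_B^G \chi \simeq \cF_{\overline{B}}^G(\ovM(\lambda),\chi_{\rm sm})$. Under this identification the Kohlhaase–Schraen resolution (\ref{equationintroductionresolutionprincipalseries}) arises from applying $\cF_{\overline{B}}^G$ to the standard Koszul resolution of the one-dimensional $U(\overline{\frb})$-module of weight $\lambda$ by free $U(\fru^-)$-modules (with $\fru^-$ the nilpotent radical of $\overline{\frb}$), and the Hecke generators $U_i-1\in\cH$ translate under this correspondence into the generators of $\fru^-$ acting on that resolution.

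The first main step is to give an Orlik–Strauch description of the dual compact induction $\cind_I^G\cW_\sharp^\vee$. Using the Iwahori factorization $I=(I\cap\overline{U})(I\cap T)(I\cap U)$ together with the explicit construction of $\cW_\sharp$ in (\ref{equationintroductionWsharp}), one identifies $\cW_\sharp^\vee$ with a locally analytic $I$-representation built from $\chi^{-1}$ twisted by the modulus character of $B$; this should yield an identification of $\cind_I^G\cW_\sharp^\vee$ with $\cF_{\overline{B}}^G$ applied to a free $U(\fru^-)$-module paired with the smooth coefficient $\bbD_{\rm BZ}(\chi_{\rm sm})$, the modulus twist being precisely the reason $\bbD_{\rm BZ}(\chi_{\rm sm})$ appears in place of the naïve continuous dual $\chi_{\rm sm}^\vee$. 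The second step is to transport the complex $\wedge^{\bullet}\cH^d\otimes_\cH \cind_I^G\cW_\sharp^\vee[-d]$ through this identification: Koszul self-duality, suitably interpreted for the regular sequence $(U_1-1,\dots,U_d-1)$ corresponding to the generators of $\fru^-$, yields a complex of modules in $\cO^{\overline{\frb}}$ which resolves $\ovM(\lambda)^\vee$ (rather than $\ovM(\lambda)$), with the shift $[-d]$ accounting for the top degree of the Koszul complex. Applying $\cF_{\overline{B}}^G$ together with its exactness on the relevant subcategory of $\cO^{\overline{\frb}}$ then produces the desired quasi-isomorphism.

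The main technical obstacle is the first step: matching the Hecke action of the $U_i$ on $\cind_I^G\cW_\sharp^\vee$ with the natural $U(\fru^-)$-action on a free module in $\cO^{\overline{\frb}}$, while simultaneously tracking the modulus twist that yields $\bbD_{\rm BZ}(\chi_{\rm sm})$. On the non-dual side this compatibility is implicit in the Kohlhaase–Schraen construction, but on the dual side the integration formulas defining the $U_i$ have to be re-derived for distributions, and one must verify that the resulting action agrees with that predicted by the Orlik–Strauch formalism; this verification is where all the normalizations of the footnote on page~2 will need to be reconciled. Once this compatibility is established, the quasi-isomorphism of the theorem follows formally by applying $\cF_{\overline{B}}^G$ to a Koszul resolution of $\ovM(\lambda)^\vee$.
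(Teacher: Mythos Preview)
Your proposal rests on a conceptual misidentification that makes the outlined strategy unworkable. The Hecke algebra $\cH=E[T^{-}/T_0]$ is generated by operators $U_{t_i}$ attached to torus elements $t_i\in T^{-}$, not to nilpotent Lie algebra elements; the Kohlhaase--Schraen resolution is \emph{not} obtained by applying $\cF_{\overline{B}}^G$ to a Koszul resolution in $\cO^{\overline{\frb}}$ by free $U(\fru^{-})$-modules, and there is no mechanism by which $U_i-1$ ``translates into a generator of $\fru^{-}$''. Relatedly, $\cind_I^G\cW_\sharp^\vee$ is not of the form $\cF_{\overline{B}}^G(N,V)$ for any $N\in\cO^{\overline{\frb}}$: compact inductions from $I$ are far larger objects than anything in the image of the Orlik--Strauch functor. (Also, $\bbD_{\rm BZ}(\chi_{\rm sm})=\chi_{\rm sm}^{-1}[-d]$ with no modulus twist; the shift comes purely from Koszul self-duality.)

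The paper's route is quite different. One first identifies $\cW_{\sharp}^{\vee}$ explicitly as $\cD_r(I,E)\otimes_{\cD(\frg,B\cap I)}(M\otimes\chi_{\rm sm}^{-1})$, which is a ``$\natural$-type'' module but for $B$ rather than $\overline{B}$, and checks that the transposed Hecke operators $U_t^\vee$ agree with the $U_{t^{-1}}$ coming from the $\delta_t$-action on $M$ (Lemmas~\ref{lemmatransposepsit}, \ref{lemmatransposeUt}). One then passes from the Iwahori $I$ to the opposite Iwahori $\overline{I}$ via the intermediate group $I\cap\overline{I}$, using the change-of-level quasi-isomorphisms of Propositions~\ref{propositionchangethegroups1} and~\ref{propositionchangethegroups2}; this converts the complex into $\wedge^\bullet\cH^d\otimes_\cH\cind_{\overline{I}}^G\overline{\cW}_{\natural,r}(M\otimes\chi_{\rm sm}^{-1})$. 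The final and deepest step is to show that this $\natural$-type complex still resolves $\cF_{\overline{B}}^G(\tau(M)^\vee,\chi_{\rm sm}^{-1})$: this is Theorem~\ref{theoremresolution}, whose surjectivity part (Proposition~\ref{propositionsurjectivityPS}) requires the Ardakov--Wadsley rigid-analytic Beilinson--Bernstein localization and the $b$-function arguments of \S\ref{sectionlocalizationandcompletion}. None of this machinery appears in your plan, and it cannot be bypassed by a purely Lie-algebraic Koszul argument.
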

Here $\bbD_{\rm BZ}(\chi_{\rm sm})=\chi_{\rm sm}^{-1}[-d]$ and $\mathcal{F}_{\overline{B}}^G(\ovM(\lambda)^{\vee},\chi_{\rm sm}^{-1})$ is a locally analytic representation constructed using the functor $\mathcal{F}_{\overline{B}}^G(-,-)$ in \cite{orlik2015jordan} from certain $\frg$-modules and smooth representations of $T$. It admits the same Jordan-Hölder factors as $\Ind_{\overline{B}}^G\chi^{-1}=\mathcal{F}_{\overline{B}}^G(\ovM(\lambda),\chi_{\rm sm}^{-1})$. Similarly, $\Ind_{B}^G\chi=\cF_B^G(M(-\lambda),\chi_{\rm sm})$ where $M(-\lambda)=U(\frg)\otimes_{U(\frb)}(-\lambda)\in\cO^{\frb}$ is the Verma module. We have a general result for representations that are in the image of the functor $\mathcal{F}_{B}^G$.
\begin{thm}[{Theorem \ref{theoremresolution} and Theorem \ref{theoremdualKS}}]\label{theoremintroductionmain}
    Let $M$ be a $\mathfrak{g}$-module in the BGG category $\mathcal{O}^{\mathfrak{b}}$ with algebraic weights and $\chi_{\rm sm}$ be a smooth character of $T$, then there exists a Koszul resolution of $\cF_{B}^G(M,\chi_{\rm sm})$ 
    \[ \wedge^{\bullet} \cH^d\otimes_{\cH} \cind_I^G \cW_{\sharp}\simeq \cF_{B}^G(M,\chi_{\rm sm})\] 
    for some Banach $I$-representation $\cW_{\sharp}$ such that there exists a quasi-isomorphism 
	\[ \wedge^{\bullet} \cH^d\otimes_{\cH} \cind_I^G \cW_{\sharp}^{\vee}[-d]\simeq \cF_{\overline{B}}^G(\Hom_E(M,E)^{\mathfrak{n}_{\overline{B}}^{\infty}},\mathbb{D}_{\rm BZ}(\chi_{\rm sm}))\]
	where $\Hom_E(M,E)^{\mathfrak{n}^{\infty}_{\ovB}}\in\cO^{\overline{\frb}}$ is certain BGG dual of $M\in\cO^{\frb}$ appeared in Breuil's adjunction formula \cite[Prop. 4.2]{breuil2015versII} (see (\ref{equationintroBreuiladjunction})).
\end{thm}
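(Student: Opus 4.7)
The plan is to reduce both statements to the case of Verma modules, where they follow from the original Kohlhaase--Schraen construction and the first theorem of the introduction, and then to propagate the results to general $M\in\cO^{\frb}$ by exactness of the Orlik--Strauch functors $\cF_B^G$ and $\cF_{\ovB}^G$.

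First, I would construct $\cW_\sharp$ for general $M\in\cO^{\frb}$ with algebraic weights by generalizing the Kohlhaase--Schraen recipe. For a Verma module $M=M(-\lambda)$ the original $\cW_\sharp$ is a Banach $I$-representation that completes a locally algebraic character of $B\cap I$; for general $M$ the same procedure applies using the restriction of $M$ to $U(\frb\cap\Lie I)$ together with the smooth character $\chi_{\rm sm}|_{T\cap I}$. The Hecke operators $U_1,\ldots,U_d\in\End_G(\cind_I^G\cW_\sharp)$ and the Koszul complex $\wedge^\bullet\cH^d$ are defined by the same formulas. On a Verma module the original Kohlhaase--Schraen resolution gives
\[\wedge^\bullet\cH^d\otimes_{\cH}\cind_I^G\cW_\sharp\ \simeq\ \Ind_B^G(\lambda\cdot\chi_{\rm sm})\ =\ \cF_B^G(M(-\lambda),\chi_{\rm sm}).\]
Since $\cF_B^G(-,\chi_{\rm sm})$ is exact on $\cO^{\frb}$ and the construction $M\mapsto \cind_I^G\cW_\sharp$ is exact in the solid derived setting, a resolution of $M$ by Verma modules in $\cO^{\frb}$ with algebraic weights propagates the resolution property from Verma modules to general $M$.

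Next, for the dual, I would use the self-duality of the Koszul complex $\wedge^\bullet\cH^d$, which yields a shift by $d$:
\[R\Hom_G\bigl(\wedge^\bullet\cH^d\otimes_{\cH}\cind_I^G\cW_\sharp,\ \cC_c^{\rm la}(G,E)\bigr)\ \simeq\ \wedge^\bullet\cH^d\otimes_{\cH}\, R\Hom_G\bigl(\cind_I^G\cW_\sharp,\ \cC_c^{\rm la}(G,E)\bigr)[-d].\]
By Theorem \ref{theoremsoliddualcompactinduction}, the inner derived Hom evaluates to $\cind_I^G\cW_\sharp^\vee$, identifying the dual complex with $\wedge^\bullet\cH^d\otimes_{\cH}\cind_I^G\cW_\sharp^\vee[-d]$. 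To identify this with $\cF_{\ovB}^G(\Hom_E(M,E)^{\mathfrak{n}^{\infty}_{\ovB}},\bbD_{\rm BZ}(\chi_{\rm sm}))$ I would once more reduce to Verma modules. On $M=M(-\lambda)$ the BGG dual $\Hom_E(M,E)^{\mathfrak{n}^{\infty}_{\ovB}}$ is the dual Verma module $\ovM(\lambda)^{\vee}\in\cO^{\overline{\frb}}$, and the claim is exactly the first theorem of the introduction. For general $M$ with algebraic weights, Breuil's adjunction formula \cite{breuil2015versII} shows that the BGG duality intertwines $\cF_B^G$ and $\cF_{\ovB}^G$, so the desired identification propagates by exactness from the Verma case.

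The main obstacle is the careful use of the solid formalism for locally analytic representations. Showing that $\cW_\sharp^\vee$ is a locally analytic Banach $I$-representation of the expected form, and that the derived Hom $R\Hom_G(\cind_I^G\cW_\sharp,\cC_c^{\rm la}(G,E))$ is concentrated in a single degree and computes $\cind_I^G\cW_\sharp^\vee$, both rely on the solid duality framework of \cite{jacinto2023solid} and Theorem \ref{theoremsoliddualcompactinduction}. A secondary subtlety is matching the normalizations of the Hecke operators $U_i$ on the $B$- and $\ovB$-sides (see Remark \ref{remarkdefinitinpsit}), and tracking the identification $\bbD_{\rm BZ}(\chi_{\rm sm})=\chi_{\rm sm}^{-1}[-d]$ consistently through both Koszul complexes.
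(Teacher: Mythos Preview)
Your reduction-to-Verma strategy is sound in outline, and the paper does use exactness of $\cF_B^G$ and of $M\mapsto\cW_\sharp$ (Proposition~\ref{propositioncomplexesseq}) to propagate from the principal-series case. But there is a genuine gap: you treat the principal-series case (the ``first theorem of the introduction'') as already established and reduce to it, when in fact that theorem is not proven independently --- it is the special case $M=M(-\lambda)$ of the very statement you are proving, and it requires exactly the hard input you have not addressed.

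Concretely, the dual $\cW_\sharp^\vee$ is identified in the paper as $\cD_r(I,E)\otimes_{\cD(\frg,B\cap I)}(M\otimes\chi_{\rm sm}^{-1})$, a $\cW_\natural$-type module built from $M$ rather than from its BGG dual. The paper then changes the inducing subgroup from $I$ to the opposite Iwahori $\ovI$ via $I'=I\cap\ovI$ (Propositions~\ref{propositionchangethegroups1} and~\ref{propositionchangethegroups2}), recognizes the result as $\overline{\cW_{\natural,r}}(M\otimes\chi_{\rm sm}^{-1})$, and applies Theorem~\ref{theoremresolution} for the $\natural$-version with respect to $\ovB$. The crux is that the $\natural$-Koszul complex is a resolution: this is the surjectivity of $\cind_I^G\cW_{\natural,r}\to\Ind_B^G\chi$ (Proposition~\ref{propositionsurjectivityPS}), which in turn rests on the $\widehat{\cD}$-module argument of \S\ref{sectionlocalizationandcompletion} (Proposition~\ref{propositioncompletionlocalization}, via Beilinson--Bernstein localization and $b$-functions). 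None of this appears in your proposal. Without it, even the Verma base case of the dual statement is unproven.

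A secondary point: you identify the solid formalism as the main obstacle, but Theorem~\ref{theoremdualKS} is proved entirely at the level of complexes of $E[G]$-modules using the naive $\bbD_{\rm BZ}^0$ (Lemmas~\ref{lemmadualcompactinduction} and~\ref{dualKoszulcomplex}); the solid machinery enters only later, in \S\ref{sectiondualitysolid}, to upgrade this to the derived $\bbD_{\rm BZ}$. The real difficulty is the analytic one in \S\ref{sectionlocalizationandcompletion}.
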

The calculation leads to the following conjecture.
\begin{conj}\label{conjectureintroduction}
	Let $P$ be a parabolic subgroup of $G$ (containing $B$) with Lie algebra $\mathfrak{p}$ and the opposite $\ovP$. Let $M$ be a $\mathfrak{g}$-module in the parabolic BGG category $\mathcal{O}^{\mathfrak{p}}$ with algebraic weights and $V$ be a finitely presented admissible smooth representation of the Levi factor of $P$, then we have
	\[ \bbD_{\rm BZ}(\cF_{P}^G(M,V))=\cF_{\overline{P}}^G(\Hom_E(M,E)^{\mathfrak{n}_{\overline{P}}^{\infty}},\mathbb{D}_{\rm BZ}(V))\]
	where $\mathfrak{n}_{\overline{P}}$ is the nilradical of the Lie algebra of $\ovP$.
\end{conj}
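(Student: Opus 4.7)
The plan is to reduce Conjecture \ref{conjectureintroduction} to the Borel case of Theorem \ref{theoremintroductionmain} through two independent reductions: first, resolving the smooth $L_P$-representation $V$ by compactly induced smooth representations; second, exploiting a transitivity relation between the Orlik-Strauch functors $\cF_P^G$ and $\cF_B^G$, together with the classical compatibility of smooth Bernstein-Zelevinsky duality with parabolic induction.

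For the first reduction, the Schneider-Stuhler construction \cite{Schneider1997building} provides any finitely presented admissible smooth representation $V$ of $L_P$ with a finite resolution by compactly induced pieces $\cind_{I_{L,i}}^{L_P} W_i$, and the self-duality of the underlying Bruhat-Tits complex yields a resolution of $\bbD_{\rm BZ}(V)$ of the same form up to a shift. Since $\cF_P^G(M,-)$ and $\cF_{\ovP}^G(N,-)$ are exact in the smooth argument, both sides of the conjecture convert such resolutions into resolutions, so we may assume $V = \cind_{I_L}^{L_P} W$.

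For the second reduction, the natural identification
\[
    \cF_P^G(M, V) \simeq \cF_B^G\bigl(M,\; \Ind_{B \cap L_P}^{L_P,\,\sm} V\bigr), \qquad M \in \cO^{\frp},
\]
which follows from the definition of the Orlik-Strauch functor since $\fru_{\frp}$ acts locally nilpotently on $M$, reduces the computation of $\bbD_{\rm BZ}(\cF_P^G(M,V))$ to an extension of Theorem \ref{theoremintroductionmain} from smooth characters on $T$ to smooth $B$-representations built from compact inductions. Given that extension, one gets $\bbD_{\rm BZ}(\cF_B^G(M, V')) \simeq \cF_{\ovB}^G(\Hom_E(M,E)^{\frn_{\ovB}^{\infty}}, \bbD_{\rm BZ}(V'))$ for $V' = \Ind_{B\cap L_P}^{L_P} V$. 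The smooth Bernstein-Zelevinsky compatibility $\bbD_{\rm BZ}(\Ind_{B \cap L_P}^{L_P} V) \simeq \Ind_{\ovB \cap L_P}^{L_P}\bbD_{\rm BZ}(V)$ and the reverse transitivity $\cF_{\ovB}^G(N, \Ind_{\ovB \cap L_P}^{L_P}(-)) \simeq \cF_{\ovP}^G(N,-)$ applied to $N := \Hom_E(M,E)^{\frn_{\ovB}^{\infty}} \in \cO^{\overline{\frp}}$ (so that $N^{\frn_{\ovP}^{\infty}} = N$) should then yield the right-hand side of the conjecture.

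The main obstacle is extending the Kohlhaase-Schraen resolution (\ref{equationintroductionresolutionprincipalseries}) from smooth characters to general compactly induced smooth $B$-representations: for a character, the Hecke algebra $\cH = E[U_1,\ldots,U_d]$ is commutative of polynomial type and the Koszul complex has exact length $d = \dim G/B$ matching the duality shift, but for general smooth $V'$ one must work with a larger, generally non-commutative parahoric Hecke algebra and verify that a suitably self-dual Koszul-type complex exists with the correct cohomological amplitude. Secondary difficulties include tracking the modulus character twists $\delta_P$ coming from smooth parabolic induction together with the twist induced by the BGG dual $(-)^{\frn_{\ovP}^\infty}$ on the $\cO$-side, to verify that they cancel cleanly; and ensuring, via the solid formalism of \cite{rodrigues2022solid, jacinto2023solid}, that the derived $\Hom$ defining $\bbD_{\rm BZ}$ remains well-behaved on the possibly non-admissible intermediate representations that appear during the reductions.
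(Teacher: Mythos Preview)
This statement is left as a \emph{conjecture} in the paper; there is no proof to compare against. Immediately after stating it the authors establish only the special case $P=B$, $V=\chi_{\rm sm}$ a smooth character (Theorem~\ref{theoremBZdual}), via the explicit dual Kohlhaase--Schraen resolutions of Theorem~\ref{theoremintroductionmain}. The remark following that theorem suggests a possible attack on the general case through a derived extension of Breuil's adjunction formula~(\ref{equationintroBreuiladjunction}), but this is not pursued.

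Your proposal attempts the full conjecture, and its second reduction does not work as written. The asserted identity
\[
\cF_P^G(M,V)\;\simeq\;\cF_B^G\bigl(M,\ \Ind_{B\cap L_P}^{L_P,\sm}V\bigr)
\]
does not typecheck: the second argument of $\cF_B^G$ must be a smooth representation of $T$, whereas $\Ind_{B\cap L_P}^{L_P,\sm}V$ is a representation of $L_P$ (and $V$ is already an $L_P$-representation, so this induction is a regular-representation construction, not an inverse to anything). The transitivity that is actually available in the Orlik--Strauch framework runs in the opposite direction: for $M\in\cO^{\frp}$ and a smooth character $\chi$ of $T$ one has $\cF_B^G(M,\chi)\simeq\cF_P^G\bigl(M,(\Ind_{B\cap L_P}^{L_P}\chi)^{\sm}\bigr)$. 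This lets one rewrite $\cF_P^G(M,V)$ as an $\cF_B^G$ only when $V$ is itself a smooth principal series of $L_P$; your Schneider--Stuhler step produces compactly induced $V=\cind_{I_L}^{L_P}W$, which are not of this form, so the chain never reaches the Borel case to which Theorem~\ref{theoremintroductionmain} applies. The ``main obstacle'' you flag at the end (extending the $P=B$ result from characters to arbitrary smooth coefficients) is therefore not the principal gap: the reduction from general $P$ to $B$ is the step that is genuinely missing, and it is precisely why the paper records the general statement as a conjecture rather than a theorem.
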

\begin{thm}[Theorem \ref{theoremBZdual}]
    Conjecture \ref{conjectureintroduction} is true if $P=B$, $V$ is a smooth character of $T$ and $\bbD_{\rm BZ}$ is defined by (\ref{equationintroductionoDBZ}) (Definition \ref{definitionDBZ}). 
\end{thm}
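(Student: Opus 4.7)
The plan is to combine Theorems \ref{theoremresolution}, \ref{theoremdualKS} and \ref{theoremsoliddualcompactinduction}: one applies the functor $\bbD_{\rm BZ}=R\Hom_G(-,\cC^{\rm la}_c(G,E))$ to the Koszul resolution
\[
\wedge^{\bullet}\cH^d\otimes_{\cH}\cind_I^G\cW_\sharp \xrightarrow{\sim} \cF_B^G(M,\chi_{\rm sm})
\]
from Theorem \ref{theoremresolution}, dualizes term by term using Theorem \ref{theoremsoliddualcompactinduction}, and recognizes the resulting complex via Theorem \ref{theoremdualKS}. In this way the theorem reduces to a bookkeeping argument about bounded Koszul complexes together with the already-established computation of the Bernstein--Zelevinsky dual of a single compact induction.

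In more detail, the computation proceeds in three steps. First, since the Koszul resolution is a bounded complex whose terms are finite direct sums of copies of $\cind_I^G\cW_\sharp$, and since Theorem \ref{theoremsoliddualcompactinduction} asserts that
\[
R\Hom_G(\cind_I^G\cW_\sharp,\cC^{\rm la}_c(G,E))\simeq \cind_I^G\cW_\sharp^\vee
\]
is concentrated in degree zero, the hyper-Ext spectral sequence degenerates and yields
\[
\bbD_{\rm BZ}(\cF_B^G(M,\chi_{\rm sm})) \simeq \Hom_\cH\bigl(\wedge^{\bullet}\cH^d,\cind_I^G\cW_\sharp^\vee\bigr).
\]
Second, the self-duality of the Koszul complex on the polynomial algebra $\cH=E[U_1,\dots,U_d]$ rewrites the right-hand side as $\wedge^{\bullet}\cH^d\otimes_{\cH}\cind_I^G\cW_\sharp^\vee[-d]$, the shift $[-d]$ reflecting the top wedge power. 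Third, Theorem \ref{theoremdualKS} identifies this complex with
\[
\cF_{\ovB}^G\bigl(\Hom_E(M,E)^{\frn_{\ovB}^\infty},\bbD_{\rm BZ}(\chi_{\rm sm})\bigr),
\]
which is exactly the formula predicted by Conjecture \ref{conjectureintroduction} in the case $P=B$ and $V=\chi_{\rm sm}$.

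The main obstacle is the solid-theoretic verification of termwise dualization: one must check that the ingredients of the Koszul complex are sufficiently well behaved solid $G$-representations for $R\Hom_G(-,\cC^{\rm la}_c(G,E))$ to commute with the finite direct sums appearing in each Koszul degree, so that the degeneration of the spectral sequence follows from Theorem \ref{theoremsoliddualcompactinduction} alone. One must also verify that the $\cH$-equivariant structure on $\cind_I^G\cW_\sharp^\vee$ is compatible with the transposed Koszul differentials, so that the self-duality isomorphism $\Hom_\cH(\wedge^{\bullet}\cH^d,-)\simeq\wedge^{\bullet}\cH^d\otimes_{\cH}-\,[-d]$ can be applied $G$-equivariantly; given the normalizations of the operators $U_i$ recorded in Remark \ref{remarkdefinitinpsit}, this should amount to a direct check. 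Once these formal compatibilities are set, the theorem follows as an immediate corollary of the three inputs.
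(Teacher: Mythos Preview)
Your proposal is correct and follows essentially the same route as the paper. The paper's proof of Theorem \ref{theoremBZdual} is a one-line citation of exactly the three ingredients you named, together with Proposition \ref{propositionsolidresolution}, which is the solid upgrade of Theorem \ref{theoremresolution} that handles your ``main obstacle'' (it shows the augmented Koszul complex remains acyclic in $\mathrm{Mod}_{E_{\blacksquare}}(\cD(G,E))$, so that $\bbD_{\rm BZ}$ can be computed termwise); the compatibility of the $\cH$-action under duality that you flag is precisely Lemma \ref{lemmatransposeUt} and Lemma \ref{lemmadualcompactinduction}, already used in the proof of Theorem \ref{theoremdualKS}.
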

\begin{rem}
    If $M=E$ is the trivial $\frg$-module, then $\cF_P^G(M,V)=(\Ind_{P}^GV)^{\rm sm}$ is the smooth parabolic induction. In this case the conjecture says that $\bbD_{\rm BZ}((\Ind_{P}^GV)^{\rm sm})=(\Ind_{\ovP}^G\bbD_{\rm BZ}(V))^{\rm sm}$, which matches with the duality for smooth representations \cite[Thm. IV.31]{bernstein1992notes}. As in \textit{loc. cit.}, one possible approach to proving the conjecture would be establishing the second adjointness theorem for the functor $\cF_{P}^G$, in some way generalizing Breuil's adjunction formula \cite[Prop. 4.2]{breuil2015versII} (which is based on Emerton's adjunction \cite{emerton2007jacquetII}, see also \cite[Lem. 5.2.1]{breuil2019local}):
    \begin{equation}\label{equationintroBreuiladjunction}
        \Hom_G(\cF_{P}^G(M,V),\Pi)=\Hom_{(\frg,\ovP)}(\Hom_E(M,E)^{\mathfrak{n}_{\overline{P}}^{\infty}}\otimes_E\cC^{\rm sm}_{c}(N_{\ovP},V),\Pi)
    \end{equation}
    to the derived setting. 
\end{rem}
\begin{rem}
    Let $\tau:g\mapsto (g^{t})^{-1}$ be the Chevalley involution (inverse transpose) of $G=\GL_d(\bbQ_p)$ which switches $B$ and $\ovB$. Let $\bbD=\tau\circ \bbD_{\rm BZ}$ be the twist by the Chevalley involution of our duality functor. Then the above theorem says that $\bbD(\cF_{B}^G(M,\chi_{\rm sm}))=\cF_{B}^G(M^{\vee},\chi_{\rm sm})$ up to a degree shift, where $(-)^{\vee}$ denotes the dual in the BGG category $\cO^{\frb}$. This is exactly the duality expected in \cite[Rem. 6.2.22]{emerton2023introduction}, and discussed in \cite[Thm. 1.4]{hellmann2024patching}.
\end{rem}
\subsection{Duality for coherent sheaves}
Using the Koszul resolutions, we can verify the categorical expectation for the Bernstein-Zelevinsky duality in some global setting. The $I$-representations as $\cW_{\sharp}$ and $\cW_{\sharp}^{\vee}$ define coefficient systems on locally symmetric spaces with the Iwahori level at $p$. The cohomologies or homologies of these coefficient systems, which define overconvergent $p$-adic automorphic forms (e.g. \cite{urban2011eigenvarieties, ash2008p,loeffler2011overconvergent, hansen2017universal}), should be related by Poincaré duality and induce certain Serre duality of coherent sheaves on the (patched) eigenvariety after taking finite slope parts. In this paper, we will work in simple, and more local, settings of abstract patched eigenvarieties.

In \S\ref{sectionpatchingmodules}, using patched completed homologies in \cite{caraiani2013patching}, from a locally analytic representation $\pi$ appeared in Theorem \ref{theoremintroductionmain}, we can attach two coherent sheaves $\frA_{\infty}^{\rm rig}(\pi),\frA_{\infty}^{',\rm rig}(\pi)$ supported respectively on two rigid analytic spaces $\frX$ and $\frX'$. Here $\frX$ (resp. $\frX'$) is roughly the space of deformations of a mod-$p$ Galois representation $\overline{\rho}$ (resp. the dual $\overline{\rho}^{\vee}=\tau\circ \overline{\rho}$ up to a twist where $\tau$ is given by the inverse transpose). The Chevalley involution induces an isomorphism $\eta:\frX\simeq\frX'$. The following theorem is a formal consequence of Theorem \ref{theoremintroductionmain} and the Poincaré duality of completed cohomologies. 
\begin{thm}[Theorem \ref{theoremdualitypatchingmodule}]\label{theoremintrodualitypatchingmodule}
    Let $\pi=\cF_{B}^G(M,\chi_{\sm})$ be as in Theorem \ref{theoremintroductionmain}. There exists an isomorphism 
    \[\bbD_{\rm GS}(\frA_{\infty}^{\rm rig}(\pi) )\simeq \eta^*\frA_{\infty}^{',\rig}(\bbD_{\rm BZ}(\pi)).\]
    of coherent sheaves on $\frX$, where $\bbD_{\rm GS}(-)$ denotes the Grothendieck-Serre duality for coherent sheaves on $\frX$.
\end{thm}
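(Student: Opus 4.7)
The plan is to reduce the Grothendieck--Serre duality on $\frX$ to a computation on the Koszul resolution provided by Theorem \ref{theoremintroductionmain}, combined with Poincaré duality for patched completed (co)homology at Iwahori level.

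First I would apply the patched completed homology functor $\pi \mapsto \frA_\infty^{\rig}(\pi)$ term-by-term to the Koszul resolution
\[\wedge^{\bullet}\cH^d \otimes_{\cH} \cind_I^G \cW_{\sharp} \;\xrightarrow{\sim}\; \cF_B^G(M,\chi_{\sm}) = \pi.\]
Since the building blocks $\cind_I^G \cW_{\sharp}$ are compactly induced from the Iwahori, the patching functor sends them to honest (patched) Iwahori-level Hecke modules, so $\frA_\infty^{\rig}(\pi)$ is quasi-represented by a bounded complex of coherent sheaves on $\frX$ obtained by tensoring the patched Iwahori module against $\wedge^{\bullet}\cH^d$ over $\cH$. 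This is the concrete model on which I would compute $\bbD_{\rm GS}$.

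Next, I would use the self-duality of the Koszul complex: $\RHom_\cH(\wedge^\bullet \cH^d, \cH) \simeq \wedge^\bullet \cH^d[-d]$. Combining this with the Grothendieck--Serre duality on $\frX$ applied to the Iwahori-level patched module, the question is reduced to identifying the Serre dual of the patched Iwahori module attached to $\cind_I^G \cW_\sharp$ with the patched Iwahori module attached to $\cind_I^G \cW_\sharp^{\vee}$ on the dual side $\frX'$, pulled back along $\eta:\frX \simeq \frX'$. This identification is precisely the Poincaré duality for patched completed (co)homology at Iwahori level of Caraiani--Emerton--Gee--Geraghty--Paskunas--Shin \cite{caraiani2013patching}: cohomology of a local system on the locally symmetric space with coefficients in $\cW_\sharp$ is Serre-dual (up to a degree shift by the dimension $d$ of the symmetric space, absorbed by the $[-d]$ above) to homology with coefficients in $\cW_\sharp^{\vee}$, and passing to the dual Galois representation $\overline{\rho}^\vee$ is exactly the $\eta^*$ twist.

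Finally I would assemble the pieces: the shift $[-d]$ coming from the self-duality of the Koszul complex cancels with the opposite shift in the second clause of Theorem \ref{theoremintroductionmain}, so that the dual complex $\wedge^\bullet \cH^d \otimes_\cH \cind_I^G \cW_\sharp^\vee[-d]$ computes the patching module of $\bbD_{\rm BZ}(\pi) = \cF_{\ovB}^G(\Hom_E(M,E)^{\frn_{\ovB}^\infty}, \bbD_{\rm BZ}(\chi_{\sm}))$ on the dual side, yielding the asserted isomorphism on $\frX$.

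The main obstacle I expect is bookkeeping: ensuring that the various shifts, twists by the modulus character of $B$, and normalizations of the $U_i$-operators (cf.\ Remark \ref{remarkdefinitinpsit}) match up correctly so that the Chevalley involution $\tau$, which swaps $B$ and $\ovB$ and underlies the isomorphism $\eta: \frX \simeq \frX'$, is really the one that intertwines the two patched Hecke modules. Checking this compatibility between the dual Koszul resolution of Theorem \ref{theoremintroductionmain} and the Poincaré duality pairing on completed cohomology at Iwahori level is the technical heart of the argument; the rest is formal manipulation of coherent duality on $\frX$.
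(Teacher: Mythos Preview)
Your strategy matches the paper's: reduce to the Koszul resolution, use its self-duality together with Poincar\'e duality for the patched Iwahori module (which in the paper is the assumed isomorphism $\Hom_{S_\infty[[I]]}(M_\infty,S_\infty[[I]])\simeq M_\infty'$, packaged as Lemma \ref{lemmapatchingdualcompactinduction}), and then invoke Theorem \ref{theoremintroductionmain} on the dual side.

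One point deserves correction, however. You assert that applying the patching functor to the Koszul complex yields ``a bounded complex of coherent sheaves on $\frX$'', and that what remains is ``formal manipulation of coherent duality''. This is not quite right: the individual terms $\frA_\infty^{\rig}(\cind_I^G\cW_\sharp)$ are enormous, not coherent. What is true is that the \emph{total} complex $A_\infty^{\rig}(\cind_I^G\cW)\otimes^L_\cH\cH/\frm$ is perfect over $S_\infty^{\rig}$, and establishing this (Proposition \ref{propositionfiniteslopeperfect}) is a genuine input, not bookkeeping: it requires showing that a suitable $U_t$ acts by a trace-class operator on the patched Iwahori module so that Fredholm/eigenvariety machinery applies. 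This perfectness is then what allows the paper to prove $f_!=f_*$ on the object in question (Lemma \ref{lemmacompactlysupported}), which is exactly the step needed to turn the $\RiHom$ defining $\bbD_{\rm GS}$ into the $S_\infty^{\rig}$-linear dual computed in Lemma \ref{lemmapatchingdualcompactinduction}. By contrast, the shift/twist bookkeeping you flag as the ``main obstacle'' is handled rather cleanly in the paper and is not where the difficulty lies.
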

\subsection{Construction and proof}
Now we go into more (technical) details of Theorem \ref{theoremintroductionmain}, by explaining the two types of resolutions $\wedge^{\bullet} \cH^d\otimes_{\cH} \cind_I^G \cW_{\sharp}$ and $\wedge^{\bullet} \cH^d\otimes_{\cH} \cind_I^G \cW_{\natural}$ of a principal series $\Ind_{B}^G\chi$ where $\chi$ is locally algebraic of weight $\lambda$. 

Take $n\geq 1$. Let $\cC^{I_n-\an}(I,E)$ be the space of functions on $I$ that is rigid analytic on all cosets for certain open normal subgroup $I_n\subset I$. Let $\cD^{I_n-\an}(I,E):=\cC^{I_n-\an}(I,E)'$ be the continuous dual of $\cC^{I_n-\an}(I,E)$, the $I_n$-analytic distribution algebra. Consider the $I$-subspace 
\[\Ind_B^G\chi(BI)=\{f\in\cC^{\rm la}(G,E) \mid f(bg)=\chi(b)f(g), \forall b\in B, \mathrm{supp}(f)
\subset BI\}\subset \Ind_B^G\chi\] 
of functions in $\Ind_B^G\chi$ supported on $BI\subset G$. Then the representation $\cW_{\sharp}$ of $I$ in (\ref{equationintroductionresolutionprincipalseries}) is defined to be 
\begin{equation}\label{equationintroductionWsharp}
    \cW_{\sharp}:=\Ind_B^G\chi(BI) \cap \cC^{I_n-\an}(I,E)
\end{equation}
of functions that are ``$I_n$-rigid analytic'' which is naturally a module over $\cD^{I_n-\an}(I,E)$.

Let $\ovN$ be the unipotent radical of $\ovB$. Using that $ B\backslash BI= I\cap \ovN$, we may identify the space $\Ind_B^G\chi(BI)=\Ind_{I\cap B}^I\chi$ with $\cC^{\rm la}(I\cap \ovN,E)$, the space of locally analytic functions on $I\cap \ovN$. There are inclusions of subspaces 
\begin{equation}\label{equationinclusionintro}
    \cC^{\rm pol}(I\cap \ovN,E)\subset \cW_{\sharp}\subset \cC^{\rm la}(I\cap \ovN,E)=\Ind_B^G\chi(BI)\subset \Ind_{B}^G\chi
\end{equation}
where $\cC^{\rm pol}(I\cap \ovN,E)$ is the space of polynomial (algebraic) functions on $I\cap \ovN$. The maps in (\ref{equationinclusionintro}) are equivariant for a subalgebra $\cD(\ovB\cap I,\frg)\subset \cD^{I_n-\an}(I,E)$ generated by $U(\frg)$ and the distribution algebra of $\ovB\cap I$. Moreover, as a $\frg$-module, $\cC^{\rm pol}(I\cap \ovN,E)$ can be identified to the dual Verma module $\ovM(\lambda)^{\vee}$ in $\cO^{\overline{\frb}}$ whose appearance is also the first step for (\ref{equationintroBreuiladjunction}). 

The first inclusion of (\ref{equationinclusionintro}) induces a natural map  
\[\cW_{\natural}:=\cD^{I_n-\an}(I,E)\otimes_{\cD(\ovB\cap I,\frg)}\cC^{\rm pol}(I\cap \ovN,E)\rightarrow \cW_{\sharp}.\]
Together with the Kohlhaase-Schraen resolution (\ref{equationintroductionresolutionprincipalseries}), we arrive at least a map of complexes
\begin{equation}\label{equationintroductionresolutionprincipalseriesnew}
    \wedge^{\bullet} \cH^d\otimes_{\cH} \cind_I^G \cW_{\natural}\rightarrow \wedge^{\bullet} \cH^d\otimes_{\cH} \cind_I^G \cW_{\sharp}\rightarrow \Ind_{B}^G\chi.
\end{equation}
Note that in contrast to the locally analytic representation $\cW_{\sharp}$, the $I$-representation $\cW_{\natural}$ is finite over the distribution algebra $\cD^{I_n-\an}(I,E)$ and is usually considered as a (continuous linear) dual of a locally analytic representation as $\cW_{\sharp}^{\vee}$ (even though $\cW_{\natural},\cW_{\sharp}^{\vee}$ are still locally analytic representations). The left-hand side of (\ref{equationintroductionresolutionprincipalseriesnew}) will be the complex that is ``dual'' to the original Kohlhaase-Schraen resolution in Theorem \ref{theoremintroductionmain} for $\bbD_{\rm BZ}(\Ind_B^G\chi)=\mathcal{F}_{\overline{B}}^G(\ovM(\lambda)^{\vee},\bbD_{\rm BZ}(\chi_{\rm sm}))$.

We prove that (\ref{equationintroductionresolutionprincipalseriesnew}) is a quasi-isomorphism, thus obtaining the second type resolution of $\Ind_B^G\chi$ in Theorem \ref{theoremintroductionmain}. The major difficulty lies in the surjectivity of the map 
\begin{equation}\label{equationintrosurjectivity}
    \cind_I^G \cW_{\natural}\rightarrow \Ind_{B}^G\chi.
\end{equation}
The image of $\cW_{\natural}$ is a union of $I$-translations of some Banach/Smith completion of $\cC^{\rm pol}(I\cap \ovN,E)$. Our proof of the surjectivity boils down to some statement similar to Proposition \ref{propositionintroductionDmodule} below on different completions of the $U(\frg)$-module $\ovM(\lambda)^{\vee}\simeq \cC^{\rm pol}(I\cap \ovN,E)\simeq \cC^{\rm pol}(\ovN,E)$. We let $\widehat{U}(\frg)$ be the Arens-Michael envelope of the universal enveloping algebra $U(\frg)$, which is a Fr\'echet completion of $U(\frg)$. We write $\bG=\GL_{n}\supset \overline{\bB}\supset\overline{\bN}$ for the algebraic groups such that $\ovB=\overline{\bB}(\Q_p),\ovN=\overline{\bN}(\Q_p)$. The space $\cC^{\rm an}(\ovN,E)$ of rigid analytic functions on the analytification $\overline{\mathbf{N}}^{\rm an}$ of $\overline{\mathbf{N}}_E$ is a Fr\'echet completion of the space $\cC^{\rm pol}(\ovN,E)$ of all algebraic functions.
\begin{prop}\label{propositionintroductionDmodule}
    The isomorphism $\ovM(\lambda)^{\vee}\simeq \cC^{\rm pol}(\ovN,E)$ extends to a surjection (even an isomorphism) $\widehat{U}(\frg)\otimes_{U(\frg)}\ovM(\lambda)^{\vee}\rightarrow \cC^{\rm an}(\ovN,E)$.
\end{prop}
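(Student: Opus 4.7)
The plan is to extend the $U(\frg)$-action on $\cC^{\rm pol}(\ovN,E)\simeq\ovM(\lambda)^\vee$ to a continuous $\widehat{U}(\frg)$-action on $\cC^{\rm an}(\ovN,E)$, construct the map, and then verify surjectivity and injectivity separately.

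Using the standard realization of $\ovM(\lambda)^\vee$ as the algebraic sections on the big cell $\ovN\cdot B/B\subset G/B$ of the homogeneous line bundle $G\times^B\lambda$ (under which the constant function $1$ corresponds to the highest weight vector of $\ovM(\lambda)^\vee$), the action of $\frg$ by infinitesimal left translation becomes a family of first-order differential operators on $\ovN\cong\bbA^N$ with polynomial (algebraic) coefficients of bounded degree. With respect to the triangular decomposition $\frg=\overline{\frn}\oplus\frt\oplus\frn_B$, the subalgebra $\overline{\frn}=\mathrm{Lie}(\ovN)$ acts by translation-invariant vector fields, $\frt$ by Euler-type operators shifted by a scalar coming from $\lambda$, and $\frn_B$ by operators whose coefficients are polynomials of degree at most two (with the top-degree piece independent of $\lambda$ and a linear piece depending on $\lambda$). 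Since polynomial coefficients are bounded on every affinoid polydisc $\ovN_r\subset \ovN^{\rm an}$, each $X\in\frg$ extends continuously to the Fr\'echet space $\cC^{\rm an}(\ovN,E)=\varprojlim_r\cC^{\rm an}(\ovN_r,E)$, and a standard commutator/norm estimate shows that every submultiplicative seminorm defining $\widehat{U}(\frg)$ dominates the corresponding operator seminorms. Hence the $U(\frg)$-action extends to a continuous $\widehat{U}(\frg)$-action on $\cC^{\rm an}(\ovN,E)$, producing the map of the proposition.

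For surjectivity the image contains the dense subspace $\cC^{\rm pol}(\ovN,E)$; the additional content is that the subalgebra $\widehat{U}(\frn_B)\subset\widehat{U}(\frg)$ already moves polynomials to entire functions. Each root vector $e_\alpha\in\frn_B$ acts as a differential operator whose top-degree part increases the polynomial degree in the coordinate $x_\alpha$ of $\ovN$ in a non-degenerate way, so an appropriate entire PBW series $\sum_{\uk}c_{\uk}\prod_\alpha e_\alpha^{k_\alpha}\in \widehat{U}(\frn_B)$ applied to a suitable polynomial in $\cC^{\rm pol}(\ovN,E)$ produces any prescribed entire function on $\ovN$; the at-worst factorial growth arising from iterated differentiation of the polynomial coefficients is absorbed by the super-geometric decay of the Arens--Michael coefficients $c_{\uk}$.

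The main obstacle I expect is upgrading surjectivity to an isomorphism, since the source is an algebraic tensor product whereas the target is a Fr\'echet space. The plan is to put compatible filtrations on both sides --- the PBW-degree filtration from $\widehat{U}(\frg)$ together with the weight filtration from $\ovM(\lambda)^\vee$ on the source, and the coordinate-degree filtration on the target --- and identify the associated graded of the map with the Koszul-type completion isomorphism between $\widehat{\mathrm{Sym}}(\frn_B)\otimes_{\mathrm{Sym}(\frn_B)}\cC^{\rm pol}(\ovN,E)^{\rm gr}$ and $\widehat{\mathrm{Sym}}(\overline{\frn}^*)$. A flatness argument for completions of $U(\frg)$ at the associated-graded level, combined with a careful comparison of the algebraic tensor product with its natural Fr\'echet completion, should then force the map to be injective as well.
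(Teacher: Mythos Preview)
Your direct computational approach is exactly what the paper carries out for $\bG=\mathrm{SL}_2$ only (Example \ref{ExampleSL2}); for general $\bG$ it takes a different route, and the reason is the gap in your surjectivity paragraph. The claim that ``at-worst factorial growth \dots\ is absorbed by the super-geometric decay of the Arens--Michael coefficients'' is not justified. Already for $\mathrm{SL}_2$ one has $e\cdot x^k=-(\lambda+k)x^{k+1}$, so expressing $\sum_i a_i x^{\alpha+i}$ as $u\cdot x^\alpha$ with $u\in\widehat{U}(\frn_B)$ forces $u=\sum_i(-1)^ia_i\big/\prod_{j=0}^{i-1}(\lambda+\alpha+j)\,e^i$, and convergence in $\widehat{U}(\frg)$ requires $p$-adic control of the denominators $\prod_j(\lambda+\alpha+j)$. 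For integral $\lambda$ this reduces to $|k!|_p\ge p^{-k/(p-1)}$, but for general $\bG$ the root vectors $e_\alpha$ act by operators with quadratic polynomial coefficients, iterating them produces combinatorially complicated denominators whose $p$-adic size you have not bounded, and the paper explicitly notes that for weights which are not $p$-adically non-Liouville the statement may genuinely fail. Your ``non-degenerate top-degree part'' heuristic does not supply this bound.

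The paper's substitute is Ardakov--Wadsley localization together with $b$-functions. The module $\ovM(\lambda)^\vee$ localizes to $(j_{w_0,*}\cO_{\bX_{w_0}})\otimes\cO(\lambda)$ on $\bX=\bG/\overline{\bB}$ (Lemma \ref{lemmalocalizationdualverma}); covering $\bX$ by Weyl translates of the big cell, on each chart $\bV$ the question becomes convergence of series $\sum_i a_i f^{-i}u$ in $\widehat{\cD_{n,L}}(\bV)\otimes_\cD A[\tfrac{1}{f}]$, where $f=f_{\bV}$ cuts out the boundary divisor. The Bernstein relation $b(s)f^{-s-1}u=P(s)f^{-s}u$ (Theorem \ref{theorembfunction}) rewrites $f^{-i}u$ as $\frac{P(i-1)\cdots P(s_0)}{b(i-1)\cdots b(s_0)}f^{-s_0}u$, so the needed estimate becomes a bound on $\prod_{s}b(s)^{-1}$. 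The decisive point is that for integral $\lambda$ the whole configuration $(\bG,\bB,\cO(\lambda),f_\bV,u)$ is defined over $\bbQ$, hence $b(s)\in\bbQ[s]$ and all its roots are automatically of positive type (Definition \ref{definitionpositivetype}, Lemma \ref{lemmaconvergesbfunction}). This $b$-function mechanism, borrowed from Bitoun--Bode, is the missing ingredient that replaces your unproved growth estimate and is what makes the argument work uniformly in $\bG$.
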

Except for the case $\bG=\mathrm{SL}_2$ where we calculate explicitly (Example \ref{ExampleSL2}), the proof of the above statement uses essentially the theory of rigid analytic $\widehat{\cD}$-modules and the Beilinson-Bernstein localization for $\widehat{U}(\frg)$-modules of Ardakov-Wadsley (see for example \cite{ardakov2014ICM}, even though the actual proof will only use the theory developed in \cite{ardakov2013irreducible} for Banach completions). The reason is that it's easier to compare the two types of completions of $U(\frg)$-modules locally after the localization. In the case that $\lambda=0$, the dual Verma module $\ovM(\lambda)^{\vee}$ is localized to the $\cD$-module $j_*\cO_{\overline{\bN}}$ where $j:\overline{\bN}\simeq \overline{\bN}w_0\hookrightarrow \bG/\overline{\bB}$. Proposition \ref{propositionintroductionDmodule} is roughly equivalent to that the localization of $\widehat{U}(\frg)\otimes_{U(\frg)}\ovM(\lambda)^{\vee}$ is the $\widehat{\cD}$-module $j_*^{\rm an}\cO_{\overline{\bN}^{\rm an}}$. We emphasize the non-trivial fact that the direct image $j_*^{\rm an}\cO_{\overline{\bN}^{\rm an}}$ is coadmissible over $\widehat{\cD}$ which implies that $\cC^{\rm an}(\ovN,E)$ is coadmissible over $\widehat{U}(\frg)$ by taking the global section. Such direct images of $\widehat{\cD}$-modules were studied by Bitoun-Bode in \cite{BitounBode2021meromorphic} and our proof is largely inspired by their methods.
\begin{rem}
    As shown by \cite{BitounBode2021meromorphic}, the direct image of a coadmissible $\widehat{\cD}$-module may not be coadmissible. This is not the case for us with the integral weights but remains a serious issue for more general weights. We don't know whether the resolution (\ref{equationintroductionresolutionprincipalseriesnew}) holds if the weights of the character $\chi$ are not ``$p$-adically non-Liouville'' in some way, even when $\bG=\mathrm{SL}_2$. This is the reason that we restrict to locally algebraic characters for parabolic inductions. The duality of representations of general weights or families of representations is still mysterious to the authors.
\end{rem}
\subsection{Overview}
    We review the construction of Kohlhaase-Schraen in \S\ref{sectionKSresolutions}. In \S\ref{sectiondualcomplexes}, we establish the resolutions for representations and study dualities between complexes, thus proving the main Theorem \ref{theoremintroductionmain}. The proof of the surjectivity of (\ref{equationintrosurjectivity}) (and Proposition \ref{propositionintroductionDmodule}) is postponed to \S\ref{sectionlocalizationandcompletion}. In the last section \S\ref{sectiondualitysolid}, we embrace the solid formalism to discuss the duality between representations and coherent sheaves (Theorem \ref{theoremintrodualitypatchingmodule}).
\subsection{Acknowledgements}
We thank Eugen Hellmann for introducing us to the problem and for his interest in this work. The second author would like to thank Juan Esteban Rodriguez Camargo, Lucas Mann and Benjamin Schraen for very helpful discussions or answers to questions. Part of the work was done during the second author's visits to Beijing International Center for Mathematical Research and he would like to thank Yiwen Ding and Liang Xiao for hospitality. He also like to thank Claudius Heyer and Yuanyang Jiang for useful exchanges.

This work began during the Trimester Program 2023 on the arithmetic of the Langlands Program at Hausdorff Research Institute for Mathematics in Bonn, funded by the Deutsche Forschungsgemeinschaft (DFG, German Research Foundation) under Germany's Excellence Strategy – EXC-2047/1 – 390685813. The second author was funded by the Deutsche Forschungsgemeinschaft (DFG, German Research Foundation) – Project-ID 427320536 – SFB 1442, as well as under Germany's Excellence Strategy EXC 2044 390685587, Mathematics Münster: Dynamics–Geometry–Structure.
\subsection{Notation and convention}
Let $L$ be a finite extension of $\Q_p$ with the ring of integers $\cO=\cO_L$ and a uniformizer $\varpi=\varpi_L$. We take a finite large enough extension $E$ of $\Qp$ as the coefficient field. 

Fix $\bG$ to be a connected split reductive group over $L$ with a Borel subgroup $\bB=\bT\bN$ where $\bT$ is a split maximal torus. We fix also split models $\bG_{\cO_L}\supset \bB_{\cO_L}\supset \bT_{\cO_L}$ over $\cO_L$ of $\bG$. Let $d$ be the rank of $\bT$. Write $G=\bG(L)\supset B=\bB(L)\supset T=\bT(L)$ and similarly $N=\bN(L)$. Denote by $\ovB=\ovN T$ the opposite Borel subgroup. Let $\Phi\supset \Phi_+\supset \Delta$ be the set of roots, positive roots and simple roots of $\bG$, corresponding to $\bB$. Let $X_*(\bT)$ (resp. $X^*(\bT)$) be the lattice of coweights (resp. weights) of $\bT$.

Write $\frg,\frb,\frt,\frn$, etc. for the base change to $E$ of the $\Q_p$-Lie algebras of $G,B,T,N$, etc. Write $U(\frg),U(\frb)$, etc. for the universal enveloping algebras.

Set $K=\bG_{\cO_L}(\cO_L)$, a hyperspecial maximal compact open subgroup of $G$.  Write $B_0=K\cap B, T_0=T\cap K$, etc. Set $T^{-}=\{t\in T\mid t(K\cap \ovN) t^{-1}\subset K\cap \ovN\}$ and $T^+=(T^-)^{-1}$. 

For a $p$-adic Lie group $G$, we write $\cC^{\la}(G,E)$ for the space of locally $\Q_p$-analytic functions on $G$ with values in $E$. Write $\cD(G,E)=\cC^{\la}(G,E)'$ for the distribution algebra. 

Let $I$ be the preimage of $\bB_{\cO_L}(\cO_L/\varpi_L)$ under the reduction map $K\rightarrow \bG_{\cO_L}(\cO_L/\varpi_L)$. The Iwahori subgroup $I$ admits the Iwahori decomposition $I=(I\cap \ovN)(I\cap T)(I\cap N)$. There exists and we fix a sequence of good open normal subgroups $\cdots \subset I_{n+1}\subset I_n\subset \cdots \subset I$ where $n\geq 1$ forming a neighborhood basis of the neutral element of $G$ and admitting Iwahori decompositions $I_n=(I_n\cap \ovB)(I_n\cap T)(I_n\cap B)$, coming from decompositions of rigid analytic groups $\bI_n$ as in \cite[Prop. 4.1.3]{emerton2006jacquet}. 

We let $\cC^{\an}(I_n,E)$ be the rigid analytic functions on the rigid space $(\mathrm{Res}_{L/\Q_p}\bI_n)\otimes_LE$. Set $\cC^{I_n-\an}(I,E)=\oplus_{g\in I/I_n} g.\cC^{\an}(I_n,E)\subset \cC^{\la}(I,E)$. Since $I_n$ is normal in $I$ and any element $g\in I$ induces a rigid analytic automorphism of $\bI_n$ by conjugation, we have $g.\cC^{\an}(I_n,E)=\cC^{\an}(I_ng^{-1},E)=\cC^{\an}(g^{-1}I_n,E)$ for all $g\in I$. The space $\cC^{I_n-\an}(I,E)$ consists of $I_n$-rigid analytic functions on $I$. Let $\cD^{I_n-\an}(I,E)$ be the continuous linear dual of $\cC^{I_n-\an}(I,E)$.

We use $\tau$ to denote the Chevalley involution (inverse transpose) of $\bG,G$ or $\frg$. For $M\in\cO^{\frb}$, we will write $\tau(M)^{\vee}=\Hom_E(M,E)^{\frn_{\overline{B}}^{\infty}}$, see Remark \ref{remark-dualverma}.
\section{Kohlhaase-Schraen resolutions}\
\label{sectionKSresolutions}
In this section, we review and extend (a little bit of) the work of Kohlhaase-Schraen in \cite[\S 2]{kohlhaase2012homological} for resolutions of parabolic inductions.
\subsection{The Koszul complexes}\label{subsectionKoszulcomplex}
We start with some general considerations of compact inductions (from the Iwahori subgroup $I$) and Hecke operators. Then we turn to Kohlhaase-Schraen resolutions for parabolic inductions (from the Borel subgroup $B$) 

Suppose that $\cW$ is a representation of the Iwahori subgroup $I$ of the split $p$-adic reductive group $G$ over $E$. Consider the abstract compactly induced representation 
\[\cind_{I}^G\cW=\{f: G\rightarrow \cW \text{ compactly supported }\mid f(xg)=x.f(g), \forall x\in I \}\]
on which $G$ acts by right translations. If $\cW$ is a locally analytic representation of $I$, $\cind_{I}^G\cW$ can be equipped with the locally convex direct sum topology so that $\cind_{I}^G\cW$ is a locally analytic representation of $G$.

For $g\in G,w\in \cW$, we use the notation $[g,w]=g[1,w]$ to denote the function in $\cind_{I}^G\cW$ supported on $Ig^{-1}$ sending $ig^{-1}\in Ig^{-1}$ to $i.w$. Then $[g,w]=[gi,i^{-1}w]$ for $i\in I$. We also use the notation $[g,\cW]=[gI,\cW]$ to denote the space of all functions in $\cind_{I}^G\cW$ supported in $Ig^{-1}$. For a subset $S\subset G$, write $[S,\cW]=\sum_{s\in S}[s,\cW]\subset \cind_I^G\cW$. Then $\cind_{I}^G\cW=\oplus_{g\in G/I}[g,\cW]=\oplus_{g\in I\backslash G/I}[IgI,\cW]$.

The Hecke (endomorphism) algebra $\End_G(\cind_{I}^G\cW)=\Hom_I(W,(\cind_{I}^G\cW)|_I)$ acts on $\cind_{I}^G\cW$, where the equality follows from the usual Frobenius reciprocity. For $t\in T$,
\[\Hom_I(\cW,(\Ind_I^{ItI}\cW)|_I)=\Hom_I(\cW,\Ind_{I\cap tIt^{-1}}^I[t,\cW])=\Hom_{I\cap tIt^{-1}}(W,[t,W])\] 
gives rise to a subspace of $\End_G(\cind_{I}^G\cW)$ via $\Ind_I^{ItI}\cW=[ItI,\cW]\subset \cind_{I}^G\cW$.

Assume that for $t\in T^{-}$, there is an element $U_t\in \Hom_{I\cap tIt^{-1}}(\cW,[t,\cW])\subset \End_G(\cind_{I}^G\cW)$. Equivalently, we make $t^{-1}$ act on $\cW$ by an operator $\psi_t:\cW\rightarrow \cW$ in the sense that the map $w\mapsto [t,\psi_t.w]$ is in $\Hom_{I\cap tIt^{-1}}(\cW,[t,\cW])$. More explicitly, $\psi_t$ satisfies the condition
\[(t^{-1}xt)\psi_{t}(w)=\psi_{t}(xw)\] 
for $x\in I\cap tIt^{-1}, w\in\cW$. The corresponding action of $U_t$ on $\cind_{I}^G\cW$ is given by
\begin{equation}\label{equationUtaction}
    U_t[g,w]=\sum_{xtI\in ItI/I}[gxt,\psi_t(x^{-1}w)], \forall g\in G,w\in\cW
\end{equation}
by \cite[(2.2)]{kohlhaase2012homological} (here $xtI\in ItI/I$ is equivalent to $x\in I/(I\cap tIt^{-1})$).
\begin{lemma}\label{lemmacommutativeHecke}
    Suppose that $\psi_{t_1}\circ \psi_{t_2}:\cW\rightarrow \cW$ is equal to $\psi_{t_1t_2}$ for all $t_1,t_2\in T^{-}$ (in other words, $\cW$ becomes a module over the semi-group $IT^+I$), then $U_{t_1}U_{t_2}=U_{t_1t_2}$. 
\end{lemma}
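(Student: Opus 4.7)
The plan is to verify $U_{t_1}U_{t_2}[g,w] = U_{t_1t_2}[g,w]$ on generators $[g,w]$ by direct computation with the explicit formula (\ref{equationUtaction}). Applying that formula twice gives
\[
U_{t_1}U_{t_2}[g,w] \;=\; \sum_{x_2}\sum_{x_1}\bigl[g\,x_2t_2x_1t_1,\;\psi_{t_1}\bigl(x_1^{-1}\psi_{t_2}(x_2^{-1}w)\bigr)\bigr],
\]
where $x_i$ runs over coset representatives such that $x_it_iI\in It_iI/I$. Since $t_1,t_2\in T$ commute, I would reorder as $x_2t_2x_1t_1 = x_2(t_2x_1t_2^{-1})\cdot t_1t_2$ and set $y:=x_2(t_2x_1t_2^{-1})$, putting each summand into the shape $y\,t_1t_2$ expected from a summand of $U_{t_1t_2}[g,w]$.

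The next step is to normalize the coset representatives. Using the Iwahori decomposition $I = (I\cap\ovN)(I\cap T)(I\cap N)$ and the fact that conjugation by $t\in T^-$ contracts $I\cap\ovN$ and expands $I\cap N$, one computes $I\cap tIt^{-1} = t(I\cap\ovN)t^{-1}\cdot(I\cap T)\cdot(I\cap N)$, so representatives of $ItI/I$ may be chosen inside $I\cap\ovN$. With $x_1,x_2\in I\cap\ovN$, the element $t_2x_1t_2^{-1}$ lies in $t_2(I\cap\ovN)t_2^{-1}\subset I$, so the equivariance relation for $\psi_{t_2}$ applies to the element $t_2 x_1^{-1} t_2^{-1}\in I\cap t_2It_2^{-1}$ and rewrites
\[
x_1^{-1}\psi_{t_2}(x_2^{-1}w) \;=\; \psi_{t_2}\bigl((t_2x_1t_2^{-1})^{-1}x_2^{-1}w\bigr) \;=\; \psi_{t_2}(y^{-1}w).
\]
The hypothesis $\psi_{t_1}\circ\psi_{t_2}=\psi_{t_1t_2}$ then collapses the coefficient to $\psi_{t_1t_2}(y^{-1}w)$.

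It remains to check that $(x_1,x_2)\mapsto y$ induces a bijection between the product of the two index sets and a system of representatives of $It_1t_2I/I$ (also chosen inside $I\cap\ovN$). Surjectivity and injectivity follow from unique factorization in the filtration $t_1t_2(I\cap\ovN)t_2^{-1}t_1^{-1}\subset t_2(I\cap\ovN)t_2^{-1}\subset I\cap\ovN$, and the cardinalities match because
\[
[I\cap\ovN:t_1t_2(I\cap\ovN)t_2^{-1}t_1^{-1}] \;=\; [I\cap\ovN:t_1(I\cap\ovN)t_1^{-1}]\cdot[I\cap\ovN:t_2(I\cap\ovN)t_2^{-1}],
\]
using that conjugation by $t_1$ preserves indices. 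The coset bookkeeping in this final step is the main place where care is required; everything else is formal given the $\psi$-equivariance and the commutativity of $T$. Conceptually, this identity is the Hecke-algebra analogue of the familiar fact that $\mathbf{1}_{It_1I}\ast\mathbf{1}_{It_2I} = \mathbf{1}_{It_1t_2I}$ in $H(G,I)$ for $t_i\in T^-$.
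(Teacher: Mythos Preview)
Your proof is correct and follows essentially the same route as the paper: expand $U_{t_1}U_{t_2}[g,w]$ via the explicit formula, choose the coset representatives $x_1,x_2$ in $I\cap\ovN$ using the Iwahori decomposition, rewrite $x_2t_2x_1t_1=x_2(t_2x_1t_2^{-1})t_1t_2$, use the $\psi_{t_2}$-equivariance (valid since $t_2x_1^{-1}t_2^{-1}\in I\cap t_2It_2^{-1}$) to pull $x_1^{-1}$ inside, and then collapse the double sum to a single sum indexed by $(I\cap\ovN)/t_1t_2(I\cap\ovN)(t_1t_2)^{-1}$ via the filtration. One cosmetic remark: for the filtration you display, $t_1t_2(I\cap\ovN)(t_1t_2)^{-1}\subset t_2(I\cap\ovN)t_2^{-1}\subset I\cap\ovN$, the index identity is obtained by conjugating the inner step by $t_2^{-1}$, not by $t_1$; alternatively, use the tower through $t_1(I\cap\ovN)t_1^{-1}$ and then your ``conjugation by $t_1$'' comment is the right one.
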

\begin{proof}
    By the Iwahori decomposition, $ItI/I=(I\cap \ovN)/t(I\cap \ovN)t^{-1}\cdot tI/I$ for any $t\in T^{-}$.
    We calculate that 
    \begin{align*}
        &U_{t_1}U_{t_2}[g,w]\\
        =&\sum_{xt_1I\in It_1I/I}\sum_{yt_2I\in It_2I/I}[gyt_2xt_1,\psi_{t_1}(x^{-1}\psi_{t_2}(y^{-1}w))]\\
        =&\sum_{t_2xt_2^{-1}\in t_2(I\cap \ovN)t_2^{-1}/t_1t_2(I\cap \ovN)(t_1t_2)^{-1},y\in (I\cap \ovN)/t_2(I\cap \ovN)t_2^{-1}}[gy(t_2xt_2^{-1})t_2t_1, \psi_{t_1}\psi_{t_2}((yt_2xt_2^{-1})^{-1}w)]\\
        =&\sum_{x\in (I\cap \ovN)/t_1t_2(I\cap \ovN)(t_1t_2)^{-1}}[gxt_2t_1,\psi_{t_1}\psi_{t_2}(x^{-1}w)],
    \end{align*}
    where for the second equality, we used that $x^{-1}\psi_{t_2}(-)=\psi_{t_2}(t_2x^{-1}t_2^{-1}-)$ since $t_2x^{-1}t_2^{-1}\in t_2(I\cap \ovN)t_2^{-1}\subset I\cap t_2It_2^{-1}$. The result follows. 
\end{proof}

There is an isomorphism $T/T_0\simeq X_*(\bT)$ of $\Z$-modules characterized by $\langle \chi,t\rangle=\mathrm{val}_{\varpi}(\chi(t))$ for $\chi\in X^*(\bT)$, $t\in T$, where $\mathrm{val}_{\varpi}$ denotes the $\varpi$-adic valuation. Under the identification $T^{-}/T_0=X_*(\bT)^-=\{\mu\in X_*(\bT)\mid \langle\alpha,\mu\rangle\leq 0,\forall\alpha\in \Phi_+\}$ corresponds to the set of antidominant coweights. It contains $X_*(\bT)_0=\{\mu\in X_*(\bT)\mid \langle\alpha,\mu\rangle= 0,\forall\alpha\in \Phi\}\simeq \Z^{d-|\Delta|}$. Write $\bZ$ for the center of $\bG$ and $Z=\bZ(L)$. Then $\bZ\subset\bT$ (\cite[Prop. 21.7]{milne2017algebraic}) and $Z/(Z\cap K)\simeq X_*(\bT)_0$. Choose generators $z_1,\cdots,z_{d-|\Delta|}$ of $Z/(Z\cap K)$.  And we choose a splitting $X_*(\bT)\simeq X_*(\bT)'\oplus X_*(\bT)_0$ where $X_*(\bT)'\simeq \Z^{|\Delta|}$ is dual to $X^*(\bT)\cap \Q\Phi$. It's easy to see that the splitting induces an isomorphism $X_*(\bT)^-\simeq \bbN^{|\Delta|}\oplus \bbZ^{d-|\Delta|}$ as monoids and we can choose generators $t_{\alpha}\in T^-/T_0,\alpha\in\Delta$  for the free monoid $X_*(\bT)^-\cap X_*(\bT)'\simeq \bbN^{|\Delta|}$ (we use the same notation as in \cite[\S 1]{kohlhaase2012homological} but we don't require $\langle \alpha,t_{\alpha}\rangle=-1$!). 

Let $\cH=E[T^{-}/T_0]=E[X_*(\bT)^-]$ be the monoid ring. Then $\cH=E[z_i^{\pm}, t_{\alpha},i=1,\cdots,d-|\Delta|,\alpha\in\Delta]$.
 
Choose an ordering of $\Delta$ and we rewrite $(t_1,\cdots,t_d)$ for the sequence $(z_1,\cdots,z_{d-|\Delta|}, t_{\alpha},\alpha\in\Delta)$ of elements in $\cH$. We write $ \frm\subset \cH$ for the ideal $(t_1-1,\cdots,t_d-1)$. 

From now on we will always work in the situation that given $\cW$, there are operators $\psi_t$ for $t\in T^{-}$ making $\cW$ an $IT^+I$-module as in Lemma \ref{lemmacommutativeHecke}. This implies that $\cind_I^G\cW$ is an $\cH$-module given by (\ref{equationUtaction}) (note that $U_t$ is the identity map if $t\in T_0$.). And we will write the corresponding Hecke operators $U_t\in\cH$ for $t\in T^{-}/T_0$. If $f:\cW_1\rightarrow \cW_2$ is an $IT^+I$-equivariant map, then the induced $G$-map $\cind_{I}^G\cW_1\rightarrow \cind_{I}^G\cW_2$ is $\cH$-equivariant.

Recall for the sequence of elements $(U_{t_1}-1,\cdots,U_{t_d}-1)$, there is a homological complex $\wedge^{\bullet}\cH^d$ of $\cH$-modules with differentials 
\[\wedge^i\cH^d\rightarrow \wedge^{i-1}\cH^d: f_{1}\wedge\cdots\wedge f_{i}\mapsto\sum_{j=1}^i (-1)^{j+1}\varphi(f_j)f_{1}\wedge\cdots\wedge \widehat{f_{j}}\wedge \cdots\wedge f_i \]
where $\varphi:\cH^d\rightarrow \cH,(h_1,\cdots,h_d)\mapsto \sum_{i=1}^{d}h_i(U_{t_i}-1)$ \cite[\href{https://stacks.math.columbia.edu/tag/0621}{Tag 0621}]{stacks-project}. Note that 
\[H_i(\wedge^{\bullet}\cH^d\otimes_{\cH}\cind_{I}^G\cW)=\mathrm{Tor}_i^{\cH}(\cH/\frm,\cind_{I}^G\cW).\]
\begin{prop}\label{propositionKSresolutionKoszulregular}
    The sequence $(U_{t_1}-1,\cdots,U_{t_d}-1)$ is Koszul regular for the $\cH$-module $\cind_{I}^G\cW$: the natural map
    \[ \wedge^{\bullet}\cH^d\otimes_{\cH}\cind_{I}^G\cW\rightarrow \cind_{I}^G\cW\otimes_{\cH}\cH/\frm\]
    is a quasi-isomorphism of $E[G]$-modules
\end{prop}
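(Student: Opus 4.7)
The plan is to deduce Koszul regularity of $(U_{t_1}-1, \ldots, U_{t_d}-1)$ on $\cind_I^G\cW$ from the stronger assertion that $\cind_I^G\cW$ is a free $\cH$-module. Since $\cH = E[X_*(\bT)^-]$ is a polynomial ring in the $t_\alpha$ ($\alpha \in \Delta$) and Laurent polynomial in the central $z_i$, the sequence $(t_1-1, \ldots, t_d-1)$ cuts out a closed point of $\Spec\, \cH$ and is a regular, hence Koszul regular, sequence in $\cH$ itself. Consequently, for any flat $\cH$-module $M$, the Koszul complex $\wedge^{\bullet}\cH^d \otimes_\cH M$ is exact in positive degrees and is quasi-isomorphic to $M \otimes_\cH \cH/\frm$.

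The main step is therefore to exhibit an explicit free $\cH$-basis of $\cind_I^G\cW$. I would choose a set of representatives $\Gamma \subset G$ for the orbits of the right monoidal action $gI \mapsto gtI$ of $T^-/T_0$ on $G/I$, aligned with the Iwahori--Bruhat decomposition $G = \bigsqcup_{w \in \widetilde{W}} IwI$, where $\widetilde{W}$ is the extended affine Weyl group. The claim is then that the $\cH$-linear map
$$\Phi : \bigoplus_{\gamma \in \Gamma} \cH \otimes_E \cW \longrightarrow \cind_I^G\cW, \qquad h \otimes w \longmapsto h \cdot [\gamma, w],$$
is an isomorphism, where the action of $h \in \cH$ on $[\gamma,w]$ is via the Hecke operators.

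The crucial obstacle is proving bijectivity of $\Phi$. The difficulty is that the Hecke action
$$U_t[g,w] = \sum_{x \in (I \cap \ovN)/t(I\cap \ovN)t^{-1}} [gxt,\, \psi_t(x^{-1} w)]$$
is not a simple translation $[g,w] \mapsto [gt, \psi_t(w)]$ but a sum of $|ItI/I|$ terms. To handle this, I would introduce a length-type filtration on $G/I$, coming from the Bruhat order on $\widetilde{W}$ (equivalently, a distance function on the Bruhat--Tits building), with the property that, because $t \in T^-$ is antidominant, the term $[gt, \psi_t(w)]$ corresponding to $x = 1$ is the unique term of maximal length, while the remaining terms $[gxt, \cdot]$ with $x \ne 1$ lie in strictly shorter cosets that have already been reached by the $\cH$-action on other basis elements in $\Gamma$. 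This triangular pattern yields both injectivity and surjectivity of $\Phi$ by induction on length. Once freeness over $\cH$ is established, the Koszul complex of $\cind_I^G\cW$ decomposes as a direct sum of Koszul complexes of $\cH$ for the regular sequence $(t_1-1, \ldots, t_d-1)$, and the desired quasi-isomorphism follows at once.
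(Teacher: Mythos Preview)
Your plan to prove that $\cind_I^G\cW$ is \emph{free} over $\cH$ is too strong and in fact false under the stated hypotheses. Take $\cW=E$ with trivial $I$-action and set $\psi_t=0$ for every $t\in T^-\smallsetminus T_0$; this satisfies the monoid compatibility $\psi_{t_1}\psi_{t_2}=\psi_{t_1t_2}$ and the intertwining relation trivially. Then $U_{t_\alpha}=0$ on $\cind_I^G\cW$ for each $\alpha\in\Delta$, so the module has $U_{t_\alpha}$-torsion and cannot be free over $\cH$. Koszul regularity nevertheless holds in this example because each $U_{t_\alpha}-1=-1$ is a unit, making the Koszul complex contractible. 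More concretely, your triangularity argument breaks at two points. First, there is no ``unique term of maximal length'': for $g\in I$ the summands $[gxt,\psi_t(x^{-1}w)]$ with $x\in (I\cap\ovN)/t(I\cap\ovN)t^{-1}$ all lie in the single double coset $ItI$ and hence all have the same length in $\widetilde W$, so the Bruhat filtration does not separate them. Second, even if a triangular pattern existed, the diagonal entries would be the maps $\psi_t:\cW\to\cW$, and nothing in the hypotheses forces these to be bijective (in the paper's principal applications they are restriction-type operators with large cokernel).

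The paper (following Kohlhaase--Schraen) does \emph{not} attempt freeness. It first peels off the central variables, showing directly that $U_{z_1}-1,\dots,U_{z_{d-|\Delta|}}-1$ is regular with quotient $\cind_{IZ}^G\cW$. For the remaining $U_{t_\alpha}-1$, the key observation is that for any $g\in G$ some $U_t$ sends $[g,\cW]$ into $[KT^-IZ,\cW]$; since $\Tor^{\cH'}_\bullet(\cH'/\frm',-)$ is insensitive to inverting $U_{t_\alpha}-1$, one may replace $\cind_{IZ}^G\cW$ by the submodule $[G^-,\cW]$ supported on $G^-=KT^-IZ$. On this piece the grading $G^-=\coprod_r V_r$ by the exponent $m_{\alpha_j}$ makes $U_{t_{\alpha_j}}$ a pure degree-$1$ shift $[V_r,\cW]\to[V_{r+1},\cW]$ while the other $U_{t_{\alpha_i}}$ preserve each $[V_r,\cW]$; regularity of the sequence then follows by an explicit formal-series inversion argument. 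In short, the paper replaces your global freeness claim by a localisation step plus a genuine regular-sequence argument on a graded submodule where the shape of the Hecke action is transparent.
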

\begin{proof}
    This is \cite[Thm. 2.5]{kohlhaase2012homological} with the same proof sketched below. We first reduce the statement to the adjoint case $G/Z$. The operators $U_{z_i},i=1,\cdots, d-|\Delta|$ acts on each $[gIZ,\cW]$ for $g\in G/IZ$. Since $gIZ/gI=gZ/g(Z\cap I)=g(Z/(Z\cap I))=\prod_{i}z_i^{\Z}$, it's easy to see that $U_{z_1}-1, \cdots, U_{z_{d-|\Delta|}}-1$ form regular sequences on $\cind_{I}^G\cW$ and $\cind_{I}^G\cW/(U_{z_i}-1,i=1,\cdots,d-|\Delta|)\simeq \cind_{IZ}^G\cW$. 
    
    We now consider the action of $\cH'=E[U_{t_{\alpha}},\alpha\in \Delta]$ on $\cind_{IZ}^G\cW$. Let $\frm'=U_{t_{\alpha}}-1,\alpha\in \Delta$. The statement is equivalent to that $\Tor_q^{\cH'}(\cH'/\frm',\cind_{IZ}^G\cW)=0$ for $q\geq 1$. Recall \cite[Lem. 1.4]{kohlhaase2012homological}, see also \cite[Lem. 2.20]{herzig2011classification}: $G=\coprod_{t\in T/T_0}KtI$ and for any $g\in G$ there is $t\in T^{-}$ such that $gItI\subset KT^{-}I$. Using the lemma, as in the proof of \cite[Thm. 2.5]{kohlhaase2012homological}, we have $\Tor_q^{\cH'}(\cH'/\frm',\cind_{IZ}^G\cW)\simeq \Tor_q^{\cH'}(\cH'/\frm',\cind_{IZ}^G\cW\otimes_{\cH'}\cH'[\frac{1}{U_{t_{\alpha}}-1},\alpha\in \Delta])\simeq \Tor_q^{\cH'}(\cH'/\frm',\cind_{IZ}^G\cW(G^-))$ where $G^{-}=KT^{-}IZ=\coprod_{\underline{m}\in\bbN^{\Delta}}K\prod_{\alpha}t_{\alpha}^{m_{\alpha}}IZ$ and 
    \[\cind_{IZ}^G\cW(G^-):=[G^{-},\cW]\subset \cind_{IZ}^G\cW.\] 
    
    We remain to show that $U_{t_{\alpha}}-1, \alpha\in \Delta$ form a regular sequence on $\cind_{IZ}^G\cW(G^-)$ as in \cite[Thm. 2.6]{kohlhaase2012homological} for an arbitrary ordering $\alpha_1,\cdots,\alpha_{|\Delta|}$ of $\Delta$. We prove by induction on $1\leq j\leq |\Delta|$. Suppose that $(U_{t_{\alpha_j}}-1)f=\sum_{i=1}^{j-1}(U_{t_{\alpha_i}}-1)f^i$ for some $f,f^1,\cdots,f^{i-1}\in \cind_{IZ}^G\cW(G^-)$. Then formally in $\prod_{g\in G^-/IZ}[g,\cW]$, $f=\sum_{i=1}^{j-1}(U_{t_{\alpha_i}}-1)(U_{t_{\alpha_j}}-1)^{-1}f^i=\sum_{i=1}^{j-1}(U_{t_{\alpha_i}}-1)\sum_{k\geq 0}(-1)U_{t_{\alpha_j}}^kf^i$. Consider 
    \[V_r=\coprod_{\underline{m}\in\bbN^{\Delta},m_{\alpha_j}=r}Kt_{\alpha_1}^{m_{\alpha_1}}\cdots t_{\alpha_{|\Delta|}}^{m_{\alpha_{|\Delta|}}}IZ.\]
    Then $U_{t_{\alpha_j}}:[V_r,\cW]\rightarrow [V_{r+1},\cW]$ and $U_{t_{\alpha_i}}-1:[V_r,\cW]\rightarrow [V_r,\cW]$ for $i\neq j$ by \cite[Lem. 1.3]{kohlhaase2012homological}. Let $g^i=\sum_{k\geq 0}(-1)U_{t_{\alpha_j}}^kf^i$. Write $f=\sum_{r\geq 0}f^{(r)}$ and $g^i=\sum_{r\geq 0}g^{i,(r)}$ where $f^{(r)},g^{i,(r)}\in [V_r,\cW]\subset \cind_{IZ}^G\cW(G)$ (this is possible since $f,f^i$ are compactly supported modulo $Z$). There exists $s$ such that $f^{(r)}=0$ for all $r\geq s$. Then $f=\sum_{i=1}^{j-1}(U_{t_{\alpha_i}}-1)\sum_{r=0}^sg^{i,(r)}$, which implies that $f$ lies in the subspace of $\cind_{IZ}^G\cW(G^-)$ generated by images of $U_{t_{\alpha_i}}-1,i=1,\cdots,j-1$.
\end{proof}
\begin{cor}\label{corinjectiontwoW}
    Let $\cW_1\hookrightarrow \cW_2$ be an injection of $IT^+I$-representations equipped with the corresponding $\psi_t$-actions. Then the induced $\cH$-map $\cind_I^G\cW_1\rightarrow\cind_{I}^G\cW_2$ induces an injection
    \[\cind_I^G\cW_1\otimes_{\cH}\cH/\frm\hookrightarrow \cind_I^G\cW_2\otimes_{\cH}\cH/\frm.\]
\end{cor}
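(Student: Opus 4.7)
The plan is to deduce this corollary directly from Proposition \ref{propositionKSresolutionKoszulregular} together with the exactness of compact induction, by invoking the long exact sequence of $\Tor$.

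First, I would let $\cW_3 \eqdef \cW_2/\cW_1$, equipped with its induced $IT^+I$-structure. Since the injection $\cW_1 \hookrightarrow \cW_2$ is assumed $IT^+I$-equivariant (in particular, $\psi_t$-equivariant for every $t \in T^-$), the operators $\psi_t$ descend to well-defined operators on $\cW_3$, still satisfying $\psi_{t_1}\circ\psi_{t_2}=\psi_{t_1t_2}$, so Lemma \ref{lemmacommutativeHecke} applies and $\cind_I^G\cW_3$ carries the $\cH$-module structure defined by (\ref{equationUtaction}).

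Next, since compact induction from $I$ is exact (it is a direct sum over cosets $G/I$), the short exact sequence $0 \to \cW_1 \to \cW_2 \to \cW_3 \to 0$ of $IT^+I$-modules yields a short exact sequence of $\cH$-modules
\[
0 \lra \cind_I^G\cW_1 \lra \cind_I^G\cW_2 \lra \cind_I^G\cW_3 \lra 0.
\]
Applying $-\otimes_\cH \cH/\frm$ and taking the long exact sequence of $\Tor^\cH(\cH/\frm,-)$, the desired injectivity of $\cind_I^G\cW_1\otimes_\cH\cH/\frm \to \cind_I^G\cW_2\otimes_\cH\cH/\frm$ is equivalent to the vanishing of $\Tor_1^\cH(\cH/\frm,\cind_I^G\cW_3)$.

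Finally, this vanishing is exactly what Proposition \ref{propositionKSresolutionKoszulregular} provides, applied to the $IT^+I$-representation $\cW_3$: the Koszul regularity of $(U_{t_1}-1,\ldots,U_{t_d}-1)$ on $\cind_I^G\cW_3$ forces $\Tor_i^\cH(\cH/\frm,\cind_I^G\cW_3)=0$ for all $i\geq 1$, completing the proof. There is no real obstacle here; the only point worth double-checking is the first step, namely that the quotient $\cW_2/\cW_1$ genuinely inherits the semigroup structure used in Proposition \ref{propositionKSresolutionKoszulregular}, which is immediate from the equivariance hypothesis on the given injection.
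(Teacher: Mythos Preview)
Your proof is correct and follows exactly the paper's approach: the paper's one-line proof simply cites Proposition \ref{propositionKSresolutionKoszulregular} to get $\Tor_1^{\cH}(\cH/\frm, \cind_I^G(\cW_2/\cW_1))=0$, leaving the short exact sequence and long exact $\Tor$ sequence implicit, which is precisely what you have spelled out.
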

\begin{proof}
    By Proposition \ref{propositionKSresolutionKoszulregular}, $\mathrm{Tor}_1^{\cH}(\cH/\frm, \cind_I^G(\cW_2/\cW_1))=0$.
\end{proof}
Now we consider parabolic inductions. Let $U$ be a locally analytic representation of the torus $T$ inflated to a representation of $B$ with the same notation. Recall the locally analytic parabolic induction $\Ind_B^GU$ as in \cite[\S 2.1]{emerton2007jacquetII}:
\[\Ind_{B}^GU=\{f\in\cC^{\la}(G,U)\mid f(bg)=b.f(g), \forall b\in B\}\]
with the action of $G$ is given by right translations. For an open subset $V\in B\backslash G$, e.g. $V=BI$, let $\Ind_{B}^GU(V)\subset \Ind_{B}^GU$ be the subspace consisting of functions supported in $V$. Let $\mathrm{Res}_V:\Ind_{B}^GU\rightarrow \Ind_{B}^GU(V), f\mapsto f\cdot \mathbf{1}_{V}$ be the restriction map where $\mathbf{1}_{V}$ denotes the characteristic function of $V$.

We take an $I$-invariant subspace $\cW\subset \Ind_{B}^GU(BI)$ such that for any $t\in T^{-}$, $\mathrm{Res}_{BI}(t^{-1}.\cW)\subset \cW$ as subspaces of $\Ind_{B}^GU(BI)$. 
\begin{dfn}\label{definitinpsit}
    In the above situation, we set 
    \[\psi_t: \cW\rightarrow \cW, f(-)\mapsto \mathrm{Res}_{BI}(t^{-1}.f)=\mathrm{Res}_{BI}(f(- t^{-1}))=f(-t^{-1})\mathbf{1}_{BI}\]
    for $t\in T^{-}$. And let $U_t\in T^{-}$ be the corresponding Hecke operator.
\end{dfn}
\begin{rem}\label{remarkdefinitinpsit}
    Our definition of $\psi_t$ is \textit{slightly different} from that of \cite{kohlhaase2012homological}, where $\psi_t.f(-)=\mathrm{Res}_{BI}f(t-t^{-1})=\chi(t)\mathrm{Res}_{BI}(f(-t^{-1}))$ when $U$ is a character $\chi$. Our price is that the representation $\cind_I^G\cW\otimes_{\cH}\cH/\frm$ will depend implicitly on $\chi(t_{i}),1\leq i\leq d$.
\end{rem}
The defined actions satisfy the condition of Lemma \ref{lemmacommutativeHecke}.
\begin{lemma}\label{lemmapsit}
    In the above definition, we have $\psi_t(xw)=(t^{-1}xt)\psi_t(w)$ for $x\in tIt^{-1}\cap I$. And for $t_1,t_2\in T^{-}$, $\psi_{t_1}\psi_{t_2}=\psi_{t_1t_2}$.
\end{lemma}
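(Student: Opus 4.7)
The plan is to check both identities pointwise. Unwinding Definition~\ref{definitinpsit} with the usual convention $(g.f)(h) = f(hg)$ for the right translation action on $\Ind_B^G U$, one has
\[\psi_t(f)(h) \;=\; f(ht^{-1})\,\mathbf{1}_{BI}(h), \qquad f\in\cW,\;h\in G,\]
so both claims reduce to straightforward calculations, the only subtle point being the careful bookkeeping of the indicator $\mathbf{1}_{BI}$ under compositions.

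For the first identity, direct computation gives
\[\psi_t(xw)(h) \;=\; w(ht^{-1}x)\,\mathbf{1}_{BI}(h), \qquad \bigl((t^{-1}xt)\psi_t(w)\bigr)(h) \;=\; w(ht^{-1}x)\,\mathbf{1}_{BI}(ht^{-1}xt).\]
The hypothesis $x \in tIt^{-1}\cap I$ forces $t^{-1}xt \in I$, and the trivial stability $BI\cdot I = BI$ then yields $\mathbf{1}_{BI}(h) = \mathbf{1}_{BI}(ht^{-1}xt)$, so the two sides agree.

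For the second identity, the analogous calculation produces
\[\psi_{t_1}(\psi_{t_2}(w))(h) \;=\; w(ht_1^{-1}t_2^{-1})\,\mathbf{1}_{BI}(h)\,\mathbf{1}_{BI}(ht_1^{-1}),\]
which, using that $T$ is abelian, must match $\psi_{t_1t_2}(w)(h) = w(h(t_1t_2)^{-1})\,\mathbf{1}_{BI}(h)$. The arguments of $w$ coincide automatically, so the identity boils down to the key inclusion
\[(BI)\cdot t^{-1} \,\subset\, BI \qquad\text{for all } t\in T^{-},\]
which guarantees that $\mathbf{1}_{BI}(ht_1^{-1})$ is redundant whenever $\mathbf{1}_{BI}(h)=1$.

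The main technical input, and the only step that is not pure bookkeeping, is this last inclusion. I would prove it by passing to inverses, which is equivalent to showing $t\cdot IB \subset IB$: for $g = ib \in IB$, applying the Iwahori decomposition $i = n^- n^0 n^+$ with $n^-\in I\cap\ovN$, $n^0\in I\cap T$, $n^+\in I\cap N$ gives
\[tg \;=\; (tn^- t^{-1})\cdot(tn^0 n^+ b),\]
where $tn^- t^{-1}\in I\cap\ovN$ by the very definition of $T^{-}$, and $tn^0 n^+ b\in TNB = B$. Hence $tg \in (I\cap\ovN)\cdot B \subset IB$, as desired. Trying to argue directly on $(BI)\cdot t^{-1}$ is less convenient because $B$ does not normalize $\ovN$, so one cannot freely commute $(I\cap\ovN)$ past $B$; passing to inverses sidesteps this difficulty.
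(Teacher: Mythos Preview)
Your proof is correct and follows essentially the same approach as the paper's. Both arguments reduce the first identity to the $I$-stability of $BI$ (the paper phrases this as ``$\mathrm{Res}_{BI}$ is $I$-equivariant''), and both reduce the second to the inclusion $BI\,t^{-1}\subset BI$ for $t\in T^{-}$ (the paper states the equivalent form $BIt_2\supset BI$ without proof, whereas you supply the Iwahori-decomposition argument via inverses). One minor remark: when you invoke ``the very definition of $T^{-}$'' to get $t(I\cap\ovN)t^{-1}\subset I\cap\ovN$, the paper's definition literally uses $K\cap\ovN$ rather than $I\cap\ovN$, but the Iwahori version follows immediately from the root-group description and is used freely throughout the paper.
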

\begin{proof}
    The map $\mathrm{Res}_{BI}$ is $I$-equivariant. Hence $\psi_t(xw)=\mathrm{Res}_{BI}(t^{-1}xw)=\mathrm{Res}_{BI}(t^{-1}xtt^{-1}w)=(t^{-1}xt)\mathrm{Res}_{BI}(t^{-1}w)=(t^{-1}xt)\psi_t(w)$. And $\psi_{t_2}\psi_{t_1}f=\psi_{t_2}(f(-t_1^{-1})\mathbf{1}_{BI})=f(-t_2^{-1}t_1^{-1})\mathbf{1}_{BIt_2}\mathbf{1}_{BI}=f(-t_2^{-1}t_1^{-1})\mathbf{1}_{BI}$ since $BIt_2\supset BI$.
\end{proof}

By the Frobenius reciprocity, there is a $G$-map 
\begin{equation}\label{equaitionKSresolutionthemap}
    \Phi_{\cW}: \cind_{I}^G\cW\rightarrow \Ind_B^GU: [g,w]\mapsto g.w.
\end{equation}

\begin{prop}\label{propositionKSresolutioninjection}
    For $t_1,t_2\in T^{-}$, $U_{t_1t_2}=U_{t_1}U_{t_2}$ making $\cind_{I}^G\cW$ an $\cH$-module. Moreover, the map (\ref{equaitionKSresolutionthemap}) factors through an injection
    \[ \cind_{I}^G\cW\otimes_{\cH}\cH/\frm\hookrightarrow \Ind_B^GU.\]
\end{prop}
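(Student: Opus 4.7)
The first statement $U_{t_1}U_{t_2}=U_{t_1t_2}$ is immediate from Lemma \ref{lemmacommutativeHecke} applied to the semigroup relation $\psi_{t_1}\psi_{t_2}=\psi_{t_1t_2}$ established in Lemma \ref{lemmapsit}, endowing $\cind_I^G\cW$ with a well-defined $\cH$-module structure.

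To show that $\Phi_\cW$ factors through $\cind_I^G\cW\otimes_\cH\cH/\frm$, observe that since both $\Phi_\cW$ and each $U_t$ are $G$-equivariant, it suffices to verify $\Phi_\cW(U_t[1,w])=\Phi_\cW([1,w])=w$ for each $t\in T^{-}/T_0$ and $w\in\cW$. Using (\ref{equationUtaction}) together with Definition \ref{definitinpsit}, the right-translation action, and the elementary identity $(x^{-1}w)(hx)=w(h)$, a direct computation at $h\in G$ gives $\psi_t(x^{-1}w)(hxt)=w(h)\mathbf{1}_{BI}(hxt)$ and hence
\[
\Phi_\cW(U_t[1,w])(h)\;=\;w(h)\cdot\#\{x\in(I\cap\ovN)/t(I\cap\ovN)t^{-1}\mid hxt\in BI\}.
\]
Since $\mathrm{supp}(w)\subseteq BI$, it remains to show the count equals $1$ for $h\in BI$, and by left-$B$-invariance of $BI$ we may assume $h\in I$. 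Combining the Iwahori decomposition $I=(I\cap\ovN)(I\cap B)$, the inclusion $t\in T\subseteq B$, and the antidominance $t(I\cap\ovN)t^{-1}\subseteq I\cap\ovN$, the uniqueness of factorization in the open Bruhat cell $B\cdot\ovN$ forces the unique valid representative to be the trivial one, as required. Consequently $\Phi_\cW$ annihilates $\sum_i(U_{t_i}-1)\cdot\cind_I^G\cW$ and descends to the quotient.

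For the injectivity of the induced map $\overline{\Phi}_\cW\colon\cind_I^G\cW\otimes_\cH\cH/\frm\to\Ind_B^GU$, we apply Corollary \ref{corinjectiontwoW} to the inclusion $\cW\hookrightarrow\Ind_B^GU(BI)$ (which is $IT^+I$-equivariant by the hypothesis $\mathrm{Res}_{BI}(t^{-1}\cW)\subseteq\cW$) to reduce to the universal case $\cW=\Ind_B^GU(BI)$. In this case each $\Phi_\cW([g,w])$ is supported on $BI\cdot g^{-1}$, so the Bruhat--Iwahori decomposition $G=\bigsqcup_{\sigma\in W}B\sigma I$ induces a compatible stratification of $\Ind_B^GU$ by support; exploiting it, injectivity reduces to one double-coset stratum at a time. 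Within a stratum, we invoke the decomposition $G=\bigsqcup_{t\in T/T_0}KtI$ and restrict to the antidominant subspace supported on $G^{-}=KT^{-}IZ$, on which the Hecke operators act monotonically along the filtration by $\bbN^{|\Delta|}$-levels of $T^{-}/T_0\cong\bbN^{|\Delta|}\oplus\bbZ^{d-|\Delta|}$. The recursive argument already used in the proof of Proposition \ref{propositionKSresolutionKoszulregular} (identical to \cite[Thm.~2.6]{kohlhaase2012homological}) then identifies the kernel of $\Phi_\cW$ with $\sum_i(U_{t_i}-1)\cdot\cind_I^G\cW$, giving the desired injectivity. The main technical obstacle is precisely the careful matching of the support stratification with the $\cH$-action; this matching relies crucially on $t_i\in T^{-}\subseteq B$ together with the contraction property of antidominant elements, which ensures that the Hecke operators stay compatible with the $B$-left-support filtration modulo the kernel.
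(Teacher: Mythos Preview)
Your treatment of the commutativity $U_{t_1}U_{t_2}=U_{t_1t_2}$ and of the factorization $\Phi_\cW\circ(U_t-1)=0$ is correct and essentially identical to the paper's. The reduction to the universal case $\cW=\Ind_B^GU(BI)$ via Corollary \ref{corinjectiontwoW} is a valid simplification that the paper does not make explicitly.

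The injectivity argument, however, has a genuine gap. You claim that ``the recursive argument already used in the proof of Proposition \ref{propositionKSresolutionKoszulregular}'' identifies $\ker\Phi_\cW$ with $\sum_i(U_{t_i}-1)\cind_I^G\cW$, but that argument proves only Koszul regularity of the sequence $(U_{t_i}-1)$ on $\cind_I^G\cW$; it never refers to the map $\Phi_\cW$ and cannot by itself say anything about its kernel. Likewise, the Bruhat--Iwahori stratification $G=\bigsqcup_{\sigma\in W}B\sigma I$ you invoke does not separate the contributions of different summands $[g,w]$: for $g$ running over a single $Kt^nI$-coset the supports $BIg^{-1}$ all lie in the same Bruhat cell, so no finite stratification by $W$ helps.

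What is missing is the key geometric input \cite[Prop.~1.2]{kohlhaase2012homological}: for $t\in T^-$ and $k,k'\in K$, if $ktIB\cap k'tIB\neq\emptyset$ then $ktI=k'tI$. The paper first uses the Hecke action (as in Proposition \ref{propositionKSresolutionKoszulregular}) to reduce modulo $\cV=\sum_\alpha(U_{t_\alpha}-1)\cind_{IZ}^G\cW$ to elements $f=\sum_j[k_jt^n,w_j]$ supported on a single double coset $Kt^nIZ$. Then, and only then, \cite[Prop.~1.2]{kohlhaase2012homological} guarantees that the functions $k_jt^n.w_j\in\Ind_B^GU$ have pairwise disjoint supports $BI(k_jt^n)^{-1}$, so $\Phi_\cW(f)=0$ forces each $w_j=0$ and hence $f=0$. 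Without this disjointness you cannot conclude that $f$ vanishes in the quotient; your final paragraph acknowledges a ``main technical obstacle'' but does not supply the tool that resolves it.
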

\begin{proof}
    The first statement is Lemma \ref{lemmacommutativeHecke} and Lemma \ref{lemmapsit}. The proof for the injection is the same as for \cite[Prop. 2.4]{kohlhaase2012homological}. We first recall the key input \cite[Prop. 1.2]{kohlhaase2012homological}: suppose that $t\in T^{-},k,k'\in K$ such that $ktIB\cap k'tIB\neq \emptyset$, then $ktIB=k'tIB, ktI=k'tI$ and $kI=k'I$ (the proof of \textit{loc. cit.}, which cites \cite[\S4, Prop. 7]{schneider1991cohomology}, works for the general split reductive group $G$).
    
    We check that for $t\in T^{-}$  
    \begin{align*}
        \Phi_{\cW}(U_t[g,w])=\sum_{xtI\in ItI/I}gxt.\psi_t(x^{-1}w)=\sum_{xtI\in ItI/I}gxt.(t^{-1}x^{-1}w\cdot \mathbf{1}_{BI})\\
        =\sum_{xtI\in ItI/I}gxtt^{-1}x^{-1}(w\mathbf{1}_{BIt^{-1}x^{-1}})=g.w=\Phi_{\cW}([g,w])
    \end{align*}
    since $BI=\coprod_{It^{-1}x^{-1}\in I\backslash It^{-1}I}BIt^{-1}x^{-1}$ (by \cite[Prop. 1.2]{kohlhaase2012homological} and $I/(I\cap tIt^{-1})=(I\cap \ovN)/t(I\cap \ovN)t^{-1}$). 
    
    By definition, we have $U_{z}[g,w]=[gz,z^{-1}.w]$ for $z\in Z,w\in \cW$. Hence $\Psi_{\cW}$ factors through $\cind_{I}^G\cW/(U_{z_1}-1,\cdots,U_{z_{d-|\Delta|}}-1)\simeq \cind_{IZ}^G\cW$ where the action of the center $Z$ on $\cW$ comes from the embedding $\cW\subset \Ind_{B}^GU$. The quotient $\cind_{IZ}^G\cW$ inherits the actions of $U_{t_{\alpha}},\alpha\in\Delta$. We only need to show that the kernel of $\Phi'_{\cW}:\cind_{IZ}^G\cW\rightarrow \Ind_B^GU$ lies in the subspace $\cV$ spanned by $(U_{t_{\alpha}}-1)[g,w]$ for $g\in G,w\in\cW$ and $\alpha\in\Delta$. The proof goes as for \cite[Prop. 2.4]{kohlhaase2012homological}, and we give a sketch in our notation. By \cite[Lem. 1.4]{kohlhaase2012homological} (recalled in the proof of Proposition \ref{propositionKSresolutionKoszulregular}), $G=\coprod_{t\in T/T_0Z}KtIZ$ and for any $g\in G$, there exists $t\in T^{-}$ such that $gItI\subset KT^{-}I$. Assume that $f\in \ker(\Phi'_{\cW})$ and we prove that $f\in\cV$. Since $U_t([g,w])\subset [gItIZ,\cW]$ and $\cV\subset \ker(\Phi'_{\cW})$, we may assume that $f\in [KT^{-}IZ,\cW]$. Since $Kt_1I\cdot It_2I\subset Kt_1t_2I$, up to add elements in $\cV$, we may assume that $f\in [Kt^nIZ,\cW]$ for some $n$ large enough and $t=\prod_{\alpha}t_{\alpha}$. Write $Kt^nIZ=\coprod_{j\in J}k_jt^nIZ$ and $f=\sum_j[k_jt^n,w_j]$. Then $ \Phi'_{\cW}(f)=\sum_{j}k_jt^n.w_j\in \Ind_B^GU$. The support of each $k_jt^n.w_j$ as a function on $G$ is contained in $BI(k_jt^n)^{-1}=(k_jt^nIB)^{-1}$ which are pairwisely disjoint for different $j$ by \cite[Prop. 1.2]{kohlhaase2012homological} recalled above. Hence $\Phi'_{\cW}(f)=0$ implies that $w_j=0$ for all $j$. We conclude that $f=0$.
\end{proof}

\subsection{Change the levels} \label{subsectionchangethegroups}
For the actual application to the duality (Theorem \ref{theoremdualKS}), we need the flexibility to shrink (or change) the Iwahori subgroup $I$ to other open compact subgroups of $G$ for the compact inductions in the Kohlhaase-Schraen resolutions. In this subsection, $I$ denotes an open compact subgroup such that $K\cap T\subset I\subset K$ admitting the Iwahori decomposition $I=(I\cap N)(I\cap T)(I\cap \ovN)$. For example, we can take $I$ to be the opposite Iwahori subgroup. We still let $\cH=E[T^{-}/(T\cap K)]\supset\frm=(U_{t_i}-1)_{i=1,\cdots,d}$. The discussions on Hecke operators at the beginning of \S\ref{subsectionKoszulcomplex}, including Lemma \ref{lemmacommutativeHecke}, apply for the compact induction $\cind_I^G\cW$ which will be an $\cH$-module if $\cW$ is an $IT^+I$-module equipped with the corresponding $\psi_t$-actions.
\begin{rem}
    One can also shrink the torus $T\cap I$: take $I'$ such that $I'\cap T\subsetneq I\cap T$ and replace $\cH$ by the algebra $E[T^{-}/(I'\cap T)]$. See \cite{ollivier2014resolutions} for the example when $I'$ is the pro-$p$ Iwahori subgroup.
\end{rem}
We first take a subgroup $I'$ such that $T\cap K\subset I\cap \ovB \subset I'\subset I$ with the Iwahori decomposition such that 
\begin{equation}\label{equationchangethegroupassumption}
    I'/(tI't^{-1}\cap I')=I/(tIt^{-1}\cap I)=(I\cap \ovN)/t(I\cap \ovN)t^{-1}, \forall t\in T^{-}.
\end{equation}
This assumption implies that $BI'=BI=B(I\cap \ovN)$. 
\begin{example}
    We can and will take $I'=(I\cap \ovN)(I\cap T)(\ovI\cap N)=\ovI \cap I$ where $I$ is the Iwahori subgroup and $\ovI$ is the Iwahori subgroup for the opposite Borel $\ovB$. 
 \end{example}

Suppose that $\cW$ is a representation of $IT^+I$ with operators $\psi_t,t\in T^{-}$ satisfying the condition of Lemma \ref{lemmacommutativeHecke}. We consider $\cW':=\cW|_{I'}$. Then operators $\psi_t$ still act on $\cW'$. We can similarly define Hecke operators $U_{t}$ acting on $\cind_{I'}^G\cW'$ as in (\ref{equationUtaction}) for $t\in T^{-}$ making $\cind_{I'}^G\cW'$ an $\cH$-module. We get a Koszul complex $\wedge^{\bullet} \cH^{d}\otimes_{\cH} \cind_{I'}^G\cW'$.  

Conversely, suppose that $\cW'$ is a representation of $I'T^+I'$ with $\psi_t$-actions   and $\cW=\cind_{I'}^{I}\cW'$. 
\begin{lemma}\label{lemmapsitcompactinduction}
    There are $\psi_t$-actions on $\cind_{I'}^I\cW'$ satisfying the condition of Lemma \ref{lemmacommutativeHecke} such that 
    \[\psi_t[g,w]=[t^{-1}gt,\psi_t(w)]\] for $[g,w]\in \cind_{I'}^I\cW', g\in I\cap N$ and $(t^{-1}xt)\psi_{t}(-)=\psi_{t}(x-)$ for $x\in I\cap tIt^{-1}$. 
\end{lemma}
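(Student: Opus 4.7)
The decomposition $I = I' \cdot (I \cap N)$ with $I' \cap (I \cap N) = \ovI \cap N$ (immediate from the Iwahori decompositions of $I$ and $I'$) gives a system of coset representatives for $I/I'$ inside $I \cap N$, so every element of $\cind_{I'}^I \cW'$ is a finite sum of terms $[g,w]$ with $g \in I \cap N$. The approach is to define $\psi_t$ on these generators by the prescribed formula $\psi_t[g,w] := [t^{-1}gt, \psi_t(w)]$ (well-defined as an element since $t^{-1}(I \cap N)t \subset I \cap N$ for $t \in T^-$), extend by linearity, and verify the three required properties.

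\textit{Well-definedness.} The only relation among such generators is $[gy, w] = [g, yw]$ for $y \in (I \cap N) \cap I' = \ovI \cap N$. I would compute $\psi_t[gy, w] = [(t^{-1}gt)(t^{-1}yt), \psi_t(w)]$ and push $t^{-1}yt$ into the second slot via the shift $[Ai', B] = [A, i'B]$ for $i' \in I'$. This requires $t^{-1}yt \in I'$, which follows from $t^{-1}(\ovI \cap N)t \subset \ovI \cap N \subset I'$ (standard root-space analysis: for $t \in T^-$ one has $\alpha(t)^{-1} \in \cO$ for all positive $\alpha$, so conjugation preserves the coefficient lattice $\varpi \cO$ defining $\ovI \cap N$). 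The resulting identity then reduces to the $\cW'$-relation $\psi_t(yw) = (t^{-1}yt)\psi_t(w)$, which holds because $y \in I' \cap tI't^{-1}$.

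\textit{Twist relation.} For $x \in I \cap tIt^{-1}$ and $f = [g,w]$ with $g \in I \cap N$, first $xg \in I \cap tIt^{-1}$ (since $t^{-1}xgt = (t^{-1}xt)(t^{-1}gt) \in I$). The crucial input is the Iwahori-type decomposition
\[
I \cap tIt^{-1} = (I \cap N)(I \cap T)(t(I \cap \ovN)t^{-1}),
\]
whose $\ovN$-factor is the proper subgroup $t(I \cap \ovN)t^{-1} \subset I \cap \ovN$ because $T^-$-conjugation contracts $I \cap \ovN$. Writing $xg = u_0 \tau_0 \ovu_0$ in this form with $\ovu_0 = t v t^{-1}$ for some $v \in I \cap \ovN$, the factor $j := \tau_0 \ovu_0$ lies in $I' \cap tI't^{-1}$: it is in $I'$ since $(I \cap T), t(I \cap \ovN)t^{-1} \subset I'$, and $t^{-1}jt = \tau_0 v \in (I \cap T)(I \cap \ovN) \subset I'$. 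Using the shift $[xg, w] = [u_0, jw]$, the defining formula, and the $\cW'$-twist relation $\psi_t(jw) = (t^{-1}jt)\psi_t(w)$, I would unwind
\[
\psi_t[xg, w] = [t^{-1}u_0 t, (t^{-1}jt)\psi_t(w)] = [t^{-1}(u_0 j)t, \psi_t(w)] = [t^{-1}xgt, \psi_t(w)],
\]
which matches $(t^{-1}xt)\psi_t[g, w] = [(t^{-1}xt)(t^{-1}gt), \psi_t(w)]$.

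\textit{Multiplicativity and main obstacle.} The identity $\psi_{t_1}\psi_{t_2}[g, w] = [(t_1 t_2)^{-1}g(t_1 t_2), \psi_{t_1 t_2}(w)] = \psi_{t_1 t_2}[g, w]$ is immediate from the defining formula, the commutativity of $T$, and the corresponding relation on $\cW'$. The main technical obstacle is identifying the correct Iwahori-type decomposition of $I \cap tIt^{-1}$ with $\ovN$-part shrunk to $t(I \cap \ovN)t^{-1}$: this is precisely what forces the $I'$-factor $j$ of $xg$ to land inside $tI't^{-1}$, so that the hypothesis on $\cW'$ can be invoked.
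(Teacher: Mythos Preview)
Your proof is correct and follows essentially the same route as the paper. For the twist relation the paper writes $xg = g'i'$ with $g' \in I \cap N$, $i' \in I \cap \ovB$ via the Iwahori decomposition of $I$, and then observes that $i' = (g')^{-1}(xg) \in I \cap tIt^{-1}$ forces $i' \in (I \cap \ovB) \cap tIt^{-1} = t(I \cap \ovB)t^{-1}$; this is exactly your decomposition $I \cap tIt^{-1} = (I\cap N)(I\cap T)(t(I\cap \ovN)t^{-1})$ in slightly different packaging. One small remark: your well-definedness step is phrased for the specific example $I' = I \cap \ovI$ (writing $I' \cap N = \ovI \cap N$), whereas the lemma is stated for the general $I'$ of the subsection; there the required inclusion $t^{-1}(I'\cap N)t \subset I'$ is precisely what assumption~(\ref{equationchangethegroupassumption}) encodes, which is what the paper records as ``since $n \in I' \cap tI't^{-1}$''.
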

\begin{proof}
    Since $\cind_{I'}^I\cW'=\oplus_{g\in I/I'}[g,\cW]$ and $I/I'=(I\cap N)/(I'\cap N)$, $\psi_t$ is already defined by the formula and we check it is well-defined. For $n\in B\cap I'$, $\psi_t[gn,n^{-1}w]=[t^{-1}gnt,\psi_t(n^{-1}w)]=[t^{-1}gt,t^{-1}nt\psi_t(n^{-1}w)]=[t^{-1}gt,\psi_t(w)]$ since $n\in I'\cap tI't^{-1}$. For $x\in I\cap tIt^{-1}$ and $g\in I\cap N\subset I\cap tIt^{-1}$, we can write $xg=g'i'$ for $g'\in I\cap N,i'\in I\cap\ovB=I'\cap \ovB$. Since $g',g'i'=xg\in I\cap tIt^{-1}$, we see $i'\in \ovB\cap I\cap tIt^{-1}=t(I\cap \ovB) t^{-1}$. Then $\psi_t[xg,w]=\psi_t[g',i'.w]=[t^{-1}g't,\psi_t(i'w)]=[t^{-1}g't,t^{-1}i't\psi_t(w)]=[t^{-1}xgt,\psi_t(w)]=t^{-1}xt[t^{-1}gt,\psi_t(w)]=(t^{-1}xt)\psi_{t}([g,w])$.
\end{proof}
\begin{prop}\label{propositionchangethegroups1}
    Let $T\cap K\subset I'\subset I$ be subgroups with the Iwahori decomposition satisfying (\ref{equationchangethegroupassumption}).
    \begin{enumerate}
        \item If $\cW'=\cW|_{I'}$ with the restricted $\psi_t$-actions, then there is an $\cH$-equivariant morphism $\cind_{I'}^G\cW'\rightarrow \cind_{I}^G\cW$ which induces a quasi-isomorphism 
        \[\wedge^{\bullet} \cH^{d}\otimes_{\cH} \cind_{I'}^G\cW'\stackrel{\sim}{\rightarrow} \wedge^{\bullet} \cH^{d}\otimes_{\cH} \cind_{I}^G\cW.\]
        \item If $\cW=\cind_{I'}^I\cW'$ with the induced $\psi_t$-actions given by Lemma \ref{lemmapsitcompactinduction}, then there is an $\cH$-equivariant isomorphism $\cind_{I'}^G\cW'\simeq \cind_{I}^G\cW$ which induces a quasi-isomorphism 
        \[\wedge^{\bullet} \cH^{d}\otimes_{\cH} \cind_{I'}^G\cW'\stackrel{\sim}{\rightarrow} \wedge^{\bullet} \cH^{d}\otimes_{\cH} \cind_{I}^G\cW.\]
    \end{enumerate}
\end{prop}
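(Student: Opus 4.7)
My plan is to dispatch (2) directly by transitivity of compact induction and to reduce (1) to (2) via a short exact sequence, with the main work being a nilpotence calculation for $\psi_t$ on a certain kernel.

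\emph{Part (2).} Transitivity of compact induction gives a canonical $E[G]$-isomorphism $\cind_{I'}^G \cW' \simeq \cind_I^G(\cind_{I'}^I \cW') = \cind_I^G \cW$, under which $[g,w]_{I'} = g.[1,w]_{I'}$ corresponds to $g.[1,[1,w]_{I'}]_I$.  To verify $\cH$-equivariance, since both Hecke actions are $G$-equivariant and $\cind_{I'}^G\cW'$ is generated over $E[G]$ by the elements $[1,w]_{I'}$, it suffices to compare the two $U_t$-actions on such a generator. A direct calculation using (\ref{equationUtaction}), Lemma \ref{lemmapsitcompactinduction}, and the coset identification $I'/(I'\cap tI't^{-1}) = I/(I\cap tIt^{-1}) = (I\cap\ovN)/t(I\cap\ovN)t^{-1}$ coming from (\ref{equationchangethegroupassumption}) shows that both sides send $[1,w]_{I'}$ to $\sum_x [xt,\psi_t(x^{-1}.w)]_{I'}$, with $x$ running over $(I\cap\ovN)/t(I\cap\ovN)t^{-1}$. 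The identification of Koszul complexes is then tautological.

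\emph{Part (1), reduction.} Applying (2) with $\cW|_{I'}$ in place of $\cW'$, I would rewrite $\cind_{I'}^G(\cW|_{I'}) \simeq \cind_I^G \tilde{\cW}$ with $\tilde{\cW}:= \cind_{I'}^I(\cW|_{I'})$ equipped with the $\psi_t$-action of Lemma \ref{lemmapsitcompactinduction}. The summation map $\phi:\tilde{\cW}\twoheadrightarrow \cW$, $[g,w]_{I'}\mapsto g.w$, is plainly $I$-linear and surjective; it is also $\psi_t$-equivariant, as one checks on a generator $[g,w]_{I'}$ with $g\in I\cap N$ (to which every generator reduces via the Iwahori decomposition $g=g_Ng_{\ovB}$, $g_{\ovB}\in I\cap\ovB\subset I'$) by combining Lemma \ref{lemmapsitcompactinduction} with the identity $\psi_t(g.w)=(t^{-1}gt)\psi_t(w)$ valid for $g\in I\cap N\subset I\cap tIt^{-1}$. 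Setting $K=\ker\phi$, we obtain a short exact sequence
\[
0\to \cind_I^G K\to \cind_I^G\tilde{\cW}\to \cind_I^G\cW\to 0
\]
of $\cH$-equivariant $G$-modules. Proposition \ref{propositionKSresolutionKoszulregular} concentrates each of the three Koszul complexes in degree zero, so the map of complexes is a quasi-isomorphism iff the induced surjection $\cind_I^G\tilde{\cW}/\frm\cdot\cind_I^G\tilde{\cW}\twoheadrightarrow \cind_I^G\cW/\frm\cdot\cind_I^G\cW$ is an isomorphism; the Tor long exact sequence for $-\otimes^L_{\cH}\cH/\frm$ reduces the statement to the single vanishing $\cind_I^G K/\frm\cdot\cind_I^G K=0$.

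\emph{The crux.} This vanishing is the main obstacle. The module $K$ is $E$-spanned by the elements $[g,w]_{I'}-[1,g.w]_{I'}$, and after absorbing the $\ovB$-part of $g$ we may assume $g\in I\cap N$. I would choose $t\in T^{-}$ deep enough that $t^{-1}(I\cap N)t\subset I'\cap N$ (for instance $t=\prod_\alpha t_\alpha^{n_\alpha}$ with every $n_\alpha$ sufficiently large). Lemma \ref{lemmapsitcompactinduction} then gives
\[
\psi_t([g,w]_{I'})=[t^{-1}gt,\psi_t(w)]_{I'}=[1,(t^{-1}gt)\psi_t(w)]_{I'},
\]
while applying the same lemma at the trivial coset together with the equivariance relation for $\cW$ yields
\[
\psi_t([1,g.w]_{I'})=[1,\psi_t(g.w)]_{I'}=[1,(t^{-1}gt)\psi_t(w)]_{I'}.
\]
Hence $\psi_tK=0$, and by (\ref{equationUtaction}) the operator $U_t$ annihilates $\cind_I^GK$. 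On the other hand, in the quotient $\cind_I^GK/\frm\cdot\cind_I^GK$ each $U_{t_i}$ acts as $1$, so $U_t=\prod_\alpha U_{t_\alpha}^{n_\alpha}$ also acts as $1$ there. The simultaneous identities $U_t=0$ and $U_t=1$ force the quotient to vanish, completing the proof. The entire difficulty is thus concentrated in the nilpotence statement $\psi_tK=0$ for deep $t$; once established, the remainder is a formal long-exact-sequence argument.
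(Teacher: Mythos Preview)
Your proof is correct and follows essentially the same strategy as the paper: in both cases the heart of (1) is showing that for a sufficiently deep $t\in T^-$ (namely $t^{-1}(I\cap N)t\subset I'$) the operator $U_t$ kills the kernel of the comparison map, after which the vanishing of $\cH/\frm$-coinvariants is immediate. The only real difference is organizational: you prove (2) first and then factor (1) through it, working with $\psi_t$ on $K=\ker(\cind_{I'}^I\cW|_{I'}\twoheadrightarrow\cW)$, whereas the paper proves (1) directly by computing $U_t$ on $\ker(\mathrm{pr})\subset\cind_{I'}^G\cW'$ and concludes via the localization $\cH'=E[U_{t_i}^{\pm}]$; under the transitivity isomorphism of (2) these kernels and computations match exactly.
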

\begin{proof}
    (1) The map $\cind_{I'}^G\cW'\rightarrow \cind_{I}^G\cW$, denoted by $\mathrm{pr}$, is induced by the Frobenius reciprocity applying to the $I'$-inclusion $\cW'=[I,\cW]|_{I'}\subset (\cind_{I}^G\cW)|_{I'}$. It is also induced by $\cind_{I'}^I\cW'\rightarrow \cW:\sum_{i\in I/I'}[i,w_i]\mapsto \sum_{i\in I/I'} iw_i$. Then $\mathrm{pr}([g,[i,w]])=[g,iw]=[gi,w]$ for $g\in G,[i,w]\in\cind_{I'}^I\cW'$. We see $\mathrm{pr}([g,w])=[g,w]$ for $g\in G, w\in\cW'$.
 
    Let $t\in T^{-}$, $w\in \cW'$ and $[i,w]\in\cind_{I'}^I\cW'\subset \cind_{I'}^G\cW'$. Then by definition $U_{t}[i,w]=\sum_{a\in (I\cap \ovN)/t(I\cap \ovN)t^{-1}}[iat, \psi_{t}(a^{-1}w)]$. While for $[1,iw]\in \cind_{I}^G\cW$ and $i\in I$, $U_{t}[1,iw]=U_{t}[i,w]=\sum_{a\in (I\cap \ovN)/t(I\cap \ovN)t^{-1}}[iat, \psi_{t}(a^{-1}w)]$. By the explicit formula, the morphism $\mathrm{pr}$ commutes with the operator $U_t$. The kernel of $\mathrm{pr}$ is spanned by $G$-translations of elements $\sum_{i\in I/I'}[i,w_i]$ such that $\sum_{i\in I/I'}iw_i=0$. If $i\in I$ and $a\in I\cap \ovN$, write (uniquely) $ia=a_ix_{a,i}$ such that $a_i\in I\cap \ovN$ and $x_{a,i}\in I\cap B\subset I\cap tIt^{-1}$. Then $t^{-1}x_{a,i}t\psi_t(a^{-1}w_i)=\psi_t(x_{a,i}a^{-1}w_i)=\psi_t(a_i^{-1}iw_i)$. Suppose that $t$ is chosen such that $t^{-1}(I\cap B)t\subset I'$ (this is possible since $I\cap T=I'\cap T$ by our assumption). In this case we have 
    \[U_t(x)=\sum_{i\in I/I'}\sum_{a\in (I\cap \ovN)/t(I\cap \ovN)t^{-1}}[iat, \psi_{t}(a^{-1}w_i)]=\sum_{i,a}[a_it,\psi_t(a_i^{-1}iw_i)]=\sum_{a'}\sum_{i}[a't,\psi_t(a'^{-1}iw_i)]=0\] 
    for $x=\sum_{i\in I/I'}[i,w_i]$ in the kernel of $\mathrm{pr}$ so that $\sum_{i}iw_i=0$. For the third equality we used that when $a$ ranges over $(I\cap \ovN)/t(I\cap \ovN)t^{-1}=I/(I\cap tIt^{-1})$, $a_i(I\cap tIt^{-1})=i^{-1}a (I\cap tIt^{-1})\in I/(I\cap tIt^{-1})$ ranges over the same set.
    
    Let $\cH'=E[U_{t_i}^{\pm},i=1,\cdots,d]$. The fact that $U_t(x)=0$ for $x\in\ker(\mathrm{pr})$ implies that $\mathrm{pr}$ induces an $\cH'$-isomorphism: $ \cind_{I'}^G\cW'\otimes_{\cH}\cH'\simeq \cind_{I}^G\cW\otimes_{\cH}\cH'$. The map $\mathrm{pr}$ also induces a map of the Koszul complexes 
        \[  \wedge^{\bullet} \cH^{d}\otimes_{\cH} \cind_{I'}^G\cW'\rightarrow \wedge^{\bullet} \cH^{d}\otimes_{\cH} \cind_{I}^G\cW.\]
    The induced maps of homologies are $\Tor_{\bullet}^{\cH}(\cH/\frm, \cind_{I'}^G\cW')\rightarrow \Tor_{\bullet}^{\cH}(\cH/\frm, \cind_{I}^G\cW)$ which are isomorphisms since for any $\cH$-module $M$, $\Tor_{\bullet}^{\cH}(\cH/\frm, M)\simeq \Tor_{\bullet}^{\cH'}(\cH'/\frm, M\otimes_{\cH}\cH')$ \cite[\href{https://stacks.math.columbia.edu/tag/00M8}{Tag 00M8}]{stacks-project}).

    (2) Since $\cind_{I'}^G\cW'=\cind_{I}^G\cW$, we only need to check that the Hecke actions coincide. Take $t\in T^{-}$. Suppose that $[g,w]\in \cind_{I'}^G\cW'$ and $g\in I$. For $x\in I\cap\ovN=I'\cap\ovN$, we can write $gx=x'g'$ where $x'\in  I\cap \ovN$ and $g'\in I\cap B\subset  I\cap tIt^{-1}$. Then $[gxt,\psi_t(x^{-1}w)]=[x'g't,\psi_t(x^{-1}w)]=[x't,[t^{-1}g't,\psi_t(x^{-1}w)]]=[x't,\psi_t([(x')^{-1}g,w])]$. Note that when $x$ ranges over $I\cap\ovN/t(I\cap\ovN)t^{-1}=I/(I\cap tIt^{-1})$, $x'$ ranges over the same set. Hence
    \[U_t[g,w]=\sum_{x\in I'/(I'\cap tI't^{-1})}[gxt,\psi_t(x^{-1}w)]=\sum_{x\in I/(I\cap t I t^{-1})}[xt,\psi_t([x^{-1}g,w])].\]
    The result follows by definitions.
\end{proof}

\begin{cor}\label{corollarychangethegroupinjection}
    Let $I$ be the Iwahori subgroup and $\cW'$ be an $I'$-subrepresentation of $\Ind_B^GU (BI)$ (in the notation of Proposition \ref{propositionKSresolutioninjection}) that is stable under the operators $\psi_t:f\mapsto f(-t^{-1})\cdot \mathbf{1}_{BI}$ for all $t\in T^{-}$. Then the induced map $\cind_{I'}^G\cW'\rightarrow \Ind_B^GU$ factors through an injection
    \[ \cind_{I'}^G\cW'\otimes_{\cH}\cH/\frm\hookrightarrow \Ind_B^GU.\]
\end{cor}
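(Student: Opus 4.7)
Plan.

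The strategy is to embed $\cW'$ into a suitable $I$-invariant subspace of $\Ind_B^G U(BI)$ and reduce to Proposition \ref{propositionKSresolutioninjection}. First I would form
\[
\cW_I := \sum_{i \in I/I'} i\cdot \cW' \;\subset\; \Ind_B^G U(BI),
\]
the $I$-subrepresentation generated by $\cW'$, and verify that $\cW_I$ is stable under each $\psi_t$ for $t \in T^-$. The key point is that assumption (\ref{equationchangethegroupassumption}) gives $I/I' = (I\cap N)/(I'\cap N)$, so representatives of $I/I'$ may be chosen in $I\cap N$; since $t^{-1}(I\cap N)t \subset I\cap N \subset I\cap tIt^{-1}$ for $t \in T^-$, Lemma \ref{lemmapsit} yields $\psi_t(i.w') = (t^{-1}it)\cdot\psi_t(w') \in I\cdot\cW' = \cW_I$ for $w'\in\cW'$. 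Proposition \ref{propositionKSresolutioninjection} applied to $\cW_I$ then provides an injection
\[
\cind_I^G \cW_I \otimes_\cH \cH/\frm \;\hookrightarrow\; \Ind_B^G U.
\]

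Next, Proposition \ref{propositionchangethegroups1}(1), applied to the $I$-representation $\cW_I$ with restriction $(\cW_I)|_{I'}$, furnishes an $\cH$-equivariant quasi-isomorphism of Koszul complexes, hence in particular an isomorphism
\[
\cind_{I'}^G (\cW_I)|_{I'} \otimes_\cH \cH/\frm \;\xrightarrow{\sim}\; \cind_I^G \cW_I \otimes_\cH \cH/\frm.
\]

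Finally, I would establish that the inclusion of $I'T^+I'$-modules $\cW' \hookrightarrow (\cW_I)|_{I'}$ induces an injection
\[
\cind_{I'}^G \cW' \otimes_\cH \cH/\frm \;\hookrightarrow\; \cind_{I'}^G (\cW_I)|_{I'} \otimes_\cH \cH/\frm,
\]
which amounts to $\Tor_1^\cH(\cH/\frm,\cind_{I'}^G Q)=0$ for $Q := (\cW_I)|_{I'}/\cW'$. To obtain this, I would apply Proposition \ref{propositionchangethegroups1}(2) to $Q$ to get an $\cH$-equivariant quasi-isomorphism $\cind_{I'}^G Q \simeq \cind_I^G(\cind_{I'}^I Q)$ of Koszul complexes, where $\cind_{I'}^I Q$ is an $IT^+I$-module via Lemma \ref{lemmapsitcompactinduction}; then Proposition \ref{propositionKSresolutionKoszulregular} applied to $\cind_I^G(\cind_{I'}^I Q)$ gives vanishing of all positive-degree Tor, and hence of $\Tor_1^\cH(\cH/\frm,\cind_{I'}^G Q)$.

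Composing the three displays produces the desired injection, and checking on generators $[g,w'] \mapsto g\cdot w'$ identifies it with the Frobenius map induced by $\cind_{I'}^G\cW' \to \Ind_B^G U$, proving the corollary. The main technical point is the $\psi_t$-stability of $\cW_I$ in the first step, which crucially uses (\ref{equationchangethegroupassumption}) to choose $I/I'$-representatives inside $I \cap N$; once that is in place, the remainder is formal bookkeeping with the results already established.
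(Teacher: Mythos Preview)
Your proof is correct and follows essentially the same strategy as the paper's. The only difference is cosmetic: the paper uses $\cW_0 = \Ind_B^G U(BI)$ as the intermediate $I$-representation (for which $\psi_t$-stability is immediate from Definition \ref{definitinpsit}), whereas you use the smaller $\cW_I = \sum_{i\in I/I'} i\cdot\cW'$ and supply the extra verification of $\psi_t$-stability via representatives in $I\cap N$. Both arguments then invoke Proposition \ref{propositionKSresolutioninjection}, Proposition \ref{propositionchangethegroups1}(1), and the $\Tor_1$-vanishing from Proposition \ref{propositionKSresolutionKoszulregular} (via Proposition \ref{propositionchangethegroups1}(2)) in the same way; the paper's choice simply makes the $\psi_t$-stability step unnecessary.
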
 
\begin{proof}
    Write $\cW_0=\Ind_B^GU (BI)$. Let 
    \[\cW'':=\coker(\cind_{I'}^I\cW'\rightarrow \cind_{I'}^I\cW_0|_{I'})=\cind_{I'}^I\coker(\cW'\rightarrow \cW_0|_{I'})\] 
    with the induced $\psi_t$-actions. We have a short exact sequence of $\cH$-modules
    \[ 0\rightarrow \cind_{I'}^G\cW'\rightarrow \cind_{I'}^G\cW_0|_{I'}\rightarrow \cind_{I}^G\cW''\rightarrow 0.\]
    By (the proof of) Proposition \ref{propositionKSresolutionKoszulregular}, $\mathrm{Tor}^{\cH}_1(\cH/\frm,\cind_{I}^G\cW'')=0$. Hence by Proposition \ref{propositionKSresolutioninjection} and Proposition \ref{propositionchangethegroups1}, the composite
    \[ \cind_{I'}^G\cW'\otimes_{\cH}\cH/\frm\hookrightarrow \cind_{I'}^G\cW_0|_{I'}\otimes_{\cH}\cH/\frm\simeq \cind_{I}^G\cW_0\otimes_{\cH}\cH/\frm\hookrightarrow \Ind_B^GU\]
    is an injection.
\end{proof}

We will also need the case when $T\cap K\subset I'\subset I$, $I'\cap \ovN\subsetneq I\cap \ovN$ while $I'\cap N=I\cap N$. Let $\cW$ be an $IT^+I$-representation as before and let $\cW'=\cW|_{I'T^+I'}$ with the restriction of $\psi_t$-actions satisfying the condition of Lemma \ref{lemmacommutativeHecke}. 
\begin{prop}\label{propositionchangethegroups2}
    In the above situation, there is an $\cH$-equivariant morphism $\cind_I^G\cW\rightarrow \cind_{I'}^G\cW'$ which induces a quasi-isomorphism 
    \[\wedge^{\bullet} \cH^{d}\otimes_{\cH} \cind_{I}^G\cW\stackrel{\sim}{\rightarrow} \wedge^{\bullet} \cH^{d}\otimes_{\cH} \cind_{I'}^G\cW'.\]
\end{prop}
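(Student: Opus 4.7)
The natural candidate for the map is the inclusion $\iota\colon \cind_I^G\cW\hookrightarrow \cind_{I'}^G\cW'$ obtained by viewing an $I$-equivariant function on $G$ as an $I'$-equivariant function. Choosing coset representatives $J\subset I\cap\ovN$ for $I'\backslash I\simeq (I'\cap\ovN)\backslash (I\cap\ovN)$ (which is possible because $I\cap N=I'\cap N$ implies $I=I'\cdot(I\cap\ovN)$), one has $[g,w]_I=\sum_{z\in J}[gz^{-1},zw]_{I'}$. The first step is to verify that $\iota$ is $\cH$-equivariant: expanding both $\iota\circ U_t^I$ and $U_t^{I'}\circ\iota$ and re-indexing by $b\in (I\cap\ovN)/t(I'\cap\ovN)t^{-1}$, the compatibility $\psi_t(xv)=(t^{-1}xt)\psi_t(v)$ applied with $x=tz^{-1}t^{-1}\in t(I\cap\ovN)t^{-1}\subset I\cap tIt^{-1}$ shows that both sides agree with $\sum_b[gbt,\psi_t(b^{-1}w)]_{I'}$.

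Next I choose $t\in T^-$ sufficiently antidominant so that $t(I\cap\ovN)t^{-1}\subset I'\cap\ovN$ (for instance $t=\prod_{\alpha\in\Delta}t_\alpha^N$ for $N$ large, since conjugation by $T^-$ contracts each negative root subgroup), and I show that $U_t$ maps $\cind_{I'}^G\cW'$ into $\iota(\cind_I^G\cW)$. In the sum $U_t[g,w]_{I'}=\sum_{a'}[ga't,\psi_t(a'^{-1}w)]_{I'}$ over $a'\in(I'\cap\ovN)/t(I'\cap\ovN)t^{-1}$, the cosets $a'$ are grouped by their class $a'_0$ in $(I'\cap\ovN)/t(I\cap\ovN)t^{-1}$; writing $a'=a'_0\cdot tj^{-1}t^{-1}$ for $j\in J$ and applying the $\psi_t$-identity, the sub-sum for each fixed $a'_0$ becomes $\sum_{j\in J}[ga'_0 tj^{-1},j\psi_t(a'^{-1}_0 w)]_{I'}=\iota\bigl([ga'_0 t,\psi_t(a'^{-1}_0 w)]_I\bigr)$, which lies in the image of $\iota$.

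It follows that $U_t$ acts as zero on $C:=\coker(\iota)$. Since $U_t$ is a monomial in the generators $U_{t_i}$ of $\cH$ (by Lemma~\ref{lemmacommutativeHecke}), one has $U_t\equiv 1\pmod{\frm}$, so $U_t$ acts as $1$ on $\cH/\frm$. Commutativity of $\cH$ forces the two induced actions of $U_t$ on $\Tor_q^{\cH}(\cH/\frm,C)$ to coincide, yielding $\Tor_q^{\cH}(\cH/\frm,C)=0$ for all $q\ge 0$, and hence $\wedge^\bullet\cH^d\otimes_{\cH}C$ is acyclic. The long exact sequence of $\Tor$'s attached to $0\to\cind_I^G\cW\to\cind_{I'}^G\cW'\to C\to 0$ then produces the claimed quasi-isomorphism of Koszul complexes. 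The main technical obstacle is the coset matching in the second paragraph; the hypothesis $t(I\cap\ovN)t^{-1}\subset I'\cap\ovN$ is essential there, as it is what makes each sub-sum indexed by $j\in J$ reassemble precisely into an element of $\iota(\cind_I^G\cW)$.
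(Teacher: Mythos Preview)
Your proof is correct and follows essentially the same strategy as the paper: the map you use is (up to the harmless scalar $|I/I'|$) exactly the paper's averaging injection $\cW\hookrightarrow\Ind_{I'}^I\cW|_{I'}$, your reindexing over $(I\cap\ovN)/t(I'\cap\ovN)t^{-1}$ is the same computation the paper carries out to check $\cH$-equivariance, and your argument that $U_t$ kills $\coker(\iota)$ for $t$ with $t(I\cap\ovN)t^{-1}\subset I'\cap\ovN$ matches the paper's. Your concluding step (the action of $U_t$ on $\Tor_q^{\cH}(\cH/\frm,C)$ is both $0$ and $1$) is just a rephrasing of the paper's appeal to the localization argument in Proposition~\ref{propositionchangethegroups1}(1).
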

\begin{proof}
    There is a natural $I$-injection 
    \[\cW\hookrightarrow \Ind_{I'}^I\cW|_{I'}\simeq \cind_{I'}^I\cW|_{I'}:w\mapsto \frac{1}{|I/I'|}\sum_{x\in I/I'}[x,x^{-1}w].\] 
    Write $\iota$ for the induced map $\cind_I^G\cW\hookrightarrow \cind_{I'}^G\cW'$. We see that 
    \begin{equation*}
    \begin{array}{rl}
        |I/I'|\iota(U_t[g,w])&=\sum_{h\in I/I'}\sum_{x\in I/(I\cap tIt^{-1})}[gxth,h^{-1}\psi_t(x^{-1}w)]\\
        &=\sum_{tht^{-1}\in tIt^{-1}/tI't^{-1}}\sum_{x\in I/(I\cap tIt^{-1})}[gx(tht^{-1})t,h^{-1}\psi_t(x^{-1}w)]\\
        &=\sum_{x\in (I\cap \ovN)/t(I'\cap \ovN)t^{-1}}[gxt,\psi_t(x^{-1}w)].
    \end{array}     
    \end{equation*}
    where we used that $I/(I\cap tIt^{-1})=(I\cap \ovN)/t(I\cap \ovN)t^{-1}, I/I'=(I\cap \ovN)/(I'\cap \ovN)$, and $h^{-1}\psi_t(x^{-1}w)=\psi_t(th^{-1}t^{-1}x^{-1}w)$ if $th^{-1}t^{-1}\in t(I\cap \ovN)t^{-1}\subset I\cap tIt^{-1}$.
    While
    \[|I/I'|U_t\iota([g,w])=\sum_{h\in I/I'}\sum_{x\in I'/(I'\cap tI't^{-1})}[ghxt,\psi_t(x^{-1}h^{-1}w)]=\sum_{x\in I/(I'\cap tI't^{-1})}[gxt,\psi_t(x^{-1}w)].\] 
    Hence $\iota$ is $\cH$-equivariant. For any $[1,w]\in\cind_{I'}^{G}\cW'$, we have
    \[U_t[1,w]=\sum_{x\in (I'\cap \ovN)/t(I'\cap \ovN)t^{-1}}[xt,\psi_t(x^{-1}w)]=\sum_{x\in t^{-1}(I'\cap \ovN)t/(I'\cap N)}[tx,\psi_t(tx^{-1}t^{-1}w)].\]
    Take $t\in T^{-}$ such that $I\cap \ovN\subset t^{-1}(I'\cap\ovN)t$. Then we can write the last term as 
    \begin{align*}
        &\sum_{x\in t^{-1}(I'\cap \ovN)t/(I\cap \ovN)}\sum_{y\in (I\cap \ovN)/(I'\cap \ovN)}[txy,\psi_t(ty^{-1}t^{-1}tx^{-1}t^{-1}w)]\\
        =&\sum_{x\in t^{-1}(I'\cap \ovN)t/(I\cap \ovN)}\sum_{y\in (I\cap \ovN)/(I'\cap \ovN)}[txy,y^{-1}\psi_t(tx^{-1}t^{-1}w)]
    \end{align*}
    since $ty^{-1}t^{-1}\in I\cap tIt^{-1}$. Now 
    \[\sum_{y\in (I\cap \ovN)/(I'\cap \ovN)}[txy,y^{-1}\psi_t(tx^{-1}t^{-1}w)]=\sum_{y\in I/I'}[txy,y^{-1}\psi_t(tx^{-1}t^{-1}w)]=\iota([tx,\psi_t(tx^{-1}t^{-1}w)])\]
    lies in the image of $\iota$. We conclude that $U_t$ annihilates $\mathrm{coker}(\iota)$. Using the argument as for (1) of Proposition \ref{propositionchangethegroups1}, the quasi-isomorphism follows.
\end{proof}
\begin{rem}
    Proposition \ref{propositionchangethegroups2} is an analog of \cite[Prop. 3.4.11]{emerton2006jacquet}.
\end{rem}
\subsection{Examples}\label{subsectionexamples}
We first recall the examples in \cite{kohlhaase2012homological}. Let $\chi:T\rightarrow E^{\times}$ be a continuous character. We take good open normal subgroups $I_n=\bI_n(L), n\geq 1$ of $I$ which admit Iwahori decompositions. Fix $n$ large enough such that $\chi$ is rigid analytic on $T\cap I_n$. Then Kohlhaase-Schraen takes the $I$-subspace 
\[\cW_{\sharp,n}:=\Ind_{B}^G\chi(BI)^{I_n-\mathrm{an}}\subset \Ind_{B}^G\chi\] 
of $I_n$-rigid analytic functions in $\Ind_{B}^G\chi$ supported on the open subset $BI$. Since $\cC^{I_n-\mathrm{an}}(I,E)=\cC^{(I_n\cap B)-\mathrm{an}}(I\cap B,E)\widehat{\otimes}_E\cC^{(I_n\cap \ovN)-\an}(I\cap \ovN,E)$, we see as an $I \cap \ovN$-representation, $\cW_{\sharp,n}$ can be identified with $\cC^{(I_n\cap \ovN)-\mathrm{an}}(I \cap \ovN,E)$ the space of $I_n\cap\ovN$-analytic functions on $I \cap \ovN$. As $t^{-1}(I \cap \ovN)t\supset I \cap \ovN$ for $t\in T^{-}$, there is a factorization
\[\mathrm{Res}_{BI}\circ \mathrm{Ad}(t): \cC^{(I_n\cap \ovN)-\mathrm{an}}(I \cap \ovN,E)\rightarrow \cC^{t^{-1}(I_n\cap \ovN)t-\mathrm{an}}(t^{-1}(I \cap \ovN)t,E)\rightarrow \cC^{(I_n\cap \ovN)-\mathrm{an}}(I \cap \ovN,E).\]
Since $f(-t^{-1})=\chi(t)^{-1}f(t-t^{-1})$, the map $\psi_t$ in Definition \ref{definitinpsit} stabilizes $\cW_{\sharp,n}$.
\begin{prop}[{\cite[Prop. 2.4]{kohlhaase2012homological}}]\label{propositionKSresolutionsujectivity}
    For $\cW_{\sharp,n}$ as above, the map $\Phi_{\cW_{\sharp,n}}:\cind_{I}^G\cW_{\sharp,n}\rightarrow \Ind_{B}^G\chi$ is surjective. And $\Phi_{\cW_{\sharp,n}}$ induces a quasi-isomorphism (see also Proposition \ref{propositionKSresolutioninjection} and Proposition \ref{propositionKSresolutionKoszulregular})
    \[\wedge^{\bullet}\cH^d\otimes_{\cH}\cind_{I}^G\cW_{\sharp,n}\simeq  \Ind_B^G\chi.\]
\end{prop}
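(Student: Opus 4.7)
The statement has two assertions: surjectivity of $\Phi_{\cW_{\sharp,n}}$ and the quasi-isomorphism $\wedge^{\bullet}\cH^d\otimes_{\cH}\cind_{I}^G\cW_{\sharp,n}\simeq \Ind_B^G\chi$. The plan is to reduce the second to the first using the previously established propositions, and then to prove surjectivity by a compactness and partition-of-unity argument as in \cite{kohlhaase2012homological}.

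First, one checks that the hypotheses of the preceding results apply to $\cW_{\sharp,n}$: the normality of $I_n$ in $I$ gives $I$-stability, and for $t\in T^-$ the conjugation $t^{-1}(I_n\cap\ovN)t\supset I_n\cap \ovN$ together with the rigid analyticity of $\chi$ on $T\cap I_n$ imply $\mathrm{Res}_{BI}(t^{-1}.\cW_{\sharp,n})\subset\cW_{\sharp,n}$, so that the $\psi_t$-operators of Definition~\ref{definitinpsit} are defined on $\cW_{\sharp,n}$. Proposition~\ref{propositionKSresolutionKoszulregular} then says that the Koszul complex is concentrated in degree $0$, where it computes $\cind_I^G\cW_{\sharp,n}\otimes_\cH\cH/\frm$, and Proposition~\ref{propositionKSresolutioninjection} shows that $\Phi_{\cW_{\sharp,n}}$ descends to an injection of this quotient into $\Ind_B^G\chi$. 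Once we know $\Phi_{\cW_{\sharp,n}}$ is surjective, this injection becomes an isomorphism and the desired quasi-isomorphism follows.

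For the surjectivity, the key geometric input is the Iwasawa decomposition $G=BK$, which identifies $B\backslash G$ with the compact space $B_0\backslash K$. Given $f\in\Ind_B^G\chi$, compactness forces $f|_K$ to have a uniform analyticity radius, and we choose a finite collection $\{g_\alpha\}\subset K$ whose translates $BIg_\alpha^{-1}$ give a disjoint open cover of $G$ (more precisely, a disjoint cover of $B\backslash G$ by sets $B\backslash BIg_\alpha^{-1}$; this can be arranged by taking the $g_\alpha^{-1}$ to be representatives of $B_0I\backslash K$, finite since $K/I$ is finite). Multiplying by the locally constant characteristic functions of the pieces, we decompose $f=\sum_\alpha f_\alpha$ with $\mathrm{supp}(f_\alpha)\subset BIg_\alpha^{-1}$. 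Setting $w_\alpha:=g_\alpha^{-1}.f_\alpha\in\Ind_B^G\chi(BI)$, one obtains $f=\sum_\alpha g_\alpha.w_\alpha=\Phi_{\cW_{\sharp,n}}(\sum_\alpha[g_\alpha,w_\alpha])$, provided each $w_\alpha$ lies in $\cW_{\sharp,n}$.

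The principal obstacle is precisely this last verification: one must show that $w_\alpha$ is $I_n$-analytic on $I$. This requires matching the local analyticity of $f$ on $K$ with the $I_n$-analyticity condition on $I$ after translation by $g_\alpha^{-1}$. The geometric fact that $I_n$ is normal in $I$ (and that conjugation by $g_\alpha\in K$ preserves the $I_n$-filtration, for $n$ taken in the characteristic range) ensures that the analyticity of $f$ on the $I_ng_\alpha^{-1}$-cosets of $K$ translates directly into $I_n$-analyticity of $w_\alpha$ on the corresponding $I_n$-cosets of $I$. This compatibility, once carefully tracked through the Iwahori decomposition of $I$ and the interaction of $B_0$ with $I_n$, completes the proof; this is the technical core that is taken from \cite[Prop.~2.4]{kohlhaase2012homological}.
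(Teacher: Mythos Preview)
Your reduction of the quasi-isomorphism to surjectivity via Propositions~\ref{propositionKSresolutioninjection} and~\ref{propositionKSresolutionKoszulregular} is correct, and the partition-of-unity step using representatives $g_\alpha$ of $B_0I\backslash K$ is a sound first move. However, there is a genuine gap in the surjectivity argument.

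The issue is the very obstacle you name in the last paragraph but do not actually overcome. An arbitrary $f\in\Ind_B^G\chi$ is by compactness $I_m$-analytic for \emph{some} $m$, but there is no reason this $m$ equals the \emph{fixed} $n$. Your appeal to ``conjugation by $g_\alpha\in K$ preserves the $I_n$-filtration'' does not address this (and is in any case false: $K$ does not normalise $I_n$; moreover the relevant operation $f_\alpha\mapsto g_\alpha^{-1}.f_\alpha$ is a right translation, not a conjugation). Right translation by $g_\alpha^{-1}\in K$ carries $I_m$-analytic functions to $I_m$-analytic functions, so your $w_\alpha$ lands in $\cW_{\sharp,m}$, not in $\cW_{\sharp,n}$. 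As written, your argument proves only that the image of $\Phi_{\cW_{\sharp,n}}$ contains the $I_n$-analytic vectors $(\Ind_B^G\chi)^{I_n\text{-an}}$.

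The missing idea, which is what \cite[Prop.~2.4]{kohlhaase2012homological} actually uses, is to exploit the $T^-$-action to refine analyticity. For $t\in T^-$ one has the identity $w=\Phi_{\cW_{\sharp,n}}\bigl(\sum_{x\in I/(I\cap tIt^{-1})}[xt,\psi_t(x^{-1}w)]\bigr)$ for any $w\in\Ind_B^G\chi(BI)$ (this is exactly the computation $\Phi(U_t[1,w])=\Phi([1,w])$ from the proof of Proposition~\ref{propositionKSresolutioninjection}). The point is that $\psi_t$ improves analyticity: if $w\in\cW_{\sharp,m}$ then $\psi_t(x^{-1}w)$, being the restriction of a dilation $n\mapsto w(tnt^{-1})$, is $(t^{-1}(I_m\cap\ovN)t)$-analytic, and for $t$ sufficiently regular this lies in $\cW_{\sharp,n}$. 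Thus every $\cW_{\sharp,m}$ is already in the image of $\Phi_{\cW_{\sharp,n}}$, and combining this with your partition-of-unity step finishes the proof. (Compare the analogous manoeuvre in the proof of Lemma~\ref{lemma-surjectivity1an} in the paper.)
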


\begin{cor}\label{corollaryKSresolution}
    For any sub $I$-representation $\cW_{\sharp}$ such that there exists $m\geq n$ and inclusions $\cW_{\sharp,n}\subset \cW_{\sharp}\subset \cW_{\sharp,m}$ and $\cW_{\sharp}$ is stable under $\psi_t$ for all $t\in T^-$, the map $\Phi_{\cW_{\sharp}}$ induces a quasi-isomorphism
    \[\wedge^{\bullet}\cH^d\otimes_{\cH}\cind_{I}^G\cW_{\sharp}\simeq  \Ind_B^G\chi.\]
\end{cor}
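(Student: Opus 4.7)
The plan is to combine the Koszul regularity already established for any suitable $\cW$ with a sandwich argument based on the known case $\cW_{\sharp,n}$. Since $\cW_{\sharp}$ is by hypothesis stable under $\psi_t$ for all $t\in T^-$, and the verification of the cocycle condition $\psi_{t_1}\psi_{t_2}=\psi_{t_1t_2}$ (Lemma \ref{lemmapsit}) only uses the formula $f\mapsto f(-t^{-1})\mathbf{1}_{BI}$, the space $\cW_{\sharp}$ is naturally an $IT^+I$-module. Hence Proposition \ref{propositionKSresolutionKoszulregular} gives a quasi-isomorphism
\[\wedge^{\bullet}\cH^d\otimes_{\cH}\cind_{I}^G\cW_{\sharp}\;\simeq\;\cind_{I}^G\cW_{\sharp}\otimes_{\cH}\cH/\frm,\]
so it suffices to upgrade $\Phi_{\cW_{\sharp}}$ to an isomorphism $\cind_{I}^G\cW_{\sharp}\otimes_{\cH}\cH/\frm\simeq\Ind_{B}^G\chi$.

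Next, Proposition \ref{propositionKSresolutioninjection} already provides the injectivity of this induced map, so only surjectivity is at stake. For the surjectivity, I would apply Proposition \ref{propositionKSresolutionsujectivity} simultaneously to $\cW_{\sharp,n}$ and to $\cW_{\sharp,m}$; the proposition applies at the larger level $m\geq n$ because $\chi$ is a fortiori rigid analytic on the smaller group $T\cap I_m$. This yields a commutative diagram of $G$-equivariant maps
\[\Ind_{B}^G\chi\;\simeq\;\cind_{I}^G\cW_{\sharp,n}\otimes_{\cH}\cH/\frm\;\lra\;\cind_{I}^G\cW_{\sharp}\otimes_{\cH}\cH/\frm\;\lra\;\cind_{I}^G\cW_{\sharp,m}\otimes_{\cH}\cH/\frm\;\simeq\;\Ind_{B}^G\chi,\]
where the inner arrows come from the inclusions $\cW_{\sharp,n}\subset\cW_{\sharp}\subset\cW_{\sharp,m}$. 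By Corollary \ref{corinjectiontwoW}, applied twice, both inner arrows are injections; and each of them is compatible with $\Phi$, so the composite is the identity of $\Ind_{B}^G\chi$. Sandwiched between two isomorphisms, the middle term must also map isomorphically to $\Ind_{B}^G\chi$.

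Combining the two steps, $\Phi_{\cW_{\sharp}}$ induces the required quasi-isomorphism $\wedge^{\bullet}\cH^d\otimes_{\cH}\cind_{I}^G\cW_{\sharp}\stackrel{\sim}{\to}\Ind_{B}^G\chi$. There is essentially no obstacle here beyond bookkeeping: the only small point to verify is that the inclusions $\cW_{\sharp,n}\hookrightarrow\cW_{\sharp}\hookrightarrow\cW_{\sharp,m}$ are equivariant for all the operators $\psi_t$ (so that Corollary \ref{corinjectiontwoW} applies), but this is immediate from the explicit formula $\psi_t(f)=f(-t^{-1})\mathbf{1}_{BI}$, which makes sense uniformly on all three spaces.
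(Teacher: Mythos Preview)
Your proof is correct and follows essentially the same approach as the paper: Koszul regularity (Proposition \ref{propositionKSresolutionKoszulregular}) reduces to identifying $\cind_I^G\cW_{\sharp}\otimes_{\cH}\cH/\frm$ with $\Ind_B^G\chi$, injectivity comes from Proposition \ref{propositionKSresolutioninjection}, and surjectivity from the known case $\cW_{\sharp,n}$. The only difference is that your sandwich argument invokes the level-$m$ isomorphism as well, which is not needed: once you know $\cW_{\sharp,n}\subset\cW_{\sharp}$, the surjection $\Phi_{\cW_{\sharp,n}}$ factors through $\Phi_{\cW_{\sharp}}$, so $\Phi_{\cW_{\sharp}}$ is automatically surjective without any appeal to $\cW_{\sharp,m}$ or to a second application of Corollary \ref{corinjectiontwoW}.
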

\begin{proof}
    Use Proposition \ref{propositionKSresolutioninjection}, Proposition \ref{propositionKSresolutionKoszulregular} (Corollary \ref{corinjectiontwoW}) and the surjectivity of $\Phi_{\cW_{\sharp,n}}$. 
\end{proof}

We now give examples of $\cW_{\sharp}$ that satisfy the assumption of Corollary \ref{corollaryKSresolution}, whose duals will be more convenient for us. Take an open normal subgroup $H\subset I$ with Iwahori decomposition $H=(H\cap \ovN)(H\cap T)(H\cap N)$ and such that $H$ is uniform pro-$p$, see \cite[\S 4]{schneider2003algebras} and the proof of \cite[Thm. 5.5]{orlik2015jordan}. Equip $H$ with a $p$-valuation $\omega$ and ordered basis $h_1,\cdots,h_{d_H}$ for $d_H=\dim_{\Q_p}H$ compatible with the Iwahori decomposition and root groups, see \cite[\S 3.3.3]{orlik2010irreducibility}.  The ordered basis induces $\Z_p^{d_H}\simeq H$ as $p$-adic manifolds. Upon shrinking $H$, we assume that $\omega(h_1)=\cdots=\omega(h_{d_H})=\omega\in \bbN$ using the method in \cite[Prop. III.3.1.3]{lazard1965groupes}.

The distribution algebra $\cD(H,E)$ is the continuous dual of $\cC^{\la}(H,E)$. We consider its Banach completions 
\[\cD_r(H,E)=\{\sum_{\alpha\in \bbN^{d_H}}d_{\alpha}\bb^{\alpha}\mid d_{\alpha}\in E, \lim_{\alpha\rightarrow \infty}|d_{\alpha}|_pr^{\sum_{i}\alpha_i\omega(h_i)}=0 \}\] 
for $r<1$ where $b_i=h_i-1$ and $\bb^{\alpha}=(h_1-1)^{\alpha_1}\cdots(h_{d_H}-1)^{\alpha_{d_H}}$. Since $I$ normalizes $H$, the conjugation of $x\in I$ induces an automorphism of $D_{r}(H,E)$ (see the discussion in the proof of \cite[Thm. 5.1]{schneider2003algebras}). We let $\cD_r(I,E)=E[I]\otimes_{E[H]}\cD_r(H)$. Let $\cC^r(I,E)$ be the continuous $E$-dual of $\cD_r(I,E)$ with the open compact topology, a Smith space with an open compact lattice over $\cO_L$. Then $\cC^r(I,E)=\prod_{g\in I/H}g.C^r(H,E)$ where 
\[\cC^r(H,E)=\{\sum_{\alpha\in\bbN^{d_H}}c_{\alpha}\binom{x_1}{\alpha_1}\cdots \binom{x_{d_H}}{\alpha_{d_H}}\mid \sup_{\alpha} |c_{\alpha}|_pr^{-\sum_{i}\alpha_i\omega(h_i)}<+\infty\}\] 
for $x=(x_1,\cdots,x_{d_H})\in\Z_p^{d_H}\simeq H$ in terms of Mahler's basis. Moreover, similarly as $\cC^{(I_n\cap \ovN)-\an}(I,E)$, we have (cf. \cite[Thm. 3.40 (2)]{rodrigues2022solid} and \cite[Prop. 3.3.4]{orlik2010irreducibility})
\[\cC^r(I,E)=(\cD_r(I,E))'=(\cD_r(B\cap I,E)\widehat{\otimes}_E\cD_r(I\cap \ovN,E))'=\cC^r(B\cap I,E)\widehat{\otimes}_E\cC^r(I\cap \ovN,E).\]
By \cite[III.1.3.8]{lazard1965groupes} (and see also \cite[\S 4.1.3]{rodrigues2022solid}), $\cC^r(I,E)\supset \cC^{I_m-\an}(I,E)$ for $r$ sufficiently close to $1$. We set
\[ \cW_{\sharp,r}:=\Ind_{B}^G\chi(BI)\cap \cC^r(I,E).\]
Then $\cW_{\sharp,m}\subset \cW_{\sharp,r}$ for $r$ sufficiently close to $1$.
\begin{lemma}
    The operator $\psi_t$ in Definition \ref{definitinpsit} stabilizes $\cW_{\sharp,r}$ for $r<1$ sufficiently close to $1$. Hence the conclusion of Corollary \ref{corollaryKSresolution} holds for such $\cW_{\sharp,r}$.
\end{lemma}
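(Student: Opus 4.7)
The plan is to reduce the $\psi_t$-stability of $\cW_{\sharp,r}$ to the boundedness of pullback by a contracting group endomorphism of $I\cap\ovN$, and then to verify this by a Mahler-coefficient estimate on the dual Banach distribution algebra.

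I would begin by unwinding the identification $\Ind_B^G\chi(BI)\simeq\cC^{\la}(I\cap\ovN,E)$ coming from the Iwahori decomposition and the $B$-equivariance. Combined with the tensor decomposition $\cC^r(I,E)=\cC^r(I\cap B,E)\widehat{\otimes}_E\cC^r(I\cap\ovN,E)$ cited in the text, this realizes $\cW_{\sharp,r}$ as $\cC^r(I\cap\ovN,E)$. A direct computation, writing $\ovn t^{-1}=t^{-1}(t\ovn t^{-1})$ and using the $B$-equivariance, then shows that $\psi_t$ corresponds under this identification to $\chi(t^{-1})\cdot c_t^{*}$, where $c_t:\ovn\mapsto t\ovn t^{-1}$ is the conjugation endomorphism, well-defined and contracting on $I\cap\ovN$ because $t\in T^-$. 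The truncation by $\mathbf{1}_{BI}$ is harmless here, since $\ovn t^{-1}=t^{-1}\cdot(t\ovn t^{-1})\in B\cdot(I\cap\ovN)\subset BI$ already.

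It then suffices to show that $c_t^{*}$ preserves $\cC^r(I\cap\ovN,E)$ for $r<1$ sufficiently close to $1$; by duality this amounts to bounding the pushforward $c_{t,*}$ on $\cD_r(I\cap\ovN,E)$. Using the hypothesis that the ordered $p$-valuated basis is compatible with the root-group decomposition, the basis elements $h_i$ lying in $\ovN$ sit in one-parameter root subgroups $\ovU_{\alpha_i}$ with $c_t(h_i)=h_i^{\alpha_i(t)}$, and the scalar $\alpha_i(t)$ lies in $\cO_L$ because $t\in T^-$ and $\alpha_i\in-\Phi_+$. Since $\binom{a}{k}\in\cO_L$ for every $a\in\cO_L$ by Mahler's theorem, the expansion
\[
  c_{t,*}(b_i)\;=\;(1+b_i)^{\alpha_i(t)}-1\;=\;\sum_{k\geq 1}\binom{\alpha_i(t)}{k}b_i^k
\]
has $\cD_r$-norm at most $r^\omega=\|b_i\|_r$ for every $r<1$. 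Multiplicativity of $c_{t,*}$ as an algebra endomorphism then forces $\|c_{t,*}\|_r\leq 1$, and a finite coset decomposition of $I\cap\ovN$ by $H\cap\ovN$ (which $c_t$ respects because it contracts) propagates this estimate from $\cD_r(H\cap\ovN,E)$ to $\cD_r(I\cap\ovN,E)$.

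Once $\psi_t$-stability is established, the last sentence is a direct application of Corollary~\ref{corollaryKSresolution} once one exhibits a sandwich $\cW_{\sharp,n}\subset\cW_{\sharp,r}\subset\cW_{\sharp,m'}$: the first inclusion is the text's observation $\cC^{I_n-\an}\subset\cC^r$ for $r$ close to $1$, while the second reflects that a $\cC^r$-function with $r<1$ fixed is automatically $I_{m'}$-analytic for some $m'$ large depending on $r$. The main technical obstacle I anticipate is verifying the clean root-group diagonalization $c_t(h_i)=h_i^{\alpha_i(t)}$ on the given $p$-valuated basis; any Baker--Campbell--Hausdorff mixing in a non-abelian $\ovN$ would require ordering the basis root by root (or passing to a smaller uniform $H$) so that $c_t$ is at worst lower-triangular with diagonal scalars $\alpha_i(t)$ of norm $\leq 1$, which is still exactly what the Mahler bound above demands.
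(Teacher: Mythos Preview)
Your argument is correct and yields the same conclusion, but it differs from the paper's proof in two ways.

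First, you work on the distribution side, bounding the pushforward $c_{t,*}$ on $\cD_r(I\cap\ovN,E)$ via the Mahler expansion $(1+b_i)^{a}-1=\sum_{k\ge1}\binom{a}{k}b_i^k$ and submultiplicativity of the $\cD_r$-norm. The paper works on the function side and instead uses a sandwich $\cC^{p^n\Z_p-\an}\subset\cC^r\subset\cC^{p^{n+1}\Z_p-\an}$ coming from Lazard, together with the observation that pullback by $x\mapsto px$ sends the right-hand space into the left-hand one. Your direct norm estimate is cleaner; the paper's argument is more indirect but makes the relation to the $I_n$-analytic spaces (and hence to Corollary~\ref{corollaryKSresolution}) explicit.

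Second, your anticipated ``technical obstacle'' is slightly misdiagnosed. Baker--Campbell--Hausdorff mixing between root groups is not an issue: torus conjugation preserves each root subgroup, so $c_t$ acts root-by-root and your multiplicativity argument goes through without any triangularity hypothesis. The genuine subtlety is rather that for $L\neq\Q_p$ the formula $c_t(h_i)=h_i^{\alpha_i(t)}$ does not literally make sense, since $\alpha_i(t)\in\cO_L$ while the exponent must lie in $\Z_p$. Inside a single (abelian) root group $\cO_L\simeq\Z_p^{d_L}$ one has instead $c_t(h_i)=\prod_j h_j^{a_{ij}}$ with $a_{ij}\in\Z_p$, and your Mahler bound still gives $\|c_{t,*}(b_i)\|_r\le r^\omega$ by expanding $\prod_j(1+b_j)^{a_{ij}}-1$. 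The paper handles this same point by factoring multiplication by $\varpi_L$ on $\cO_L$ as a $\Z_p$-linear automorphism followed by a diagonal map $(x_1,\dots,x_{d_L})\mapsto(p^{a_1}x_1,\dots,p^{a_{d_L}}x_{d_L})$, reducing to the case $L=\Q_p$.
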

\begin{proof}
    As in the case for $\cC^{I_n-\an}(I,E)$ and $\cW_{\sharp,n}$, we need to show that if $f\in \cC^r(I\cap \ovN,E)$, then $\mathrm{Res}_{I\cap \ovN}(f(t-t^{-1}))$ is still in $\cC^r(I\cap \ovN,E)$. Since $I\cap \ovN$ is a product of its root subgroups and the conjugation of $t$ acts on each of them, we reduce to prove the similar result for $\cO_L\subset L=\bG_{m/L}(L)$ and for the action of $\varpi_L$ on $\cO_L$ or $\cC^r(\cO_L,E)$, namely to show that $ f(-)\mapsto f(\varpi_L-)$ sends $\cC^r(\cO_L,E)$ into $\cC^r(\cO_L,E)\subset \cC(\cO_L,E)$. Choose a $\Z_p$-basis $1=e_1,\cdots,e_{d_L}$ of $\cO_L$ with coordinates $x_1,\cdots,x_{d_L}\in \Z_p$. Then $\times \varpi_L$ factors as $\cO_L\stackrel{\tau}{\rightarrow} \Z_p^{d_L}\stackrel{\eta}{\rightarrow} \cO_L=\Z_p^{d_L}$ where $\tau$ is a $\Z_p$-linear automorphism of $\cO_L$ and $\eta: (x_1,\cdots,x_{d_L})\mapsto (p^{a_1}x_1,\cdots,p^{a_{d_L}}x_{d_L})$. Since the definition of $\cD_r(\cO_L,E)$, as well as $\cC^r(\cO_L,E)$, is independent of the choice of ordered basis, $\tau$ induces an automorphism of $\cC^r(\cO_L,E)$ and we may assume that $L=\Q_p$ and $\varpi_L=p$. For simplicity, we assume that $\omega(1)=1$. Take $n$ such that $p^{-\frac{p^{-n}}{p-1}}<r\leq p^{-\frac{p^{-(n+1)}}{p-1}}$. Then $\cC^{p^{n}\Z_p-\an}(\Z_p,E)\subset \cC^r(\Z_p,E)\subset \cC^{p^{n+1}\Z_p-\an}(\Z_p,E)=\{\sum_{\alpha\geq 0}c_{\alpha}\binom{x}{\alpha}|\varinjlim_{\alpha\rightarrow +\infty} |c_{\alpha}|_pp^{\frac{p^{-{(n+1)}}\alpha-s(\alpha)}{p-1}}=0\}$ by \cite[III.1.3.8]{lazard1965groupes} where $s(\sum \alpha_ip^i)=\sum_{i}\alpha_i$ if $\alpha_i\in\{0,1,\cdots,p-1\}$. The result follows. See also \cite[Cor. 3.3.10]{johansson2019extended} and Lemma \ref{lemmatransposepsit} for the dual version.
\end{proof}

\section{Dual complexes}\label{sectiondualcomplexes}
In this section, we establish the main theorem on resolutions of representations constructed from the functor $\cF_{B}^G$ in \cite{orlik2015jordan} (Theorem \ref{theoremresolution}) and study the duality between these complexes (Theorem \ref{theoremdualKS}).
\subsection{Banach modules over the distribution algebras}
We recall the definitions of various distribution algebras and use them to construct locally analytic $I$-representations as a continuation of \S \ref{subsectionexamples}. 

Let $\cD(\frg,\ovB\cap I)$ be the subalgebra $U(\frg)\cD(\ovB\cap I,E)$ generated by $U(\frg)$ and $\cD(\ovB\cap I,E)$ inside $\cD(I,E)$, which was considered in \cite[\S 3.4]{orlik2015jordan} or \cite[\S 4]{schmidt2016dimensions}. Define similarly $\cD(\frg,\ovB)$. Note that $\cD(\frg,\ovB\cap I)=U(\frg)\otimes_{U(\overline{\frb})}\cD(\ovB\cap I)$ \cite[Prop. 3.5]{orlik2015jordan}.

\begin{dfn}\label{definitionWnaturalM}
    If $\ovM$ is a $\cD(\frg, \ovB)$-module such that $\ovM$ is finitely presented over $U(\frg)$ and $\ovB$-locally finite (i.e., $\ovM$ is the union of its finite-dimensional $\ovB$-subspaces), we let 
    \[\cW_{\natural,r}(\ovM):=\cD_r(I,E)\otimes_{\cD(\frg,\ovB\cap I)}\ovM.\]
\end{dfn}

Since $\ovM$ is assumed to be a finitely presented $\cD(\frg,\ovB\cap I)$-module, $\cW_{\natural,r}(\ovM)$ is a finitely presented $\cD_r(I,E)$-module. 

By definition, $\cD_r(I,E)=E[I]\otimes_{E[H]}\cD_r(H,E)$ while 
\[\cD_r(H,E)=E[H]\otimes_{E[H^m]}D_{r^{p^m}}(H^m)=E[H]\otimes_{E[H^m]}U_r(\frg)\] 
for $m$ such that $ \frac{1}{p}<r^{p^m}<p^{-\frac{1}{p-1}}$ (\cite[(5.5.6)]{orlik2015jordan} for $p\geq 2$) where $U_r(\frg)$ is the closure of $U(\frg)$ in $\cD_r(I,E)$. Moreover, by \cite[Sublemma 5.6]{orlik2015jordan}, the closure of $\cD(\frg,\ovB\cap I)$ in $\cD_r(I,E)$ is a subring $\cD_r(\frg,\ovB\cap I)$ such that $\cD_r(I,E)=\oplus_{I/H^m(\ovB\cap I)}\delta_g\cD_r(\frg,\ovB\cap I)$ where $\delta_g$ denote Dirac distributions and $H^m(\ovB\cap I)=H(\ovB\cap I)\cap \cD_r(\frg,\ovB\cap I)$. 

For $\ovM$ in Definition \ref{definitionWnaturalM}, we write $\ovM_r:=U_r(\frg)\otimes_{U(\frg)}\ovM=\cD_r(\frg,\ovB\cap I)\otimes_{\cD(\frg,\ovB\cap I)}\ovM$ (\cite[Lem. 4.6]{schmidt2016dimensions}). Then we get
\[\cW_{\natural, r}(\ovM)=\oplus_{g\in I/H^m(\ovB\cap I)}\delta_r\ovM_r.\]

Note that $\widehat{U}(\frg)=\varprojlim_rU_r(\frg)$ is a Fr\'echet-Stein algebra with the Fr\'echet-Stein structure (cf. \cite[\S 3]{schneider2003algebras}, \cite[Prop. 4.8]{schmidt2016dimensions}). Let $\widehat{\ovM}:=\widehat{U}(\frg)\otimes_{U(\frg)}\ovM=\varprojlim_r\ovM_r$. Similarly, $\cD(I,E)=\varprojlim_r\cD_r(I,E)$. 
\begin{lemma}\label{lemmaexact}
    The functors $\ovM\mapsto \cW_{\natural,r}(\ovM)$ are exact on $\ovM$ that are finitely presented as $U(\frg)$-modules and $\cW_{\natural,r}(\ovM)\neq 0$ for $r$ sufficiently close to $1$. Moreover, if $\ovM$ is a simple $U(\frg)$-module, then $\ovM_{r}$ is a simple $U_{r}(\frg)$-module.
\end{lemma}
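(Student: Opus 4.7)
The plan is to reduce all three claims to statements about the single functor $\ovM\mapsto \ovM_r := U_r(\frg)\otimes_{U(\frg)}\ovM$. The explicit decomposition
\[ \cW_{\natural,r}(\ovM) \;=\; \bigoplus_{g\in I/H^m(\ovB\cap I)}\delta_g\,\ovM_r \]
displays $\cW_{\natural,r}(\ovM)$ as a \emph{finite} direct sum of copies of $\ovM_r$, each Dirac translation being an $E$-linear bijection since $\delta_g$ is a unit in $\cD(I,E)$. So all three assertions follow once they are established for $\ovM\mapsto\ovM_r$.

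For exactness on finitely presented $U(\frg)$-modules, right exactness is automatic and left exactness reduces to showing $\Tor_1^{U(\frg)}(U_r(\frg),\ovM)=0$. I would argue this by the classical symbol/PBW method: $U_r(\frg)$ carries a natural filtration extending the PBW filtration on $U(\frg)$, and its associated graded is a commutative Tate algebra which is flat over $\mathrm{gr}\,U(\frg)$. Lifting a finite presentation $U(\frg)^m\to U(\frg)^n\to\ovM\to 0$ and comparing relations modulo these filtrations gives the required vanishing, in the spirit of \cite[\S 3]{schneider2003algebras}.

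For the nonvanishing at $r$ close to $1$: since $\ovM$ is $\ovB$-locally finite and finitely generated, it has a generalized weight-space decomposition for $\frt$ with finite-dimensional components. The triangular decomposition of $\cD_r(\frg,\ovB\cap I)$ (matching the Iwahori decomposition) shows that $\ovM_r$ inherits a compatible weight decomposition with the \emph{same} weight components as $\ovM$, since no completion is needed on finite-dimensional pieces. Consequently the canonical map $\ovM\to\ovM_r$ is injective and $\ovM_r\ne 0$ whenever $\ovM\ne 0$, in fact for all admissible $r<1$.

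The main obstacle is the simplicity claim. Let $\ovM$ be simple, so $\ovM=U(\frg)v$ and $\ovM_r=U_r(\frg)v$ for any $0\ne v\in\ovM$. Given a nonzero proper $U_r(\frg)$-submodule $N\subsetneq\ovM_r$, the injectivity of $\ovM\hookrightarrow\ovM_r$ established above forces $N\cap\ovM=0$, for otherwise $v\in N\cap\ovM$ would give $N=\ovM_r$. To derive a contradiction, I would pick $0\ne w\in N$, expand $w=\sum_n u_n v$ as a convergent series with $u_n\in U(\frg)$, and invoke simplicity of $\ovM$ (each $u_n v$ either vanishes or generates $\ovM$) to construct $x\in U_r(\frg)$ with $xw=v$, placing $v$ in $N$. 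Producing such an $x$ with the requisite uniform control in the $r$-norm is the delicate step; for $\ovM$ with integral infinitesimal character, it should follow from the $\widehat{\cD}$-module analysis of \S\ref{sectionlocalizationandcompletion}, which identifies $\ovM_r$ with the global sections of a localized simple coadmissible $\widehat{\cD}$-module whose simplicity transfers directly back to $\ovM_r$.
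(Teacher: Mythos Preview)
Your reduction to $\ovM\mapsto\ovM_r$ via the finite direct-sum decomposition is correct and matches the paper's setup. For exactness the paper simply quotes that $U_r(\frg)$ is flat over $U(\frg)$ (flatness of $U_r(\frg)$ over $\widehat{U}(\frg)$ from \cite{schneider2003algebras} combined with flatness of $\widehat{U}(\frg)$ over $U(\frg)$ from \cite{schmidt2013verma}); your PBW/associated-graded sketch aims at the same flatness and is a reasonable alternative route. The nonvanishing argument is essentially the same as the paper's, which also cites \cite{schmidt2013verma}.

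The simplicity argument, however, has a real gap. Your plan to expand $w\in N$ as a convergent series $\sum_n u_n v$ with $u_n\in U(\frg)$ and then manufacture some $x\in U_r(\frg)$ with $xw=v$ does not work: there is no mechanism for producing such an $x$ from the simplicity of $\ovM$ alone, and invoking the $\widehat{\cD}$-module machinery of \S\ref{sectionlocalizationandcompletion} is both overkill and only available under the integrality hypotheses used there, whereas the lemma is stated for arbitrary simple $\ovM$.

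The paper's argument is much simpler, and in fact you already have the key ingredient in your nonvanishing paragraph but fail to deploy it. Since $U_r(\frg)$ is a Noetherian Banach algebra, any submodule $N\subset\ovM_r$ is finitely generated and hence closed in the canonical Banach topology. A closed submodule of $\ovM_r$ then has its own $\frt$-weight decomposition, and the crucial point (cf.\ \cite[Cor.~1.3.22]{feaux1999einige} or \cite[Prop.~2.0.1]{schmidt2013verma}) is that these weight spaces are automatically contained in $\ovM$, because the weight components of $\ovM_r$ coincide with those of $\ovM$. Thus $N\neq 0$ forces $N\cap\ovM\neq 0$, and simplicity of $\ovM$ gives $\ovM\subset N$, whence $N=\ovM_r$. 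You should replace your series-inversion attempt with this weight-space argument.
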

\begin{proof}
    The exactness follows from the flatness of $U_r(\frg)$ over $\widehat{U}(\frg)$ by \cite[Rem. 3.2]{schneider2003algebras} and that $\widehat{U}(\frg)$ is flat over $U(\frg)$ \cite[Thm. 4.3.3]{schmidt2013verma}. The simplicity of $\ovM_{r}$ when $\ovM$ is simple is well known \cite[Thm. 5.7]{orlik2015jordan} and we give a sketch. We may suppose that $\ovM_{r}\neq 0$ (cf. \cite[Lem. 4.3.6]{schmidt2013verma}), hence $\ovM\subset \ovM_{r}$. Let $0\neq N\subset \ovM_{r}$ be a sub-$U_r(\frg)$-module. We may assume that $N$ is finitely generated since $U_{r}(\frg)$ is a Noetherian Banach algebra, $N$ is closed in $\ovM_{r}$ equipped with the induced canonical Banach topology. One then shows that the $\frt$-weight spaces of $N$ are contained in $\ovM$, cf. \cite[Cor. 1.3.22]{feaux1999einige} or see \cite[Prop. 2.0.1]{schmidt2013verma}. If $N\neq 0$, then $N$ contains $\ovM$ and $N=\ovM_{r}$. 
\end{proof}
\begin{lemma}\label{lemma-psit}
    Suppose that $t\in T^{-}$ and $\ovM$ is a $\cD(\frg,\ovB)$-module such that $\ovM|_{U(\frg)}\in \cO^{\overline{\frb}}_{\rm alg}$ (\cite[Def. 2.6]{orlik2015jordan}) and is $\ovB$-locally finite. The action of the Dirac distribution $\delta_{t^{-1}}\in \cD(\frg,\ovB)$ on $\ovM$ extends continuously to a map $\ovM_r\rightarrow \ovM_r$ and induces an operator $\psi_t:\cW_{\natural,r}(\ovM)\rightarrow \cW_{\natural,r}(\ovM)$ satisfying $(t^{-1}xt)\psi_{t}(-)=\psi_{t}(x-), \forall x\in I\cap tIt^{-1}$.
\end{lemma}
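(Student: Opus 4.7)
The plan is to deduce both parts of the lemma from the $\frt$-weight structure of $\ovM$ combined with the anti-dominance of $t \in T^-$, and then to transport the resulting operator through the Iwahori decomposition of $\cD_r(I, E)$.

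For the continuous extension of $\delta_{t^{-1}}$ to $\ovM_r$, I would first observe that since $\ovM|_{U(\frg)} \in \cO^{\overline{\frb}}_{\rm alg}$ decomposes as $\ovM = \bigoplus_\mu \ovM_\mu$ into $\frt$-weight spaces with algebraic weights $\mu$ (each extending to a character $T \to E^\times$), the element $\delta_{t^{-1}} \in \cD(\ovB, E) \subset \cD(\frg, \ovB)$ acts on $\ovM_\mu$ by the scalar $\mu(t^{-1})$. Finite generation over $U(\frg)$ plus $\ovB$-local finiteness permits choosing a finite-dimensional $\ovB$-stable generating subspace of $\ovM$ and filtering it by $\overline{\frb}$-eigenlines of weights $\lambda_1, \dots, \lambda_k$; this produces a finite filtration of $\ovM$ whose subquotients are quotients of Verma modules $\ovM(\lambda_i)$. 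Every weight of $\ovM$ then lies in $\bigcup_i \bigl(\lambda_i + \sum_{\alpha \in \Phi_+}\bbN\alpha\bigr)$, and since $|\alpha(t^{-1})|_p \leq 1$ for $\alpha \in \Phi_+$ and $t \in T^-$, one gets the uniform bound $|\mu(t^{-1})|_p \leq C := \max_i |\lambda_i(t^{-1})|_p$. By the exactness of $(-)_r$ from Lemma~\ref{lemmaexact} it suffices to treat the Verma case, where PBW identifies $\ovM(\lambda)_r$ as an $E$-Banach space with $U_r(\frn)$ (with $\lambda$-shifted $\frt$-weight grading), and the gauge norm on $U_r(\frn)$ decouples along the PBW-monomial weight components, so the diagonal operator $\delta_{t^{-1}}$ is continuous of norm $\leq |\lambda(t^{-1})|_p$. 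Because $\delta_{t^{-1}} \in \cD(\frg, \ovB)$ commutes with all $\cD(\frg, \ovB)$-module maps, this extension propagates along the filtration to give the desired $\delta_{t^{-1}} : \ovM_r \to \ovM_r$.

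To induce the operator on $\cW_{\natural,r}(\ovM)$, I would use $I = (I \cap N)(\ovB \cap I)$ and $H^m(\ovB \cap I) \supset \ovB \cap I$ to pick coset representatives for $I / H^m(\ovB \cap I)$ inside $I \cap N$. Via the decomposition $\cD_r(I, E) = \bigoplus_g \delta_g \cdot \cD_r(\frg, \ovB \cap I)$ recalled before Definition~\ref{definitionWnaturalM}, every element of $\cW_{\natural,r}(\ovM)$ admits a reduced expression $\sum_g \delta_g \otimes m_g$ with $g \in I \cap N$ and $m_g \in \ovM_r$. Since $t \in T^-$ makes conjugation by $t$ contract $I \cap N$ into itself, I would set
\[ \psi_t(\delta_g \otimes m) := \delta_{t^{-1}gt} \otimes \delta_{t^{-1}} m \qquad (g \in I \cap N,\ m \in \ovM_r), \]
continuous because conjugation by $t$ is a contraction on $I \cap N$ (hence induces a bounded map on $\cD_r(I \cap N, E)$) and $\delta_{t^{-1}}$ is bounded on $\ovM_r$. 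For the intertwining identity, take $x \in I \cap tIt^{-1}$ and $w = \delta_n \otimes m$ with $n \in I \cap N$; writing $xn = n''b''$ in Iwahori form yields $xw = \delta_{n''} \otimes b''m$, and one checks that $t^{-1}(xn)t \in I$ has Iwahori decomposition $(t^{-1}n''t)(t^{-1}b''t)$ with $t^{-1}n''t \in I \cap N$ and $t^{-1}b''t \in \ovB \cap I$, so that
\[ \psi_t(xw) = \delta_{t^{-1}n''t} \otimes (t^{-1}b''t)\delta_{t^{-1}}m = (t^{-1}xt)\bigl(\delta_{t^{-1}nt} \otimes \delta_{t^{-1}}m\bigr) = (t^{-1}xt)\psi_t(w). \]

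The hardest step is the continuous extension of $\delta_{t^{-1}}$ to $\ovM_r$, where the anti-dominance of $t \in T^-$ is essential: for an arbitrary $t \in T$ the weight eigenvalues $\mu(t^{-1})$ would be unbounded, and no continuous extension could exist.
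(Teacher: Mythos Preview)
Your overall strategy is sound, and the second paragraph is correct --- it reproduces exactly the construction of Lemma~\ref{lemmapsitcompactinduction}, which the paper simply invokes after identifying $\cW_{\natural,r}(\ovM) = \cind_{H^m(\ovB\cap I)}^I \ovM_r$.

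The first paragraph, however, has two issues. The minor one: $\delta_{t^{-1}}$ need not act on $\ovM_\mu$ by the scalar $\mu(t^{-1})$, because the $\ovB$-action on $\ovM$ may involve a smooth twist (e.g.\ $\ovM = \tau(M)^\vee \otimes \chi_{\rm sm}$); the scalar is $\mu(t^{-1})$ times a value of a smooth character. This only changes the bound by a constant and is harmless. The real gap is the ``propagation along the filtration'' step. Continuity of a linear operator on the graded pieces of a strict Banach filtration does \emph{not} imply continuity on the total space: think of an upper-triangular operator $\begin{psmallmatrix}\mathrm{id}&T\\0&\mathrm{id}\end{psmallmatrix}$ on $A_r \oplus C_r$ with $T$ unbounded. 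Since $\delta_{t^{-1}}$ is not $U(\frg)$-linear but only $\mathrm{Ad}(t^{-1})$-twisted linear, and $\mathrm{Ad}(t^{-1})$ is \emph{unbounded} on $U_r(\frg)$ (it expands $\overline{\frn}$), you cannot simply push the operator through a $U_r(\frg)$-presentation either.

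The paper's fix is to bypass the filtration entirely. Having chosen a finite-dimensional $\ovB$-stable generating subspace $\sigma \subset \ovM$, one has a $\cD(\frg,\ovB)$-surjection $U(\frg)\otimes_{U(\overline{\frb})}\sigma \twoheadrightarrow \ovM$, whence $\ovM_r$ is a quotient of $U_r(\frg)\otimes_{U_r(\overline{\frb})}\sigma = U_r(\frn)\otimes_E \sigma$. On this free object, $\delta_{t^{-1}}$ acts as $\mathrm{Ad}(t^{-1})|_{U_r(\frn)} \otimes (\ovB\text{-action on }\sigma)$, and the key point is that $\mathrm{Ad}(t^{-1})$ \emph{is} bounded on $U_r(\frn)$ precisely because $t \in T^-$ (this is the paper's ``$\delta_{t^{-1}}U_r(\frn) \subset U_r(\frn)\delta_{t^{-1}}$''). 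Continuity then descends to the quotient $\ovM_r$ because the kernel is $\cD(\frg,\ovB)$-stable and closed. Your Verma computation is the special case $\dim\sigma = 1$; no filtration or induction is needed.
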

\begin{proof}
    The $\cD(\frg,\ovB)$-module $\ovM$ admits a presentation
    \[\cD(\frg,\ovB)\otimes_{\cD(\ovB,E)}\sigma'\rightarrow \cD(\frg,\ovB)\otimes_{\cD(\ovB,E)}\sigma\rightarrow\ovM\rightarrow 0\]
    where $\sigma,\sigma'$ are finite-dimensional $\ovB$-representations over $E$. This is possible since $\ovB$ acts on $\ovM$ locally finitely and the same statement holds for $\cD(\frg,\ovB)\otimes_{\cD(\ovB,E)}\sigma=\cD(\frg,\ovB\cap I)\otimes_{\cD(\ovB\cap I,E)}\sigma=U(\frg)\otimes_{U(\overline{\frb})}\sigma$.
    By base change we get the exact sequence
    \[\cD_r(\frg,\ovB\cap I)\otimes_{\cD(\ovB\cap I,E)}\sigma'\rightarrow \cD_r(\frg,\ovB\cap I)\otimes_{\cD(\ovB\cap I,E)}\sigma\rightarrow\ovM_r\rightarrow 0.\]
    To extend the action of $\delta_{t^{-1}}$ to $\overline{M}_r$, it's enough to verify the existence of the extensions for $\cD_r(\frg,\ovB\cap I)\otimes_{\cD(\ovB\cap I,E)}\tau=U_r(\frg)\otimes_{U(\overline{\frb})}\tau$, $\tau=\sigma,\sigma'$ (and use the uniqueness of such extensions). While $U_r(\frg)\otimes_{U(\overline{\frb})}\tau=U_r(\frg)\otimes_{U_r(\overline{\frb})}\tau=U_r(\frn)\otimes_E\tau$ using $U_r(\frg)=U_r(\frn)\widehat{\otimes}_EU_r(\overline{\frb})$. We can get the $\delta_{t^{-1}}$-action on $U_r(\frn)\otimes_E\tau$ since $\delta_{t^{-1}}U_r(\frn)\subset U_r(\frn)\delta_{t^{-1}}$ for $t\in T^{-}$ and $\tau$ is $\ovB$-stable. As an $I$-representation, $\cW_{\natural,r}(\ovM)=\cind_{H^m(\ovB\cap I)}^I\ovM_r$. The $\psi_t$-operator on $\cW_{\natural,r}(\ovM)$ is defined using Lemma \ref{lemmapsitcompactinduction}. 
\end{proof}
\subsection{The resolution for principal series}\label{subsectionresolutionprincipalseries}
We continue with the principal series $\Ind_B^G\chi$ from \S \ref{subsectionexamples} and with the notation there. We will construct $I$-representations $\cW_{\natural,r}$ for $\Ind_{B}^G\chi$ that do not satisfy the assumption but might satisfy the conclusion of Corollary \ref{corollaryKSresolution}. Let $\lambda=\wt(\chi)\in (E^{d})^{[L:\Q_p]}$ be the weight of the locally analytic character $\chi$. We assume that $\lambda\in (\mathbb{Z}^{d})^{[L:\Q_p]}$, namely $\chi$ is locally algebraic.

Consider the subspace $\cC^{\rm pol}(I\cap \ovN,E)= \cC^{\rm pol}(\ovN,E)\mathbf{1}_{I\cap \ovN}\subset \cW_{\sharp,r}\subset \Ind_{B}^G\chi$ consisting of polynomial functions on $I\cap \ovN$, identified to the coordinate ring of $(\mathrm{Res}_{L/\Q_p}\overline{\bN})\otimes_{\Q_p}E$. By \cite[Lem. 2.5.8]{emerton2007jacquetII}, $\cC^{\rm pol}(I\cap \ovN,E)$ is a $\cD(\frg,\ovB)$-module. Actually, by the explicit description in \textit{loc. cit.} (see also Remark \ref{remark-dualverma} below), its restriction to $U(\frg)$ is the dual Verma module $\ovM(\lambda)^{\vee}$ in the category $\cO^{\overline{\frb}}$ (the BGG dual of the Verma module of the highest weight $\lambda$) when restricted to $\frg$. In the following, we write $\ovM(\lambda)^{\vee}$ for the $\cD(\frg,\ovB)$-module $\cC^{\rm pol}(I\cap \ovN,E)$. 

More generally, by \cite[Prop. 3.6]{breuil2015versII}, there exists a natural injection of $(\frg,\ovB\cap I)$-modules (where $M\in\cO^{\frb}_{\rm alg}$ and $\chi_{\rm sm}$ is a smooth character of $T$, see more in \S\ref{subsectionbeyonddualverma}),
\begin{equation}\label{equation-breuiladjunction}
     \Hom_E(M,E)^{\frn_{\overline{B}}^{\infty}}\otimes_E \chi_{\mathrm{sm}}\hookrightarrow  \Hom_E(M,E)^{\frn_{\overline{B}}^{\infty}}\otimes \cC_c^{\infty}(\overline{N},\chi_{\mathrm{sm}}) \subset \cF_B^G(M,\chi_{\mathrm{sm}}). 
\end{equation}
Here $\Hom_E(M,E)$ is a left $\frg$-module induced by the $\frg$-action on $M$ and the involution of $\frg$ given by multiplying $-1$. The superscript $\frn_{\ovB}^{\infty}$ denotes the subspace of elements killed by finite powers of $\frn_{\ovB}=\overline{\frn}$. The first injection of (\ref{equation-breuiladjunction}) sends $\chi_{\mathrm{sm}}$ to the constant function $\mathbf{1}_{I\cap \ovN}$ on $I \cap \ovN$. And the image of the last injection is the space of locally polynomial functions compactly supported on $\overline{N}$ inside $\cF_B^G(M,\chi_{\mathrm{sm}})$ \cite[(16) \& Prop. 3.6]{breuil2015versII}. The image of $\Hom_E(M,E)^{\frn_{\overline{B}}^{\infty}}\otimes_{E}\chi_{\rm sm}$ in $\cW_{\sharp,r}$ (see \S \ref{subsectionexamples}) consists of polynomial functions on $I \cap \ovN$.

\begin{rem}\label{remark-dualverma}
    We have $\Ind_B^G\chi=\cF_B^G(M(-\lambda),\chi_{\mathrm{sm}})$ in the notation of \cite{orlik2015jordan}. There are isomorphisms of $U(\frg)$-modules (in the last identity the involution of multiplying $-1$ of $\frg$ switches left and right $U(\frb)$-modules)
    \[\cC^{\pol}(I\cap \ovN,E)\simeq \Sym^{\bullet}(\overline{\frn}^{\vee})\simeq \Hom_E(U(\frg)\otimes_{U(\frb)}\chi^{-1},E)^{\overline{\frn}^\infty}=\Hom_{U(\frb)}(U(\frg),\chi)^{\overline{\frn}^\infty}.\]
    Note that $\Hom(U(\frg)\otimes_{U(\frb)}(-\lambda),E)^{\frn_{\overline{B}}^{\infty}}$ is the dual Verma module in $\cO^{\overline{\frb}}$ for the Verma module $U(\frg)\otimes_{U(\overline{\frb})}\lambda \in \cO^{\overline{\frb}}$. (According to \cite[\S3.2]{humphreys2008representations}, $(U(\frg)\otimes_{U(\overline{\frb})}\lambda)^{\vee}$ is constructed by taking the left $U(\frg)$-module $\Hom_E(U(\frg)\otimes_{U(\overline{\frb})}\lambda,E)^{\frn_{B}^{\infty}}$ and then composing with the Chevalley involution (inverse transpose) of $\frg$.)
\end{rem}

Since $\cW_{\sharp,r}$, as well as its dual, is a $\cD_r(I,E)$-module, extending the module structure over $\cD(I,E)$, the $\cD(\frg, \ovB\cap I)$-map $\ovM(\lambda)^{\vee}\hookrightarrow \cW_{\sharp,r}$ from (\ref{equation-breuiladjunction}), extends to a continuous map
\[i_{\cW_{\natural,r}}: \cW_{\natural,r}:=\cW_{\natural,r}(\overline{M}(\lambda)^{\vee})=\cD_r(I,E)\otimes_{\cD(\frg,\ovB\cap I)}\overline{M}(\lambda)^{\vee}\rightarrow \cW_{\sharp,r}\subset \Ind_{B}^G\chi.\]

\begin{lemma}\label{lemma-psitequivariant}
    The map $i_{\cW_{\natural,r}}:\cW_{\natural,r}\rightarrow \cW_{\sharp,r}$ is $\psi_{t}$-equivariant for $t\in T^{-}$, where the $\psi_t$-actions on $\cW_{\natural,r}$ are given by Lemma \ref{lemma-psit} for the $\cD(\frg,\ovB)$-module $\cC^{\rm pol}(I\cap \ovN,E)\simeq \cC^{\rm pol}(\ovN,E)$.
\end{lemma}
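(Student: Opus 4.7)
The plan is to reduce the $\psi_t$-equivariance to a computation on the generating subspace $\ovM(\lambda)^{\vee}\subset \cW_{\natural,r}$, exploit the fact that both $\psi_t$-operators satisfy the same twisted intertwining relation with the $I$-action, and then spread the identity by $\cD_r(I,E)$-linearity and continuity.

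First, I would check that on $\ovM(\lambda)^{\vee}$ itself the two operators coincide. By Lemma \ref{lemma-psit}, $\psi_t$ on $\cW_{\natural,r}$ restricts on $\ovM(\lambda)^{\vee}$ to the action of $\delta_{t^{-1}}$ coming from the $\cD(\frg,\ovB)$-module structure, which under the inclusion $\ovM(\lambda)^{\vee}\hookrightarrow \Ind_B^G\chi$ coincides with right translation $v\mapsto v(\cdot\, t^{-1})$. On $\cW_{\sharp,r}$, Definition \ref{definitinpsit} applies this same right translation followed by $\mathrm{Res}_{BI}$. It therefore suffices to show that $\mathrm{Res}_{BI}$ acts trivially on translates of elements of $\ovM(\lambda)^{\vee}$: the support of $v(\cdot\,t^{-1})$ equals $\mathrm{supp}(v)\cdot t=BI\cdot t$, and the Iwahori decomposition together with the contraction $t(I\cap\ovN)t^{-1}\subset I\cap\ovN$ for $t\in T^{-}$ yields $BIt=BI$. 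Hence the two actions agree on $\ovM(\lambda)^{\vee}$.

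Next, I would propagate the equivariance to the whole module $\cW_{\natural,r}\cong \cind_{H^m(\ovB\cap I)}^I \ovM(\lambda)^{\vee}_r$. For a typical element $[n,v]$ with $n\in I\cap N$ and $v\in \ovM(\lambda)^{\vee}$, Lemma \ref{lemmapsitcompactinduction} gives $\psi_t[n,v]=[t^{-1}nt,\psi_t(v)]$ (using $t^{-1}(I\cap N)t\subset I\cap N$), and applying $i_{\cW_{\natural,r}}$ yields $(t^{-1}nt)\cdot \psi_t(v)=\delta_{t^{-1}n}\cdot v$, i.e.\ the function $x\mapsto v(xt^{-1}n)$. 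On the other side, $\psi_t(i_{\cW_{\natural,r}}[n,v])=\psi_t(n\cdot v)$ equals the same function after applying $\mathrm{Res}_{BI}$; but its support $BI\cdot n^{-1}t=BIt=BI$ again makes the restriction redundant, so both expressions coincide. To conclude for an arbitrary element of $\cW_{\natural,r}$, I would use that both $\psi_t$-operators satisfy the identity $(t^{-1}xt)\psi_t(-)=\psi_t(x\,-)$ for $x\in I\cap tIt^{-1}$ (Lemma \ref{lemmapsit} on the $\cW_{\sharp,r}$ side, Lemma \ref{lemma-psit} on the $\cW_{\natural,r}$ side), together with the $I$-equivariance of $i_{\cW_{\natural,r}}$, to reduce to the computation above for $n$ ranging over a system of representatives of $I/(I\cap tIt^{-1})=(I\cap\ovN)/t(I\cap\ovN)t^{-1}$, and then invoke continuity of $\psi_t$ and $i_{\cW_{\natural,r}}$ to pass from $\ovM(\lambda)^{\vee}$ to its Banach completion $\ovM(\lambda)^{\vee}_r$.

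The proof is essentially bookkeeping: the substantive structural input, namely the continuous extension of the $\delta_{t^{-1}}$-action from $\ovM(\lambda)^{\vee}$ to $\ovM(\lambda)^{\vee}_r$, has already been provided by Lemma \ref{lemma-psit}. The only point demanding a little care is ensuring that right translation by $t^{-1}$ of every function arising in the $I$-orbit of $\ovM(\lambda)^{\vee}$ remains supported in $BI$, so that $\mathrm{Res}_{BI}$ acts trivially; this reduces in every case to the elementary identity $BIt=BI$ for $t\in T^{-}$.
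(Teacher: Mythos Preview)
Your overall strategy---verify the equivariance first on $\ovM(\lambda)^{\vee}$, then propagate to $[n,v]$ with $n\in I\cap N$ via Lemma~\ref{lemmapsitcompactinduction}, and finally pass to the completion $\ovM(\lambda)^{\vee}_r$ by continuity---is exactly the route taken in the paper. However, your justification of the base case contains a concrete error.

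You assert that $BIt=BI$ for $t\in T^{-}$, and use this to argue that $\mathrm{Res}_{BI}$ acts trivially on $v(\cdot\,t^{-1})$. This is false: from $t(I\cap\ovN)t^{-1}\subset I\cap\ovN$ one obtains $t^{-1}(I\cap\ovN)t\supset I\cap\ovN$, hence $BIt=B\cdot t^{-1}(I\cap\ovN)t\supset BI$, and the containment is strict for $t\notin T_0$. Consequently $v(\cdot\,t^{-1})$ has support $BIt\supsetneq BI$ and the restriction $\mathrm{Res}_{BI}$ is \emph{not} the identity. The same issue recurs in your second paragraph where you claim $BI\cdot n^{-1}t=BI$.

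The underlying mistake is the assertion that ``the action of $\delta_{t^{-1}}$, under the inclusion $\ovM(\lambda)^{\vee}\hookrightarrow\Ind_B^G\chi$, coincides with right translation''. This would force the image of $\delta_{t^{-1}}v$ to have support $BIt$, contradicting that the image of $i_{\cW_{\natural,r}}$ lies in $\Ind_B^G\chi(BI)$. The correct picture is that the identification $\cC^{\pol}(\ovN,E)\simeq\cC^{\pol}(I\cap\ovN,E)$ is via restriction, so the $\delta_{t^{-1}}$-action on $\ovM(\lambda)^{\vee}$, transported through $i_{\cW_{\natural,r}}$, is right translation on $\cC^{\pol}(\ovN,E)$ \emph{followed by} $\mathrm{Res}_{BI}$. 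This is precisely Definition~\ref{definitinpsit}'s $\psi_t$ on $\cW_{\sharp,r}$, and the two sides match on the nose---no triviality of $\mathrm{Res}_{BI}$ is needed. The paper's explicit check $i_{\cW_{\natural,r}}([t^{-1}gt,\psi_t(f)])=f(-t^{-1}g)\mathbf{1}_{BI}=\psi_t(i_{\cW_{\natural,r}}([g,f]))$ confirms this: both sides carry the factor $\mathbf{1}_{BI}$.

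Once you correct this, your argument becomes essentially identical to the paper's.
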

\begin{proof}
    Write $\ovM$ for $\ovM(\lambda)^{\vee}$. For $f\in \cC^{\pol}(\ovN,E)\stackrel{\mathrm{Res}_{BI}}{\rightarrow }\cC^{\pol}(I\cap \ovN,E)$, $\psi_t$ is just the action of $t^{-1}$ on the $\cD(\frg,\ovB)$-module $\cC^{\pol}(\ovN,E)$ (composed with $\mathrm{Res}_{BI}$ on $\cC^{\pol}(I\cap \ovN,E)$). Hence the map $\ovM \hookrightarrow \Ind_{B}^G\chi(BI)$ from (\ref{equation-breuiladjunction}) is $\psi_t$-equivariant by Definition \ref{definitinpsit}. By continuity, the same holds for $\ovM_r\subset \cW_{\natural,r}(\ovM)\rightarrow \Ind_{B}^G\chi(BI)$. For $[g,f]\in \cW_{\natural,r}(\ovM)=\cind_{H^m(\ovB\cap I)}^I\ovM_r$, the element $\psi_t(i_{\cW_{\natural,r}}([g,f]))\in \Ind_{B}^G\chi(BI)$, where $g\in N\cap I$, is equal to $f(-t^{-1}g)\cdot \mathbf{1}_{BI}$. While (see Lemma \ref{lemmapsitcompactinduction})
    \[i_{\cW_{\natural,r}}([t^{-1}gt,\psi_t(f)])=(t^{-1}gt)(f(-t^{-1})\mathbf{1}_{BI})=f(-(t^{-1}gt)t^{-1})\mathbf{1}_{BI}=f(-t^{-1}g)|_{BI}\]
    since $t^{-1}gt\in I$ if $g\in N\cap I$ and $t\in T^{-}$. Hence $i_{\cW_{\natural,r}}$ is $\psi_t$-equivariant.
\end{proof}
\begin{prop}\label{propositioninjectiondualverma}
    The map $\Phi_{\cW_{\natural,r}}: \cind_I^G\cW_{\natural,r}\rightarrow \Ind_B^G\chi$ induced by $i_{\cW_{\natural,r}}$ factors through an injection (where $\cH,\frm$ are given in \S\ref{subsectionKoszulcomplex} )
    \[\cind_I^G\cW_{\natural,r}\otimes_{\cH}\cH/\frm\hookrightarrow \Ind_B^G\chi.\]
\end{prop}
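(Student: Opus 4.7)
The proof splits into two logically independent parts: (I) showing that $\Phi_{\cW_{\natural,r}}$ factors through the Hecke quotient $\cind_I^G\cW_{\natural,r}\otimes_{\cH}\cH/\frm$, and (II) showing that the induced map on the quotient is injective.

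For part (I), I would follow verbatim the computation carried out in the proof of Proposition \ref{propositionKSresolutioninjection}. That argument is purely formal once one has a $\psi_t$-equivariant map into $\Ind_B^G\chi(BI)$: combining the $\psi_t$-equivariance of $i_{\cW_{\natural,r}}$ (Lemma \ref{lemma-psitequivariant}), the explicit formula $U_t[g,w]=\sum_{xtI\in ItI/I}[gxt,\psi_t(x^{-1}w)]$, the identity $\psi_t(v)=v(-t^{-1})\cdot\mathbf{1}_{BI}$ valid on $\cW_{\sharp,r}$, and the disjoint decomposition $BI=\coprod_x BIt^{-1}x^{-1}$ from \cite[Prop.~1.2]{kohlhaase2012homological}, one computes $\Phi_{\cW_{\natural,r}}(U_t[g,w])=g\cdot i_{\cW_{\natural,r}}(w)=\Phi_{\cW_{\natural,r}}([g,w])$. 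Hence each $U_{t_i}-1$ is annihilated and $\Phi_{\cW_{\natural,r}}$ descends to the Hecke quotient.

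For part (II), the strategy is to reduce to the already-known injection from Corollary \ref{corollaryKSresolution} via Corollary \ref{corinjectiontwoW}. If one can show that the $\psi_t$-equivariant map $i_{\cW_{\natural,r}}:\cW_{\natural,r}\hookrightarrow\cW_{\sharp,r}$ is injective, then Corollary \ref{corinjectiontwoW} gives
\[
\cind_I^G\cW_{\natural,r}\otimes_{\cH}\cH/\frm\;\hookrightarrow\;\cind_I^G\cW_{\sharp,r}\otimes_{\cH}\cH/\frm,
\]
and composing with the injection $\cind_I^G\cW_{\sharp,r}\otimes_{\cH}\cH/\frm\hookrightarrow\Ind_B^G\chi$ obtained from Corollary \ref{corollaryKSresolution} (applicable because $\cW_{\sharp,r}$ is sandwiched between $\cW_{\sharp,n}$ and $\cW_{\sharp,m'}$ for some integers $n,m'$) yields the desired embedding.

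The main obstacle is therefore verifying the injectivity of $i_{\cW_{\natural,r}}$. Using the direct sum decomposition $\cD_r(I,E)=\bigoplus_{g\in I/H^m(\ovB\cap I)}\delta_g\cD_r(\frg,\ovB\cap I)$ recalled before Lemma \ref{lemmaexact}, one obtains $\cW_{\natural,r}\simeq\bigoplus_g \delta_g\cdot\ovM(\lambda)^{\vee}_r$, and the question splits into: (a) injectivity of $\ovM(\lambda)^{\vee}_r\to\cW_{\sharp,r}$, and (b) linear independence of the $(I\cap N)$-translates of its image inside $\cW_{\sharp,r}\simeq\cC^r(I\cap\ovN,E)$. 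Point (b) is a support/germ argument: under the Iwahori decomposition, distinct coset representatives in $I/H^m(\ovB\cap I)$ correspond to distinct $(I\cap N)/(H^m\cap N)$-translations, and since translation acts freely on ``polynomial germs at the origin'' of functions on $I\cap\ovN$, linear independence follows. Point (a) is the substantive content: the integral weight hypothesis $\lambda\in(\Z^d)^{[L:\Q_p]}$ is crucial because it ensures that the continuous extension of the inclusion $\ovM(\lambda)^{\vee}=\cC^{\pol}(I\cap\ovN,E)\hookrightarrow \cC^r(I\cap\ovN,E)$ to the $U_r(\frg)$-completion $\ovM(\lambda)^{\vee}_r$ remains injective. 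This in turn is controlled by the rigid-analytic localization theory invoked in the introduction near Proposition \ref{propositionintroductionDmodule}: passing to the Fréchet limit, the Arens--Michael completion $\widehat{U}(\frg)\otimes_{U(\frg)}\ovM(\lambda)^{\vee}$ is identified with $\cC^{\an}(\ovN,E)$, so each $\ovM(\lambda)^{\vee}_r$ embeds as a Banach piece of an analytic function space on $\ovN$, whose restriction to the admissible open $I\cap\ovN$ remains injective. The delicate part of the whole argument is therefore controlling these completions uniformly in $r$, and the integral-weight restriction is precisely what allows this.
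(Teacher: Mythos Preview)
Your strategy for part (II) has a genuine gap: the map $i_{\cW_{\natural,r}}:\cW_{\natural,r}\rightarrow \cW_{\sharp,r}$ is \emph{not} injective in general, so your reduction via Corollary~\ref{corinjectiontwoW} breaks down. This is precisely the content of the Remark immediately following the proposition in the paper. Concretely, whenever $\ovM(\lambda)^{\vee}$ fails to be simple (for instance if $\lambda$ is dominant integral, so that the finite-dimensional $L(\lambda)$ sits in the socle), there is a submodule $\ovM\subset\ovM(\lambda)^{\vee}$ which is already a $\cD(\frg,\ovP\cap I)$-module for some parabolic $\ovP\supsetneq\ovB$; the map $\cD_r(I,E)\otimes_{\cD(\frg,\ovB\cap I)}\ovM\rightarrow\cW_{\sharp,r}$ then factors through the proper quotient $\cD_r(I,E)\otimes_{\cD(\frg,\ovP\cap I)}\ovM$, producing a nonzero kernel. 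Your ``support/germ'' argument for point~(b) cannot succeed because the $(I\cap N)$-translates do not act freely on these larger-symmetry pieces.

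The paper avoids this obstacle altogether. Rather than proving injectivity at the level of $I$, it rewrites $\cind_I^G\cW_{\natural,r}=\cind_{I'}^G(\ovM(\lambda)^{\vee})_r$ for the smaller group $I'=H^m(\ovB\cap I)$ (using Proposition~\ref{propositionchangethegroups1}(2)), and then only needs the injectivity of $(\ovM(\lambda)^{\vee})_r\hookrightarrow\cW_{\sharp,r}$, which feeds into Corollary~\ref{corollarychangethegroupinjection}. That injectivity is obtained by an elementary length argument from Lemma~\ref{lemmaexact}: the functor $\ovM\mapsto\ovM_r$ is exact and preserves simplicity, so $(\ovM(\lambda)^{\vee})_r$ has the same $U_r(\frg)$-length as $\ovM(\lambda)^{\vee}$ has over $U(\frg)$, and hence the map induced by the injection $\ovM(\lambda)^{\vee}\hookrightarrow\cW_{\sharp,r}$ cannot collapse anything. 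In particular, neither the integral-weight hypothesis nor the rigid-analytic localization theory of \S\ref{sectionlocalizationandcompletion} is used here; those enter only for the surjectivity statement (Proposition~\ref{propositionsurjectivityPS}), not for this injectivity.
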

\begin{proof}
    By the discussion before Lemma \ref{lemmaexact}, $\cind_I^G\cW_{\natural,r}=\cind_{I'}^G (\overline{M}(\lambda)^{\vee})_r$ where $I'=H^m(\ovB\cap I)$ contains $I\cap \ovB$ with the Iwahori decomposition. Since the map $ \overline{M}(\lambda)^{\vee}\hookrightarrow \cW_{\sharp,r}$ is injective, so is the map $(\overline{M}(\lambda)^{\vee})_r\hookrightarrow \cW_{\sharp,r}$ as $(\overline{M}(\lambda)^{\vee})_r$ has the same length over $U_{r}(\frg)$ as the length of $\overline{M}(\lambda)^{\vee}$ over $U(\frg)$ by Lemma \ref{lemmaexact}. Moreover, the $\psi_t$-actions on $\cW_{\natural,r}$ given by Lemma \ref{lemma-psit} is the one in \S \ref{subsectionchangethegroups} induced from the $\psi_t$-actions on $ (\overline{M}(\lambda)^{\vee})_r$. Apply (2) of Proposition \ref{propositionchangethegroups1} and Corollary \ref{corollarychangethegroupinjection} for $\cW'=(\overline{M}(\lambda)^{\vee})_r$, we get the injectivity.
\end{proof}
\begin{rem}
    If $\ovM(\lambda)^{\vee}$ is simple, then the module $\cW_{\natural,r}$ is topologically of finite length as a $\cD_r(I,E)$-module, with the same length as the $U(\frg)$-module $\overline{M}(\lambda)^{\vee}$ by results in \cite{orlik2015jordan}. In this case the map $i_{W_{\natural,r}}$ is injective and Proposition \ref{propositioninjectiondualverma} follows directly from Proposition \ref{propositionKSresolutioninjection}. But this is not true in general. It may happen that $\ovM\subset \ovM(\lambda)^{\vee}$ is a $\cD(\frg,\ovP\cap I)$-module for some parabolic subgroup $\ovP\supsetneq \ovB$ (for example if $\chi_{\rm sm}$ is trivial, $\ovM=E$ and $\ovP=G$). Then the map $\cD_r(I,E)\otimes_{\cD(\frg,\ovB\cap I)}\overline{M}\rightarrow \cW_{\sharp,r}$ factors through the quotient $\cD_r(I,E)\otimes_{\cD(\frg,\ovP\cap I)}\overline{M}$.
\end{rem}
\subsection{Beyond dual Verma modules}\label{subsectionbeyonddualverma}
We extend the Kohlhaase-Schraen resolutions for principal series in the last sections to representations $\cF_{B}^G(M)$ constructed from some $\cD(\frg,B)$-modules $M$ in \cite{orlik2015jordan} extended in \cite[Appendice]{breuil2016vers} or \cite[\S 4]{schmidt2016dimensions} that we will review below.

Consider a $\cD(\frg,B)$-module $M$ such that $M\in \cO^{\frb}_{\rm alg}$ and such that we can choose a presentation \[0\rightarrow \frd\rightarrow U(\frg)\otimes_{U(\frb)}\sigma\rightarrow M\rightarrow 0\] 
for a finite dimensional $B$-representation $\sigma$. For such a presentation and $r$ sufficiently close to $1$ (which we will assume from now on), consider 
\[\cW_{\sharp,r}(\sigma^{\vee}):=(\Ind_{B}^G\sigma^{\vee})(BI)
\cap \cC^r(I,\sigma)\subset \Ind_{B}^G\sigma^{\vee}\]
where $\sigma^{\vee}=\Hom_E(\sigma,E)$ with the subspace topology of $\cC^r(I,\sigma)$. The continuous $E$-dual of $\cW_{\sharp,r}(\sigma^{\vee})$ is equal to $\cD_r(I,E)\otimes_{D(B,E)}\sigma$. By definition 
\[\cF_{B}^G(M)=(\Ind_{B}^G\sigma^{\vee})^{\frd}=((\cD(G,E)\otimes_{D(B,E)}\sigma)/\cD(G,E)\frd)'\] 
where the last superscript denotes the strong dual. Explicitly, the action of $\frx\otimes y \in U(\frg)\otimes \sigma$ on $\Ind_{B}^G\sigma^{\vee}$ sends $f:G\rightarrow \sigma^{\vee}$ to $g\mapsto (\frx\cdot f)(g)(y)\in \cC^{\la}(G,E)$ where the Lie algebra action $\frx\cdot$ is induced by left $G$-translations. We also consider 
\[\cW_{\natural,r}(\sigma^{\vee}):=\cD_r(I,E)\otimes_{\cD(\frg,I\cap\ovB)}\cC^{\pol}(I\cap \ovN,\sigma^{\vee})\]
where $\cC^{\pol}(I\cap \ovN,\sigma^{\vee})$ is the space of $(I\cap \ovN)$-polynomial functions supported on $I$ and with values in $\sigma^{\vee}$. Then $\cC^{\pol}(I\cap \ovN,\sigma^{\vee})=\Hom_{U(\frb)}(U(\frg),\sigma^{\vee})^{\overline{\frn}^{\infty}}=\Hom_E(U(\frg)\otimes_{U(\frb)}\sigma,E)^{\overline{\frn}^{\infty}}$ (see \cite[(2.5.7)]{emerton2007jacquetII} or discussions around (\ref{equation-breuiladjunction})). As in Lemma \ref{lemma-psitequivariant}, there is a natural $\psi_t$-equivariant $\cD_r(I,E)$-map $i_{\cW_{\natural,r}(\sigma^{\vee})}:\cW_{\natural,r}(\sigma^{\vee})\rightarrow \cW_{\sharp,r}(\sigma^{\vee})$ where we equip $\cW_{\sharp,r}(\sigma^{\vee})$ with the $\psi_t$-actions given by Definition \ref{definitinpsit}.

There is a $\psi_t$-stable sub-$D_r(I,E)$-module
\[\cW_{\natural,r}(\tau(M)^{\vee})=\cD_r(I,E)\otimes_{\cD(\frg,\ovB\cap I)}\Hom_E(M,E)^{\overline{\frn}^{\infty}}\] 
of $\cW_{\natural,r}(\sigma^{\vee})$ where for short we write $\tau(M)^{\vee}:=\Hom_E(M,E)^{\overline{\frn}^{\infty}}$ with the usual dual left actions of $U(\frg)$ and $B$. Consider also the $I$-subspace
\[\cW_{\sharp,r}(\tau(M)^{\vee}):=\cW_{\sharp,r}(\sigma^{\vee})^{\frd}=\cW_{\sharp,r}(\sigma^{\vee})\cap \cF_{B}^G(M)\]
of $ \cW_{\sharp,r}(\sigma^{\vee})$ which is $\psi_t$-stable (since the action of $\frd$ commutes with right translations and restrictions on $G$ which are used to define $\psi_t$) and has the continuous dual given by $\cD_r(I,E)\otimes_{\cD(\frg,B\cap I)}M$ (see the proof of \cite[Prop. 3.11]{orlik2015jordan}).
\begin{lemma}
    The composite map $\cW_{\natural,r}(\tau(M)^{\vee})\subset \cW_{\natural,r}(\sigma^{\vee})\stackrel{i_{\cW_{\natural,r}(\sigma^{\vee})}}{\rightarrow} \cW_{\sharp,r}(\sigma^{\vee})$ factors through a $\psi_t$-equivariant $I$-map $i_{\cW_{\natural,r}(\tau(M)^{\vee})}:\cW_{\natural,r}(\tau(M)^{\vee})\rightarrow \cW_{\sharp,r}(\tau(M)^{\vee})\subset \cW_{\sharp,r}(\sigma^{\vee})$. Moreover, $i_{\cW_{\natural,r}(\tau(M)^{\vee})}$ is independent of the presentations of $M$ and is induced by (\ref{equation-breuiladjunction}).
\end{lemma}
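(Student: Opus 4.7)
The plan is to establish the factoring as a statement of $\cD_r(I,E)$-submodules, then to deduce equivariance and intrinsicality from it. First I would observe that $\cW_{\sharp,r}(\tau(M)^{\vee}) = \cW_{\sharp,r}(\sigma^{\vee}) \cap \cF_{B}^G(M)$ is a $\cD_r(I,E)$-submodule of $\cW_{\sharp,r}(\sigma^{\vee})$: since $\cF_{B}^G(M) \subset \Ind_{B}^G \sigma^{\vee}$ is closed and $G$-stable (the action of $\frd$ being through left translations), it is stable under integration by $\cD(G,E) \supset \cD_r(I,E)$, while $\cW_{\sharp,r}(\sigma^{\vee})$ is a $\cD_r(I,E)$-module by construction. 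The same subspace is automatically $I$-stable as a subrepresentation and $\psi_t$-stable since $\psi_t$ is defined by restriction of the right translation by $t^{-1}$ to $BI$, which preserves $\cF_{B}^G(M)$.

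The heart of the argument is to show that the image of $\tau(M)^{\vee} \hookrightarrow \cC^{\pol}(I \cap \ovN, \sigma^{\vee}) \subset \cW_{\sharp,r}(\sigma^{\vee})$ already lies inside $\cW_{\sharp,r}(\tau(M)^{\vee})$. This is exactly the content of (\ref{equation-breuiladjunction}) with trivial smooth character: the canonical $(\frg, \ovB \cap I)$-equivariant embedding $\tau(M)^{\vee} \hookrightarrow \cF_{B}^G(M)$, obtained by dualising the surjection $U(\frg) \otimes_{U(\frb)} \sigma \twoheadrightarrow M$, coincides on the nose with the composite $\tau(M)^{\vee} \hookrightarrow \cC^{\pol}(I \cap \ovN, \sigma^{\vee}) \hookrightarrow \cW_{\sharp,r}(\sigma^{\vee}) \hookrightarrow \Ind_{B}^G \sigma^{\vee}$, so this composite automatically lands in $\cF_{B}^G(M) \cap \cW_{\sharp,r}(\sigma^{\vee}) = \cW_{\sharp,r}(\tau(M)^{\vee})$. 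Combining this with the first paragraph and with the $\cD_r(I,E)$-equivariance of $i_{\cW_{\natural,r}(\sigma^{\vee})}$, the image of $\cW_{\natural,r}(\tau(M)^{\vee}) = \cD_r(I,E) \otimes_{\cD(\frg, \ovB \cap I)} \tau(M)^{\vee}$, which is generated as a $\cD_r(I,E)$-module by $\tau(M)^{\vee}$, lies inside $\cW_{\sharp,r}(\tau(M)^{\vee})$. This produces $i_{\cW_{\natural,r}(\tau(M)^{\vee})}$; its $I$- and $\psi_t$-equivariance is inherited directly by restriction from $i_{\cW_{\natural,r}(\sigma^{\vee})}$ and Lemma \ref{lemma-psitequivariant}.

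For independence of the presentation, both the source $\tau(M)^{\vee} = \Hom_E(M,E)^{\overline{\frn}^{\infty}}$ and the Breuil embedding $\tau(M)^{\vee} \hookrightarrow \cF_{B}^G(M)$ are intrinsic to $M$. Two choices of $(\sigma, \frd)$ therefore produce two $\cD_r(I,E)$-equivariant maps out of $\cW_{\natural,r}(\tau(M)^{\vee})$ into the intrinsic target $\cF_{B}^G(M)$ that agree on the generating submodule $\tau(M)^{\vee}$, hence coincide. I expect the one genuinely nontrivial step to be the identification in the second paragraph — that elements of $\tau(M)^{\vee}$, viewed as polynomial functions on $I \cap \ovN$ with values in $\sigma^{\vee}$, indeed annihilate $\frd$ for the left-translation action of $U(\frg) \otimes_{U(\frb)} \sigma$ on $\Ind_{B}^G \sigma^{\vee}$. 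This is a direct unwinding of the Breuil pairing from \cite[Prop. 3.6]{breuil2015versII}; everything else reduces to formalities of tensoring with $\cD_r(I,E)$.
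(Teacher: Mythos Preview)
Your proposal is correct and follows essentially the same route as the paper: reduce the factoring to checking that the generating submodule $\tau(M)^{\vee}$ lands in $\cW_{\sharp,r}(\sigma^{\vee})^{\frd}$, then use that the target is a $\cD_r(I,E)$-submodule. The paper makes the key containment slightly more explicit by invoking \cite[Lem.~3.3]{breuil2015versII} to identify $\cC^{\pol}(I\cap\ovN,\sigma^{\vee})^{\frd}=\Hom_E((U(\frg)\otimes_{U(\frb)}\sigma)/\frd,E)^{\overline{\frn}^{\infty}}$, and cites \cite[Prop.~3.4~(ii)]{breuil2015versII} for independence of the presentation, but this is precisely the ``unwinding of the Breuil pairing'' you flagged as the only genuine step.
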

\begin{proof}
    Since $\cW_{\natural,r}(\tau(M)^{\vee})$ is generated as a $\cD_r(I,E)$-module by $\tau(M)^{\vee}$ and $\cW_{\sharp,r}(\tau(M)^{\vee})$ is a $\cD_r(I,E)$-submodule, it's enough to show that $\Hom_E(M,E)^{\overline{\frn}^{\infty}}\subset \cC^{\pol}(I\cap \ovN,\sigma^{\vee})$ is contained in the subspace $\cW_{\sharp,r}(\sigma^{\vee})^{\frd}\cap \cC^{\pol}(I\cap \ovN,\sigma^{\vee})=\cC^{\pol}(I\cap \ovN,\sigma^{\vee})^{\frd}$. By \cite[Lem. 3.3]{breuil2015versII}, $\cC^{\pol}(I\cap \ovN,\sigma^{\vee})^{\frd}=\Hom_E(U(\frg)\otimes_{U(\frb)}\sigma,E)^{\overline{\frn}^{\infty},\frd}=\Hom_E((U(\frg)\otimes_{U(\frb)}\sigma)/\frd,E)^{\overline{\frn}^{\infty}}$. The factorization follows. The independence of the presentations follows from \cite[Prop. 3.4 (ii)]{breuil2015versII}.
\end{proof}
The $\psi_t$-equivariant map $i_{\cW_{\natural,r}(\sigma^{\vee})}$ induces an $\cH$-map 
\[\cind_{I}^G\cW_{\natural,r}(\sigma^{\vee})\rightarrow \cind_{I}^G\cW_{\sharp,r}(\sigma^{\vee})\]
and we have $G$-maps (see Proposition \ref{propositionKSresolutioninjection}) 
\[\Phi_{\cW_{?,r}(\sigma^{\vee})}:\cind_{I}^G\cW_{?,r}(\sigma^{\vee})\rightarrow \cind_{I}^G\cW_{?,r}(\sigma^{\vee})\otimes_{\cH}\cH/\frm\rightarrow \Ind_{B}^G\sigma^{\vee}\]
for $?\in\{\natural,\sharp\}$.
By the above lemma, the map restrict to an $\cH$-map
\[\cind_{I}^G\cW_{\natural,r}(\tau(M)^{\vee})\rightarrow \cind_{I}^G\cW_{\sharp,r}(\tau(M)^{\vee})\]
and finally (use Corollary \ref{corinjectiontwoW})
\[\Phi_{\cW_{?,r}(\tau(M)^{\vee})}:\cind_{I}^G\cW_{?,r}(\tau(M)^{\vee})\rightarrow \cind_{I}^G\cW_{?,r}(\tau(M)^{\vee})\otimes_{\cH}\cH/\frm\rightarrow \cF_{B}^G(M).\]

To simplify discussions, we will focus more on subquotients of principal series. Suppose that $\chi=z^{\lambda}\chi_{\rm sm}$ is a locally algebraic character of $T$ where $\lambda$ is the weight of $\chi$ and $\chi_{\rm sm}$ is the smooth part. Then the principal series $\Ind_{B}^G\chi$ can be written as $\cF_B^G(M)$ for $M=D(\frg,B)\otimes_{D(B)}\chi^{-1}=(U(\frg)\otimes_{U(\frb)}(-\lambda))\otimes_{E}\chi_{\rm sm}^{-1}$ where $B$ acts diagonally (on the first factor by integration) and $\frg$ acts on the second factor trivially. In the notation of \cite{orlik2015jordan}, $\cF_{B}^G(M\otimes_{E}\chi_{\rm sm}^{-1})=\cF_{B}^G(M,\chi_{\sm})$ for $M\in\cO^{\frb}_{\rm alg}$ with the algebraic $B$-action. Note that $\tau(M\otimes_{E}\chi_{\rm sm}^{-1})^{\vee}=\tau(M)^{\vee}\otimes_{E}\chi_{\rm sm}$.

\begin{prop}\label{propositioncomplexesseq}
    Suppose that $0\rightarrow M_0\rightarrow M\rightarrow M_1\rightarrow 0$ is a short exact sequence of $U(\frg)$-modules in $\cO^{\frb}_{\rm alg}$ and let $\chi_{\rm sm}$ be a smooth character of $T$. Then for $r$ sufficiently close to $1$, we have a short exact sequence of complexes for $?=(\natural/\sharp, r)$:
    \begin{align*} 0\rightarrow \wedge^{\bullet} \cH^{d}\otimes_{\cH} \cind_{I}^G\cW_{?}(\tau(M_1)^{\vee}\otimes\chi_{\rm sm})\rightarrow &\wedge^{\bullet} \cH^{d}\otimes_{\cH} \cind_{I}^G\cW_{?}(\tau(M)^{\vee}\otimes\chi_{\rm sm})\\
    &\rightarrow \wedge^{\bullet} \cH^{d}\otimes_{\cH} \cind_{I}^G\cW_{?}(\tau(M_0)^{\vee}\otimes\chi_{\rm sm})\rightarrow 0.
    \end{align*}
    Moreover, we have a commutative diagram 
        \begin{center}
            \begin{tikzcd}[sep=small]
                0\arrow[r]&\cind_{I}^G\cW_{?}(\tau(M_1)^{\vee}\otimes\chi_{\rm sm})/\frm\arrow[r]\arrow[d] & \cind_{I}^G\cW_{?}(\tau(M)^{\vee}\otimes\chi_{\rm sm})/\frm \arrow[r] \arrow[d]
                & \cind_{I}^G\cW_{?}(\tau(M_0)^{\vee}\otimes\chi_{\rm sm})/\frm \arrow[d]\arrow[r]&0 \\
                0\arrow[r]&\cF_{B}^G(M_1,\chi_{\rm sm})\arrow[r]&\cF_{B}^G(M,\chi_{\rm sm})\arrow[r] 
                & \cF_{B}^G(M_0,\chi_{\rm sm}) \arrow[r]&0
            \end{tikzcd}
        \end{center}	
    where the rows are exact and all vertical arrows are injective. Moreover if $?=(\sharp,r)$, the vertical maps are isomorphisms.
\end{prop}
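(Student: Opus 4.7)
My plan is to reduce the statement to three successive steps: (a) produce the short exact sequence of $\cW_?$-modules at the Iwahori level, (b) propagate that exactness through compact induction and the Koszul complex, and (c) analyze the resulting diagram of zeroth homologies. The content is mostly formal once (a) is in hand, and the main obstacle will be in (c), namely upgrading the vertical injection to an isomorphism in the $?=(\sharp,r)$ case for an arbitrary $M\in\cO^{\frb}_{\rm alg}$; this amounts to extending Proposition \ref{propositionKSresolutionsujectivity} beyond (generalized) Verma modules.

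For step (a) in the $\natural$ case, I plan to use that the BGG dualization $M\mapsto \tau(M)^{\vee}=\Hom_E(M,E)^{\frn_{\overline{B}}^{\infty}}$ is an exact functor $\cO^{\frb}_{\rm alg}\to \cO^{\overline{\frb}}_{\rm alg}$ and then apply Lemma~\ref{lemmaexact}, which gives exactness of $\ovM\mapsto \cW_{\natural,r}(\ovM)=\cD_r(I,E)\otimes_{\cD(\frg,\ovB\cap I)}\ovM$ on finitely presented $U(\frg)$-modules. In the $\sharp$ case I would pass to strong continuous duals: the dual of $\cW_{\sharp,r}(\tau(M)^{\vee}\otimes\chi_{\rm sm})$ is $\cD_r(I,E)\otimes_{\cD(\frg,B\cap I)}(M\otimes_E\chi_{\rm sm}^{-1})$, and an argument parallel to Lemma~\ref{lemmaexact} (using flatness of $U_r(\frg)$ over $\widehat{U}(\frg)$ and of $\widehat{U}(\frg)$ over $U(\frg)$) together with strong duality of the Banach/Smith spaces involved yields the desired exact sequence.

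For step (b), the maps produced in (a) are $\psi_t$-equivariant by naturality (via Lemma~\ref{lemma-psit} in the $\natural$ case, and directly from Definition~\ref{definitinpsit} in the $\sharp$ case), so they induce $\cH$-equivariant maps after applying $\cind_I^G$. Since compact induction is exact and $\wedge^i\cH^d$ is a free $\cH$-module, the resulting sequence of Koszul complexes is short exact in every degree. The long exact sequence of homology, combined with Proposition~\ref{propositionKSresolutionKoszulregular} (which makes higher $\Tor^\cH_i(\cH/\frm,\cind_I^G\cW)$ vanish for any $\cW$ equipped with a compatible $\psi_t$-structure), then yields the short exact top row of the target diagram.

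For step (c), commutativity of the squares is immediate from the naturality of $\Phi_?$ in its coefficients. Vertical injectivity in the $\sharp$ case comes from Proposition~\ref{propositionKSresolutioninjection}; for the $\natural$ case I would compose with the comparison map $\cind_I^G\cW_{\natural,r}\to \cind_I^G\cW_{\sharp,r}$, reinterpret the source via the level-change isomorphism of Proposition~\ref{propositionchangethegroups1}(2), and invoke Corollary~\ref{corollarychangethegroupinjection}, mirroring the proof of Proposition~\ref{propositioninjectiondualverma}. The remaining isomorphism in the $\sharp$ case is the hard part: my approach is to first verify the surjectivity for generalized Verma modules $M=U(\frg)\otimes_{U(\frb)}\sigma$, where the Kohlhaase--Schraen disjointness argument in the proof of Proposition~\ref{propositionKSresolutionsujectivity} (via \cite[Prop.~1.2]{kohlhaase2012homological}) carries through after replacing the scalar $\chi$ with the coefficient system $\sigma^{\vee}\otimes\chi_{\rm sm}$, and then to pass to an arbitrary $M$ using a two-term generalized Verma presentation and the five lemma applied to the $\sharp$-exact sequence already established in (a).
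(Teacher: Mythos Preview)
Your proposal is correct and follows essentially the paper's route: exactness of $\cW_?$ via Lemma~\ref{lemmaexact} (and its dual for $\sharp$), Koszul regularity via Proposition~\ref{propositionKSresolutionKoszulregular}, and the vertical maps handled from the (generalized) Verma base case by a snake-lemma reduction. One caution on the $\natural$-injectivity: your phrase ``compose with the comparison map $\cind_I^G\cW_{\natural,r}\to\cind_I^G\cW_{\sharp,r}$'' is misleading, since that map need not be injective (cf.\ the remark after Proposition~\ref{propositioninjectiondualverma}); the paper instead proves injectivity for Verma $M$ via Proposition~\ref{propositioninjectiondualverma} and then inducts on the length of $M$ using the snake lemma, and your level-change-plus-Corollary~\ref{corollarychangethegroupinjection} plan works once you bypass that comparison map and apply the corollary directly to the $I'$-embedding $(\tau(M)^{\vee}\otimes\chi_{\rm sm})_r\hookrightarrow\Ind_B^G(\sigma^{\vee}\otimes\chi_{\rm sm})(BI)$.
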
 
\begin{proof}
    The functoriality of $\cW_{?}$ is similar as in \cite[Prop. 4.7]{orlik2015jordan} or by \cite[Prop. 3.4]{breuil2015versII}. The exactness of taking $\cW_{?}$ is the proof of \cite[Prop. 4.2]{orlik2015jordan} or Lemma \ref{lemmaexact}. By Proposition \ref{propositionKSresolutionKoszulregular}, the homologies of the Koszul complexes concentrate in degree $0$. Hence we get the short exact sequence of degree $0$ homologies. If $M$ is a Verma module, then $\Phi_{\cW_{\natural,r}(\tau(M)^{\vee}\otimes\chi_{\rm sm})}$ induces an injection $\cind_{I}^G\cW_{\natural,r}(\tau(M)^{\vee}\otimes \chi_{\rm sm})\otimes_{\cH}\cH/\frm\hookrightarrow \cF_{B}^G(M,\chi_{\rm sm})$ by Proposition \ref{propositioninjectiondualverma}. As any irreducible $M$ in $\cO^{\frb}_{\rm alg}$ is a quotient of a Verma module, the injectivity of general $M$ follows by the snake lemma and an induction on the length of $M$. If $?=(\sharp,r)$, the surjectivity follows similarly using the result of Kohlhaase-Schraen, i.e., Proposition \ref{propositionKSresolutionsujectivity}.
\end{proof}
\subsection{The surjectivity}\label{subsectionsurjectivity}
We prove the surjectivity of the map
\[\Phi_{\cW_{\natural,r}}:\cind_I^G \cW_{\natural,r}(\overline{M}(\lambda)^{\vee})\rightarrow \Ind_{B}^G\chi\]
constructed in \S\ref{subsectionresolutionprincipalseries} for locally algebraic characters $\chi$ using Proposition \ref{propositioncompletionlocalization} that will be proved in \S\ref{sectioncompletion} later. Then we can deduce from the surjectivity the main theorem on Kohlhaase-Schraen resolutions (Theorem \ref{theoremresolution}).

We introduce some notations which serve only for the proof below. The exponential map $\exp:\overline{\frn}\rightarrow \ovN$ is an isomorphism of $L$-analytic spaces. Choose basis of $\overline{\frn}$ in the root spaces compatible with the $\cO_L$-lattice we obtain the $L$-analytification $\overline{\frn}^{\rm an}\simeq \mathbf{A}^{\dim_{L}N}$. We obtain an isomorphism of the analytic spaces $\overline{\bN}^{\rm an}\simeq \mathbf{A}^{\dim_{L}N}$ with the ball of radius $s$ denoted by $\overline{\bN}^{\rm an}_{\leq s}$, satisfying that $\overline{\bN}^{\rm an}_{\leq 1}(L)=\ovN\cap K$. For $h\in \mathbb{Z}$, let $\ovN_h=\overline{\bN}^{\rm an}_{\leq |\varpi_L|_p^{h}}(L)$ which may or may not be a group. Using the Baker-Campbell-Hausdorff formula, $\ovN_h$ is a normal subgroup of $\ovN\cap I$ for $h\geq 0$ large enough. We say that a compactly supported locally analytic function $f\in \cC_c^{\la}(\ovN,E)$ is $h$-analytic for some $h\geq 1$ large enough if $f$ is rigid analytic on each left $\ovN_h$-coset. If $f$ is supported in $\ovN\cap I$, this is equivalent to that $f$ is rigid analytic on each right $\ovN_h$-coset since $\cC^{\rm an}(\ovN_hg,E)=\cC^{\rm an}(g\ovN_h,E)$ for any $g\in\ovN\cap I$. Let $\cA_h\subset (\Ind_B^G\chi)(BI)\simeq \cC^{\la}(I\cap \ovN,E)$ be the subspace of functions that are supported on $B\backslash BI$ and is $h$-analytic. For $h$ large enough, $\cA_h$ is the $I$-representation $\cW_{\sharp,h}$ in \S\ref{subsectionexamples}. We recall again the result of Kohlhaase-Schraen.
\begin{prop}[{\cite[Prop. 2.4]{kohlhaase2012homological}}]\label{propositionsurjectivitykohlhaase}
    The map $\cind_I^G\cA_h\rightarrow \Ind_B^G\chi$ is surjective for $h\geq 1$ large enough.
\end{prop}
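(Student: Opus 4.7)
The plan is to deduce this directly from \cite[Prop.~2.4]{kohlhaase2012homological} (which is Proposition~\ref{propositionKSresolutionsujectivity} above): once $h$ is large enough that $\cA_h$ coincides with the Kohlhaase--Schraen representation $\cW_{\sharp,h}$, the surjectivity of $\cind_I^G \cA_h \to \Ind_B^G\chi$ is immediate from \textit{loc.~cit.}. For orientation, I would nonetheless sketch the underlying argument.

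First, I would use compactness of the flag variety $B\backslash G$ (which follows from the Iwasawa decomposition $G = BK$) to reduce to showing that every element of $\Ind_B^G\chi(BI)$ lies in the image. Indeed, the translates $\{B\backslash BIg^{-1}\}_{g\in G}$ form an open cover of the compact set $\mathrm{supp}(f) \subset B\backslash G$, so by extracting a finite subcover and applying a locally analytic partition of unity one writes $f = \sum_i g_i^{-1} \cdot f'_i$ with each $f'_i \in \Ind_B^G\chi(BI)$.

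Identifying $\Ind_B^G\chi(BI) \simeq \cC^{\la}(I\cap\ovN, E)$ via the Iwasawa decomposition $BI = B(I\cap\ovN)$, the reduced problem concerns a locally analytic function $\psi$ on the compact $p$-adic manifold $I\cap\ovN$. By compactness there exists $h' \geq h$ such that $\psi$ is $h'$-analytic, and I would decompose $\psi = \sum_x \psi_x$ as a finite sum in which each $\psi_x$ is supported on a single left coset $x\ovN_{h'}$ and is rigid analytic there. The remaining task is then to express each $F_{\psi_x}\in\Ind_B^G\chi$ as a finite combination of right $G$-translates of elements of $\cA_h$.

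The key construction would transport $\psi_x$ to an $h$-analytic function via conjugation by an element $s\in T^-$ chosen deep enough that $s\ovN_h s^{-1} \subset \ovN_{h'}$: define $\phi\in\cA_h$ by $\phi(y) := \psi_x(xsys^{-1})$ for $y\in\ovN_h$, extended by zero to the remaining $\ovN_h$-cosets in $I\cap\ovN$, so that $\phi$ is rigid analytic on each $\ovN_h$-coset by rigidity of conjugation and translation. A direct computation with the Iwasawa decomposition then recovers $F_{\psi_x}$ as a suitable translate $g\cdot F_\phi$ (up to a scalar value of $\chi$), modulo contributions supported off $BI$ which must be handled separately. The hard part will be this final bookkeeping: one must track the supports of the translates in $B\backslash G$ and invoke the Bruhat-type disjointness statement \cite[\S4, Prop.~7]{schneider1991cohomology} (equivalently \cite[Prop.~1.2]{kohlhaase2012homological}) to ensure that pieces not supported in $BI$ either cancel or can be absorbed inductively. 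This is the technical heart of \cite[Prop.~2.4]{kohlhaase2012homological}.
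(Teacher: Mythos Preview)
Your proposal is correct and matches the paper's approach exactly: the paper does not prove this proposition independently but simply observes (in the sentence immediately preceding the statement) that for $h$ large enough $\cA_h$ coincides with $\cW_{\sharp,h}$, and then cites \cite[Prop.~2.4]{kohlhaase2012homological} (i.e., Proposition~\ref{propositionKSresolutionsujectivity}). Your additional sketch of the underlying Kohlhaase--Schraen argument is not in the paper and is not needed, though it is a reasonable outline of how that cited result is proved.
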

\begin{prop}\label{propositionsurjectivityPS}
    The map $\Phi_{\cW_{\natural,r}}$ above is surjective if $\chi$ has integral and antidominant (with respect to $\frb$) weights.
\end{prop}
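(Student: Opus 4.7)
I will mimic the Kohlhaase--Schraen argument, replacing the space $\cA_h$ by the image of $\cW_{\natural,r}$ and using Proposition \ref{propositioncompletionlocalization} to secure enough analyticity in the latter. Let $V:=\Phi_{\cW_{\natural,r}}(\cind_I^G\cW_{\natural,r})\subset \Ind_B^G\chi$, which is a $G$-subrepresentation. By Proposition \ref{propositionsurjectivitykohlhaase} the space $\Ind_B^G\chi$ is generated over $G$ by $\cA_h$ for $h$ large enough, so the claim reduces to $\cA_h\subset V$ for some $h$ and some $r<1$ sufficiently close to $1$.

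Next, I would decompose $f\in\cA_h$ as a finite sum $f=\sum_{g\in(I\cap\ovN)/\ovN_h}f_g$ where each $f_g$ is supported on the coset $g\ovN_h$ and rigid analytic there. Since $V$ is left $I$-stable and left translation by $g^{-1}\in I\cap \ovN\subset I$ moves $f_g$ to a function supported on $\ovN_h$, it suffices to show that every rigid analytic function on $\ovN_h$, extended by zero on the complement in $I\cap\ovN$ and then by $B$-equivariance to $BI$, belongs to $V$.

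For this last step I would invoke Proposition \ref{propositioncompletionlocalization}, which identifies $\widehat{U}(\frg)\otimes_{U(\frg)}\ovM(\lambda)^{\vee}$ with $\cC^{\rm an}(\ovN,E)$. Descending this through the Fr\'echet--Stein structure $\widehat{U}(\frg)=\varprojlim_r U_r(\frg)$ produces, at each Banach layer, an identification of $\ovM_r=U_r(\frg)\otimes_{U(\frg)}\ovM(\lambda)^{\vee}\subset \cW_{\natural,r}$ with the rigid analytic functions on some affinoid ball $\overline{\bN}^{\rm an}_{\leq s(r)}$; for $r$ close enough to $1$ this ball contains $\ovN\cap I\supset \ovN_h$. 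Hence $i_{\cW_{\natural,r}}(\ovM_r)$ hits (by restriction) every rigid analytic function on $\ovN_h$. Cutting off onto a single $\ovN_h$-coset---i.e. realising the extension by zero---is then accomplished via the action of $\cD_r(I,E)$, using that $\ovN_h$-cosets are clopen in $I\cap \ovN$ and that the finite quotient $E[(I\cap \ovN)/\ovN_h]$ embeds into $\cD_r(I,E)$ through Dirac distributions, so that the requisite characteristic idempotents are available to project onto the coset of $1$.

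\textbf{Main obstacle.} The crux of the argument lies in promoting the global Fr\'echet-level identification of Proposition \ref{propositioncompletionlocalization} to the affinoid-level description of $\ovM_r$. This rests on the coadmissibility over $\widehat{\cD}$ of the direct image $j^{\rm an}_*\cO_{\overline{\bN}^{\rm an}}$ along the open Bruhat embedding $j:\overline{\bN}\hookrightarrow \bG/\overline{\bB}$, a property which holds only when $\lambda$ is integral; the antidominance of $\lambda$ is then what allows the Beilinson--Bernstein localisation of $\ovM(\lambda)^{\vee}$ to recover this $\widehat{\cD}$-module. This is precisely the hypothesis that the authors single out as essential, and the proof in \S\ref{sectionlocalizationandcompletion} via the methods of Bitoun--Bode is where the real analytic work happens; the reductions in the first two steps are essentially formal by comparison.
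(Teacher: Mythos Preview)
There is a genuine gap in your third step. Proposition~\ref{propositioncompletionlocalization} (reformulated as Lemma~\ref{lemma-surjectivityhan}) shows that the image of $U_r(\frg)\otimes_{U(\frg)}\ovM(\lambda)^\vee$ inside $\cW_{\sharp,r}$ contains $\cC^{\an}(\ovN_{-h},E)\mathbf 1_{I\cap\ovN}$ for some large $h$: restrictions to $I\cap\ovN$ of functions analytic on a \emph{large} ball $\ovN_{-h}\supset I\cap\ovN$. Restricting such functions further to the small ball $\ovN_{h_0}$ yields only functions with large radius of convergence, not all of $\cC^{\an}(\ovN_{h_0},E)$. Your inference ``the ball contains $\ovN\cap I\supset\ovN_h$, hence restriction hits every rigid analytic function on $\ovN_h$'' has the inclusion of function spaces running the wrong way: if $U\supset V$ then $\cC^{\an}(U,E)\subset\cC^{\an}(V,E)$, and the inclusion is strict.

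The paper closes this gap with an extra step you have omitted: it uses the $G$-action of a regular element $t\in T^{--}$ to scale. The map $f\mapsto t^k.f$ carries $\cC^{\an}(\ovN_{-h},E)\mathbf 1_{I\cap\ovN}$ isomorphically onto $\cC^{\an}(t^k\ovN_{-h}t^{-k},E)\mathbf 1_{t^k(I\cap\ovN)t^{-k}}$, and for $k\gg0$ one has $t^k\ovN_{-h}t^{-k}\subset\ovN_{h_0}$. This produces, inside the image, all functions on the tiny support $t^k(I\cap\ovN)t^{-k}$ that are analytic on $\ovN_{h_0}$; translating these by $\ovN_{h_0}$ and then by $I\cap\ovN$ recovers the whole of $\cA_{h_0}$. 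Your final ``cutting off via Dirac idempotents'' is also not quite right---Dirac distributions act on $\cW_{\sharp,r}$ by translation, not by multiplication with characteristic functions---but the decisive missing ingredient is this $T^{--}$-scaling that converts ``analytic on a big ball, supported on $I\cap\ovN$'' into ``analytic on $\ovN_{h_0}$, supported on a single small coset''.
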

\begin{proof}
    Combine Proposition \ref{propositionsurjectivitykohlhaase} and Lemma \ref{lemma-surjectivity1an} below. The proof of Lemma \ref{lemma-surjectivity1an} will use Proposition \ref{propositioncompletionlocalization}. 
\end{proof}

\begin{lemma}\label{lemma-surjectivity1an}
    The image of $\Phi_{\cW_{\natural,r}}$ contains $\mathcal{A}_h\subset \Ind_B^G\chi(BI)$ the space of all $h$-analytic functions on $\cC^{\la}(I\cap \ovN,E)$ for any $h\geq 1$.
\end{lemma}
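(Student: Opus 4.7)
The strategy is to combine the completion/surjectivity statement of Proposition \ref{propositioncompletionlocalization} (to be established in \S\ref{sectionlocalizationandcompletion}) with the $G$-equivariance of $\Phi_{\cW_{\natural,r}}$. First, I would apply Proposition \ref{propositioncompletionlocalization}---the Banach analogue of Proposition \ref{propositionintroductionDmodule}---to conclude that the image of $i_{\cW_{\natural,r}}:\cW_{\natural,r}\to\cW_{\sharp,r}$ inside $\Ind_B^G\chi(BI)$ already contains $\cA_m=\cW_{\sharp,m}$ for some integer $m$ depending on $r$, using the inclusion $\cC^{I_m-\an}(I,E)\subset\cC^r(I,E)$ recalled in \S\ref{subsectionexamples}. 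Since $\cA_h\subset\cA_{h'}$ whenever $h\le h'$, this already handles the lemma for $h\le m$.

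For $h>m$, I exploit that the image of $\Phi_{\cW_{\natural,r}}$ is a $G$-submodule of $\Ind_B^G\chi$. Choose $t\in T^-$ sufficiently antidominant so that the subgroup $C:=t(\ovN\cap I)t^{-1}$ is contained in $\ovN_h$. Using $t\in B$ together with the Iwahori decomposition $BI=B(\ovN\cap I)$, one checks that for $f\in\Ind_B^G\chi(BI)$ corresponding to $\tilde f\in\cC^{\la}(\ovN\cap I,E)$, the right translate $t\cdot f$ is again supported in $BI$, and the associated function $\widetilde{t\cdot f}\in\cC^{\la}(\ovN\cap I,E)$ vanishes outside $C$ while on $C$ it is given by $\bar n\mapsto\chi(t)\tilde f(t^{-1}\bar n t)$. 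Since $\bar n\mapsto t^{-1}\bar n t$ defines an $L$-analytic isomorphism of balls $C\xra{\sim}\ovN\cap I$, as $\tilde f$ ranges over $\cA_m\subset{}$image, the translate $\widetilde{t\cdot f}$ ranges over all functions supported in $C$ that are rigid analytic on each right $(t\ovN_m t^{-1})$-coset of $C$; in particular, this range contains every function supported in $C$ and rigid analytic on all of $C$.

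Right translations by $\bar n\in\ovN\cap I\subset I\subset G$ also preserve the image and shift the support from $C$ to an arbitrary right $C$-coset in $\ovN\cap I$. Summing, the image contains every function on $\ovN\cap I$ that is rigid analytic on each right $C$-coset. Since $C\subset\ovN_h$ is a subgroup, the $C$-coset partition of $\ovN\cap I$ refines the $\ovN_h$-coset partition, so any function in $\cA_h$ is \emph{a fortiori} rigid analytic on each $C$-coset. Hence $\cA_h$ is contained in the image of $\Phi_{\cW_{\natural,r}}$.

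The main obstacle is the first step. Producing $\cA_m$ inside the image via $i_{\cW_{\natural,r}}$ is essentially the content of Proposition \ref{propositioncompletionlocalization}, whose proof requires the nontrivial rigid-analytic $\widehat{\cD}$-module theory---Beilinson--Bernstein localization for $\widehat{U}(\frg)$-modules and the coadmissibility of the direct image $j_*^{\an}\cO_{\overline{\bN}^{\an}}$ along the open immersion $j:\overline{\bN}\hookrightarrow\bG/\overline{\bB}$ described in the introduction. Once granted, the subsequent translation and coset-shifting arguments are formal consequences of the Iwahori decomposition and the $G$-equivariance of $\Phi_{\cW_{\natural,r}}$.
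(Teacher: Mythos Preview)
Your second step—the translation argument using $t\in T^-$ and then $\ovN\cap I$-shifts—is correct and is essentially what the paper does. The problem is your first step: Proposition \ref{propositioncompletionlocalization} does \emph{not} show that the image of $i_{\cW_{\natural,r}}$ contains $\cA_m=\cW_{\sharp,m}$. What it gives (after the reduction in Lemma \ref{lemma-surjectivityhan}) is that the image of $U_r(\frg)\otimes_{U(\frg)}\ovM(\lambda)^{\vee}$ inside $\cW_{\sharp,r}$ contains $\cC^{\an}(\ovN_{-h},E)\mathbf{1}_{I\cap\ovN}$ for some $h\ge 0$: functions on $I\cap\ovN$ that extend to rigid analytic functions on a \emph{large} ball $\ovN_{-h}\supset I\cap\ovN$. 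This is a single analytic disc worth of functions, not the space $\cA_m$ of functions piecewise analytic on the many small balls $\ovN_m g$ covering $I\cap\ovN$. Passing from $U_r(\frg)$ to $\cD_r(I,E)$ adds only $I$-translates, and $I$-translates of globally analytic functions on $I\cap\ovN$ remain globally analytic (for instance in the $\mathrm{SL}_2$ case, $I\cap N$ acts via Möbius transformations with poles outside $I\cap\ovN$); they never produce the characteristic-function cutoffs needed for $\cA_m$. The inclusion $\cC^{I_m-\an}(I,E)\subset\cC^r(I,E)$ you cite only says $\cW_{\sharp,m}\subset\cW_{\sharp,r}$; it says nothing about the image of $\cW_{\natural,r}$.

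The fix is to apply your translation idea directly to the correct starting space $\cC^{\an}(\ovN_{-h},E)\mathbf{1}_{I\cap\ovN}$. Acting by $t^k$ with $t\in T^{--}$ maps this to $\cC^{\an}(t^k\ovN_{-h}t^{-k},E)\mathbf{1}_{t^k(I\cap\ovN)t^{-k}}$; for $k$ large enough both the support and the radius of analyticity are contained in $\ovN_{h_0}$, so the image contains every function supported on the tiny ball $t^k(I\cap\ovN)t^{-k}$ that is analytic on the larger ball $\ovN_{h_0}$. Now your coset-shifting argument (translating by $\ovN_{h_0}$ and then by $I\cap\ovN$) finishes the job. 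This is exactly the paper's route: there is no intermediate step producing $\cA_m$ via $i_{\cW_{\natural,r}}$ alone.
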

\begin{proof}
    The image of $\cW_{\natural,r}=(\cD_r(I,E)\otimes_{\cD(\frg,\ovB\cap I)}\overline{M}(\lambda)^{\vee})$ under the map $i_{\cW_{\natural,r}}$ contains the image of $U_r(\frg)\otimes_{U(\frg)}\overline{M}(\lambda)^{\vee}$, especially the space $\overline{M}(\lambda)^{\vee}=\cC^{\pol}(I\cap \ovN,E)\subset (\Ind_{B}^G\chi)(BI)$ of $\Q_p$-polynomial functions on $I\cap \ovN$.

    For $h\geq 0$, we write $\cC^{\an}(\ovN_{-h},E)\mathbf{1}_{I\cap \ovN}$ for the space of functions on $I\cap \ovN$ that are restrictions of rigid analytic functions on $\ovN_{-h}$ i.e., power series 
    \[\sum_{i\in \bbN^{\dim_{\Q_p}\ovN}}a_{i}\prod_{\alpha\in \Phi^-,j=1,\cdots,|L:\Q_p|}X_{\alpha,j}^{i_{\alpha,j}},a_{i}\in E\] that satisfy certain convergent condition where $X_{\alpha,j}$ are coordinates of the root subgroup $\mathrm{Res}_{L/\Q_p}\mathbf{G}_a$ corresponding to the negative roots $\alpha\in \Phi^-$. This is a Banach space.
    \begin{lemma}\label{lemma-surjectivityhan}
        The image of $U_r(\frg)\otimes_{U(\frg)}\overline{M}(\lambda)^{\vee}$ under the map $i_{\cW_{\natural,r}}$ contains $\cC^{\an}(\ovN_{-h},E)\mathbf{1}_{I\cap \ovN}$ for some large enough $h$.
    \end{lemma}
    \begin{proof}[Proof of Lemma \ref{lemma-surjectivityhan}]
        This is a reformulation of Proposition \ref{propositioncompletionlocalization}, which is proved for $L$-Lie algebras and $L$-analytic functions. We explain how to strengthen the result for $\Q_p$-Lie algebras. Write $\frg=\prod_{\sigma\in \Hom(L,\overline{\Q}_p)}\frg_{\sigma}$. As in \cite[App. 9]{breuil2016vers}, there exists a factorization $U(\frg)=\otimes_{\sigma}U(\frg_{\sigma})\rightarrow \widehat{\otimes}_{\sigma}U_{r'}(\frg_{\sigma})\hookrightarrow U_{r}(\frg)$. Similar factorization exists for $\cC^{\an}(\ovN_{-h},E)$ and $\widehat{\otimes}_{\sigma}\cC^{\sigma-\an}(\ovN_{-h'},E)$ where $\sigma$-an means $\sigma$-rigid analytic functions: we only need to consider $\cO_L\subset \Sp(L\langle T \rangle)$, the rigid unit ball. By definition $\cC^{L-\an}(\cO_L,E)=E\langle T \rangle$ while 
        \[\cC^{\an}(\cO_L,E)=\cO(\mathrm{Res}_{L/\Q_p}(\Sp(L\langle T \rangle))\otimes_{\Q_p}E)=\cO(\prod_{\sigma}\Sp(E\langle T \rangle))=\widehat{\otimes}_{\sigma}E\langle T \rangle,\] 
        see \cite[\S 2.3]{emerton2017locally}. The decomposition of the space of analytic functions into spaces of $\sigma$-analytic functions is compatible with the actions of $\sigma$-Lie algebras. The result follows. 
    \end{proof}
    We now prove that the image of $\Phi_{\cW_{\natural,r}}$ contains $\cA_{h_0}$ for any fixed large enough $h_0\geq 1$. Take $t\in T^{--}$, i.e., take $t$ such that for any $n\in\ovN$, $\varinjlim_{k\rightarrow\infty}t^knt^{-k}=0$. The action of $t^k\in T$ on $\Ind_{B}^G\chi,f(-)\mapsto f(-t^k)=\chi(t)^kf(t^{-k}- t^k)$ induces an isomorphism $\cC^{\an}(\ovN_{-h},E)\mathbf{1}_{I\cap \ovN}\simeq \cC^{\an}(t^k\ovN_{-h}t^{-k},E)\mathbf{1}_{t^k(I\cap \ovN)t^{-k}}$. 
    Hence the image of $\Phi_{\cW_{\natural,r}}$ contains $\cC^{\an}(t^k\ovN_{-h}t^{-k},E)\mathbf{1}_{t^k(I\cap \ovN)t^{-k}}$. Take $k$ large enough such that $t^k\ovN_{-h}t^{-k}\subset \ovN_{h_0}$. The inclusion $t^k\ovN_{-h}t^{-k}\subset \ovN_{h_0}$ is induced by an inclusion of rigid analytic subsets. Then we see the image of $\Phi_{\cW_{\natural,r}}$ contains $\cC^{\an}(\ovN_{h_0},E)\mathbf{1}_{t^k(I\cap \ovN)t^{-k}}$, all rigid analytic functions on $t^{k}(I\cap \ovN)t^{-k}$ that converge over $\ovN_{h_0}\supset t^k\ovN_{-h}t^{-k}$.
    
    For any rigid analytic function $f\in \cC^{\an}(\ovN_{h_0},E)\subset \cA_{h_0}$, write $f_g$ for the restriction of $f$ on each coset $t^k(I\cap \ovN)t^{-k}g, g\in  t^k(I\cap \ovN)t^{-k}\backslash \ovN_{h_0}$. The translation $g.f_g$ is in $\cC^{\an}(\ovN_{h_0},E)\mathbf{1}_{t^k(I\cap \ovN)t^{-k}}$. Using translations by $\ovN_{h_0}$, we see that the image of $\Phi_{\cW_{\natural,r}}$ contains $\cC^{\an}(\ovN_{h_0},E)$. 
    
    Any function in $\cA_{h_0}$ is a linear combination of translations by elements in $I\cap \ovN$ of such functions in $\cC^{\an}(\ovN_{h_0},E)$, hence is also in the image of $\Phi_{\cW_{\natural,r}}$. 
\end{proof}

\begin{example}\label{ExampleSL2}    
    We explain the idea for Lemma \ref{lemma-surjectivityhan} by calculating explicitly in the case $L=\Q_p$ and $\bG=\mathrm{SL}_2$. In this case $\frg=E.\langle e,f,g\rangle$ is spanned by
    \[e=\mtwo{0}{1}{}{0},f=\mtwo{0}{}{1}{0},h=\mtwo{1}{}{}{-1}.\] 
    We take norm $|-|_n$ on $U(\frg)$ such that $|e|_n=|f|_n=|h|_n=p^{n}$. And let $\widehat{U}(\frg)_n$ of $U(\frg)$ be the Banach completion with respect to this norm. 

    Take $x$ for the coordinate of $\ovN=\{\mtwo{1}{}{x}{1}\mid x\in\Q_p\}$ and $\ovN\cap I=\{\mtwo{1}{}{x}{1}\mid x\in p\Z_p\}$.

    Let $\chi$ be a character with weight $\lambda\in\overline{\Q}_p$. Consider $\ovM(\lambda)^{\vee}=\cC^{\rm pol}(\ovN,E)\simeq \cC^{\rm pol}(\ovN,E)\mathbf{1}_{I\cap \ovN} \subset \Ind_{B}^G\chi$. Then $\cC^{\rm pol}(I\cap \ovN,E)$ is spanned linearly by $x^k\mathbf{1}_{I\cap \ovN}$ for $k\in\bbN$. Lemma \ref{lemma-surjectivityhan} reduces to that the image of $\widehat{U}(\frg)_n\otimes_{U(\frg)} \cC^{\rm pol}(I\cap \ovN,E)$ contains
    \[ \cC^{\rm an}(\ovN_{-h},E)=\{\sum_{i}a_ix^i\mid a_i\in E, \varinjlim_i |a_i|p^{hi}=0\}\]
    for some $h$. The element $f$ acts on $\cC^{\rm pol}(\ovN,E)$ by the derivative for $x$. We calculate that
    \begin{align*}
        (e.x^k)(\mtwo{1}{}{x}{1})&=\lim_{t\rightarrow 0}\frac{1}{t}x^k(\mtwo{1}{}{x}{1}\mtwo{1}{t}{}{1})\\
        &=\lim_{t\rightarrow 0}x^k(\mtwo{\frac{1}{1+xt}}{t}{}{1+xt}\mtwo{1}{0}{\frac{x}{1+xt}}{1})\\
        &=\frac{d}{dt}|_{t=0}\chi(1+xt)^{-1}\frac{x^k}{(1+x t)^k}=-(\lambda+k)x^{k+1}.   
    \end{align*}
    And similarly $h.x^k=(\lambda+2k)x^k$. The $U(\frg)$-module $\ovM(\lambda)^{\vee}$ is generated by the highest weight (with respect to $\overline{\frb}$) vector $x^0$ and possibly $x^{k+1}$ if $\lambda=-k\in\Z_{\leq 0}$. Let $\alpha=0$ if $\lambda\notin \Z_{\leq 0}$ and $\alpha=-\lambda+1$ otherwise. The above formula shows that $x^{\alpha+i}=(-1)^{i}\prod_{j=0}^{i-1}\frac{1}{\lambda+\alpha+j} e^i.x^{\alpha}$ for $i\geq 1$.

    For $g=\sum_{i\geq 1}a_ix^{\alpha+i}\in \cC^{\rm an}(\ovN_{-h},E)$, we can rewrite it as
    \[g=(\sum_{i\geq 1} (-1)^ia_i\frac{1}{\prod_{j=0}^{i-1}(\lambda+\alpha+j)}e^i).x^{\alpha}.\]
    The sum $u:=\sum_{i\geq 1} (-1)^ia_i\frac{1}{\prod_{j=0}^{i-1}(\lambda+\alpha+j)}e^i$ converges in $\widehat{U}(\frg)_n$ if and only if $\varinjlim_i \frac{p^{-ni}a_i}{\prod_{j=0}^{i-1}(\lambda+\alpha+j)}=0$. Since $\varinjlim_i a_ip^{-hi}=0$, we find that the element $g=u.x^{\alpha}$ lies in the image of $\widehat{U}(\frg)_n\otimes_{U(\frg)}\ovM(\lambda)^{\vee}$ if 
    \[\varinjlim_i\frac{p^{(h-n)i}}{\prod_{j=0}^{i-1}(\lambda+\alpha+j)}=0.\]
    Such $h$ exists if $-(\lambda+\alpha)$ is of positive type (Definition \ref{definitionpositivetype}). This holds at least for all $\lambda\in\Z$ using that $|k!|_p= p^{-\frac{k-s(k)}{p-1}}\geq p^{-\frac{k}{(p-1)}}$ for a positive integer $k$.

    On the other hand, if $-(\lambda+\alpha)$ is not of positive type, we don't know whether Proposition \ref{propositionsurjectivityPS} is still true.
\end{example}

\begin{thm}\label{theoremresolution}
    Let $M\in\cO_{\rm alg}^{\frb}$ and $\chi_{\rm sm}$ be a smooth character of $T$, then for $?\in\{\natural,\sharp\}$, the resolution in Proposition \ref{propositioncomplexesseq} induces a quasi-isomorphism
    \[ \wedge^{\bullet} \cH^{d}\otimes_{\cH} \cind_{I}^G\cW_{?,r}(\tau(M)^{\vee}\otimes\chi_{\rm sm})\rightarrow \cF_{B}^G(M,\chi_{\rm sm})\] 
    of complexes of $\cD(G,E)$-modules.
\end{thm}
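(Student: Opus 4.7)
The plan is to reduce the quasi-isomorphism to an isomorphism of zeroth homologies, and then to propagate the latter from dominant integral Verma modules to general $M\in\cO^\frb_{\rm alg}$ by d\'evissage.

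Equip $\cW_{?,r}(\tau(M)^\vee\otimes\chi_{\rm sm})$ with the $\psi_t$-actions from Lemma~\ref{lemma-psit} (for $\natural$) or Definition~\ref{definitinpsit} (for $\sharp$); these satisfy the hypothesis of Lemma~\ref{lemmacommutativeHecke}, so by Proposition~\ref{propositionKSresolutionKoszulregular} the Koszul complex $\wedge^\bullet\cH^d\otimes_\cH\cind_I^G\cW_{?,r}(\tau(M)^\vee\otimes\chi_{\rm sm})$ is a resolution of its degree zero homology $\cind_I^G\cW_{?,r}(\tau(M)^\vee\otimes\chi_{\rm sm})\otimes_\cH\cH/\frm$. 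The theorem thus reduces to showing that the canonical map
\[
\Phi^?_M\colon
\cind_I^G\cW_{?,r}(\tau(M)^\vee\otimes\chi_{\rm sm})\otimes_\cH\cH/\frm
\longrightarrow\cF_B^G(M,\chi_{\rm sm})
\]
is an isomorphism. For $?=\sharp$ this is precisely the right-most vertical arrow in the commutative diagram of Proposition~\ref{propositioncomplexesseq}. For $?=\natural$ the same proposition provides injectivity functorially in $M$, so it remains to prove surjectivity of $\Phi^\natural_M$.

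I argue by d\'evissage on the finite-length object $M\in\cO^\frb_{\rm alg}$. The base case $M=M(\mu)$, with $\mu$ integral and $\frb$-dominant, is covered by Proposition~\ref{propositionsurjectivityPS}: indeed $\cF_B^G(M(\mu),\chi_{\rm sm})=\Ind_B^G\chi$ for $\chi=z^{-\mu}\chi_{\rm sm}$, whose algebraic weight $-\mu$ is antidominant with respect to $\frb$. To extend beyond this case, given any short exact sequence $0\to N\to M(\mu)\to Q\to 0$, Proposition~\ref{propositioncomplexesseq} yields a commutative diagram with exact rows and three injective vertical arrows; the snake lemma then shows that, since $\Phi^\natural_{M(\mu)}$ is an isomorphism and $\Phi^\natural_N,\Phi^\natural_Q$ are injective, both $\Phi^\natural_N$ and $\Phi^\natural_Q$ are in fact isomorphisms. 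Iterating this along the submodule lattice of $M(\mu)$ gives $\Phi^\natural_K$ for every subquotient $K$ of $M(\mu)$, and in particular for every simple composition factor $L(\nu)$ of $M(\mu)$. Since every integral weight lies in the dot-orbit of a unique dominant integral weight $\mu$, and every simple in the block of $\mu$ occurs as a composition factor of $M(\mu)$, the statement holds for every simple $L(\nu)\in\cO^\frb_{\rm alg}$. A final Jordan--H\"older induction, using the five lemma applied to the same diagram of Proposition~\ref{propositioncomplexesseq}, extends the isomorphism to all $M\in\cO^\frb_{\rm alg}$.

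The main obstacle is the base case Proposition~\ref{propositionsurjectivityPS}, which itself rests on the $\widehat{\cD}$-module statement Proposition~\ref{propositioncompletionlocalization} to be proved in \S\ref{sectionlocalizationandcompletion}; once that is granted, everything else is a formal consequence of Propositions~\ref{propositionKSresolutionKoszulregular} and \ref{propositioncomplexesseq}.
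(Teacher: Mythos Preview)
Your argument is correct and follows the same route as the paper: reduce to degree-zero homology via Proposition~\ref{propositionKSresolutionKoszulregular}, invoke Proposition~\ref{propositioncomplexesseq} for injectivity (and the full $\sharp$-case), use Proposition~\ref{propositionsurjectivityPS} as the base case for the dominant Verma, and propagate by the snake lemma. Your d\'evissage is spelled out in more detail than the paper's one-line proof, but the content is identical.
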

\begin{proof}
    This follows from  Proposition \ref{propositionsurjectivityPS}, Proposition \ref{propositionsurjectivitykohlhaase}, Proposition \ref{propositioncomplexesseq} and the snake lemma.
\end{proof}
\begin{rem}\label{remarkWnaturaln}
    One can consider $\cW_{\natural, n}(\ovM):=\cD^{I_n-{\mathrm{an}}}(I,E)\otimes_{\cD(\frg,\ovB\cap I)}\ovM$ instead of $\cW_{\natural, r}(\ovM)$. There are maps $\cW_{\natural, n}(\ovM)\rightarrow \cW_{\natural, r}(\ovM)\rightarrow \cW_{\natural, n'}(\ovM)$ for $r$ sufficiently close to $1$ and $n$ large enough. These maps are injections at least if $\ovM=\cD(\frg,\ovB)\otimes_{\cD(\ovB,E)}\sigma$ for a finite-dimensional $\ovB$-representation $\sigma$ and induce quasi-isomorphisms (using Proposition \ref{propositionKSresolutionKoszulregular} and Corollary \ref{corinjectiontwoW})
    \[\wedge^{\bullet} \cH^{d}\otimes_{\cH} \cind_{I}^G\cW_{\natural,n}(\ovM)\simeq \wedge^{\bullet} \cH^{d}\otimes_{\cH} \cind_{I}^G\cW_{\natural,r}(\ovM)\]
    between $n$ and $r$-versions. The quasi-isomorphisms also hold for general $\ovM$ in Theorem \ref{theoremresolution} by considering a presentation of $\ovM$ as in the proof of Lemma \ref{lemma-psit} and the right exactness of $-\otimes_{\cH}\cH/\frm$.
\end{rem}
\subsection{Duality for complexes}\label{subsectiondualitycomplex}
We can already observe certain duality between different Koszul resolutions in Theorem \ref{theoremresolution}. At present, we will establish the duality only for complexes (Theorem \ref{theoremdualKS}). The duality between locally analytic representations will be discussed later in \S\ref{sectiondualitysolid}.

We continue with the notation in \S\ref{subsectionbeyonddualverma}. The continuous linear $E$-dual $\cW_{\sharp,r}(\sigma^{\vee})^{\vee}$ of $\cW_{\sharp,r}(\sigma^{\vee})$ is $\cD_r(I,E)\otimes_{D(B\cap I,E)}\sigma$ and the dual of $\cW_{\sharp,r}(\tau(M)^{\vee})$ is $\cD_r(I,E)\otimes_{\cD(\frg,B\cap I)}M$. We adapt the convention that $g.f(-)=f(g^{-1}-)$ for $f\in \Hom_E(\cW,E)$ for an $I$-representation $\cW$ and $g\in I$. As in Lemma \ref{lemma-psit}, for $t\in T^{-}$, there is a $\psi_{t^{-1}}$ operator on $\cD_r(I,E)\otimes_{\cD(\frg,B\cap I)}M$ extending the action of $\delta_{t}$ on $M$. We obtain also $U_{t^{-1}}$ operators on $\cind_{I}^G\cD_r(I,E)\otimes_{\cD(\frg,B\cap I)}M$, as in \S\ref{subsectionKoszulcomplex}.

\begin{lemma}\label{lemmatransposepsit}
    Let $t\in T^{-}$. The transpose $\psi_{t}^{\vee}$ of $\psi_t$ on $\cW_{\sharp,r}(\tau(M)^{\vee})^{\vee}=\cD_r(I,E)\otimes_{\cD(\frg,B\cap I)}M$ coincides with $\psi_{t^{-1}}$.
\end{lemma}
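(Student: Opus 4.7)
The plan is to reduce the identity to a transpose calculation on a single finite-dimensional representation. First I reduce to the case $M=U(\frg)\otimes_{U(\frb)}\sigma$: the inclusion $\cW_{\sharp,r}(\tau(M)^{\vee})\hookrightarrow \cW_{\sharp,r}(\sigma^{\vee})$ is $\psi_t$-equivariant by construction, and dually the surjection $\cD_r(I,E)\otimes_{\cD(B\cap I,E)}\sigma\twoheadrightarrow \cD_r(I,E)\otimes_{\cD(\frg,B\cap I)}M$ is equivariant for the corresponding $\psi_{t^{-1}}$-operators (each defined as the continuous extension of the $\delta_t$-action on $\sigma$, respectively $M$, by the evident $\cD(\frg,B)$-analogue of Lemma \ref{lemma-psit}). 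Hence it suffices to prove the identity on $\cD_r(I,E)\otimes_{\cD(B\cap I,E)}\sigma$.

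Next, using the Iwahori decomposition $BI=B\cdot(\ovN\cap I)$, restriction to $\ovN\cap I$ identifies $\cW_{\sharp,r}(\sigma^{\vee})$ with $\cC^{r}(\ovN\cap I,\sigma^{\vee})$, whose continuous dual is $\cD_r(\ovN\cap I,E)\otimes_E\sigma$; the latter coincides with $\cD_r(I,E)\otimes_{\cD(B\cap I,E)}\sigma$ via $\cD_r(I,E)\simeq \cD_r(\ovN\cap I,E)\widehat{\otimes}_E \cD_r(B\cap I,E)$. In these coordinates, for $t\in T^-$, $n\in\ovN\cap I$ and $f\in\cW_{\sharp,r}(\sigma^{\vee})$, the $B$-equivariance $f(bg)=b.f(g)$ together with $tnt^{-1}\in\ovN\cap I$ gives
\begin{equation*}
\psi_t(f)(n) \;=\; f(nt^{-1}) \;=\; f(t^{-1}(tnt^{-1})) \;=\; t^{-1}.f(tnt^{-1}).
\end{equation*}
A direct transpose calculation with the pairing $\langle\mu\otimes v,\phi\rangle=\mu(\langle v,\phi(\cdot)\rangle)$ then yields
\begin{equation*}
\psi_t^{\vee}(\mu\otimes v) \;=\; (\mathrm{Ad}(t)_{*}\mu)\otimes (t.v),
\end{equation*}
where $\mathrm{Ad}(t)_{*}$ is push-forward of distributions along the conjugation $n\mapsto tnt^{-1}$; this push-forward is well-defined on $\cD_r(\ovN\cap I,E)$ because conjugation by $t\in T^{-}$ contracts (rather than expands) the $r$-norm on $\overline{\frn}$.

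To conclude, I compare the right-hand side with $\psi_{t^{-1}}$ on $\cD_r(I,E)\otimes_{\cD(B\cap I,E)}\sigma\simeq U_r(\overline{\frn})\otimes_E\sigma$. By the construction recalled in Lemma \ref{lemma-psit}, $\psi_{t^{-1}}$ is the unique continuous $E$-linear extension of the map $v\mapsto \delta_t.v$ on $\sigma$; using the commutation relation $\delta_t u=\mathrm{Ad}(t)(u)\delta_t$ in $\cD(G,E)$ for $u\in U_r(\overline{\frn})$ (convergent thanks to the contraction property), one obtains $\psi_{t^{-1}}(u\otimes v)=\mathrm{Ad}(t)(u)\otimes (t.v)$. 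The standard compatibility between $\mathrm{Ad}(t)$ on $U_r(\overline{\frn})$ and the push-forward $\mathrm{Ad}(t)_{*}$ on $\cD_r(\ovN\cap I,E)$ under the isomorphism $U_r(\overline{\frn})\simeq \cD_r(\ovN\cap I,E)$ then gives $\psi_t^{\vee}=\psi_{t^{-1}}$. The main obstacle is the bookkeeping needed to match these two incarnations of the distribution algebra and of the adjoint action; once that is pinned down, the rest of the calculation is formal.
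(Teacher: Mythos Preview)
Your overall strategy is reasonable and the reduction to $M=U(\frg)\otimes_{U(\frb)}\sigma$ together with the transpose computation in your step~4 are correct. However, the final step rests on a false identification: you write $\cD_r(I,E)\otimes_{\cD(B\cap I,E)}\sigma\simeq U_r(\overline{\frn})\otimes_E\sigma$ and then invoke ``the isomorphism $U_r(\overline{\frn})\simeq\cD_r(\ovN\cap I,E)$''. This is not true. By the paper's own description (see the discussion after Definition~\ref{definitionWnaturalM}), one has $\cD_r(I,E)=E[I]\otimes_{E[H^m]}U_r(\frg)$ for a suitable small subgroup $H^m$, and likewise $\cD_r(\ovN\cap I,E)\simeq\bigoplus_{g\in(\ovN\cap I)/(H^m\cap\ovN)}\delta_g\,U_r(\overline{\frn})$, a free $U_r(\overline{\frn})$-module of rank $|(\ovN\cap I)/(H^m\cap\ovN)|>1$. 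Consequently your formula $\psi_{t^{-1}}(u\otimes v)=\mathrm{Ad}(t)(u)\otimes(t.v)$ only determines $\psi_{t^{-1}}$ on the single summand $M_r=U_r(\overline{\frn})\otimes_E\sigma$, not on the whole dual space, and $U_r(\overline{\frn})$ is a closed proper summand rather than a dense subalgebra, so no continuity argument closes the gap.

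To repair the argument you must also treat the $\delta_g$-translates for $g\in\ovN\cap I$, using the $B$-analogue of Lemma~\ref{lemmapsitcompactinduction}: on $\cind_{H^m(B\cap I)}^IM_r$ one has $\psi_{t^{-1}}(\delta_g m)=\delta_{tgt^{-1}}\,\psi_{t^{-1}}(m)$ for $g\in I\cap\ovN$, and this matches $\mathrm{Ad}(t)_*(\delta_g\mu)\otimes(t.v)$ since push-forward of distributions satisfies $\mathrm{Ad}(t)_*(\delta_g\cdot u)=\delta_{tgt^{-1}}\cdot\mathrm{Ad}(t)(u)$. Once this is added, your proof goes through. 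The paper's proof sidesteps both the reduction to induced $M$ and the explicit $\cD_r(\ovN\cap I,E)$ coordinates: it verifies $\psi_t^{\vee}=\psi_{t^{-1}}$ directly on the dense subspace $\sum_{g\in I\cap\ovB}\delta_g\cdot M$ by an explicit pairing computation, then concludes by continuity. That route is shorter precisely because it never needs to identify $\cD_r(\ovN\cap I,E)$ with anything.
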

\begin{proof}
    Let $H^m$ be the subgroup after Definition \ref{definitionWnaturalM} such that $\cD_r(I,E)=E[I]\otimes_{E[H^m]}U_r(\frg)$. The canonical pairing $\cD(I,E)\times \cC^{\la}(I,E)\rightarrow E$ is $I$-equivariant of left $I$-modules (the left $I$-action on $\cD(I,E)$ comes from the transpose of its left action on $\cC^{\la}(I,E)$ composed with the inverse involution). The pairing refines to pairings $\delta_gD(H^m,E)\times \delta_g\cC^{\la}(H^m,E)\rightarrow E$ for $g\in I/H^m$ where $\delta_g\in E[I]$ is the Dirac distribution. Hence the dual of $\delta_gU_r(\frg)$ can be identified with the direct summand $\delta_g\cC^r(H^m,E)=\cC^r(H^mg,E)\subset \cC^r(I,E)$ for $g\in I/H^m$. We see for $m\in M_r\subset \cD_r(I,E)\otimes_{\cD(\frg,B\cap I)}M, g\in I$ and $f\in \cW_{\sharp,r}(\tau(M)^{\vee})\subset \Ind_B^G\sigma^{\vee} (BI)$, we have $m(f)=m(f\mathbf{1}_{BH^m})$ and $(\delta_gm)(f)=m(g^{-1}.f)$.

    Hence for $m\in M\subset \cD(G,E)\otimes_{\cD(\frg,B)}M,g\in I$, 
    \[(\psi_t^{\vee}(\delta_gm))(f)=\delta_gm(\psi_tf)=\delta_gm((t^{-1}.f)\mathbf{1}_{BI})=m(((g^{-1}t^{-1}).f) \mathbf{1}_{BI})=m((g^{-1}t^{-1}).f),\]
    where the last equality is calculated for $g^{-1}t^{-1}f\in \Ind_B^G\sigma^{\vee}$. By the construction (see Lemma \ref{lemmapsitcompactinduction}), $\psi_{t^{-1}}(\delta_gm)=\delta_{tgt^{-1}}t.m$ for $g\in I\cap \ovB$. Hence
    \[(\psi_{t^{-1}}(\delta_gm))(f)=(\delta_{tgt^{-1}}t.m)(f)=(t.m)(tg^{-1}t^{-1}f)=m(g^{-1}t^{-1}f).\]

    We see $\psi_t^{\vee}=\psi_{t^{-1}}$ on $\delta_g.M$ for $g\in I\cap \ovB$. Since $M$ is dense in $M_r$, $\sum_{g\in I\cap \ovB}\delta_g.M$ is dense in $\cD_r(I,E)\otimes_{\cD(\frg,B\cap I)}M$ and both operators are continuous, the result follows.
\end{proof}

We discuss some generality of Hecke operators. Let $\cW^{\vee}$ be the (continuous) dual of an $I$-representation $\cW$ (with the open compact topology). Suppose that there are $\psi_t$-actions on $\cW$ which induce $U_t$-actions on $\cind_I^G\cW$. For $t\in T^{-}$, let $U_t^{\vee}$ be the transpose of $U_t$ acting on $\cind_{I}^G\cW^{\vee}$ under the following $G$-invariant pairing 
\begin{equation}\label{equataionpairing}
    \cind_{I}^G \cW^{\vee}\times \cind_{I}^G \cW\rightarrow E: (\sum_{g\in G/I} [g,f_g],\sum_{g\in G/I} [g',w_{g'}])\mapsto \sum_{g\in G/I} f_g(w_g),
\end{equation}
such that $(x,U_{t}y)=(U_t^{\vee}x,y)$ (see Lemma \ref{lemmatransposeUt} or Lemma \ref{lemmadualcompactinduction} below for the existence of the transpose). Let $\psi_{t^{-1}}:=\psi_t^{\vee}$ be the transpose of $\psi_t$ on $\cW^{\vee}$ so that $\psi_{t^{-1}}(x.f)=(txt^{-1})\psi_{t^{-1}}f$ for $x\in I\cap t^{-1}It$. This allows us to define the corresponding Hecke operator $U_{t^{-1}}$ on $\cind_I^G\cW^{\vee}$.
 
\begin{lemma}\label{lemmatransposeUt}
    The transpose $U_t^{\vee}$ of $U_t$ is equal to $U_{t^{-1}}$.
\end{lemma}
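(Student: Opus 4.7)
My plan is to prove $U_t^{\vee} = U_{t^{-1}}$ by directly evaluating both operators against arbitrary generators of the pairing (\ref{equataionpairing}). By $G$-invariance of (\ref{equataionpairing}) together with $G$-equivariance of both $U_t$ and $U_{t^{-1}}$, it will suffice to show
\[
([1, \phi], U_t[g', w]) = (U_{t^{-1}}[1, \phi], [g', w])
\]
for all $g' \in G$, $\phi \in \cW^{\vee}$, $w \in \cW$. Both sides vanish unless $g' \in It^{-1}I$: the expansion $U_t[g', w] = \sum_x [g'xt, \psi_t(x^{-1}w)]$ pairs nontrivially with $[1,\phi]$ only when $g'xt \in I$ for some $x$, while $U_{t^{-1}}[1, \phi] = \sum_y [yt^{-1}, \psi_{t^{-1}}(y^{-1}\phi)]$ is supported on cosets contained in $It^{-1}I$. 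So both sides reduce to a single term.

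I would then exploit the two disjoint decompositions $It^{-1}I = \bigsqcup_x It^{-1}x^{-1} = \bigsqcup_y yt^{-1}I$ into left- and right-$I$-cosets respectively, with $x$ running over the transversal of $(I\cap \ovN)/t(I\cap \ovN)t^{-1}$ and $y$ over that of $(I\cap N)/t^{-1}(I\cap N)t$ (both are consequences of the Iwahori decomposition and $t \in T^{-}$, as used throughout \S\ref{subsectionKoszulcomplex}). For $g' \in It^{-1}I$, uniquely write $g' = it^{-1}x^{-1} = yt^{-1}j$ with $i, j \in I$ and $x, y$ the chosen transversal elements. Using $[ki, v] = [k, iv]$ for $i \in I$ together with the rewriting $[g', w] = [yt^{-1}, jw]$, the LHS reduces to $\phi(i\,\psi_t(x^{-1}w))$, while the RHS, by the definition $\psi_{t^{-1}} = \psi_t^{\vee}$, reduces to $\phi(y\,\psi_t(jw))$.

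The remaining identity $i\,\psi_t(x^{-1}w) = y\,\psi_t(jw)$ will follow by setting $u := jx \in I$: the equation $it^{-1}x^{-1} = yt^{-1}j$ rearranges to $u = ty^{-1}it^{-1}$, so $u \in I \cap tIt^{-1}$ with $t^{-1}ut = y^{-1}i$. Applying the defining property of $\psi_t$ (see Lemma \ref{lemmapsit}) with this $u$, one gets
\[
\psi_t(jw) = \psi_t(ux^{-1}w) = (t^{-1}ut)\,\psi_t(x^{-1}w) = y^{-1}i\cdot \psi_t(x^{-1}w),
\]
and multiplying by $y$ yields the equality.

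The calculation itself is routine once the parametrization is set up; the only real care needed is in verifying the double-coset decomposition and matching the left-versus-right transversals, which is a standard consequence of the Iwahori decomposition but easy to mis-orient. I do not anticipate a serious obstacle.
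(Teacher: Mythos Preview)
Your proof is correct and follows essentially the same approach as the paper's: both evaluate the pairing $([1,\phi], U_t[g',w])$ for $g' \in It^{-1}I$ and reduce to the defining equivariance property $(t^{-1}ut)\psi_t(v) = \psi_t(uv)$ for $u \in I \cap tIt^{-1}$. The paper organizes the computation slightly more economically by parametrizing $g' = x't^{-1}$ with $x' \in I$ (so the surviving term in $U_t[g',w]$ is the trivial coset $x=1$) and then directly recognizing $w \mapsto f(x'\psi_t(w))$ as $\psi_{t^{-1}}(x'^{-1}f)(w)$, which avoids introducing two separate transversals and the matching step via $u = jx$.
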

\begin{proof}
    We calculate that for $f\in\cW^{\vee},g\in G$ and $w\in \cW$,
    \[\begin{array}{rl}
            U_t^{\vee}([1,f])([g,w])&=[1,f](\sum_{x\in I/(I\cap tIt^{-1})}[gxt,\psi_t(x^{-1}w)])\\
                                &=\sum_{x\in I/(I\cap tIt^{-1})\cap g^{-1}It^{-1}}f((gxt)\psi_t(x^{-1}w))
    \end{array}\]
    The intersection $I\cap g^{-1}It^{-1}\neq \emptyset$ if and only if $g\in It^{-1}I$. And for $g=x't^{-1}$ where $x'\in I$ and $x\in I$, $gxt=x't^{-1}xt$ is in $I$ if and only $x\in I\cap tIt^{-1}$. Thus $U_t^{\vee}([1,f])=\sum_{x'\in I/(I\cap t^{-1}It)}[x't^{-1},(w\mapsto f(x'\psi_t(w)))]=\sum_{x'\in I/(I\cap t^{-1}It)}[x't^{-1},\psi_{t^{-1}}(x'^{-1}.f)]=U_{t^{-1}}([1,f])$.
\end{proof}

We consider the left and right $G$-actions on the space $\cC_c^{\la}(G,E)$ of compactly supported locally analytic functions on $G$ given by right and left translations: $(l_gr_hf)(-)=f(g- h)$. For a locally analytic representation $\pi$, we write 
\[\bbD^0_{\rm BZ}(\pi):=\Hom_G(\pi,\cC_{c}^{\la}(G,E))=\Hom_G(E,\Hom_E(\pi,\cC_{c}^{\la}(G,E)))\] where $G$ acts on $f\in \Hom_E(\pi,\cC_c^{\la}(G,E))$ by $g.f(-)=r_g(f(g^{-1}-))$ and we take the continuous Hom's. The space $\bbD^0_{\rm BZ}(\pi)$ is a right $\cD(G,E)$-module induced by the left translation on the target $\cC_{c}^{\la}(G,E)$ and we make it into a left $\cD(G,E)$-module using the involution $g\mapsto g^{-1}$ of $G$, or consider the left action of $G$ on $\cC_{c}^{\la}(G,E)$ by $g.f=f(g^{-1}-)$. We will ignore the topology of $\bbD^0_{\rm BZ}(\pi)$ in this section (we refer to Theorem \ref{theoremsoliddualcompactinduction} for the discussion on  ``topology'').
\begin{lemma}\label{lemmadualcompactinduction}
    Let $\cW$ be a locally analytic $\cD(I,E)$-module over a Smith/Banach space (so that $\cW^{\vee}$ is a Banach/Smith space). 
    \begin{enumerate}
        \item We have $\bbD^0_{\rm BZ}(\cind_{I}^G\cW)=\cind_{I}^G\cW^{\vee}$. And under this identification, $[g,f]([h,w])=(g'\in gIh^{-1}\mapsto f(g^{-1}g'hw))\in\cC^{\la}(gIh^{-1},E)\subset \cC^{\la}_c(G,E)$.
        \item If $U: \cind_{I}^G\cW\rightarrow \cind_I^G\cW$ is a $G$-equivariant map, then $\bbD^0_{\rm BZ}(U):\cind_{I}^G\cW^{\vee}\rightarrow \cind_I^G\cW^{\vee}$ under the identification in (1) is the transpose of $U$ under the pairing (\ref{equataionpairing}).
    \end{enumerate}
\end{lemma}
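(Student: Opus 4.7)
The plan is to prove (1) by combining Frobenius reciprocity with an explicit coset decomposition of $\cC_c^{\la}(G,E)$, and then to deduce (2) by observing that under the identification of (1), the pairing (\ref{equataionpairing}) is precisely ``evaluation at the identity'' of the function realizing $\Phi$.

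For (1), I will first apply the universal property of the compact induction to identify
\[\bbD^0_{\rm BZ}(\cind_I^G\cW)=\Hom_G(\cind_I^G\cW,\cC_c^{\la}(G,E))=\Hom_I(\cW,\cC_c^{\la}(G,E)),\]
where $I$ acts on the target by right translation (the convention that makes the residual $\bbD^0_{\rm BZ}$-module structure come from left translation on $\cC_c^{\la}(G,E)$). Since right translation preserves each left coset $gI\subset G$, I will decompose $\cC_c^{\la}(G,E)|_I=\bigoplus_{gI\in G/I}\cC^{\la}(gI,E)$ and observe that each summand is $I$-equivariantly isomorphic to $\cC^{\la}(I,E)$ via $F\mapsto (i\mapsto F(gi))$. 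Evaluation at $1\in I$ then gives a bijection $\Hom_I(\cW,\cC^{\la}(I,E))\simeq \cW^{\vee}$, with inverse sending $f$ to $w\mapsto (i\mapsto f(iw))$ (locally analytic since the $I$-action on $\cW$ is). Summing over cosets produces a natural injection $\cind_I^G\cW^{\vee}\hookrightarrow \Hom_I(\cW,\cC_c^{\la}(G,E))$, and the Banach/Smith hypothesis on $\cW$ is precisely what allows the reverse inclusion: continuous linear maps from $\cW$ into the locally convex direct sum $\bigoplus_{gI}\cC^{\la}(gI,E)$ must factor through a finite subsum, since the image of a bounded (or equicontinuous) set is bounded and bounded subsets of such a direct sum are contained in a finite subsum. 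Finally, tracking the element $[g,f]\in \cind_I^G\cW^{\vee}$ through these identifications yields the explicit formula: using $G$-equivariance $\tilde\Phi([h,w])=r_h\tilde\Phi([1,w])$ reduces to the case $h=1$, in which a direct computation produces the function $g'\mapsto f(g^{-1}g'w)$ supported on $gI$, and then the general formula $g'\mapsto f(g^{-1}g'hw)$ on $gIh^{-1}$ follows.

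For (2), I will use the formula from (1) to verify that the scalar $\tilde\Phi(x)(1)$ coincides with the pairing $(\Phi,x)$ of (\ref{equataionpairing}). Indeed, for $\Phi=[g,f]$ and $x=[h,w]$ the formula yields $\tilde\Phi(x)(1)=f(g^{-1}hw)$ when $gI=hI$ and $0$ otherwise; on the other hand, the pairing is nonzero in exactly the same case, in which the canonical form $[h,w]=[g,g^{-1}hw]$ produces the value $f(g^{-1}hw)$ as well. Since $\bbD^0_{\rm BZ}(U)(\Phi)$ is by definition the composition $\tilde\Phi\circ U$, its ``evaluation at $1$'' functional is $x\mapsto (\Phi,U(x))=(U^t\Phi,x)$, which also equals the ``evaluation at $1$'' of the map corresponding to $U^t\Phi$. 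As a $G$-equivariant map $\tilde\Psi$ is uniquely determined by the functional $x\mapsto \tilde\Psi(x)(1)$ through the relation $\tilde\Psi(x)(g)=\tilde\Psi(gx)(1)$, this identification forces $\bbD^0_{\rm BZ}(U)(\Phi)=U^t\Phi$ inside $\cind_I^G\cW^{\vee}$.

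The main obstacle is the ``uniform support'' issue in (1): without the Banach/Smith hypothesis, a continuous $I$-equivariant map $\cW\to \cC_c^{\la}(G,E)$ need not factor through a finite subsum, in which case one would only recover a direct product $\prod_{gI}\cW^{\vee}$ rather than the compact induction $\bigoplus_{gI}\cW^{\vee}$. The hypothesis on $\cW$ ensures the requisite boundedness and hence the factorization; a cleaner, essentially topology-free handling of this point is what is provided by the solid formalism appealed to in Theorem \ref{theoremsoliddualcompactinduction}.
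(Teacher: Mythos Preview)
Your proposal is correct and follows essentially the same route as the paper: Frobenius reciprocity, the coset decomposition of $\cC_c^{\la}(G,E)$, interchange of $\Hom$ with the direct sum via the Banach/Smith hypothesis, and the identification $\Hom_I(\cW,\cC^{\la}(I,E))\simeq\cW^{\vee}$ by evaluation at $1$; for (2) both arguments reduce to ``evaluation at the identity'' recovering the pairing (\ref{equataionpairing}) and conclude by $G$-equivariance. The only cosmetic difference is that the paper cites \cite[Cor.~8.9]{schneider2013nonarchimedean} and \cite[Thm.~3.40]{rodrigues2022solid} for the two key identifications where you argue directly.
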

\begin{proof}
    (1) $\Hom_{G}(\cind_{I}^G\cW, \cC_{c}^{\la}(G,E))=\Hom_I(\cW,\cC^{\la}(I,E)\otimes_{E[I]}E[G])$. Since $\cW$ is a Banach or Smith space over $E$ and $\cC^{\la}_c(G,E)=\oplus_{g\in I\backslash G}l_g\cC^{\la}(I,E)$, we have (\cite[Cor. 8.9]{schneider2013nonarchimedean} or \cite[Lem. 3.32]{rodrigues2022solid})
    \[\Hom_E(\cW,\cC^{\la}(I,E))=\oplus_{g\in I\backslash G}\Hom_E(\cW,\cC^{\la}(g^{-1}I,E)).\]
    Taking $I$-invariants
    \[
       \Hom_I(\cW,\cC^{\la}(I,E)\otimes_{E[I]}E[G])=\Hom_I(\cW,\cC^{\la}(I,E))\otimes_{E[I]}E[G].
    \]
    Using \cite[Thm. 3.40]{rodrigues2022solid}, there is an $I$-isomorphism 
    \[\Hom_I(\cW,\cC^{\la}(I,E))=\Hom_{\cD(I,E)}(\cD(I,E),\cW^{\vee})=\cW^{\vee}\] 
    which sends $f\in \cW^{\vee}$ to $w\in \cW\mapsto (g\mapsto f(gw))$. Finally, for $f\in \cW^{\vee}$, we see $[g,f]([h,w])= r_h.([g,f]([1,w]))=r_h.(l_{g^{-1}}.(g'\mapsto f(g'w)))=(g'\mapsto f(g^{-1}g'hw)).$ 
    
    (2) By definition, for $f\in \cind_I^G\cW^{\vee}, w\in\cind_I^G\cW,f(w)\in\cC^{\la}_c(G,E)$, we have $(g.f)(w)=l_{g^{-1}}f(w),f(g.w)=r_gf(w)$. Hence the pairing below
    \[\cind_I^G\cW^{\vee}\times\cind_I^G\cW\rightarrow \cC^{\la}_c(G,E)\rightarrow E, (f,w)\mapsto \langle f,w\rangle:=(f(w))(1).\]
    is equivariant for the left actions on $\cind_I^G\cW^{\vee}$ and $\cind_I^G\cW$ and the trivial action on $E$. Using that $[g,f]([h,w])(1)$ is not zero only if $h\in gI$, we can check that the pairing $\langle-,-\rangle$ coincides with (\ref{equataionpairing}).

    On the other hand, the operator $\bbD^0_{\rm BZ}(U)$ is characterized by for $g\in G,f\in\cind_I^G\cW^{\vee},w\in\cind_{I}^G\cW$,
    \[(\bbD^0_{\rm BZ}(U).f)(w)(g)=f(U.w)(g),\]
    and by the $G$-equivariances of $U$ and $\bbD^0_{\rm BZ}(U)$, characterized by the equality
    \[\langle \bbD^0_{\rm BZ}(U).f,w\rangle=\langle f,U.w\rangle\]
    for all $f,w$. Hence $\bbD^0_{\rm BZ}(U)$ is the transpose of $U$.
\end{proof}

We return to Koszul complexes. Let $\frm\subset \cH$ act on $\cind_{I}^G\cW$ for $\cW=\cW_?(M)$ taken as in Proposition \ref{propositioncomplexesseq}. Apply the functor $\bbD_{\rm BZ}^0$ for the Koszul complex $\wedge^{\bullet}\cH^d\otimes_{\cH}\cind_{I}^G\cW$, the ``dual'' $\bbD^0_{\rm BZ}(\wedge^{\bullet}\cH^d\otimes_{\cH}\cind_{I}^G\cW)$ is then a complex of $G$-modules in cohomological degrees $[0,d]$.
\begin{lemma}\label{dualKoszulcomplex}
    The complex $\bbD^0_{\rm BZ}(\wedge^{\bullet}\cH^d\otimes_{\cH}\cind_{I}^G\cW)$ is equal to the Koszul complex 
    \[\wedge^{\bullet}\cH^d\otimes_{\cH}\cind_{I}^G\cW^{\vee}[-d]\] 
    where $\cH$ acts on $\cind_{I}^G\cW^{\vee}$ by the transpose of its action on $\cind_I^G\cW$ and $[-d]$ denotes the cohomological degree shift.
\end{lemma}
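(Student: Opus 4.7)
The plan is to identify the complex $\bbD_{\rm BZ}^0(\wedge^{\bullet}\cH^d\otimes_{\cH}\cind_{I}^G\cW)$ term by term and differential by differential, and then match it with the right hand side via the standard self-duality of the Koszul complex. Fix the free left $\cH$-basis $\{e_S\}_{|S|=i}$ of $\wedge^i\cH^d$ indexed by the $i$-subsets $S\subset\{1,\ldots,d\}$, so that $\wedge^i\cH^d\otimes_{\cH}\cind_I^G\cW$ decomposes as a finite direct sum of $\binom{d}{i}$ copies of $\cind_I^G\cW$. Since $\bbD_{\rm BZ}^0=\Hom_G(-,\cC_c^{\la}(G,E))$ is a Hom functor it sends finite direct sums to finite products (which agree), and combining with Lemma \ref{lemmadualcompactinduction}(1) applied to each summand gives
\[
\bbD_{\rm BZ}^0\bigl(\wedge^i\cH^d\otimes_{\cH}\cind_I^G\cW\bigr)\;\cong\;\bigoplus_{|S|=i}\cind_I^G\cW^{\vee}\;\cong\;\wedge^i\cH^d\otimes_{\cH}\cind_I^G\cW^{\vee},
\]
placed in cohomological degree $i$.

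Next I would track the differentials: every matrix entry of the Koszul differential on $\wedge^{\bullet}\cH^d\otimes_{\cH}\cind_I^G\cW$ is either zero or of the form $\pm(U_{t_j}-1)$, viewed as a $G$-equivariant endomorphism of $\cind_I^G\cW$. By Lemma \ref{lemmadualcompactinduction}(2), $\bbD_{\rm BZ}^0$ sends such an entry to its transpose under the pairing \eqref{equataionpairing}, and Lemma \ref{lemmatransposeUt} identifies the transpose of $U_{t_j}$ with $U_{t_j^{-1}}$. Since the $\cH$-action on $\cind_I^G\cW^{\vee}$ is by definition the transposed one, each transposed matrix entry is exactly the corresponding entry of the Koszul codifferential on $\wedge^{\bullet}\cH^d\otimes_{\cH}\cind_I^G\cW^{\vee}$, namely exterior multiplication by $\sum_j(U_{t_j^{-1}}-1)\,e_j$ (with signs coming from the ordering of indices).

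Finally, to reconcile this ascending cochain complex, in which $\wedge^i\cH^d\otimes_{\cH}\cind_I^G\cW^{\vee}$ sits in cohomological degree $i$ and the differential is exterior multiplication, with the shifted chain complex $\wedge^{\bullet}\cH^d\otimes_{\cH}\cind_I^G\cW^{\vee}[-d]$, in which $\wedge^{d-i}\cH^d\otimes_{\cH}\cind_I^G\cW^{\vee}$ sits in cohomological degree $i$ and the differential is Koszul contraction, I would invoke the self-duality of the Koszul complex: the top-exterior-power pairing $\wedge^i\cH^d\otimes_{\cH}\wedge^{d-i}\cH^d\to\wedge^d\cH^d\cong\cH$, sending $e_S\otimes e_T$ to $\pm\delta_{T,S^c}$, is perfect and induces an $\cH$-linear isomorphism $\wedge^i\cH^d\cong\wedge^{d-i}\cH^d$ under which exterior multiplication by $e_j$ is intertwined with contraction against $e_j^*$, up to sign. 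Transporting through this isomorphism produces the claimed identification. The only remaining substance is the bookkeeping of signs coming from the ordering and the shift, which is mechanical but is the main place where care is needed.
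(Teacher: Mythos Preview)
Your proof is correct and follows essentially the same approach as the paper: identify the terms via Lemma~\ref{lemmadualcompactinduction}, track that the dualized differentials are given by the transposed $\cH$-action, and then invoke the self-duality of the Koszul complex $\Hom_{\cH}(\wedge^{\bullet}\cH^d,\cH)\simeq\wedge^{\bullet}\cH^d[-d]$. The only stylistic difference is that the paper packages the first two steps into the tensor--Hom adjunction $\Hom_A(P\otimes_{\cH}M,N)\cong P^{\vee}\otimes_{\cH}\Hom_A(M,N)$ for perfect $\cH$-modules $P$, whereas you compute matrix entries explicitly in a chosen basis; the content is the same.
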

\begin{proof}
    Suppose that $\cH$ is a commutative ring, $A$ is a ring, $M$ is an $(A,\cH)$-bimodule with commutative actions, and let $N$ be an $A$-module. Let $\varphi:\cH^{m} \rightarrow \cH^{n}$ be an $\cH$-linear map inducing $\varphi_M:\cH^{m}\otimes_{\cH}M \rightarrow \cH^{n}\otimes_{\cH}M$. For a perfect $\cH$-module $P$, we have functorially $\Hom_{A}(P\otimes_{\cH}M,N)=\Hom_{\cH}(P,\Hom_{A}(M,N))$ and $R\Hom_{\cH}(P,\Hom_{A}(M,N))=P^{\vee}\otimes_{\cH}^L\Hom_{A}(M,N)$, where $P^{\vee}:=R\Hom_{\cH}(P,\cH)$ \cite[\href{https://stacks.math.columbia.edu/tag/08JJ}{Tag 08JJ}]{stacks-project}. Take $P=\cH^m,\cH^n$, we see $\Hom_A(-,N)(\varphi_M): \Hom_{A}(\cH^{n}\otimes_{\cH}M,N)\rightarrow \Hom_{A}(\cH^{m}\otimes_{\cH}M,N)$ can be identified with the tensoring over $\cH$ of the transpose $\varphi^{\vee}:(\cH^n)^{\vee}\rightarrow (\cH^m)^{\vee}$ with $\Hom_A(M,N)$.

    Apply the same reasoning for connecting maps of $\wedge^{\bullet}\cH^d\otimes_{\cH}\cind_{I}^G\cW$, we see the ``dual'' $\bbD^0_{\rm BZ}(\wedge^{\bullet}\cH^d\otimes_{\cH}\cind_{I}^G\cW)$ is calculated by $\Hom_{\cH}(\wedge^{\bullet}\cH^d,\cH)\otimes_{\cH}\cind_{I}^G\cW^{\vee}$ where $\cH$ acts on $\cind_{I}^G\cW^{\vee}$ by the transpose in Lemma \ref{lemmadualcompactinduction}. We get the desired identification using that $\Hom_{\cH}(\wedge^{\bullet}\cH^d,\cH)=\wedge^{\bullet}\cH^d[-d]$ the autoduality of the exterior algebras, cf. \cite[\S A.2, Ch. IV]{serre1997algebre}.
\end{proof}

We can now prove our main theorem on the duality. 
\begin{thm}\label{theoremdualKS}
    Let $M\in \cO^{\frb}_{\rm alg}$ and $\chi_{\sm}$ be a smooth character of $T$. Consider the resolution by $\cind_I^G\cW_{\sharp,r}(\tau(M)^{\vee}\otimes_E\chi_{\rm sm})$ of $\cF_{B}^G(M,\chi_{\sm})$ in Theorem \ref{theoremresolution}. Then there is a quasi-isomorphism 
    \[\bbD^0_{\rm BZ}(\wedge^{\bullet}\cH^d\otimes_{\cH}\cind_{I}^G\cW_{\sharp,r}(\tau(M)^{\vee}\otimes_{E}\chi_{\rm sm}))\simeq \cF_{\ovB}^G(\Hom_E(M,E)^{\overline{\frn}^{\infty}},\bbD_{\BZ}(\chi_{\sm}))\]
    of complexes of $E[G]$-modules.
\end{thm}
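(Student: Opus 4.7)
The plan is to dualize the $\sharp$-Kohlhaase-Schraen resolution from Theorem \ref{theoremresolution} term by term, and to identify the resulting complex with the $\natural$-Kohlhaase-Schraen resolution for the opposite Borel $\ovB$ attached to the BGG dual $\Hom_E(M,E)^{\overline{\frn}^\infty}$ and the inverse smooth character $\chi_{\sm}^{-1}$.

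First, I would apply Lemma \ref{dualKoszulcomplex}. This rewrites
\[ \bbD^0_{\BZ}\bigl( \wedge^\bullet \cH^d \otimes_\cH \cind_I^G \cW_{\sharp,r}(\tau(M)^\vee \otimes \chi_{\sm}) \bigr) \;\simeq\; \wedge^\bullet \cH^d \otimes_\cH \cind_I^G \cW_{\sharp,r}(\tau(M)^\vee \otimes \chi_{\sm})^\vee [-d], \]
where $\cH$ acts on the dual compact induction through the transposes of the original Hecke operators.

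Second, I would identify the dual term together with its transposed Hecke action. By the discussion opening \S\ref{subsectiondualitycomplex}, the continuous $E$-dual of $\cW_{\sharp,r}(\tau(M)^\vee \otimes \chi_{\sm})$ is $\cD_r(I,E) \otimes_{\cD(\frg, B \cap I)} (M \otimes \chi_{\sm}^{-1})$. By Lemma \ref{lemmatransposepsit}, the transpose of $\psi_t$ (for $t \in T^-$) is $\psi_{t^{-1}}$, i.e.\ the action obtained by extending the action of the Dirac distribution $\delta_t$ on $M \otimes \chi_{\sm}^{-1}$, and by Lemma \ref{lemmatransposeUt} the transpose of $U_t$ is $U_{t^{-1}}$. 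Since the $\{t_i^{-1}\}$ generate the antidominant monoid $T_{\ovB}^-$ of the opposite Borel, the inversion $t \mapsto t^{-1}$ furnishes a canonical isomorphism $\cH \cong \cH_{\ovB} := E[T_{\ovB}^-/T_0]$ carrying $\frm$ to the corresponding ideal $\frm_{\ovB}$, and under this isomorphism the dualized Koszul complex becomes the $\natural$-Koszul complex, in the sense of Definition \ref{definitionWnaturalM} applied with the roles of $B$ and $\ovB$ swapped, attached to the $\cD(\frg, B)$-module $M \otimes \chi_{\sm}^{-1}$.

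Third, I would recast the input $\frg$-module inside the opposite-Borel framework. BGG double-duality provides a canonical $\cD(\frg, B)$-linear isomorphism $M \simeq \Hom_E(\Hom_E(M,E)^{\overline{\frn}^\infty}, E)^{\frn^\infty}$; writing $N := \Hom_E(M,E)^{\overline{\frn}^\infty} \in \cO^{\overline{\frb}}_{\rm alg}$, the twisted module $M \otimes \chi_{\sm}^{-1}$ is exactly the coefficient module appearing in Theorem \ref{theoremresolution} for $\ovB$ with input $(N, \chi_{\sm}^{-1})$. Hence applying that theorem with $B$ replaced by $\ovB$ yields a quasi-isomorphism of the complex from Step 2 with $\cF_{\ovB}^G(N, \chi_{\sm}^{-1})[-d]$, and the conclusion follows by absorbing the $[-d]$ into the convention $\bbD_{\BZ}(\chi_{\sm}) = \chi_{\sm}^{-1}[-d]$ and using the exactness of $\cF_{\ovB}^G(N,-)$ in the smooth argument.

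The principal obstacle is Step 2: verifying that the transposed Hecke action on $\cind_I^G \cW_{\sharp,r}(\cdots)^\vee$ coincides, under the inversion identification of Hecke algebras, with the natural $\cH_{\ovB}$-action supplied by the $\natural$-construction for $\ovB$. This reduces to combining Lemmas \ref{lemmatransposepsit} and \ref{lemmatransposeUt}, but requires care in tracking the inversion $t \mapsto t^{-1}$ on Hecke generators, the matching of Koszul differentials $U_{t_i} - 1 \leftrightarrow U_{t_i^{-1}} - 1$, and the coincidence of the relevant subalgebras $\cD(\frg, B \cap I)$ on the two sides. Once this bookkeeping is in place, the theorem is a formal consequence of Theorem \ref{theoremresolution} applied on both sides.
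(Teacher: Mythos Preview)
Your overall strategy matches the paper's: dualize via Lemma~\ref{dualKoszulcomplex}, identify the dual coefficient module and its Hecke action via Lemmas~\ref{lemmatransposepsit} and~\ref{lemmatransposeUt}, then invoke Theorem~\ref{theoremresolution} for the opposite Borel. Steps~1 and the first half of Step~2 are exactly right.

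There is, however, a genuine gap between your Step~2 and Step~3. The dualized complex has terms $\cind_I^G\bigl(\cD_r(I,E)\otimes_{\cD(\frg,B\cap I)}(M\otimes\chi_{\sm}^{-1})\bigr)$, compactly induced from the Iwahori $I$ attached to $B$. But Theorem~\ref{theoremresolution} applied to $\ovB$ produces a resolution built from $\cind_{\ovI}^G\overline{\cW_{\natural,r}}(M\otimes\chi_{\sm}^{-1})$ with $\overline{\cW_{\natural,r}}(M\otimes\chi_{\sm}^{-1})=\cD_r(\ovI,E)\otimes_{\cD(\frg,\ovI\cap B)}(M\otimes\chi_{\sm}^{-1})$, compactly induced from the \emph{opposite} Iwahori $\ovI$. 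These are not the same $I$-module, nor the same compact induction, and no amount of Hecke-algebra bookkeeping or inversion $t\mapsto t^{-1}$ will make them coincide on the nose. Your phrase ``the dualized Koszul complex becomes the $\natural$-Koszul complex \ldots\ with the roles of $B$ and $\ovB$ swapped'' skips over this.

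The paper bridges the gap using the change-of-level results of \S\ref{subsectionchangethegroups}. One first passes from $I$ to the intermediate subgroup $I'=I\cap\ovI$ via (the $\ovB$-analogue of) Proposition~\ref{propositionchangethegroups2}, obtaining a quasi-isomorphic Koszul complex with $\cind_{I'}^G$; then one observes that $\cD_r(I,E)\otimes_{\cD(\frg,B\cap I)}(M\otimes\chi_{\sm}^{-1})\big|_{I'}=\cD_r(I',E)\otimes_{\cD(\frg,I'\cap B)}(M\otimes\chi_{\sm}^{-1})$ and that inducing from $I'$ to $\ovI$ recovers $\overline{\cW_{\natural,r}}(M\otimes\chi_{\sm}^{-1})$, so Proposition~\ref{propositionchangethegroups1}(2) gives the second quasi-isomorphism with $\cind_{\ovI}^G$. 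Only then does Theorem~\ref{theoremresolution} for $\ovB$ apply. This is the substantive content you are missing, and it is precisely the reason \S\ref{subsectionchangethegroups} was developed. Your ``principal obstacle'' paragraph correctly senses that something needs checking but misidentifies it as Hecke-algebra tracking rather than a change of compact-open level.
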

\begin{proof}
    By Lemma \ref{dualKoszulcomplex}, the dual complex is $\wedge^{\bullet}\cH^d\otimes_{\cH}\cind_{I}^G\cW_{\sharp,r}(\tau(M)^{\vee}\otimes\chi_{\rm sm})^{\vee}[-d]$ for the action of $U_{t^{-1}}$ operators. By Lemma \ref{lemmatransposepsit} and that $\cW_{\sharp,r}(\tau(M)^{\vee}\otimes\chi_{\rm sm})^{\vee}=\cD_r(I,E)\otimes_{\cD(\frg,I\cap B)}(M\otimes\chi_{\rm sm}^{-1})$, the transpose $U_{t^{-1}}$ comes from the $t$-actions or $\psi_{t^{-1}}$-actions on $M\otimes\chi_{\rm sm}^{-1}$ in the way of Lemma \ref{lemma-psit}.
    
    Let $I'=I\cap \ovI$ where $\ovI$ is the opposite Iwahori. We may assume that the uniform pro-$p$ subgroup $H$ in \S\ref{subsectionexamples} is a normal subgroup for both $I$ and $\ovI$. By Proposition \ref{propositionchangethegroups2}, 
    \begin{equation}\label{equationtheoremchangethegroup1}
        \wedge^{\bullet}\cH^d\otimes_{\cH} \cind_{I}^G\cW_{\sharp,r}(\tau(M)^{\vee}\otimes\chi_{\rm sm})^{\vee}\simeq \wedge^{\bullet}\cH^d\otimes_{\cH} \cind_{I'}^G\cW_{\sharp,r}(\tau(M)^{\vee}\otimes\chi_{\rm sm})^{\vee}|_{I'}.
    \end{equation} 
    While $\cD_r(I,E)\otimes_{\cD(\frg,B\cap I)}(M\otimes\chi_{\rm sm}^{-1})|_{I'}=\cD_r(I',E)\otimes_{\cD(\frg,I'\cap B)}(M\otimes\chi_{\rm sm}^{-1})$
    and 
    \[\cind_{I'}^{\ovI}\cD_r(I',E)\otimes_{\cD(\frg,I'\cap B)}(M\otimes \chi_{\rm sm}^{-1})=\cD_r(\ovI,E)\otimes_{\cD(\frg,\ovI\cap B)}(M\otimes \chi_{\rm sm}^{-1})=:\overline{\cW_{\natural,r}}(M\otimes\chi_{\rm sm}^{-1}).\]
    Hence by (2) of Proposition \ref{propositionchangethegroups1},  
    \begin{equation}\label{equationtheoremchangethegroup2}
        \wedge^{\bullet}\cH^d\otimes_{\cH} \cind_{I'}^G\cW_{\sharp,r}(\tau(M)^{\vee}\otimes\chi_{\rm sm})^{\vee}|_{I'}\simeq \wedge^{\bullet}\cH^d\otimes_{\cH} \cind_{\ovI}^G\overline{\cW_{\natural,r}}(M\otimes\chi_{\rm sm}^{-1}).
    \end{equation}
    Combining (\ref{equationtheoremchangethegroup1}), (\ref{equationtheoremchangethegroup2}) and Theorem \ref{theoremresolution}, we have
    \[\wedge^{\bullet}\cH^d\otimes_{\cH} \cind_{I}^G\cW_{\sharp,r}(\tau(M)^{\vee}\otimes\chi_{\rm sm})^{\vee}\simeq \wedge^{\bullet}\cH^d\otimes_{\cH} \cind_{\ovI}^G\overline{\cW}_{\natural,r}(M\otimes\chi_{\rm sm}^{-1})\simeq \cF_{\ovB}^G(\tau(M)^{\vee},\chi_{\rm sm}^{-1}).\]
    Checking the degree shift, we get the desired quasi-isomorphism.
\end{proof}

\section{Localization and completion}\label{sectionlocalizationandcompletion}
This section aims to prove Proposition \ref{propositioncompletionlocalization} (used for Proposition \ref{propositionsurjectivityPS}) using the Beilinson-Bernstein localization of Ardakov-Wadsley \cite{ardakov2013irreducible}.
\subsection{The localization functor}\label{subsectionlocalization}
We recall the settings and results in \cite{ardakov2013irreducible}. We change the notations from previous chapters. Let $\bG$ be a connected split reductive group over $\cO_L$. We assume that $\bG$ is semisimple and simply-connected. Fix a Borel subalgebra $\bB$ and the unipotent radical $\bN$ and the opposite groups $\overline{\bB},\overline{\bN}$. Write $\bX=\bG/\overline{\bB}$ for the flag variety over $\cO_L$. Let $G=\bG(L), B=\bB(L)$ as before. Let $\frg,\frb$, etc. be the Lie algebras over $\cO_L$.

There is the moment map \[\beta:\widetilde{T^*\bX}=\bG\times^{\overline{\bB}}(\frg/\overline{\frb})^*\rightarrow \frg^*,\] 
inducing a homomorphism $U(\frg)\rightarrow \widetilde{\cD}$ as filtered $\cO_L$-algebras. Here $\bH=\overline{\bB}/\overline{\bN}$, $\widetilde{\cD}= (\xi_{*}\cD_{\widetilde{\bX}})^{\bH}$ for $\xi:\widetilde{\bX}:=\bG/\overline{\bN}\rightarrow\bG/\overline{\bB}$ and $\cD_{\widetilde{\bX}}$ denotes the sheaf of (crystalline) differential operators on $\widetilde{\bX}$. The sheaf of algebras $\widetilde{\cD}$ (locally isomorphic to $\mathcal{\cD}_X\otimes U(\frh)$) is equipped with a natural increasing filtration $F_{m}\widetilde{\cD},m\geq 0$ (induced by order of differential operators) as in \cite[\S 4.6]{ardakov2013irreducible} such that $\mathrm{gr}(\widetilde{\cD})\simeq \Sym_{\cO_{\bX}}\widetilde{\cT}$ for the enhanced tangent sheaf $\widetilde{\cT}=(\xi_*\cT_{\widetilde{\bX}})^{\bH}$. The fiber of $\widetilde{\cD}$ at the point $\overline{\bB}\in \bX$ with respect to the left $\cO_{\bX}$-action is equal to $U(\frg)/\overline{\frn}U(\frg)$.

Identify $\bH=\overline{\bB}/\overline{\bN}=\bT$, the maps $U(\frg)\rightarrow \widetilde{\cD}$ and $U(\frt)\rightarrow \widetilde{\cD}$ coincide on the center $Z(\frg)=U(\frg)^{\bG}$ of $U(\frg)$ via the Harish-Chandra morphism $Z(\frg)\rightarrow U(\frt)^{W,\cdot}$ (\cite[\S 4.10]{ardakov2013irreducible}). And $U(\frg)$ is also equipped with the Poincaré-Birkhoff-Witt filtration so that $\mathrm{gr}(U(\frg))=\Sym_{\cO_L}\frg$. 

There is a set of collections $\mathcal{S}_{\bX}$ of affine Zariski opens of $\bX$ that trivialise $\xi$ as in \cite[\S 4.3]{ardakov2013irreducible}. 

The $n$-th deformation $\widetilde{\cD}_n$ of $\widetilde{\cD}$ is defined to be the subalgebra $ \sum_i\varpi_L^{in}F_i\widetilde{\cD}$ whose $\varpi_L$-adic completion is denoted by $\widehat{{\widetilde{\mathcal{D}}}_n}$ \cite[Def. 5.9]{ardakov2013irreducible} with $\widehat{{\widetilde{\cD}_{n,L}}}:=\widehat{{\widetilde{\mathcal{D}}}_n}\otimes_{\cO_L}L$ which are sheaves for the Zariski topology on $\bX$. Note that $\widehat{{\widetilde{\cD}_{n}}}:=\varprojlim_{a}\widetilde{\cD}_n/\varpi^a\widetilde{\cD}_n$ is supported on the special fiber of $\bX$. Similarly we define $U(\frg)_n=U(\varpi_L^n\frg)$ and $\widehat{U(\frg)_{n,L}}$. The limit $\widehat{U(\frg_L)}=\varprojlim_{n\in\bbN}\widehat{{U(\frg)_{n,L}}}$ is the Arens-Michael envelope of $U(\frg_L)$. 

Fix an $\cO_L$-linear map $\lambda:\varpi_L^n\frh\rightarrow \cO_L$ which induces a character $\lambda: U(\frh)_n:=U(\varpi^n\frh)\rightarrow \cO_L$. The sheaf of deformed twisted differential operators is defined by (\cite[Def. 6.4]{ardakov2013irreducible}) $\cD_n^{\lambda}:=\widetilde{\cD}_{n}\otimes_{U(\frh)_n}\lambda$ with the $\varpi_L$-adic completion $\widehat{\cD_n^{\lambda}}$ and $\widehat{\cD_{n,L}^{\lambda}}:=\widehat{\cD_{n}^{\lambda}}\otimes_{\cO_L}L$. Let $\widehat{\cU_{n}^{\lambda}},\widehat{\cU_{n,L}^{\lambda}}$ be corresponding completions for $\cU_{n}^{\lambda}:=U(\frg)_n\otimes_{(U(\frg)^{\bG})_n}\lambda$ as in \cite[\S 6.10]{ardakov2013irreducible}. And we write $\widehat{\cD_{n,L}},\widehat{\cU_{n,L}}$, etc. for $\widehat{\cD_{n,L}^{\lambda}}, \widehat{\cU_{n,L}^{\lambda}}$, etc. if $\lambda=0$. Then \cite[Thm. 6.10]{ardakov2013irreducible} says that the map $\widehat{\cU_{n,L}^{\lambda}}\rightarrow \widehat{\cD_{n,L}^{\lambda}}$ induces an isomorphism 
\[\widehat{\cU_{n,L}^{\lambda}}\simeq \Gamma(\bX,\widehat{\cD_{n,L}^{\lambda}}).\]
Let $\mathrm{Loc}^{\lambda}:\mathrm{Mod}(\widehat{\cU_{n,L}^{\lambda}})\rightarrow \mathrm{Mod}(\widehat{\cD_{n,L}^{\lambda}})$ be the localization functor 
\[M\mapsto \widehat{\cD_{n,L}^{\lambda}}\otimes_{\widehat{\cU_{n,L}^{\lambda}}}M.\]
Let $\rho$ be the half sum of all positive roots (of $\bB$). 
\begin{thm}[{\cite[Thm. 6.12]{ardakov2013irreducible}}]\label{theoremlocalization}
    Let $\lambda\in \Hom_{\cO_L}(\varpi_L^n\frh,\cO_L)$. 
    \begin{enumerate}
        \item If $\lambda+\rho$ is dominant ($\pair{\lambda+\rho}{\alpha^{\vee}}\notin \{-1,-2,\cdots\}$ for any positive (with respect to $\frb$) coroot $\alpha^{\vee}$ of $\bH$, \cite[\S 6.7]{ardakov2013irreducible}) and regular (i.e., the stabilizer of $\lambda+\rho$ under the action of the Weyl group is trivial), then the functor $\mathrm{Loc}^{\lambda}$ and the functor $\Gamma(\bX,-)$ of taking global sections induce an equivalence of the categories of coherent modules over $\widehat{\cU_{n,L}^{\lambda}}$ and $\widehat{\cD_{n,L}^{\lambda}}$. 
        \item If $\lambda+\rho$ is dominant but not regular, then $\mathrm{Loc}^{\lambda}$ and $\Gamma(\bX,-)$ still induce an equivalence between coherent $\widehat{\cU_{n,L}^{\lambda}}$-modules and the quotient of the category of coherent $\widehat{\cD_{n,L}^{\lambda}}$-modules by the full subcategory of modules that are in the kernel of $\Gamma(\bX,-)$.
    \end{enumerate}
\end{thm}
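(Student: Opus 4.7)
The plan is to follow the classical Beilinson–Bernstein strategy, adapted to the $\varpi_L$-adically completed setting. Given that the excerpt already records $\widehat{\cU_{n,L}^{\lambda}} \simeq \Gamma(\bX,\widehat{\cD_{n,L}^{\lambda}})$ as input (Thm.~6.10 of Ardakov–Wadsley), the two remaining ingredients are (a) exactness of $\Gamma(\bX,-)$ on coherent $\widehat{\cD_{n,L}^{\lambda}}$-modules whenever $\lambda+\rho$ is dominant, and (b) the vanishing statement that, when $\lambda+\rho$ is regular dominant, a nonzero coherent $\widehat{\cD_{n,L}^{\lambda}}$-module has nonzero global sections. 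Granted these, one checks formally from the tensor–hom adjunction that the unit $M \to \Gamma(\bX,\mathrm{Loc}^{\lambda}(M))$ and the counit $\mathrm{Loc}^{\lambda}(\Gamma(\bX,\mathcal{M})) \to \mathcal{M}$ are isomorphisms on coherent objects, thereby producing the equivalence in part (1).

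For (a), the approach is to choose, for each coherent $\widehat{\cD_{n,L}^{\lambda}}$-module $\mathcal{M}$, a $\widehat{\cD_{n}^{\lambda}}$-stable lattice $\mathcal{M}^{\circ}$ and reduce modulo $\varpi_L$. The resulting $\mathcal{M}^{\circ}/\varpi_L\mathcal{M}^{\circ}$ is a coherent sheaf over $\widehat{\cD_{n}^{\lambda}}/\varpi_L$; taking associated graded for the order filtration $F_{\bullet}\widetilde{\cD}$ further reduces it to a coherent sheaf on the deformation $\widetilde{T^{*}\bX}$ of the cotangent bundle, twisted by the line bundle corresponding to $\lambda+\rho$. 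Dominance guarantees Kempf-type vanishing $H^{i}(\bX,-)=0$ for $i\geq 1$ on these graded pieces, and one lifts the vanishing back through the $\varpi_L$-adic limit using the Mittag-Leffler condition for coherent modules, together with the trivializing affine cover $\mathcal{S}_{\bX}$ to make Čech computations. This will be the main technical obstacle: ensuring that ``coherent over the completion'' is well-behaved enough that lattice arguments, graded reductions, and the passage to the limit are all valid.

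For (b), I would use the adjunction and exactness already established. The argument that any nonzero coherent $\mathcal{M}$ over $\widehat{\cD_{n,L}^{\lambda}}$ has nonzero global sections is the $p$-adic analogue of the classical ``translation to the wall'' or Casimir argument: using the isomorphism $\widehat{\cU_{n,L}^{\lambda}} \simeq \Gamma(\bX,\widehat{\cD_{n,L}^{\lambda}})$ and regularity of $\lambda+\rho$, one produces central elements in $Z(\frg)$ whose action separates $\Gamma(\bX,\mathcal{M})$ from $\Gamma(\bX,\mathcal{M}')$ for any nonzero subquotient, contradicting vanishing. Regularity is used precisely to guarantee that the relevant translation functors are equivalences and do not collapse the weight data.

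Finally, part (2) is a formal consequence of part (1) after Serre-quotienting: since exactness of $\Gamma$ holds under dominance alone, the kernel of $\Gamma$ is a Serre subcategory of coherent $\widehat{\cD_{n,L}^{\lambda}}$-modules, and the induced functor from the Gabriel quotient to coherent $\widehat{\cU_{n,L}^{\lambda}}$-modules is fully faithful and essentially surjective by the same adjunction formalism. The principal obstacle throughout is the interaction of completion with coherence and vanishing; everything else is structural transport of the classical Beilinson–Bernstein picture.
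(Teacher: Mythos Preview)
The paper does not prove this theorem at all: it is quoted as \cite[Thm.~6.12]{ardakov2013irreducible} and used as a black box, with only a remark afterward noting that the ``very good prime'' hypothesis from the original reference was later removed in \cite[\S 5.3]{ardakov2021equivariant}. So there is nothing in the paper's own text to compare your proposal against.

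That said, what you have sketched is a reasonable outline of how Ardakov--Wadsley actually establish the result in the cited reference: the Noetherian Banach algebra structure of $\widehat{\cD_{n,L}^{\lambda}}(\bV)$ on the trivializing opens $\bV\in\mathcal{S}_{\bX}$, lattice arguments and reduction mod $\varpi_L$, the affinoid acyclicity of the cover, and the classical Beilinson--Bernstein formalism once exactness and the vanishing/generation statements are in hand. Your description of part (2) as a formal Serre-quotient consequence of the dominant-case exactness is also the standard way this is packaged. If your goal is to understand or reproduce the proof, you should consult \cite{ardakov2013irreducible} directly; for the purposes of the present paper, no proof is needed.
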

\begin{rem}
    In \cite{ardakov2013irreducible}, the prime number $p$ needs to be very good for $\bG$ \cite[\S 6.8]{ardakov2013irreducible}. This assumption is removed in \cite[\S 5.3]{ardakov2021equivariant}
\end{rem}
Let $M$ be a coherent $U(\frg_L)$-module with infinitesimal character given by $\lambda$. The localization $\cM:=\cD_{n,L}^{\lambda}\otimes_{\cU_{n,L}^{\lambda}}M$ is a coherent module of $\cD_{L}^{\lambda}=\cD_{n,L}^{\lambda}$ and $\Gamma(\bX,\cM)=M$ (this is essentially the classical localization). Let $\widehat{M_{n,L}}=\widehat{\cU_{n,L}^{\lambda}}\otimes_{\cU_{n,L}^{\lambda}}M$ and $\widehat{\cM_{n,L}}:=\mathrm{Loc}^{\lambda}(\widehat{M_{n,L}})$.
\begin{lemma}\label{lemmacompletionDmodule}
    For $\bV\in \mathcal{S}_{\bX}$,
    \[\widehat{\cM_{n,L}}(\bV)=\widehat{\cD_{n,L}^{\lambda}}(\bV)\otimes_{\cD_{n,L}^{\lambda}(\bV)}\cM(\bV).\]    
\end{lemma}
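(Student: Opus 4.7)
The plan is to reduce the identity to a formal manipulation of tensor products. The key non-trivial input is that the sections of the completed localization sheaf $\widehat{\cM_{n,L}}$ on $\bV \in \mathcal{S}_\bX$ are computed by the naive tensor product of sections, i.e.,
\[
\widehat{\cM_{n,L}}(\bV) \;=\; \widehat{\cD_{n,L}^{\lambda}}(\bV) \otimes_{\widehat{\cU_{n,L}^{\lambda}}} \widehat{M_{n,L}}.
\]
Call this identity $(\ast)$. Once $(\ast)$ is granted, writing $\widehat{M_{n,L}} = \widehat{\cU_{n,L}^{\lambda}} \otimes_{\cU_{n,L}^{\lambda}} M$ and using associativity yields
\[
\widehat{\cM_{n,L}}(\bV) \;=\; \widehat{\cD_{n,L}^{\lambda}}(\bV) \otimes_{\cU_{n,L}^{\lambda}} M \;=\; \widehat{\cD_{n,L}^{\lambda}}(\bV) \otimes_{\cD_{n,L}^{\lambda}(\bV)} \bigl(\cD_{n,L}^{\lambda}(\bV) \otimes_{\cU_{n,L}^{\lambda}} M\bigr),
\]
whose last factor equals $\cM(\bV)$ by the construction of $\cM$ as the quasi-coherent $\cD_{n,L}^{\lambda}$-module attached to $M$ on the affine open $\bV$.

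To establish $(\ast)$ I would exploit that $M$ is coherent over the Noetherian algebra $U(\frg_L)$, so that $\widehat{M_{n,L}}$ is finitely presented over the Noetherian Banach algebra $\widehat{\cU_{n,L}^{\lambda}}$. Choose a presentation $\widehat{\cU_{n,L}^{\lambda}}^{\,a} \to \widehat{\cU_{n,L}^{\lambda}}^{\,b} \to \widehat{M_{n,L}} \to 0$. Applying the right-exact functor $\widehat{\cD_{n,L}^{\lambda}} \otimes_{\widehat{\cU_{n,L}^{\lambda}}} -$ produces an exact sequence of sheaves on $\bX$
\[
\widehat{\cD_{n,L}^{\lambda}}^{\,a} \to \widehat{\cD_{n,L}^{\lambda}}^{\,b} \to \widehat{\cM_{n,L}} \to 0,
\]
and taking sections on $\bV$ one compares with the (also right-exact) sequence obtained by applying the naive presheaf tensor product $\widehat{\cD_{n,L}^{\lambda}}(\bV) \otimes_{\widehat{\cU_{n,L}^{\lambda}}} -$ to the same presentation. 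These two sequences agree in their first two terms, so agreement in the third term $(\ast)$ reduces to showing that $\Gamma(\bV,-)$ preserves the surjection onto $\widehat{\cM_{n,L}}$, i.e., that the kernel is acyclic on $\bV$.

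The main obstacle is therefore the acyclicity $H^{>0}(\bV, \widehat{\cD_{n,L}^{\lambda}}) = 0$ for $\bV \in \mathcal{S}_\bX$ (and for coherent subsheaves thereof). This is a $\widehat{\cD}$-affineness statement for opens in $\mathcal{S}_\bX$, which fits into the Ardakov--Wadsley framework: on such $\bV$, the sheaf $\widehat{\cD_{n,L}^{\lambda}}|_\bV$ is built from the Noetherian $\varpi_L$-adic Banach completion of the algebraic TDO $\cD_{n,L}^{\lambda}(\bV)$, and coherent modules over it should behave like finitely generated modules on an affine Noetherian scheme; this is implicit in the proof of Theorem \ref{theoremlocalization}, and should be extractable from \cite{ardakov2013irreducible}~\S5--6 without requiring $\lambda+\rho$ to be dominant and regular, since the present statement only concerns the structure of $\widehat{\cD_{n,L}^{\lambda}}$ on a single affine open.
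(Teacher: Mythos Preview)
Your proposal is correct and follows essentially the same route as the paper's proof: both reduce to the identity $(\ast)$ via a finite presentation argument, and both identify the key input as the coherent acyclicity (in fact ``affineness'' in the sense of \cite[Def.~5.1]{ardakov2013irreducible}) of the opens $\bV\in\mathcal{S}_{\bX}$ for $\widehat{\cD_{n,L}^{\lambda}}$. The paper is simply more precise about the citations, pointing to \cite[Cor.~5.9, Thm.~5.13]{ardakov2013irreducible} for the enhanced sheaf $\widehat{\widetilde{\cD}_{n,L}}$ and then descending to the quotient $\widehat{\cD_{n,L}^{\lambda}}$ via \cite[Lem.~6.11]{ardakov2013irreducible}; your phrase ``should be extractable from \S5--6'' is accurate but could be sharpened to these specific results. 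One small wording issue: what you need is acyclicity of coherent $\widehat{\cD_{n,L}^{\lambda}}$-\emph{modules} on $\bV$ (the kernel in your presentation is such a module), not merely of ``coherent subsheaves'' of $\widehat{\cD_{n,L}^{\lambda}}$.
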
 
\begin{proof}
   Since $\mathcal{S}_{\bX}$ is coherently acyclic (even affine) for the sheaves of rings $\widehat{\widetilde{\cD}_{n,L}}$ and $\widetilde{\cD}_{n,L}$ in the sense of \cite[Def. 5.1]{ardakov2013irreducible} by \cite[Cor. 5.9, Thm. 5.13]{ardakov2013irreducible}, so is for their quotients $\cD_{n,L}^{\lambda}, \widehat{\cD_{n,L}^{\lambda}}$ (\cite[Lem. 6.11]{ardakov2013irreducible}) by the same proof. For any $\bV\in \mathcal{S}_{\bX}$, we get $\cM(\bV)=\cD_{n,L}^{\lambda}(\bV)\otimes_{U_{n,L}^{\lambda}}M$ and $\widehat{\cM_{n,L}}(\bV)=\widehat{\cD_{n,L}^{\lambda}}(\bV)\otimes_{\widehat{\cU_{n,L}^{\lambda}}}\widehat{M_{n,L}}$ (writing a finite presentation of $M$ using free $\cU_{n,L}^{\lambda}$-modules). The result follows.
\end{proof}
Moreover $\widehat{\cD_{n,L}^{\lambda}}(\bV)=\widehat{\cD_{n}^{\lambda}(\bV)}\otimes_{\cO_L}L$ by \cite[Prop. 6.5]{ardakov2013irreducible}. Hence $\widehat{\cM_{n,L}}(\bV)$ is the completion of the coherent module $\cM(\bV)$ over $\cD_{n,L}^{\lambda}(\bV)$ with respect to the $p$-adic topology induced from the lattice $\cD_{n}^{\lambda}(\bV)\subset \cD_{n,L}^{\lambda}(\bV)$.
\subsection{Dual Verma modules for dominant weights} 
We consider the localization of dual Verma modules with dominant weights.

Suppose that $\lambda$ is integral (for the group $\bH$) and $\lambda+\rho$ is dominant with respect to $\frb$ (equivalently, $w_0(\lambda)-\rho=w_0(\lambda+\rho)$ is dominant with respect to $\overline{\frb}$, where $w_0$ is the longest element in the Weyl group). Take $\ovM=\overline{M}(w_0(\lambda))^{\vee}$ the BGG dual of the Verma module $\overline{M}(w_0(\lambda))=U(\frg_L)\otimes_{U(\overline{\frb})}w_0(\lambda)$ in the category $\cO^{\overline{\frb}_L}$ of the $(-\rho)$-dominant (for 
$\overline{\frb}$) highest weight $w_0(\lambda)$. Consider the Bruhat cells $\bX_{w}^{\circ}=\overline{\bB} w\overline{\bB}/\overline{\bB}\subset \bX$. The inclusion $j_{w_0}:\bX_{w_0}^{\circ}\hookrightarrow \bX$ is an open embedding.
\begin{lemma}\label{lemmalocalizationdualverma}
    There is an isomorphism of $U(\frg_L)$-modules,
    \[ \overline{M}(w_0(\lambda))^{\vee}\simeq \Gamma(\bX, ((j_{w_0,*}\cO_{\bX_{w_0}})\otimes_{\cO_{\bX}}\cO(\lambda))\otimes_{\cO_L}L).\]    
    Here $j_{w_0,*}$ is the usual direct image functor for $\cO_{\bX}$-modules and $\cO(\lambda)$ is the line bundle $\bG\times^{\overline{\bB}}\lambda$ over $\bX$ consisting of functions $f$ on $\bG$ such that $f(gb)=\lambda^{-1}(b)f(g)$ for $b\in \overline{\bB}$. And, 
    \[ \cM_{n,L}\simeq ((j_{w_0,*}\cO_{\bX_{w_0}})\otimes_{\cO_{\bX}}\cO(\lambda))\otimes_{\cO_L}L\]
    as $\cD_{n,L}^{\lambda}$-modules under the equivalences in Theorem \ref{theoremlocalization}.
\end{lemma}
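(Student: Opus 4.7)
The plan is to first establish (1) by computing $\Gamma(\bX_{w_0}^\circ,\cO(\lambda))\otimes_{\cO_L}L$ as a $U(\frg_L)$-module and identifying it with $\overline{M}(w_0(\lambda))^\vee$, then deduce (2) from the Beilinson-Bernstein equivalence of Theorem~\ref{theoremlocalization}.

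For (1), the open immersion $\overline{\bN}\hookrightarrow\bX$, $n\mapsto nw_0\overline{\bB}$, identifies $\overline{\bN}$ with $\bX_{w_0}^\circ$. The canonical section $s\in\cO(\lambda)(\bX_{w_0}^\circ)$ determined by $s(nw_0)=1$ for all $n\in\overline{\bN}$ (extended via the transformation law $f(g\overline{b})=\lambda^{-1}(\overline{b})f(g)$) trivializes the line bundle over the open cell, identifying $\Gamma(\bX_{w_0}^\circ,\cO(\lambda))\otimes L$ with $L[\overline{\bN}]\cdot s$. Direct computation of the $\bT$- and $\frg$-actions by left translations shows: (a) the $\bT$-weight of $s$ is $w_0(\lambda)$, using $t^{-1}w_0=w_0\cdot(w_0^{-1}t^{-1}w_0)$ and $\lambda(w_0^{-1}tw_0)=w_0(\lambda)(t)$; (b) $\overline{\frn}$ acts by right-invariant vector fields on $\overline{\bN}$, independently of the twist, so $\overline{\frn}\cdot s=0$; (c) the $\bT$-character of $L[\overline{\bN}]$ matches that of $\overline{M}(w_0(\lambda))$. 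Since the only $\overline{\frn}$-invariant polynomial functions on $\overline{\bN}$ are constants, one has $\Hom_{U(\frg_L)}(\overline{M}(\nu),L[\overline{\bN}]\cdot s)=\delta_{\nu,w_0(\lambda)}L$ for every weight $\nu$. Together with the character agreement, this characterizes $L[\overline{\bN}]\cdot s\simeq\overline{M}(w_0(\lambda))^\vee$, realizing the classical Beilinson-Bernstein description of the dual Verma as a direct image from the open Bruhat cell.

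For (2), set $\cN:=(j_{w_0,*}\cO_{\bX_{w_0}^\circ}\otimes_{\cO_{\bX}}\cO(\lambda))\otimes_{\cO_L} L$; this is a quasi-coherent $\cD_L^\lambda$-module with $\Gamma(\bX,\cN)\simeq\overline{M}(w_0(\lambda))^\vee$ by (1). When $\lambda+\rho$ is regular, Theorem~\ref{theoremlocalization}(1) is a categorical equivalence, and $\cM\simeq\cN$ follows from the agreement of global sections. In the dominant non-regular case, Theorem~\ref{theoremlocalization}(2) only gives an equivalence after quotienting by the kernel of $\Gamma(\bX,-)$; I would verify that $\cN$ lies outside this kernel by showing $\cN$ is generated as a $\cD_L^\lambda$-module by the single section $s$, using that $\cN|_{\bX_{w_0}^\circ}=\cO(\lambda)|_{\bX_{w_0}^\circ}$ is a rank-one $\cD_L^\lambda|_{\bX_{w_0}^\circ}$-module generated by $s$ and that the direct image along the open embedding $j_{w_0}$ preserves this generation in the quotient category.

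The main obstacle is distinguishing $\overline{M}(w_0(\lambda))^\vee$ from $\overline{M}(w_0(\lambda))$ itself in part (1): both have the same formal character but differ precisely when the Verma is reducible. The $\Hom$-computation above resolves this by exploiting the fact that $s$ lies in the socle rather than being a cyclic generator --- for instance, in the toy case $\bG=\SL_2$ with $\lambda=0$, one checks directly that $U(\frg)\cdot s=L\cdot s$ is the one-dimensional trivial submodule while $L[\overline{\bN}]\cdot s=L[x]$ is strictly larger, forcing the identification with the BGG dual.
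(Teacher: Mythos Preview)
Your approach for part (1) is correct and takes a more direct route than the paper. The paper reduces to $\lambda=0$ (citing \cite[Cor.~5.8]{brylinski1981kazhdan}) and then applies geometric translation: $\cN=j_{w_0,*}\cO_{\bX_{w_0}^\circ}\otimes\cO(\lambda)$, and $\Gamma(\cN)$ is identified with the translate $T_{w_0\overline{\cdot}0}^{w_0\overline{\cdot}w_0(\lambda)}\overline{M}(0)^\vee=\overline{M}(w_0(\lambda))^\vee$ via \cite[Thm.~7.6]{humphreys2008representations} and \cite[Prop.~2.8]{beilinson1999wall}. Your computation of the $\frg$-action on $L[\overline{\bN}]\cdot s$ bypasses the translation machinery at the cost of redoing a classical calculation; the $\SL_2$ check is exactly the right sanity test. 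The one step worth tightening is the characterization: the conditions $\mathrm{ch}(N)=\mathrm{ch}(\overline{M}(\mu))$ and $\Hom(\overline{M}(\nu),N)=\delta_{\nu,\mu}L$ do force $N\simeq\overline{M}(\mu)^\vee$, but say why --- the Hom condition gives simple socle $L(\mu)$, so $N$ embeds into the injective hull of $L(\mu)$ in the truncated category $\cO^{\overline{\frb}}_{\leq\mu}$, which is $\overline{M}(\mu)^\vee$, and equal characters then force equality.

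For part (2) your handling of the singular case misidentifies what needs proving. The lemma asserts $\cM_{n,L}\simeq\cN$ ``under the equivalences in Theorem~\ref{theoremlocalization}'', which in the non-regular case means only in the quotient by $\ker\Gamma$; this follows immediately from (1) together with $\Gamma\circ\mathrm{Loc}^\lambda=\mathrm{id}$ (dominance). Your proposed check that $\cN$ is generated by $s$ is neither needed for that statement nor sufficient to upgrade it to an isomorphism of actual sheaves: generation by global sections makes the counit $\mathrm{Loc}^\lambda\Gamma(\cN)\to\cN$ surjective, but its kernel could still be a nonzero sheaf supported on $\partial\bX$ with vanishing global sections, and ruling that out is a separate matter.
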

\begin{proof}
    This is \cite[Prop. 4.4]{beilinson1999wall}. We give some explanation. The sheaf $(j_{w_0,*}\cO_{\bX_{w_0}})\otimes_{\cO_L}L$ is a $\cD_{L}=\cD_{n,L}$-module for the trivial weight $0$ and the translation $(j_{w_0,*}\cO_{\bX_{w_0}})_L\otimes_{\cO_{\bX}}\cO(\lambda)$ becomes a $\cD_{L}^{\lambda}$-module via the identification $\cD_L^{\lambda}\simeq \cO(\lambda)\otimes \cD_L\otimes \cO(\lambda)^{-1}$. The action of $\widetilde{\cD}=(\xi_{*}\cD_{\bG/\overline{\bN}})^{\bH}$ on $\cO(\lambda)\subset (\xi_{*}\cO_{\bG/\overline{\bN}})^{\bH=\lambda}$ factors through the quotient $\widetilde{\cD}^{\lambda}_0$ since $U(\frh)\subset \widetilde{\cD}$ in \cite[Prop. 4.6]{ardakov2013irreducible} acts via the character $\lambda$, see \cite[\S 6]{kashiwara1988representation}. If $\lambda=0,\ovM(w_0(0))^{\vee}=\overline{M}(ww_0\overline{\cdot} 0)^{\vee}$ for $w=w_0$ (here $w\overline{\cdot}\lambda=w(\lambda-\rho)+\rho$), the result is \cite[Cor. 5.8]{brylinski1981kazhdan}, or Lemma \ref{lemmadualvermaprincipalserieslocalization} below. In general, if $\lambda$ is dominant for $\frb$, $\Gamma(\bX,\cO(\lambda))$ has $B$-highest weight $\lambda$, $\overline{B}$-highest weight $w_0(\lambda)$, see \cite[Prop. II.2.2]{Jantzen2003representations}. The global section of the  geometric translation $\mathrm{Loc}_0(\ovM(0))\otimes \cO(\lambda)$ when $\lambda+\rho$ is $\frb$-dominant is the translation $T_{w_0\overline{\cdot}0}^{w_0\overline{\cdot}w_0(\lambda)}\overline{M}(0)^{\vee}=\overline{M}(w_0(\lambda))^{\vee}$ by \cite[Thm. 7.6]{humphreys2008representations} and the proof of \cite[Prop. 2.8]{beilinson1999wall}. 
\end{proof} 
Taking global sections, the $U(\frg_L)$-module $\overline{M}(w_0(\lambda))^{\vee}=\Gamma(\bX,\cM_{n,L})=\cO_{\bX}(\bX_{w_0})(\lambda)_L$ can be identified with $\cO(\overline{\bN}_L)$ after trivializing the line bundle $\cO(\lambda)_L$ over $\bX_{w_0}^{\circ}=\overline{\bN}w_0\overline{\bB}/\overline{\bB}$. Let $\Phi^-$ be the set of roots of $\ovN$. We fix a decomposition $\overline{\bN}\simeq \prod_{\alpha\in \Phi^-}\bG_{a}$ into root subgroups with coordinates $X_{\alpha},\alpha\in \Phi^-$ so that $\overline{\bN}\simeq \Spec(\cO_E[X_{\alpha},\alpha\in \Phi^-])$. 
\begin{lemma}\label{lemmasectionlinebundle}
    Fix a trivialization of $\cO(\lambda)$ on $\bX_{w_0}^{\circ}$. The section $\widehat{\cM_{n,L}}(\bX_{w_0}^{\circ})$ is equal to the Tate algebra $E\langle X_{\alpha},\alpha\in \Phi^-\rangle$.
\end{lemma}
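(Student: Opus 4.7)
The plan is to apply Lemma~\ref{lemmacompletionDmodule} with $\bV = \bX_{w_0}^\circ$ and then identify the resulting completion explicitly using the geometric description of $\cM$ supplied by Lemma~\ref{lemmalocalizationdualverma}. Since $\bX_{w_0}^\circ = \overline{\bN}w_0\overline{\bB}/\overline{\bB} \cong \overline{\bN}$ is an affine open over which the $\bH$-torsor $\xi\colon \widetilde{\bX}\to\bX$ admits a section, $\bX_{w_0}^\circ$ may be taken inside the collection $\mathcal{S}_{\bX}$. Hence Lemma~\ref{lemmacompletionDmodule} applies, giving
\[
  \widehat{\cM_{n,L}}(\bX_{w_0}^\circ) \;=\; \widehat{\cD_{n,L}^\lambda}(\bX_{w_0}^\circ) \otimes_{\cD_{n,L}^\lambda(\bX_{w_0}^\circ)} \cM(\bX_{w_0}^\circ),
\]
and by the remark after Lemma~\ref{lemmacompletionDmodule} this is the $\varpi_L$-adic completion of $\cM(\bX_{w_0}^\circ)$ with respect to any $\cD_n^\lambda(\bX_{w_0}^\circ)$-stable $\cO_L$-lattice (tensored back with $L$, and then with $E$).

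Next, Lemma~\ref{lemmalocalizationdualverma} identifies $\cM$ with $((j_{w_0,*}\cO_{\bX_{w_0}^\circ})\otimes_{\cO_\bX}\cO(\lambda))\otimes_{\cO_L}L$. Restricting to the big cell and using the fixed trivialization of $\cO(\lambda)|_{\bX_{w_0}^\circ}$, together with the chosen decomposition $\overline{\bN}\simeq \Spec(\cO_L[X_\alpha,\alpha\in\Phi^-])$, we obtain
\[
  \cM(\bX_{w_0}^\circ) \;\cong\; \cO(\bX_{w_0}^\circ)\otimes_{\cO_L}L \;=\; L[X_\alpha,\alpha\in\Phi^-],
\]
which contains the obvious $\cO_L$-lattice $\mathcal{L} := \cO_L[X_\alpha,\alpha\in\Phi^-]$.

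It then remains to check that $\mathcal{L}$ is stable under $\cD_n^\lambda(\bX_{w_0}^\circ)$. Since the twist by $\cO(\lambda)$ has been trivialized, the $\cD$-action is the tautological one, generated by multiplications by coordinates $X_\alpha$ and the divided-power differential operators. In the $n$-th deformation $\widetilde{\cD}_n$, a first-order differential operator $\partial_\alpha = \partial/\partial X_\alpha$ is rescaled to $\varpi_L^n\partial_\alpha$, and more generally the divided powers appear as $\varpi_L^{nk}\partial_\alpha^{(k)}$. Since $\partial_\alpha^{(k)}$ sends the monomial $X^\beta$ to $\binom{\beta_\alpha}{k} X^{\beta - k e_\alpha}$ with integer coefficient, and multiplication by $X_\alpha$ obviously preserves $\mathcal{L}$, the lattice $\mathcal{L}$ is indeed $\cD_n^\lambda(\bX_{w_0}^\circ)$-stable (one only needs the rescaling by a nonnegative power of $\varpi_L$ to absorb any denominators, so even arbitrary $\lambda$ poses no issue here).

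Finally, the $\varpi_L$-adic completion of $\mathcal{L} = \cO_L[X_\alpha,\alpha\in\Phi^-]$ is by definition the free restricted power series ring $\cO_L\langle X_\alpha,\alpha\in\Phi^-\rangle$, and inverting $\varpi_L$ (respectively extending scalars from $L$ to $E$) yields the Tate algebra $E\langle X_\alpha,\alpha\in\Phi^-\rangle$, proving the claim. The only step where care is needed is the stability of $\mathcal{L}$ under the action of $\cD_n^\lambda$ when the line bundle $\cO(\lambda)$ is twisted; this is handled by the $\varpi_L^n$-rescaling built into the deformation, which is precisely the geometric reason the Arens-Michael-type completion exists in the first place. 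I anticipate no serious obstacle; the proof is essentially a direct unwinding of definitions once Lemmas~\ref{lemmacompletionDmodule} and~\ref{lemmalocalizationdualverma} are in hand.
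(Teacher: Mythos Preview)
Your proposal is correct and follows essentially the same route as the paper: invoke Lemma~\ref{lemmacompletionDmodule} on the big cell, identify $\cM(\bX_{w_0}^\circ)$ with $L[X_\alpha]$ via Lemma~\ref{lemmalocalizationdualverma}, and recognize the completion as the Tate algebra. The paper packages the last step slightly differently, presenting the module as the cyclic quotient $\cD_{n,L}(\overline{\bN})/(\partial_\alpha,\alpha\in\Phi^-)$ and completing the Weyl algebra using its explicit norm, rather than verifying lattice stability; but this is the same computation in different clothing.

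One small correction: the operators in $\cD_n^\lambda(\bX_{w_0}^\circ)$ are not divided powers. After trivializing $\cO(\lambda)$, the integral deformation is $\cD_n(\overline{\bN})=\cO_L[X_\alpha,\varpi_L^n\partial_\alpha]$, i.e., the usual Weyl algebra generators rescaled by $\varpi_L^n$, and the norm on $\cD_{n,L}(\overline{\bN})$ is $|\sum\lambda_{i,j}X^i\partial^j|=\sup|\lambda_{i,j}||\varpi_L|_p^{-n|j|}$. Your stability check still goes through, since $(\varpi_L^n\partial_\alpha)^k\cdot X^\beta=\varpi_L^{nk}\beta_\alpha(\beta_\alpha-1)\cdots(\beta_\alpha-k+1)X^{\beta-ke_\alpha}\in\cO_L[X_\alpha]$; just replace the divided-power formula by this one.
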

\begin{proof}
    Without the completion, $\cM_{n,L}(\bX_{w_0}^{\circ})$ is equal to the space $\cC^{\pol}(\ovN ,L)=\cO(\bN_L)$. By Lemma \ref{lemmacompletionDmodule}, $\widehat{\cM_{n,L}}(\bX_{w_0}^{\circ})$ is the completion of $\cC^{\pol}(\ovN ,L)$ for the $p$-adic topology on $\cD_{n,L}^{\lambda}(\bX_{w_0})\simeq \cD_{n,L}(\overline{\bN})=L[X_{\alpha},\partial_{\alpha},\alpha\in \Phi^-]$ \cite[Lem. 6.4]{ardakov2013irreducible}, a Weyl algebra. By definition \cite[\S 3.5, \S 5.7]{ardakov2013irreducible} and by \cite[Prop. 5.7, \S 1.7]{ardakov2013irreducible}, $\widehat{\cD_{n,L}}(\overline{\bN})$ is the completion of $\cD_{n,L}(\overline{\bN})$ for the $p$-adic norm 
    \[|\sum_{i=(i_{\alpha}),j=(j_{\alpha}) \in\bbN^{\Phi^-}}\lambda_{i,j}\prod_{\alpha} X_{\alpha}^{i_{\alpha}}\prod_{\alpha} \partial_{\alpha}^{j_{\alpha}}|=\sup_{i,j}|\lambda_{i,j}||\varpi_L|_p^{-n\sum_{\alpha}j_{\alpha}}.\] 
    Since $\cM_{n,L}(\overline{\bN})\simeq \cD_{n,L}(\overline{\bN})/(\partial_{\alpha},\alpha\in \Phi^-)$ as left $\cD_{n,L}(\overline{\bN})$-modules (up to a twist), we conclude that 
    \[\widehat{\cM_{n,L}}(\overline{\bN})\simeq \widehat{\cD_{n,L}}(\overline{\bN})/(\partial_{\alpha},\alpha\in \Phi^-)=E\langle X_{\alpha},\alpha\in \Phi^-\rangle\]
    is the Tate algebra
\end{proof}

On the other hand, let $\chi_{\lambda}:T\rightarrow L^{\times}$ be an $L$-analytic character with the weight $\lambda$. Consider the locally analytic principal series
\[\Ind_{\overline{B}}^G\chi_{\lambda}=\{f:G\rightarrow L \text{ locally analytic }\mid f(gb)=\chi_{\lambda}^{-1}(b)f(g),\forall b\in \ovB\}\]
with the usual left action of $G$ given by $g.f(-)=f(g^{-1}-)$. Fix an isomorphism $N\simeq N\overline{B}/\ovB$. Consider the subspace $\cC^{\pol}(N,L)=\cO(\bN)_L$ of polynomial functions on $N$, which is equal to the dual Verma module $M(\lambda)^{\vee}=(U(\frg_L)\otimes_{U(\frb_L)}\lambda)^{\vee}$ in $\cO^{\frb_L}$ as in Remark \ref{remark-dualverma}. Via the isomorphism 
\[\cC^{\pol}(N\ovB/\ovB,L)\simeq \cC^{\pol}(\ovN w_0\ovB/\ovB,L):f(-)\mapsto f(w_0-),\]
we see again that the $U(\frg_L)$-module $\cC^{\pol}(\ovN w_0\ovB/\ovB,L)$ is isomorphic to $\overline{M}(w_0(\lambda))^{\vee}$ the twist by $w_0$ of $M(\lambda)^{\vee}$.
\begin{lemma}\label{lemmadualvermaprincipalserieslocalization}
    The two $U(\frg_L)$-module structures on $\cC^{\pol}(\ovN ,L)\simeq \overline{M}(w_0(\lambda))^{\vee}$ (via $w_0\cC^{\pol}(\ovN ,L)\subset \Ind_{\overline{B}}^G\chi_{\lambda}$ and $\cC^{\pol}(\ovN ,L)\simeq \Gamma(\bX,\cM_{n,L})$) coincide.
\end{lemma}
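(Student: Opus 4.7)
The plan is to exhibit both $U(\frg_L)$-module structures on $\cC^{\pol}(\ovN, L)$ as arising from one and the same geometric source, namely the derivative of the algebraic left $\bG$-action on sections of the line bundle $\cO(\lambda)$ over the open Schubert cell $\bX_{w_0}^{\circ} = \overline{\bN} w_0 \overline{\bB}/\overline{\bB} \subset \bX$.

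First I would identify both objects with $\Gamma(\bX_{w_0}^{\circ}, \cO(\lambda))$. On the $\cD$-module side, Lemma \ref{lemmalocalizationdualverma} gives $\cM_{n,L} \simeq (j_{w_0,*}\cO_{\bX_{w_0}} \otimes \cO(\lambda))\otimes_{\cO_L} L$, and taking global sections on $\bX$ yields exactly algebraic sections of $\cO(\lambda)$ over the open cell. On the principal series side, a function $w_0.f$ with $f \in \cC^{\pol}(\ovN, L)$ is the function on $\ovN w_0 \overline{B}$ whose restriction to $\ovN w_0$ is given by $nw_0 \mapsto f(n)$ and which satisfies the $\chi_\lambda^{-1}$-equivariance under right translation by $\overline{B}$. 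Since this equivariance is precisely the defining relation of $\cO(\lambda) = \bG \times^{\overline{\bB}} \lambda$, such a function is an algebraic section of $\cO(\lambda)$ over $\bX_{w_0}^{\circ}$, and polynomiality of $f$ on $\ovN$ corresponds exactly to algebraicity of the section.

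Next I would verify that both $\frg_L$-actions are the infinitesimal left-translation action on sections of $\cO(\lambda)$. For the principal series side this is by construction: the Lie algebra action is the derivative of $(g.f)(x) = f(g^{-1}x)$, which preserves the $\chi_\lambda^{-1}$-equivariance and is simply the differential of the left $\bG$-action on $\cO(\lambda)$ lifted from the left action on $\bX$. For the $\cD$-module side, the comoment map $U(\frg) \to \Gamma(\bX, \widetilde{\cD})$ recalled in \S\ref{subsectionlocalization} is by construction the infinitesimal $\bG$-action on $\widetilde{\bX} = \bG/\overline{\bN}$ coming from the moment map $\beta$, and its specialization to $\cD_{n,L}^\lambda = \widetilde{\cD}_n \otimes_{U(\frh)_n} \lambda$ yields the corresponding infinitesimal action on sections of $\cO(\lambda) \subset (\xi_* \cO_{\widetilde{\bX}})^{\bH = \lambda}$ on $\bX$. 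Restricting to the open cell and applying to algebraic sections gives the same differential operators as on the principal series side.

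The hard part will be keeping track of sign and direction conventions (left vs.\ right action; $X$ vs.\ $-X$ in differentiating) to ensure the two identifications of $\cC^{\pol}(\ovN, L)$ with $\Gamma(\bX_{w_0}^{\circ}, \cO(\lambda))$ intertwine the $\frg_L$-actions on the nose rather than up to the Chevalley involution. This is ensured by the fact that both the definition of $\Ind_{\overline{B}}^G \chi_\lambda$ and the definition of $\cO(\lambda)$ in the excerpt use the same equivariance convention $f(gb) = \lambda^{-1}(b) f(g)$, and both $\frg_L$-actions come from left $\bG$-translation; the $w_0$-twist is absorbed by the choice of cell $\bX_{w_0}^{\circ}$ and the identification $w_0 \cC^{\pol}(\ovN, L) \subset \Ind_{\overline{B}}^G \chi_\lambda$. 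Once conventions are aligned, both actions are the derivative of the algebraic left $\bG$-action on $\cO(\lambda)$, hence they coincide.
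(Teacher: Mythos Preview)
Your proposal is correct and follows essentially the same approach as the paper: both $U(\frg_L)$-module structures are identified with the infinitesimal left $\bG$-action on sections of $\cO(\lambda)$ over the open cell $\bX_{w_0}^{\circ}$, i.e., the derivative $(x.f)(g) = \frac{d}{dt}|_{t=0} f(\exp(-tx)g)$. The paper's proof is a one-line version of your argument, omitting the bookkeeping of conventions that you spell out.
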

\begin{proof}
    The $U(\frg)$-module structures on $\cO_{\bX_{w_0}}(\lambda)$ both come from the infinitesimal action of $G=\bG(L)$ on $\cO(\overline{\bN} w_0\overline{\bB})^{\overline{\bB}=\lambda}$ given by $(x.f)(g)=\frac{d}{dt}f(\exp(-tx)g)$ for $g\in \bG,x\in \frg$ and $f$ regular near a neighborhood of $g$. 
\end{proof}

\subsection{Completion near the boundary}\label{sectioncompletion}
We will prove Proposition \ref{propositioncompletionlocalization} by studying the completion $\widehat{\cM_{n,L}}$ of the coadmissible $\cD_{n,L}^{\lambda}$-module $\cM_{n,L}$ in Lemma \ref{lemmalocalizationdualverma}.

    We take a finite covering 
    \[S=\{\bV_w:=w\overline{\bN}w_0\overline{B}/\overline{B},w\in W\}\] 
    of the flag variety $\bX$ where $W$ denotes the Weyl group of $\bG$. Let $\partial\bX=\bX-\bX_{w_0}^{\circ}$ be the boundary (Weil or Cartier) divisor which is also the union $\cup_{s}\bX_{w_0s}$ of codimension one Schubert varieties where $s$ runs over all simple reflections \cite[Prop. 2.3.2]{bjorner2005combinatorics}. For all $\bV\in S$, let $f_{\bV}\in \cO(\bV_L)$ be an element cutting out the boundary $\bV_L\cap \partial \bX_L$ (note that the Picard group of $\bV_L$ is trivial \cite[\href{https://stacks.math.columbia.edu/tag/0BDA}{Tag 0BDA}]{stacks-project}).

    Take one $\bV\in S$. Let $A=\cO(\bV_L)=L[Y_1,\cdots,Y_{l}]$ where $l=|\Phi^-|$, $\widehat{A}=L\langle Y_1,\cdots, Y_d \rangle$, $\cD:=\cD_{n,L}(\bV)=A[\varpi_L^n\partial_1,\cdots,\varpi_L^n\partial_l]$ and $\widehat{\cD}_{n,L}=\widehat{\cD_{n,L}}(\bV)$. Write $|\cdot|_{\widehat{A}}$ for the norm 
    \[|\sum_{i=(i_1,\cdots,i_l)\in\bbN^l}\lambda_i\prod_{j=1}^lY_j^{i_j}|_{\widehat{A}}=\sup_i|\lambda_i|_p\]
    and $f=f_{\bV}\in A$.

    The reason that $A[\frac{1}{f}]$ is a finitely generated $\cD$-module lies in the theory of $b$-functions (\cite[Cor. 3.1.2]{Mebkhout1991Bernstein}).
    \begin{thm}[{\cite[Thm. 3.1.1]{Mebkhout1991Bernstein}}]\label{theorembfunction}
        For any $u\in A$, there exist non-zero polynomials $b(s)\in L[s]$ and $P(s)\in \cD[s]$ such that 
        \[ b(s)f^{-s-1}u=P(s)f^{-s}u\]
        after specializing $s\in\Z$. 
    \end{thm}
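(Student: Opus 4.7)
The plan is to follow Bernstein's original argument via the holonomicity of an auxiliary $\cD[s]$-module. Since $\varpi_L$ is a unit in $L$, the rescaled generators $\varpi_L^{-n}\cdot\varpi_L^n\partial_i$ identify $\cD$ with the ordinary Weyl algebra in $l$ variables over $L$, so the classical Bernstein formalism applies. First I would form the $\cD[s]$-module
\[M \;:=\; A[f^{-1}][s]\cdot f^s,\]
with the action $\partial_i\cdot(g\, f^s)=(\partial_i g)\, f^s + s g\,(\partial_i f)\, f^{s-1}$. The target statement reduces, via an affine substitution $s\mapsto N-s$, to producing, for some $N\geq 0$, an identity
\[b(s)\, f^{s-N-1} u \;=\; P(s)\, f^{s-N} u \quad \text{in } M\]
with $0\neq b(s)\in L[s]$ and $P(s)\in\cD[s]$; specializing then yields the claim in $A[f^{-1}]$.

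Next I would base-change to the Weyl algebra $\cD(s):=L(s)\otimes_{L[s]}\cD[s]$ over $L(s)$ and equip $M(s):=L(s)\otimes_{L[s]}M$ with the candidate good filtration
\[G_k M(s) \;:=\; L(s)\text{-span of }\bigl\{\, g\, f^{-k} f^s \;:\; g\in A,\ \deg g \leq k(1+\deg f)\,\bigr\}.\]
A routine monomial count gives $\dim_{L(s)} G_k M(s)=O(k^l)$, and one verifies both that $M(s)=\bigcup_k G_k M(s)$ and that $G_\bullet$ is stable under the Bernstein filtration $F_\bullet\cD(s)$. The Bernstein inequality for the Weyl algebra in $l$ variables then forces the Gelfand--Kirillov dimension of $M(s)$ to equal $l$, so $M(s)$ is holonomic, and in particular of finite length as a $\cD(s)$-module. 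Consequently every $\cD(s)$-submodule of $M(s)$ has finite length; applied to the ascending chain $\cD(s)\cdot f^{s-N}u$, $N=0,1,2,\dots$, this chain stabilizes at some $N$, giving $f^{s-N-1}u=Q(s)\cdot f^{s-N}u$ for some $Q(s)\in\cD(s)$. Clearing the $L[s]$-denominators of $Q(s)$ produces the required $b(s)$ and $P(s)$.

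The main obstacle is the goodness-and-growth verification for $G_\bullet M(s)$. The polynomial growth is an elementary count of monomials in $l$ variables of bounded degree. Goodness reduces to $\partial_j\cdot G_k\subseteq G_{k+1}$, which follows from
\[\partial_j(g\, f^{-k}\, f^s)=(\partial_j g)\, f^{-k} f^s \;+\; (s-k)\, g\, (\partial_j f)\, f^{-k-1} f^s\]
together with the bounds $\deg(\partial_j g)\leq \deg g$ and $\deg(g\cdot\partial_j f)\leq \deg g+\deg f-1\leq (k+1)(1+\deg f)$, which place the two summands in $G_k$ and $G_{k+1}$ respectively. Once this is in place, the remaining steps are formal and the theorem follows.
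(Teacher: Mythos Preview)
The paper does not give a proof of this theorem; it is quoted from \cite{Mebkhout1991Bernstein} as a known input and used as a black box (the only subsequent use is Lemma~\ref{lemmaconvergesbfunction}, which just needs the existence of $b(s)$ and $P(s)$). So there is nothing in the paper to compare against.

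Your argument is the classical Bernstein proof and is correct. A couple of small remarks. First, you should say explicitly why the ``generic'' identity $b_1(s)f^{s-N-1}u=P_1(s)f^{s-N}u$ in $M$ specializes to identities in $A[f^{-1}]$: the evaluation $s\mapsto m$ defines a $\cD$-linear map $A[f^{-1}][s]f^s\to A[f^{-1}]$, $g(s)f^{s+a}\mapsto g(m)f^{m+a}$, through which the relation survives; then your substitution $s\mapsto N-s$ is just relabelling the integer parameter. Second, for the filtration $G_\bullet$ to be increasing you implicitly use $gf^{-k}f^s=(gf)f^{-(k+1)}f^s$ with $\deg(gf)\le(k+1)(1+\deg f)$, and for exhaustion you write $gf^{-a}f^s=(gf^{k-a})f^{-k}f^s$ with $k$ large; both are routine but worth stating. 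With those two sentences added, the proof is complete.
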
 
    The behavior of the $\cD$-module $A[\frac{1}{f}]$ under completion is studied in \cite{BitounBode2021meromorphic}. We adapt the arguments in \cite{BitounBode2021meromorphic} to our case. We recall the following definition.
    \begin{dfn}[{\cite[Def. 3.1]{BitounBode2021meromorphic}}]\label{definitionpositivetype}
        Let $\lambda\in \overline{\Q}_p$. We say that $\lambda$ is of positive type if $\lambda\in\Z_{\geq 0}$ or there exists $M>0$ such that $\varinjlim_{i\rightarrow \infty}\frac{p^{iM}}{\prod_{j=0}^{i-1}(\lambda-j)}=0$, equivalently (\cite[Lem. B.2]{BitounBode2021meromorphic}) the type of $\lambda$ which is, by \cite[Def. 13.1.1]{kedlaya2022p}, the radius of convergence of the formal power series $\sum_{i\geq 0,i\neq \lambda}\frac{x^i}{\lambda-i}$, is positive.
    \end{dfn}
    \begin{rem}
        Recall that a number $\lambda\in\overline{\Q}_p$ is $p$-adic non-Liouville if $\pm\lambda$ have type $1$ \cite[Def. 13.1.2]{kedlaya2022p}.
    \end{rem}
    \begin{lemma}\label{lemmaconvergesbfunction}
        Let $u$ be an element in $A$ such that the coefficients of the polynomial $b(s)$ in Theorem \ref{theorembfunction} are in $\overline{\Q}[s]$. Then there exists $M>0$ such that the series $\sum_{i\geq 0} a_ip^{Mi}\frac{1}{f^i}u$ for $a_i\in \widehat{A}$ converges in $\widehat{\cD}_{n,L}\otimes_{\cD}(A[\frac{1}{f}])$ provided that $\lim_{i \rightarrow \infty} |a_i|_{\widehat{A}}=0$. 
    \end{lemma}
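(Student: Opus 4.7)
The plan is to exploit the $b$-function equation iteratively to convert powers of $f^{-1}$ acting on $u$ into a product of differential operators (with a scalar denominator), then to estimate both pieces separately using submultiplicativity of the norm on $\widehat{\cD}_{n,L}$ and the positive type of algebraic numbers. Concretely, for each integer $j \geq 0$ with $b(j) \neq 0$, specializing Theorem \ref{theorembfunction} at $s = j$ gives an identity
\[
f^{-j-1}u \;=\; \frac{1}{b(j)}\, P(j)\, f^{-j}u
\]
in $A[\tfrac{1}{f}]$, which we view inside $\widehat{\cD}_{n,L}\otimes_{\cD} A[\tfrac{1}{f}]$. Since truncating finitely many terms of the series does not affect convergence, we may assume (after shifting the summation index by a fixed constant) that $b(j)\neq 0$ for all $j\geq 0$. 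Iterating yields
\[
\tfrac{1}{f^i}\,u \;=\; \frac{1}{\prod_{j=0}^{i-1} b(j)}\, P(i-1)\cdots P(0)\otimes u,
\]
so the $i$-th term of the series becomes
\[
a_i\, p^{Mi}\tfrac{1}{f^i}u \;=\; \frac{a_i\, p^{Mi}}{\prod_{j=0}^{i-1}b(j)}\cdot P(i-1)\cdots P(0)\otimes u.
\]

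Next I bound the operator factor. Writing $P(s)=\sum_{m=0}^{d} P_m s^m$ with $P_m\in\cD$, each $P_m$ has finite norm in the Banach algebra $\widehat{\cD}_{n,L}$; set $C:=\max_m |P_m|_{\widehat{\cD}_{n,L}}$. For $j\in\mathbb Z$ we have $|j|_p\leq 1$, so $|P(j)|_{\widehat{\cD}_{n,L}}\leq C$, and by submultiplicativity $|P(i-1)\cdots P(0)|_{\widehat{\cD}_{n,L}}\leq C^i$. For the scalar factor, factor $b(s)=c\prod_{k=1}^{r}(s-\lambda_k)$ with $c,\lambda_k\in\overline{\mathbb Q}\subset\overline{\mathbb Q}_p$. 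The classical fact that algebraic numbers over $\mathbb Q$ are $p$-adic non-Liouville shows that each $\lambda_k$ is of positive type in the sense of Definition \ref{definitionpositivetype}; hence for each $k$ there is $M_k>0$ with
\[
\left|\frac{p^{M_k i}}{\prod_{j=0}^{i-1}(\lambda_k-j)}\right|_p \longrightarrow 0.
\]

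Combining these estimates, if we set $M_0:=\sum_k M_k$ and choose any $M > M_0 + \log_p C$, then
\[
\left|\frac{p^{Mi}\,C^i}{\prod_{j=0}^{i-1}b(j)}\right|_p \longrightarrow 0
\]
as $i\to\infty$. Since the norm on $\widehat{\cD}_{n,L}$ restricts to $|\cdot|_{\widehat{A}}$ on $\widehat{A}$, the hypothesis $|a_i|_{\widehat{A}}\to 0$ combined with the above bound gives
\[
\left|\frac{a_i\,p^{Mi}}{\prod_{j=0}^{i-1}b(j)}\cdot P(i-1)\cdots P(0)\right|_{\widehat{\cD}_{n,L}}\longrightarrow 0,
\]
so the series converges in $\widehat{\cD}_{n,L}$; tensoring with $u$ gives convergence in $\widehat{\cD}_{n,L}\otimes_{\cD}A[\tfrac{1}{f}]$.

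The main obstacle is Step 3, namely the positive type argument: one needs to know that the (algebraic) roots of $b(s)$ give controlled lower bounds for $|\prod_{j=0}^{i-1}(\lambda_k-j)|_p$. This is exactly the reason for the hypothesis that the coefficients of $b(s)$ are in $\overline{\mathbb Q}[s]$, and it is the pattern of argument used in \cite{BitounBode2021meromorphic}. A minor subtlety is the possibility that some $b(j_0)=0$ for a non-negative integer $j_0$, which is handled by the initial index-shift, since only the tail of the series matters for convergence.
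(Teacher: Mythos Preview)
Your proof is correct and follows essentially the same approach as the paper: iterate the $b$-function equation to rewrite $f^{-i}u$ as a differential operator applied to a fixed element, then bound the operator norm and the scalar denominator separately using submultiplicativity and the positive type of the (algebraic) roots of $b(s)$. The paper packages the combined bound by citing \cite[Lem.~4.1]{BitounBode2021meromorphic} (obtaining that $p^{Mi}\frac{P(i-1)\cdots P(s_0)}{\prod b(s)}$ lies in the unit ball of $\widehat{\cD}_{n,L}$) and \cite[Prop.~VI.1.1]{dwork2016introduction} for positive type, whereas you unpack these estimates explicitly; the index shift past the integer roots of $b$ is handled the same way in both, though note that your phrasing ``shifting the summation index'' really amounts to replacing $u$ by $f^{-s_0}u$ as the fixed endpoint of the iteration, exactly as the paper writes it.
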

    \begin{proof}
         Let $s_0\geq 0$ such that $b(s)$ has no zero in $\bbZ_{\geq s_0}$, we get 
        \[f^{-s}u=\frac{P(s-1)\cdots P(s_0)}{b(s-1)\cdots b(s_0)}f^{-s_0}u\] for $s>s_0$.  
        
        By the assumption, all the roots of $b(s)$ are of positive type by \cite[Prop. VI.1.1]{dwork2016introduction}.  By \cite[Lem. 4.1]{BitounBode2021meromorphic}, there exists an integer $M$ such that for all $i\geq 0$, $p^{Mi}\frac{P(i-1)\cdots P(s_0)}{\prod_{s=s_0}^{i-1}b(s)}$ lies in the unit ball of $\widehat{\cD}_{n,L}$: the $p$-adic completion of $\cO_L[Y_1,\cdots,Y_l,\varpi_L^n\partial_1,\cdots,\varpi_L^n\partial_l]$. We take one such $M$.

        Let $g=\sum_{i\geq 0} a_ip^{Mi}\frac{1}{f^i}u\in \widehat{A}\langle \frac{p^M}{f}\rangle$ for $a_i\in \widehat{A}$ with $\lim_{i} |a_i|_{\widehat{A}}=0$. Then $g$ can also be written as $\sum_{i\geq s_0}a_ip^{Mi}\frac{P(i-1)\cdots P(s_0)}{\prod_{s=s_0}^{i-1}b(s)}f^{-s_0}u$ up to finitely many terms. Since $\varinjlim_{i\rightarrow\infty}a_i=0$ and $p^{Mi}\frac{P(i-1)\cdots P(s_0)}{\prod_{s=s_0}^{i-1}b(s)}$ are bounded, the element $\sum_{i\geq s_0}a_i p^{Mi}\frac{P(i-1)\cdots P(s_0)}{\prod_{s=s_0}^{i-1}b(s)}$ converges in $\widehat{\cD}_{n,L}$. Hence the series $g=\sum_{i\geq 0} a_ip^{Mi}\frac{1}{f^i}u$ converges in $\widehat{\cD}_{n,L}\otimes_{\cD}A[\frac{1}{f}]$.
    \end{proof}
    \begin{prop}\label{propositioncompletionlocalization}
        Take $M=\overline{M}(w_0(\lambda))^{\vee}$ for $\lambda+\rho$ dominant with respect to $\frb$ and integral for $\bH$. Let $\widehat{\cM_{n,L}}=\mathrm{Loc}^{\lambda}(\widehat{M_{n,L}})$. Then the restriction map $\widehat{\cM_{n,L}}(\bX)=\widehat{\cU_{n,L}^{\lambda}}\otimes_{\cU_{n,L}^{\lambda}}\overline{M}(w_0(\lambda))^{\vee}\rightarrow \widehat{\cM_{n,L}}(\bX_{w_0}^{\circ})$ contains (in the notation of Lemma \ref{lemmasectionlinebundle})
        \[\cO(\overline{\bN}^{\rm an}(\leq p^{M})):=\{\sum_{i=(i_{\alpha})_{\alpha}\in \bbN^{\Phi^-}}\lambda_ip^{M(\sum_{\alpha}i_{\alpha})}\prod_{\alpha}X_{\alpha}^{i_{\alpha}}\mid \lambda_i\in L, \varinjlim_{|i|\rightarrow\infty} |\lambda_i|_p=0 \}\] 
        for some $M\geq 0$.
    \end{prop}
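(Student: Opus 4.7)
The plan is to combine the sheaf description of $\widehat{\cM_{n,L}}$ on $\bX$ with the $b$-function estimate of Lemma \ref{lemmaconvergesbfunction}, applied one chart at a time. By Theorem \ref{theoremlocalization}, one has $\widehat{\cU_{n,L}^{\lambda}}\otimes_{\cU_{n,L}^{\lambda}}\ovM(w_0(\lambda))^{\vee}=\Gamma(\bX,\widehat{\cM_{n,L}})$, so the image of the restriction to $\bV_e=\bX_{w_0}^{\circ}$ equals the set of $g\in \widehat{\cM_{n,L}}(\bV_e)=E\langle X_{\alpha}\rangle$ (Lemma \ref{lemmasectionlinebundle}) that admit compatible extensions $g_w\in \widehat{\cM_{n,L}}(\bV_w)$ for every $w\in W$, by the sheaf property applied to the affine cover $S=\{\bV_w\}_{w\in W}$. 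Accordingly I would try to show that every $g\in \cO(\overline{\bN}^{\mathrm{an}}(\leq p^M))$ extends in this way, for $M$ large enough depending only on the $b$-functions attached to the various charts.

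For each $w\in W$, I would trivialize the line bundle $\cO(\lambda)$ over $\bV_w$ by a section $u_w$, so that $\cM_{n,L}(\bV_w)\simeq A_w[f_w^{-1}]\cdot u_w$ with $A_w=\cO(\bV_w)_L$. Lemma \ref{lemmaconvergesbfunction}, applied to $u=u_w$ and $f=f_w$, then yields an integer $M_w\geq 0$ such that every series $\sum_{i\geq 0} c_i p^{M_w i}f_w^{-i}u_w$ with $c_i\in \widehat{A_w}$ and $|c_i|_{\widehat{A_w}}\to 0$ converges in $\widehat{\cM_{n,L}}(\bV_w)$; that the coefficients of the underlying $b$-function lie in $\overline{\bbQ}[s]$ is forced by the integrality of $\lambda$, which is also what ensures the positive-type condition of Definition \ref{definitionpositivetype} required by the lemma (compare Example \ref{ExampleSL2}). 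Next, the birational change of coordinates between $\bV_e$ and $\bV_w$ expresses each $\bV_e$-coordinate $X_{\alpha}$ as a regular function on $\bV_w\setminus\{f_w=0\}=\bV_w\cap \bX_{w_0}^{\circ}$ with a pole along $\{f_w=0\}$ of order bounded by some constant $C_w$ depending only on the Schubert geometry. Expanding $g=\sum_i\lambda_i p^{M|i|}X^i$ in the $\bV_w$-coordinates and collecting powers of $f_w^{-1}$ therefore produces $g\cdot u_w=\sum_{k\geq 0} c_k f_w^{-k}u_w$ with $c_k\in \widehat{A_w}$, and a straightforward estimate turns the rapid decay of the $\lambda_i$ into the condition $|c_k|p^{M_w k}\to 0$ provided $M\geq M_w+C_w$; Lemma \ref{lemmaconvergesbfunction} then supplies the section $g_w\in \widehat{\cM_{n,L}}(\bV_w)$.

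Compatibility of the collection $(g_w)$ on intersections is automatic: on the dense open $\bV_w\cap \bV_{w'}\cap \bX_{w_0}^{\circ}$ both $g_w$ and $g_{w'}$ agree with $g$, and the description $\cM_{n,L}=(j_{w_0,*}\cO_{\bX_{w_0}^{\circ}})\otimes \cO(\lambda)$ of Lemma \ref{lemmalocalizationdualverma}, together with the computation of Lemma \ref{lemmacompletionDmodule}, shows that the restriction $\widehat{\cM_{n,L}}(V)\to \widehat{\cM_{n,L}}(V\cap \bX_{w_0}^{\circ})$ is injective for each affine $V\in S$ (sections are honest meromorphic functions on $\bX_{w_0}^{\circ}$). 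Taking $M=\max_w(M_w+C_w)$ and gluing via the sheaf property thus produces the desired preimage $\widetilde{g}\in \Gamma(\bX,\widehat{\cM_{n,L}})=\widehat{\cU_{n,L}^{\lambda}}\otimes_{\cU_{n,L}^{\lambda}}\ovM(w_0(\lambda))^{\vee}$ restricting to $g$ on $\bV_e$.

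The hard part will be the coefficient estimate in the coordinate-change step: one must bound the $p$-adic size of the coefficients $c_k$ in the $f_w^{-1}$-expansion of $g$ in terms of the original data, which requires careful bookkeeping of the polynomial degrees appearing in the explicit coordinate transitions between the Bruhat charts of $\bX$. This is a direct generalization of the $\mathrm{SL}_2$ computation in Example \ref{ExampleSL2}, where integrality of $\lambda$ ensures that the relevant roots of the $b$-functions are rational and hence of positive type; outside the integral range one expects the argument to break down, in parallel with the general failure of direct images of coadmissible $\widehat{\cD}$-modules to remain coadmissible already observed by Bitoun--Bode.
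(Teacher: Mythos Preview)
Your proposal is correct and takes essentially the same approach as the paper: reduce via the sheaf property to the finite cover $\{\bV_w\}$, trivialize $\cO(\lambda)$ on each $\bV_w$ to identify $\cM_{n,L}(\bV_w)\simeq A_w[1/f_w]$, write each $X_\alpha$ as $g_\alpha/f_w^{n_\alpha}$ times the transition unit $u/f_w^{n_u}$, and apply Lemma~\ref{lemmaconvergesbfunction} (integrality of $\lambda$ forcing the relevant $b(s)\in\bbQ[s]$) to obtain convergence once $M$ exceeds an explicit bound of the shape $M_0+M_1\sum_\alpha(1+n_\alpha)$. One simplification over your write-up: the separate injectivity argument for gluing is unnecessary, because the $g_w$ are by construction the limits on each $\bV_w$ of the restrictions of the \emph{same} global partial sums $\sum_{|i|\le m}\lambda_ip^{M|i|}X^i\in\cM_{n,L}(\bX)$, so compatibility on overlaps is automatic and the paper simply says ``since $\widehat{\cM_{n,L}}$ is a sheaf'' it suffices to verify convergence on each $\bV_w$.
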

    \begin{proof}
        By Lemma \ref{lemmalocalizationdualverma}, the localization of $\overline{M}(w_0(\lambda))^{\vee}$ is the pushforward of $\cO_{\bX}(\lambda)|_{X_{w_0}^{\circ}}$. Hence for $\bV\in S$,  $\cM_{n,L}(\bV)=\cO_{\bX}(\lambda)(\bV-\partial\bX)\otimes_{\cO_L} L=\cO_{\bX_L}(\bV_L)[\frac{1}{f_{\bV}}](\lambda)$. Here $\cO_{\bX_L}(\bV_L)[\frac{1}{f_{\bV}}](\lambda)$ denotes the twist by $\cO_{\bX_L}(\lambda)$ for the action of $\cD_{n,L}^{\lambda}$. Denote also by $X_{\alpha}\in \cM_{n,L}(\bX_{w_0}^{\circ})$ the image of the coordinates of $\overline{\bN}\simeq \overline{\bN}w_0\overline{\bB}/\overline{\bB}$ in $\cO_{\bX}(\bV_L)[\frac{1}{f_{\bV}}]=\cO_{\bX}(\bX_{w_0}^{\circ}\cap \bV)[\frac{1}{p}]$. 
            
        We fix a trivialization of $\iota_{\bX_{w_0}^{\circ}}:\cM_{n,L}|_{\bX_{w_0}^{\circ}}=\cO_{\bX}(\lambda)|_{\bX_{w_0}^{\circ}}[\frac{1}{p}]\simeq \cO_{\bX}|_{\bX_{w_0}^{\circ}}[\frac{1}{p}]$. Since $\widehat{\cM_{n,L}}$ is a sheaf, to prove the proposition, we only need take $M$ such that the sequences 
        \[\sum_{i}\lambda_ip^{M|i|}X^i:=\sum_{i\in\bbN^{\Phi^-}}\lambda_ip^{M(\sum_{\alpha}i_{\alpha})}\prod_{\alpha}X_{\alpha}^{i_{\alpha}}\] 
        converge in the completions 
        $\widehat{\cM_{n,L}}(\bV)=\widehat{\cD_{n,L}^{\lambda}}(\bV)\otimes_{\cD^{\lambda}_{n,L}(\bV)}(\cO_{\bX_L}(\bV_L)[\frac{1}{f_{\bV}}](\lambda))$ (Lemma \ref{lemmacompletionDmodule}) for all $\bV\in S$ provided that $\varinjlim_i\lambda_i=0$. 
        
        There are only finitely many $\bV\in S$, thus we can focus on one $\bV$. On $\bV$ the line bundle $\cO(\lambda)$ can be trivialized and there are isomorphisms $\widehat{\cD_{n,L}^{\lambda}}(\bV)\simeq \widehat{\cD_{n,L}}(\bV)$ \cite[Prop. 6.5]{ardakov2013irreducible}. Under this isomorphism, we can identify $\cM_{n,L}(\bV)$ with the $\cD_{n,L}(\bV)$-module $\cO_{\bX_L}(\bV_L)[\frac{1}{f_{\bV}}]$ via $\iota_{\bV}:\cO_{\bX}(\lambda)|_{\bV}[\frac{1}{p}]\simeq \cO_{\bX}|_{\bV}[\frac{1}{p}]$, i.e., we can assume $\lambda=0$ if we focus on the completion over $\bV$. Write $f=f_{\bV}$.

        The trivializations $\iota_{\bX_{w_0}^{\circ}}$ and $\iota_{\bV}$ of $\cO_{\bX}(\lambda)|_{\bX_{w_0}^{\circ}}$ and $\cO_{\bX}(\lambda)|_{\bV}$ may not be compatible over $\bV\cap \bX_{w_0}^{\circ}$. The composite $\iota_{\bV}\circ\iota_{\bX_{w_0}^{\circ}}^{-1}|_{\bV\cap \bX_{w_0}^{\circ}}:\cO_{\bX}(\bV-\partial\bX)[\frac{1}{p}]\simeq \cO_{\bX}(\bV-\partial\bX)[\frac{1}{p}]=\cO_{\bX}(\bV_L)[\frac{1}{f}]$ is $\cO_{\bX}(\bV-\partial\bX)$-linear and is determined by an element $\iota_{\bV}\circ\iota_{\bX_{w_0}^{\circ}}^{-1}|_{\bV\cap\bX_{w_0}^{\circ}}(1)=\frac{u}{f^{n_u}}$ in $\cO_{\bX}(\bX_{w_0}^{\circ}\cap V)^{\times}$ for some $u\in \cO_{\bX}(\bV), n_u\in\Z$. Nevertheless, we can always write in $\cM_{n,L}(\bV_L)[\frac{1}{f}]$ that $X_{\alpha}=\frac{g_{\alpha}}{f^{n_{\alpha}}}\frac{u}{f^{n_u}}$  for $\alpha\in \Phi^-$ and for some $g_{\alpha}\in A:=\cO_{\bX_L}(\bV_L)$ and $n_{\alpha}\geq 0$ under the trivialization $\iota_{\bV}$. Then $X_{\alpha}^i=(\frac{g_{\alpha}}{f^{n_{\alpha}}})^i\frac{u}{f^{n_u}}$ in $A[\frac{1}{f}]$. 
        
        The split reductive group $\bG$ with $\bB\subset \bG$, as well as the inclusion $\bV\subset \bG/\overline{\bB}$ and the line bundle $\cO_{\bX}(\lambda)$ for $\lambda$ integral, can be defined over $\Q$. Hence we can take $u$ such that $b(s)\in \bbQ[s]$ satisfying Theorem \ref{theorembfunction} for our $A,u,f$. 

        Take $M_0$ such that $|g_{\alpha}|_{\widehat{A}}<p^{M_0}$ for all $\alpha$. Let $M_1\geq 0$ such that Lemma \ref{lemmaconvergesbfunction} holds for $M_1,f=f_{\bV}$ and $u$. Then the series 
        \[\sum_{i\in\bbN^{\Phi^-}}\lambda_i p^{M|i|}X^i=\sum_{i\in\bbN^{\Phi^-}}\lambda_ip^{M|i|}\frac{\prod_{\alpha} g_{\alpha}^{i_{\alpha}}}{f^{n_u+\sum_{\alpha} i_{\alpha}n_{\alpha}}}u=\frac{1}{f^{n_u}}\sum_{j\geq 0}(\sum_{\sum_{\alpha} i_{\alpha}n_{\alpha}=j}\lambda_ip^{M|i|-M_1j}\prod_{\alpha}g_{\alpha}^{i_{\alpha}})\frac{p^{M_1j}}{f^j}u\]
        converges in $\widehat{\cD_{n,L}}(\bV)\otimes_{\cD_{n,L}(\bV)}A[\frac{1}{f}]$ if $\varinjlim_{i}\lambda_i=0$ and $M>M_0+M_1(\sum_{\alpha}(1+n_{\alpha}))$ by Lemma \ref{lemmaconvergesbfunction}.
    \end{proof}

\section{Duality of patching modules}\label{sectiondualitysolid}
Using solid formalism of locally analytic representations in \cite{rodrigues2022solid,jacinto2023solid}, we will define the Bernstein-Zelevinsky duality functor $\bbD_{\rm BZ}(-)$ (Definition \ref{definitionDBZ}) and then discuss coherent sheaves arising from the patching functors (Definition \ref{definitionpatchingfunctor}).

Our notations follow largely \cite{jacinto2023solid}. We consider the $p$-adic local field $E$ as a solid condensed ring. A solid $E$-space is roughly the condensed version of a complete locally convex topological $E$-space. Write $\mathrm{Mod}_{E_{\blacksquare}}$ for the ($\infty$-)derived category of solid $E$-modules, associated to the analytic ring $E_{\blacksquare}:=(E,\cO_E)_{\blacksquare}$, and write $\mathrm{Mod}_{E_{\blacksquare}}^{\heartsuit}$ for its heart with respect to the natural $t$-structure.

If $D$ is an associative solid $E$-algebra (we will only consider non-derived/static condensed rings), we write $\mathrm{Mod}_{E_{\blacksquare}}(D)$ for the stable $\infty$-category of left $D$-modules on $E_{\blacksquare}$-spaces. Let $G$ be a split reductive $p$-adic Lie group as before. We will take for example $D=E_{\blacksquare}[G]$ the solid Iwasawa algebra or $\cD(G,E)$ the locally analytic distribution algebra \cite[\S 2.1]{jacinto2023solid}. Note that if $D=E_{\blacksquare}[G]$ or $\cD(G,E)$, there is a canonical involution of $D$ induced by the inverse map of $G$ which induces an equivalence of the categories of left and right $D$-modules. We denote the involution by $\iota$. We write $\iota(-)\otimes_{D,\blacksquare}^L-$, or just $-\otimes_{D,\blacksquare}^L-$, for the relative tensor product (see \cite[\S4.5]{lurie2007derivedII}) of $\mathrm{Mod}_{E_{\blacksquare}}(D)$, an analog of complete tensor products. There are condensed non-derived or derived Hom's for $D$-modules $\underline{\Hom}_D(-,-),R\underline{\Hom}_D(-,-)\in\mathrm{Mod}_{E_{\blacksquare}}$. If $M,N\in \mathrm{Mod}_{E_{\blacksquare}}(D)$ and $S$ is a profinite set, then $R\underline{\Hom}_D(M,N)(S)=R\Hom_{E}(E[S],R\underline{\Hom}_D(M,N))=R\Hom_D(E[S]\otimes^L_{E}M,N)$.

We will focus on the category  $\mathrm{Mod}_{E_{\blacksquare}}(\cD(G,E))$ of solid $\cD(G,E)$-modules. There is a full subcategory $\mathrm{Rep}_{E_{\blacksquare}}^{\rm la}(G)$ of (derived) solid locally analytic representations of $G$, which is the derived category of its heart $\mathrm{Rep}_{E_{\blacksquare}}^{\rm la,\heartsuit}(G)$ \cite[Prop. 3.2.6]{jacinto2023solid}.

\subsection{Cohomological duality}
Let $I\subset G$ be the Iwahori subgroup as in \S\ref{sectiondualcomplexes}. Suppose that $\cW\in \Mod_{E_{\blacksquare}}(\cD(I,E))$, we define the compact induction 
\[\cind_{I}^G\cW:=E_{\blacksquare}[G]\otimes_{E_{\blacksquare}[I]}^L\cW=\cD(G,E)\otimes_{\cD(I,E)}^L\cW.\]
If $\cW\in \Mod_{E_{\blacksquare}}^{\heartsuit}(\cD(I,E))$, then $\cind_{I}^G\cW=\cD(G,E)\otimes_{\cD(I,E)}\cW$ is in $\mathrm{Mod}_{E_{\blacksquare}}^{\heartsuit}(\cD(G,E))$ (since $\cD(G,E)$ is free over $\cD(I,E)$ and $\mathrm{Mod}_{E_{\blacksquare}}^{\heartsuit}$ is stable under direct sums which are exact). 
\begin{rem}
    If $\cW=\underline{\cW(*)}$ (in the notation of \cite[Prop. 1.7]{scholze2019condensed}) arises as a classical LS space of compact type, then $(\cind_{I}^G\cW)(*)=\Hom_{E}(E,\cind_{I}^G\cW)=\cind_{I}^G\cW(*)$ since $E$ is compact in $\mathrm{Mod}_{E_{\blacksquare}}$. One may equip $(\cind_{I}^G\cW)(*)$ with certain topology as in \textit{loc. cit}. However, this topology may not coincide with the locally convex direct sum topology on $\cind_{I}^G\cW(*)$. If $V=\varinjlim_iV_i$ is a locally convex inductive limit of Banach spaces with compact injective transition maps, then $\underline{V}=\varinjlim_i\underline{V_i}$ \cite[Lem. 2.19]{colmez2023arithmetic}, which is not true in general.
\end{rem}
Suppose that $\cW=\underline{\cW(*)}$ for $\cW(*)=\cW_{?}(\tau(M)^{\vee}\otimes\chi_{\rm sm}) $ with $\cind_{I}^G\cW(*)\rightarrow \pi(*)=\cF_{B}^G(M,\chi_{\rm sm})$ as in Theorem \ref{theoremresolution} (notation in \S\ref{subsectionbeyonddualverma}) where $\pi=\underline{\cF_{B}^G(M,\chi_{\rm sm})}$. Then $\cW(*)$ is a classical $\cD(I,E)$-module whose underlying topological space is a Fr\'echet space or a Smith space with elements in $\cH$ acting continuously on $\cind_{I}^G\cW(*)$. If $\cW$ is a Banach space, $\cind_{I}^G\cW(*)$ with the locally convex inductive limit topology is an LF space. The augmented complex
\[ \wedge^{\bullet}\cH^d\otimes_{\cH}\cind_{I}^G\cW(*)\rightarrow \pi(*)\]
is an exact complex of LF-spaces where all morphisms are strict by the open mapping theorem \cite[Thm. 1.1.17]{emerton2017locally}.

\begin{prop}\label{propositionsolidresolution}
    Let $\cW,\pi$ be as above, then the map $\cW\rightarrow\pi$ induces an acyclic augmented complex
    \[\wedge^{\bullet}\cH^d\otimes_{\cH}\cind_{I}^G\cW\rightarrow \pi\] 
    in $\mathrm{Mod}_{E_{\blacksquare}}(\cD(G,E))$.
\end{prop}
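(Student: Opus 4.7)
The strategy is to transfer the classical resolution from Theorem~\ref{theoremresolution} to the solid setting via the ``underline'' functor.

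First, I would identify the terms of the solid Koszul complex with the underlines of their classical counterparts. Since $\wedge^{i}\cH^{d}$ is free of rank $\binom{d}{i}$ over $\cH$, each term $\wedge^{i}\cH^{d}\otimes_{\cH}\cind_{I}^{G}\cW$ is just a finite direct sum of copies of $\cind_{I}^{G}\cW$, so it remains in the heart and the tensor product is underived. The key identification is
\[
\cind_{I}^{G}\cW \;=\; \cD(G,E)\otimes_{\cD(I,E)}^{L}\cW \;\simeq\; \bigoplus_{g\in G/I}^{\blacksquare} g\cdot\cW,
\]
which follows from the decomposition of $\cD(G,E)$ as a solid free $\cD(I,E)$-module on coset representatives and the vanishing of higher Tors on such direct sums. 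Combined with the fact that for a Smith or Banach space $V$ one has $\underline{V\oplus W}=\underline{V}\oplus\underline{W}$ and that the solid direct sum agrees with the underline of the locally convex direct sum for compactly inductive limits of such spaces (cf.\ \cite[Lem.~2.19]{colmez2023arithmetic}), this yields a canonical isomorphism $\cind_{I}^{G}\cW \simeq \underline{\cind_{I}^{G}\cW(*)}$ in $\mathrm{Mod}_{E_{\blacksquare}}(\cD(G,E))$, and similarly for every term of the Koszul complex.

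Next, by Theorem~\ref{theoremresolution}, the augmented classical complex
\[
\wedge^{\bullet}\cH^{d}\otimes_{\cH}\cind_{I}^{G}\cW(*) \longrightarrow \pi(*)
\]
is an exact bounded complex of LF spaces (each term being an LF space because $\cW(*)$ is Banach or Smith and the compact induction is a locally convex direct sum). By the open mapping theorem (\cite[Thm.~1.1.17]{emerton2017locally}) each differential is strict with closed image. Splitting this into short strict exact sequences
\[
0\to K_i \to \wedge^{i}\cH^{d}\otimes_{\cH}\cind_{I}^{G}\cW(*) \to K_{i-1}\to 0
\]
of LF spaces (with the kernels $K_i$ also LF, as closed subspaces of LF spaces) and applying the underline functor, which is exact on strict short exact sequences of LF spaces in the solid framework of \cite{rodrigues2022solid,jacinto2023solid}, produces short exact sequences in $\mathrm{Mod}_{E_{\blacksquare}}^{\heartsuit}$. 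Splicing them back together and using the identifications of the previous step gives an acyclic complex in $\mathrm{Mod}_{E_{\blacksquare}}^{\heartsuit}$, hence in $\mathrm{Mod}_{E_{\blacksquare}}(\cD(G,E))$ once we note all differentials are $G$-equivariant and thus $\cD(G,E)$-linear.

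The main technical obstacle is the exactness of the underline functor on strict exact sequences of LF spaces, and in particular the identification $\underline{\cind_{I}^{G}\cW(*)}\simeq \cind_{I}^{G}\cW$ where the right-hand side is the solid induction. The subtlety is that $\cind_{I}^{G}\cW(*)$ is only an LF space (not of compact type in general when $\cW(*)$ is Banach), so one cannot appeal directly to the standard compatibility for LS spaces; however the presentation as a coproduct of Banach/Smith spaces over the discrete set $G/I$ and general solid formalism provides the needed comparison.
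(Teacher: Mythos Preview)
Your strategy has a real gap precisely at the point you flag as the ``main technical obstacle''. The identification $\cind_{I}^{G}\cW\simeq\underline{\cind_{I}^{G}\cW(*)}$ is not available in general: the Remark immediately preceding the proposition in the paper warns that the solid object $\cind_{I}^{G}\cW=\bigoplus^{\blacksquare}_{g\in G/I}g\cdot\cW$ need not agree with the underline of the locally convex direct sum, and that \cite[Lem.~2.19]{colmez2023arithmetic} applies only to inductive systems with \emph{compact} injective transition maps, which the inclusions of finite partial sums of Banach spaces are not. Likewise, exactness of the underline functor on strict short exact sequences of general LF spaces is not established in the references you cite; the known results cover Fr\'echet spaces and LS spaces of compact type, but the intermediate kernels $K_i$ you form are closed subspaces of LF spaces and need not be LF (LF is not stable under closed subspaces), so the splicing step is already unjustified.

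The paper therefore avoids the underline comparison entirely and argues directly in the solid category. For the vanishing of higher Koszul homology it evaluates on extremally disconnected sets $S$, uses that $E_{\blacksquare}[S]$ is compact so $(\cind_{I}^{G}\cW)(S)=\cind_{I}^{G}\underline{\Hom}_{E}(E_{\blacksquare}[S],\cW)$, and then applies Proposition~\ref{propositionKSresolutionKoszulregular} with $\cW$ replaced by $\underline{\Hom}_{E}(E_{\blacksquare}[S],\cW)$. For the surjectivity of $\cind_{I}^{G}\cW\to\pi$ it writes $\cind_{I}^{G}\cW=\varinjlim_{n}F_{n}$ with $F_{n}$ Fr\'echet, uses fully faithfulness of the condensed embedding on Fr\'echet spaces (\cite[Prop.~3.7]{rodrigues2022solid}) and \cite[Lem.~A.33]{bosco2021p} to identify $F_{n}/K_{n}$ in solid and classical senses, and shows that the two inductive systems presenting $\pi$ are equivalent via \cite[Cor.~8.9]{schneider2013nonarchimedean}. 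The Smith case is then reduced to the Banach case by sandwiching. This route never needs $\underline{(-)}$ to commute with the LF direct sum or to be exact on LF sequences.
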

\begin{proof}
    The map $\cW(*)\rightarrow \pi(*)$ induces $\cW\rightarrow \pi$ in $\mathrm{Mod}_{E_{\blacksquare}}(\cD(G,E))$ and $\psi_t:\cW(*)\rightarrow \cW(*)$ induces $\psi_t:\cW\rightarrow \cW$ via the fully faithful embedding of \cite[Prop. 1.7]{scholze2019condensed} (note that Fr\'echet and Smith spaces are compactly generated). The general construction in \S\ref{subsectionKoszulcomplex} applies formally to give the desired complex. We remain to show the complex is exact. 
    
    We show that $\wedge^{\bullet}\cH^d\otimes_{\cH}\cind_{I}^G\cW\simeq \cind_{I}^G\cW\otimes_{\cH}\cH/\frm$. We check that for any extremally disconnected set $S$, the complex $\wedge^{\bullet}\cH^d\otimes_{\cH}(\cind_{I}^G\cW)(S)$ concentrates in degree $0$. While $(\cind_{I}^G\cW)(S)=\Hom_E(E_{\blacksquare}[S],\cind_{I}^G\cW)=\cind_{I}^G\Hom_E(E_{\blacksquare}[S],\cW)$ since $E_{\blacksquare}[S]$ is compact. Hence the complex of $S$-points concentrates in degree $0$ by Proposition \ref{propositionKSresolutionKoszulregular} replacing $\cW$ by $ \Hom_E(E_{\blacksquare}[S],\cW)$.

    We assume first that $\cW$ is a (condensed) Banach space so that $\cind_{I}^G\cW$ is an LF space.

    We prove the surjectivity of $\cind_I^G\cW\rightarrow \pi$. We write $K$ for the kernel of $\cind_I^G\cW\rightarrow \pi$, and write $\cind_I^G\cW=\varinjlim_n F_n$ where $F_n$ are Fr\'echet spaces and $K_n=F_n\cap K$, the kernel of $F_n\rightarrow \pi$. Since $\pi$ is LF ($\pi$ is LS of compact type \cite[Cor. 3.38]{rodrigues2022solid}), the map $F_n\rightarrow \pi$ factors through some Fr\'echet subspace of $\pi$ \cite[Lem. 3.32]{rodrigues2022solid}. By the fully faithfulness of \cite[Prop. 3.7]{rodrigues2022solid}, the map $F_n\rightarrow \pi$ in $\Hom_E(F_n,\pi)$ arises from some $F_n(*)\rightarrow \pi(*)$. The kernel is a Fr\'echet space denoted by $K_n(*)$ closed in $F_n(*)$. We let $\pi_n=F_n/K_n$. Then $(F_n/K_n)(*)=F_n(*)/K_n(*)$ as Fr\'echet spaces by \cite[Lem. A.33]{bosco2021p}. By the surjectivity of $\cind_I^G\cW(*)\rightarrow \pi(*)$, we know that $\pi(*)=\varinjlim_n\pi_n(*)$ as sets. Equip $\varinjlim_n\pi_n(*)$ with the locally convex inductive limit topology, then the map $\varinjlim_n\pi_n(*)\rightarrow \pi(*)$ is a topological isomorphism if $\pi(*)$ is equipped with the classical locally convex topology as an LF space by the open mapping theorem. Since filtered colimits are exact \cite[Thm. 1.10]{scholze2019condensed}, it remains to show that $\pi=\varinjlim_n\pi_n$. Write $\pi=\varinjlim_nB_n$ as a colimit of Fr\'echet spaces with injective transition maps. Then $\pi(*)=\varinjlim_nB_n(*)$ if we equip $\pi(*)$ with the locally convex inductive limit topology. As $\Hom_E(\pi(*),\pi(*))=\varprojlim_n\varinjlim_m\Hom_E(\pi_n(*),B_m(*))$ by \cite[Cor. 8.9]{schneider2013nonarchimedean}, we see the two direct systems $(\pi_n(*))_n, (B_n(*))_n$ are equivalent. Then the same holds for $(\pi_n)_n, (B_m)_m$ by the fully faithful embedding of Fr\'echet spaces to solid $E$-spaces. Hence $\pi=\varinjlim_nB_n=\varinjlim_n\pi_n$.

    We need also show that the map $\cH^d\otimes_{\cH}\cind_I^G\cW\rightarrow \cind_I^G\cW$ maps surjectively on the kernel $K=\varinjlim_nK_n$ of $\cind_I^G\cW\rightarrow \pi$. This can be proved in the same way as in the preceding paragraph using the surjectivity of $ \cH^d\otimes_{\cH}\cind_I^G\cW(*)\rightarrow K(*)$.

    Finally, we suppose that $\cW$ is a Smith space, for example, $\cW$ has the form $\cW_{\sharp,r}(\ovM)$. Then there exist Banach spaces $\cW,\cW'$ with injections $\cW''\subset \cW\subset \cW'$ such that the result holds for $\cW',\cW''$. Using Proposition \ref{propositionKSresolutionKoszulregular} and Corollary \ref{corinjectiontwoW}, $\wedge^{\bullet}\cH^d\otimes_{\cH}\cind_{I}^G\cW$ concentrates in degree $0$ and there is an injection $\cH/\frm\otimes_{\cH}\cind_{I}^G\cW\hookrightarrow \cH/\frm\otimes_{\cH}\cind_{I}^G\cW'$. Since the surjection $\cind_{I}^G\cW''\twoheadrightarrow \pi$  factors through the injection $\cH/\frm\otimes_{\cH}\cind_{I}^G\cW\hookrightarrow \pi\simeq \cH/\frm\otimes_{\cH}\cind_{I}^G\cW'$, the map $\cind_{I}^G\cW\rightarrow \pi$ is also surjective.
\end{proof}

We consider the $\cD(G,E)$-bimodule $ \cC^{\la}_c(G,E)=\cC^{\la}(I,E)\otimes_{\cD(I,E)}\cD(G,E)$ as in \S\ref{subsectiondualitycomplex}.
\begin{thm}\label{theoremsoliddualcompactinduction}
    Suppose that $\cW\in \mathrm{Mod}_{E_{\blacksquare}}^{\heartsuit}(\cD(I,E))$ is a $\cD(I,E)$-module over a Smith $E$-space. Then there is an isomorphism of solid $\cD(G,E)$-modules
    \[ \RiHom_{{E_{\blacksquare}[G]}}(\cind_{I}^G\cW, \cC^{\la}_c(G,E))\simeq \cind_{I}^G\cW^{\vee}\]
    where $\cW^{\vee}=\underline{\Hom}_{E}(\cW,E)$ with the usual dual action of $I$.
\end{thm}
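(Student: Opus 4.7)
The plan is to prove the theorem by derived Frobenius reciprocity, ultimately reducing it to the solid Pontryagin-type identity $\RiHom_{E_{\blacksquare}[I]}(\cW, \cC^{\la}(I,E)) \simeq \cW^{\vee}$, which is essentially implicit in \cite[Thm. 3.40]{rodrigues2022solid} and already underlies the proof of Lemma \ref{lemmadualcompactinduction}.

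First I would record that $E_{\blacksquare}[G]$ is free as a right $E_{\blacksquare}[I]$-module via the coset decomposition $E_{\blacksquare}[G] = \bigoplus_{g \in G/I} \delta_g E_{\blacksquare}[I]$, so that $\cind_{I}^G\cW = E_{\blacksquare}[G] \otimes_{E_{\blacksquare}[I]} \cW$ sits in the heart $\Mod_{E_{\blacksquare}}^{\heartsuit}(\cD(G,E))$ and the derived tensor agrees with the underived one. The tensor-Hom adjunction in $\Mod_{E_{\blacksquare}}$ then yields
\[ \RiHom_{E_{\blacksquare}[G]}(\cind_I^G\cW, \cC_c^{\la}(G,E)) \simeq \RiHom_{E_{\blacksquare}[I]}(\cW, \cC_c^{\la}(G,E)), \]
where on the right $\cC_c^{\la}(G,E)$ is viewed as an $E_{\blacksquare}[I]$-module via restriction.

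Next I would decompose the target along cosets. Unravelling $\cC_c^{\la}(G,E) = \cC^{\la}(I,E) \otimes_{\cD(I,E)} \cD(G,E) = \cC^{\la}(I,E) \otimes_{E_{\blacksquare}[I]} E_{\blacksquare}[G]$ using the freeness of $E_{\blacksquare}[G]$ as a right $E_{\blacksquare}[I]$-module realises $\cC_c^{\la}(G,E) \simeq \bigoplus_{g \in G/I} \cC^{\la}(I,E)\cdot \delta_g$ with each summand isomorphic to $\cC^{\la}(I,E)$ as a left $E_{\blacksquare}[I]$-module. Because the right tensor factor $E_{\blacksquare}[G]$ is free (hence flat and compact) over $E_{\blacksquare}[I]$, the projection formula pulls the $\otimes E_{\blacksquare}[G]$ outside the $\RiHom$, reducing the problem to establishing
\[ \RiHom_{E_{\blacksquare}[I]}(\cW, \cC^{\la}(I,E)) \simeq \cW^{\vee} \]
concentrated in degree zero, after which reassembly gives
\[ \RiHom_{E_{\blacksquare}[I]}(\cW, \cC^{\la}(I,E)) \otimes_{E_{\blacksquare}[I]} E_{\blacksquare}[G] \simeq \cW^{\vee} \otimes_{E_{\blacksquare}[I]} E_{\blacksquare}[G] = \cind_I^G\cW^{\vee}. \]

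The $G$-equivariance of the final isomorphism would then be tracked by the explicit formula $[g,f]([h,w]) = (g' \mapsto f(g^{-1}g'hw))$ of Lemma \ref{lemmadualcompactinduction}(1), which already matches the claim on the nose in the non-derived setting.

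The main obstacle is the boxed derived identity in step two. In the non-derived setting this is precisely the content of Lemma \ref{lemmadualcompactinduction}(1) together with \cite[Thm. 3.40]{rodrigues2022solid}, which identifies $\cC^{\la}(I,E)$ as the object representing the continuous-dual functor on Banach or Smith type $\cD(I,E)$-modules. To upgrade it to the derived solid category I would use the Pontryagin duality $\cC^{\la}(I,E) \simeq \RiHom_E(\cD(I,E),E)$ as $\cD(I,E)$-modules and rewrite
\[ \RiHom_{E_{\blacksquare}[I]}(\cW, \cC^{\la}(I,E)) \simeq \RiHom_E\bigl(\cD(I,E) \otimes_{E_{\blacksquare}[I]}^L \cW,\, E\bigr) \simeq \RiHom_E(\cW,E) = \cW^{\vee}, \]
where the vanishing of higher Exts would follow from the hypothesis that $\cW$ has Smith-type underlying space, so that $\cW^{\vee}$ is Banach and the dual pairing is exact in the solid sense. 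A secondary, bookkeeping obstacle is verifying that the coset-by-coset decomposition of $\cC_c^{\la}(G,E)$ as a left $I$-module is compatible with the two a priori different $G$-actions (by right translation on the target and by the induced action on $\cind_I^G$), which is where one genuinely uses the bimodule structure over $\cD(G,E)$.
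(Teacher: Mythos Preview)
Your overall plan coincides with the paper's: both reduce via Frobenius reciprocity to computing $\RiHom_{E_{\blacksquare}[I]}(\cW,\cC^{\la}(I,E))$ and then identify this with $\cW^{\vee}$, after which the $E_{\blacksquare}[G]$-factor reassembles to $\cind_I^G\cW^{\vee}$. However, two of your justifications do not go through as stated.

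First, in your step two you claim that $E_{\blacksquare}[G]$ is compact over $E_{\blacksquare}[I]$, and invoke a projection formula. This is false: $E_{\blacksquare}[G]=\bigoplus_{g\in G/I}\delta_gE_{\blacksquare}[I]$ is an \emph{infinite} free module, hence not compact. What you actually need is that $\RiHom_{E_{\blacksquare}[I]}(\cW,-)$ commutes with the direct sum over $G/I$ in the target. The paper sidesteps this entirely by first using nuclearity of $\cC_c^{\la}(G,E)$ together with $\cW$ Smith to write $\RiHom_E(\cW,\cC_c^{\la}(G,E))=\cW^{\vee}\otimes_{E,\blacksquare}^L\cC_c^{\la}(G,E)$, and then applying solid Poincar\'e duality for the compact group $I$ \cite[Prop.~3.1.12]{jacinto2023solid}, which converts $\RiHom_{E_{\blacksquare}[I]}(E,-)$ into a tensor product $(\iota(-)\otimes\chi)\otimes_{E_{\blacksquare}[I]}^L E[-\dim G]$ with $\chi=\det(\frg)^{-1}$. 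Once everything is a tensor product, pulling out $E_{\blacksquare}[G]$ is automatic.

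Second, your step three hinges on the identity $\cD(I,E)\otimes_{E_{\blacksquare}[I]}^L\cW\simeq\cW$, which you treat as immediate. This is the statement that $\cD(I,E)$ is derived-idempotent over $E_{\blacksquare}[I]$ (or at least that the base change acts trivially on locally analytic modules), and it requires proof. The paper avoids working with $\cD(I,E)$ directly and instead passes to the quasi-coherent description of locally analytic representations \cite[Thm.~4.1.7]{jacinto2023solid}: since $\cW$ is Smith it is a $\cD^h(I,E)$-module for $h$ large, so $j^*\cW$ is the constant system $(\cW)_h$, while $j^*\cC^{\la}(I,E)=(\cD^h(I,E)\otimes\chi^{-1}[\dim G])_h$ by \cite[Exam.~4.1.9]{jacinto2023solid}. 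The computation then reduces to $R\varprojlim_h$ of terms involving the genuinely idempotent algebras $\cD^h(I,E)$, and the twist by $\chi$ together with the shift $[\dim G]$ cancel against those introduced by Poincar\'e duality, leaving $\cW^{\vee}$ in degree zero.

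In short, your route is morally the same but takes shortcuts at exactly the two places where the solid formalism forces extra care; the paper's use of Poincar\'e duality for $I$ and of the level-wise quasi-coherent picture is what makes both steps rigorous.
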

\begin{proof}
    Since $\cC^{\la}_c(G,E)$ is nuclear being LB (\cite[Thm. A.43]{bosco2021p}) and $\cW$ is Smith, we have $\RiHom_{E}(\cW,\cC_c^{\la}(G,E))=\underline{\Hom}_{E}(\cW,\cC_c^{\la}(G,E))=\cW^{\vee}\otimes_{_{E,\blacksquare}}^L\cC_c^{\la}(G,E)$ (see \cite[Lem. 3.8 \& Cor. 3.17]{rodrigues2022solid}). We have (use \cite[Lem. 3.1.7]{jacinto2023solid})
    \[\begin{array}{rl}
        \RiHom_{{E_{\blacksquare}[G]}}(\cind_{I}^G\cW, \cC^{\la}_c(G,E))&=\RiHom_{{E_{\blacksquare}[I]}}(\cW, \cC^{\la}_c(G,E))\\
        &=\RiHom_{{E_{\blacksquare}[I]}}(E,\RiHom_{E}(\cW, \cC^{\la}_c(G,E)))\\
        &=\RiHom_{{E_{\blacksquare}[I]}}(E,\cW^{\vee}\otimes^L_{E,\blacksquare} \cC^{\la}_c(G,E)))\\
        &=(\iota(\cW^{\vee})\otimes \chi)\otimes_{E_{\blacksquare}[I]}^L\cC^{\la}_c(G,E)[-\dim G]
    \end{array}\] 
    by \cite[Prop. 3.1.12]{jacinto2023solid} where $\chi=\det(\frg)^{-1}$ is a right $G$-module as in \textit{loc. cit}. Then one sees that
    \[ \RiHom_{{E_{\blacksquare}[G]}}(\cind_{I}^G\cW, \cC^{\la}_c(G,E))\simeq((\iota(\cW^{\vee})\otimes \chi)\otimes_{E_{\blacksquare}[I]}^L\cC^{\la}(I,E))\otimes_{E_{\blacksquare}[I]}^L E_{\blacksquare}[G][-\dim G].\]
    By \cite[Prop. 4.41]{rodrigues2022solid}, $\cW$ is a $\cD^{h}(I,E)$-module (in the notation of \cite[\S 3.1]{jacinto2023solid}) for $h$ large enough and is locally analytic. By the equivalence in \cite[Thm. 4.1.7]{jacinto2023solid} (we use also the notation in \textit{loc. cit.} below), 
    \[\begin{array}{rl}
        (\iota(\cW^{\vee})\otimes \chi)\otimes_{E_{\blacksquare}[I]}^L\cC^{\la}(I,E)[-\dim G]&\simeq \RiHom_{{E_{\blacksquare}[I]}}(\cW, \cC^{\la}(I,E))\\
        &=\RiHom_{\mathrm{Mod}_{E_{\blacksquare}}^{\rm qc}(\cD(I,E))}(j^*\cW,j^*\cC^{\la}(I,E)).
    \end{array}\]    
    since both $\cW$ and $\cC^{\la}(I,E)$ are (derived) locally analytic. Note that $j^*\cC^{\la}(I,E)=(\cD^h(I,E)\otimes \chi^{-1}[\dim G])_h$ (here $\chi^{-1}$ is viewed as a bimodule over $\cD(G,E)$ where $G$ acts trivially on the right and via $\chi^{-1}$ on the left) and $j^*\cW=(\cW)_{h\textrm{ large}}$ by \cite[Exam. 4.1.9]{jacinto2023solid}. Use the trick as before we get (see also \cite[Prop. 4.1.13]{jacinto2023solid})
    \[\begin{array}{rl}
        &\RiHom_{\mathrm{Mod}_{E_{\blacksquare}}^{\rm qc}(\cD(I,E))}(j^*\cW,j^*\cC^{\la}(I,E))\\
        =&R\varprojlim_{h}\RiHom_{E_{\blacksquare}[I]}(\cW,\cD^h(I,E)\otimes \chi^{-1}[\dim G])\\
        =&R\varprojlim_{h}(\iota(\cW^{\vee}) \otimes\chi)\otimes^L_{E_{\blacksquare}[I]}\cD^h(I,E)\otimes \chi^{-1}[\dim G]\\
        =&\iota(\cW^{\vee})[\dim G].
    \end{array}\]
    where we used that $\cD^h(I,E)$ are idempotent over $E_{\blacksquare}[I]$ \cite[Cor. 5.11]{rodrigues2022solid} and that $\cW^{\vee}$ is a $\cD^h(I,E)$-module for $h$ large. The result follows by returning the right module to a left module via the involution $\iota$.
\end{proof}
\begin{dfn}\label{definitionDBZ}
    For $V\in \mathrm{Mod}_{E_{\blacksquare}}(\cD(G,E))$, we define
    \[\bbD_{\BZ}(V):=\RiHom_{E_{\blacksquare}[G]}(V,\cC^{\la}_c(G,E))\]
    which is in $\mathrm{Mod}_{E_{\blacksquare}}(\cD(G,E))$ (a priori may not in $\mathrm{Rep}_{E_{\blacksquare}}^{\rm la}(G)$!) via the left translations of $\cC^{\la}_c(G,E)$ by $G$.
\end{dfn}
\begin{rem}
    Let $\cC_c^{\rm cont}(G,E)$ be the space of compactly supported continuous functions on $G$ with values in $E$. Then the space of derived locally analytic vectors of $\cC_c^{\rm cont}(G,E)$ is $\cC_c^{\rm la}(G,E)$ ($\cC_c^{\rm cont}(G,E)^{R\mathrm{la}}=\oplus_{G/I}\cC^{\rm cont}(gI,E)^{R\mathrm{la}}$ by \cite[Prop. 3.1.6 (3)]{jacinto2023solid} and $\cC^{\rm cont}(gI,E)^{R\mathrm{la}}=\cC^{\rm cont}(gI,E)^{\mathrm{la}}$ since $\cC^{\rm cont}(gI,E)$ is admissible \cite[Prop. 4.48]{rodrigues2022solid}). By the adjunction in \cite[Prop. 6.2.1]{jacinto2023solid}, if $V\in\mathrm{Rep}_{E_{\blacksquare}}^{\rm la}(G)$, we have
    \[\bbD_{\BZ}(V)=\RiHom_{E_{\blacksquare}[G]}(V,\cC^{\rm cont}_c(G,E)).\]
    Similarly by \textit{loc. cit.}, since $R\Gamma(\frg, \cC_c^{\rm la}(G,E))=\cC_c^{\rm sm}(G,E)$ (using the Poincaré lemma in the proof of \cite[Prop. 5.12]{rodrigues2022solid}) is the space of compactly supported smooth functions on $G$, we see if $V$ is a smooth representation of $G$, then
    \[\bbD_{\BZ}(V)=\RiHom_{\cD^{\rm sm}(G,E)}(V,\cC^{\rm sm}_c(G,E))\]
    where $\cD^{\rm sm}(G,E)$ is the smooth distribution algebra in \cite[\S 5.1]{jacinto2023solid}. This recovers the classical definition of the duality for smooth representations \cite[IV.5]{bernstein1992notes}.
\end{rem}
\begin{thm}\label{theoremBZdual}
    Let $M\in \cO^{\frb}_{\rm alg}$ and $\chi_{\sm}$ be a smooth character of $T$. 
    Then there exists an isomorphism of locally analytic representations 
    \[\bbD_{\rm BZ}(\cF_{B}^G(M,\chi_{\sm}))\simeq \cF_{\ovB}^G(\Hom_E(M,E)^{\overline{\frn}^{\infty}},\bbD_{\BZ}(\chi_{\sm})).\]
\end{thm}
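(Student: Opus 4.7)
The plan is to apply the derived duality functor $\bbD_{\rm BZ}$ to the solid Koszul resolution of $\pi := \cF_B^G(M,\chi_{\rm sm})$ from Proposition~\ref{propositionsolidresolution}, and then identify the result with $\cF_{\ovB}^G(\Hom_E(M,E)^{\overline{\frn}^{\infty}},\bbD_{\rm BZ}(\chi_{\rm sm}))$ using Theorem~\ref{theoremsoliddualcompactinduction} and Theorem~\ref{theoremdualKS}.

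Concretely, fix $r$ sufficiently close to $1$ and set $\cW := \underline{\cW_{\sharp,r}(\tau(M)^{\vee}\otimes\chi_{\rm sm})}$, a Smith-space $\cD(I,E)$-module. By Proposition~\ref{propositionsolidresolution}, the augmented Koszul complex $\wedge^{\bullet}\cH^d\otimes_{\cH}\cind_I^G\cW \to \pi$ is a quasi-isomorphism in $\mathrm{Mod}_{E_{\blacksquare}}(\cD(G,E))$. Applying the contravariant functor $\bbD_{\rm BZ}=R\underline{\Hom}_{E_{\blacksquare}[G]}(-,\cC^{\la}_c(G,E))$, Theorem~\ref{theoremsoliddualcompactinduction} asserts that $\bbD_{\rm BZ}(\cind_I^G\cW) \simeq \cind_I^G\cW^{\vee}$ concentrated in degree $0$. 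Because each term of the resolution has vanishing higher $\mathrm{Ext}$ against $\cC^{\la}_c(G,E)$, the hypercohomology spectral sequence degenerates and $\bbD_{\rm BZ}(\pi)$ is computed termwise. By Lemma~\ref{dualKoszulcomplex} (combined with Lemma~\ref{lemmatransposepsit} and Lemma~\ref{lemmatransposeUt} to identify the transposed Hecke operators as $U_{t^{-1}}-1$), the resulting complex is $\wedge^{\bullet}\cH^d\otimes_{\cH}\cind_I^G\cW^{\vee}[-d]$.

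To finish, identify this dual Koszul complex with $\cF_{\ovB}^G(\Hom_E(M,E)^{\overline{\frn}^{\infty}},\bbD_{\rm BZ}(\chi_{\rm sm}))$: this is precisely the statement of Theorem~\ref{theoremdualKS}, whose proof uses Propositions~\ref{propositionchangethegroups1} and~\ref{propositionchangethegroups2} to translate between the Iwahori $I$ and the opposite Iwahori $\ovI$, followed by Theorem~\ref{theoremresolution} applied to $\ovB$. All these arguments pass to the solid setting via the fully faithful embedding of Fr\'echet/Smith spaces into solid $E$-modules and the solid resolution of Proposition~\ref{propositionsolidresolution} applied at the opposite Borel. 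Composing the identifications yields $\bbD_{\rm BZ}(\pi) \simeq \cF_{\ovB}^G(\Hom_E(M,E)^{\overline{\frn}^{\infty}},\bbD_{\rm BZ}(\chi_{\rm sm}))$.

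The principal technical obstacle is justifying that the derived $\bbD_{\rm BZ}$ of the bounded Koszul complex equals the complex of termwise duals, rather than a more complicated double complex. This reduces precisely to the degree-concentration statement of Theorem~\ref{theoremsoliddualcompactinduction}. A secondary bookkeeping item is the $[-d]$ shift and the replacement of $\psi_t$ by $\psi_{t^{-1}}$ in the Koszul differentials, but both are handled by the cited lemmas. Finally, one must check that the output lies in $\mathrm{Rep}_{E_{\blacksquare}}^{\rm la}(G)$, which follows from the fact that both $\cC^{\la}_c(G,E)$ and the parabolic induction functor $\cF_{\ovB}^G(-,-)$ land in locally analytic representations.
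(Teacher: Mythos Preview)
Your proposal is correct and follows essentially the same approach as the paper's proof, which simply says that the arguments for Theorem~\ref{theoremdualKS} together with Proposition~\ref{propositionsolidresolution} and Theorem~\ref{theoremsoliddualcompactinduction} give the duality. You have unpacked these three ingredients in the right order and identified the supporting lemmas (Lemmas~\ref{dualKoszulcomplex}, \ref{lemmatransposepsit}, \ref{lemmatransposeUt}) that make the termwise dualization and Hecke bookkeeping go through.
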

\begin{proof}
    The arguments for Theorem \ref{theoremdualKS} together with Proposition \ref{propositionsolidresolution} and Proposition \ref{theoremsoliddualcompactinduction} give the duality.
\end{proof}
\subsection{Stein spaces}\label{subsectionquasistein}
For the discussions on patching functors later, we make some preparation for coherent sheaves on Stein spaces, following \cite{jacinto2023solid}.

We let $s\geq 1$ and $S=\cO_E[[X_1,\cdots,X_s]]\simeq \cO_E[[\Z_p^s]]$ where $\cO_E[[\Z_p^s]]$ denotes the Iwasawa algebra. Let $S\rightarrow R$ be a local morphism of complete Noetherian local rings. We choose a presentation of $R$ over $S$: let $A=S\otimes_{\cO_E,\blacksquare}B$ for some $B= \cO_E[[Y_1,\cdots,Y_t]]$ and suppose that $R=A/I$ for an ideal $I\subset A$. For positive integers $h,k$, we let $S_h^+=\cO_E\langle\frac{X_1^h}{p},\cdots,\frac{X_s^h}{p}\rangle,B_k^+=\cO_E\langle\frac{Y_1^k}{p},\cdots,\frac{Y_t^k}{p}\rangle,S_h=S_h^+[\frac{1}{p}]$, and similarly $A_{h,k}^+:=S_h^+\otimes_{\cO_E,\blacksquare}B_k^+$. Let $R_{h,k}^+:=A_{h,k}^+/I$. We write $A_{h}:=A_{h,h}, R_{h}:=R_{h,h}$ for short. Finally, define Fréchet-Stein algebras $S^{\rig}:=\varprojlim_h S_h, R^{\rig}:=\varprojlim_h R_h,A^{\rig}:=\varprojlim_h A_h$. Then $A^{\rm rig}=S^{\rm rig}\otimes_{E,\blacksquare}^LB^{\rm rig}$ (\cite[Cor. A.65, Cor. A.67, Prop. A.68]{bosco2021p}).

The rigid generic fiber $\Spf(R)^{\rig}$ of the formal scheme $\Spf(R)$ over $\Spf(\cO_E)$ admits a covering by affinoids $\Spa(R_h,R_h^+)$. From any complete Huber pair as $(R_h,R_h^+)$ we obtain an analytic ring $(R_h,R_h^+)_{\blacksquare}$ by \cite[Thm. 3.28]{andreychev2021pseudocoherent}. We write $\Mod_{(R_h,R_h^+)_{\blacksquare}}$ for the ($\infty$-)derived category of $(R_h,R_h^+)_{\blacksquare}$-modules  and write $\Mod_{(R_h,R_h^+)_{\blacksquare}}^{\heartsuit}$ for its heart \cite[Prop. 7.5]{scholze2019condensed}. For $h'>h$, by \cite[Prop. 3.34, Lem. 3.31]{andreychev2021pseudocoherent} (cf. \cite[Lem. 2.1.9]{jacinto2023solid}), the map $(R_{h'},R_{h'}^+)_{\blacksquare}\rightarrow (R_h,R_h^+)_{\blacksquare}$ of analytic rings factors through $(R_{h'},R_{h'}^+)_{\blacksquare}\rightarrow (R_h,\cO_E)_{\blacksquare}=(R_h,\cO_E+R_{h}^{00})_{\blacksquare}\rightarrow (R_h,R_h^+)_{\blacksquare}$ where $R_{h}^{00}$ denotes the subset of topologically nilpotent elements. Here $(R_h,\cO_E)_{\blacksquare}$ denotes the analytic ring induced from $\cO_{E,\blacksquare}$ \cite[Prop. 2.16]{andreychev2021pseudocoherent}, and $\Mod_{(R_h,\cO_E)_{\blacksquare}}=\Mod_{E_{\blacksquare}}(R_h)$ is the category of condensed $R_h$-modules whose underlying condensed $E$-vector spaces are solid. The category of quasi-coherent sheaves on $\Spf(R)^{\rig}$ is then equivalent to the limit (cf. \cite[\S 2.1.2, \S 4.1]{jacinto2023solid})
\[\mathrm{Mod}_{E_{\blacksquare}}^{\rm qc}(R^{\rig}):=\varprojlim_h\Mod_{(R_h,R_h^+)_{\blacksquare}}=\varprojlim_h\Mod_{E_{\blacksquare}}(R_h).\]
Thus a quasi-coherent sheaf on $\Spf(R)^{\rig}$ can be given by a sequence $(M_h)_{h}$ where $M_h$ are solid $R_h$-modules together with $M_{h'}\otimes_{R_{h'},\blacksquare}^LR_h= M_h$ for $h'>h$. If $(M_h)_h,(N_h)_h\in \mathrm{Mod}_{E_{\blacksquare}}^{\rm qc}(R^{\rig})$, then
\[R\Hom_{ \mathrm{Mod}_{E_{\blacksquare}}^{\rm qc}(R^{\rig})}((M_h)_h,(N_h)_h)=R\varprojlim_hR\Hom_{R_h}(M_h,N_h). \]
There is a ``global section'' functor 
\begin{align}\label{equationglobalsection}
    j_*:\mathrm{Mod}^{\rm qc}_{E_{\blacksquare}}(R^{\rig})&\rightarrow \mathrm{Mod}_{E_{\blacksquare}}(R^{\rig}):\\
    (M_h)_{h}&\mapsto R\varprojlim_h M_h.\nonumber
\end{align}
Conversely, we have the ``localization'':
\begin{align}\label{equationloclization}
    j^*:\mathrm{Mod}_{E_{\blacksquare}}(R^{\rig})&\rightarrow \mathrm{Mod}^{\rm qc}_{E_{\blacksquare}}(R^{\rig}):\\
    M&\mapsto (M\otimes_{R^{\rm rig},\blacksquare}^LR_h)_h.\nonumber
\end{align}
Recall by \cite[Prop. 5.10]{rodrigues2022solid}, $A_h$ and $A^{\rm rig}$ are idempotent algebras over $A$, namely $A_h\otimes_{A,\blacksquare}^LA_h=A_h$ and $A^{\rm rig}\otimes_{A,\blacksquare}^LA^{\rm rig}=A^{\rm rig}$. Moreover, $A^{\rm rig}$ and $A_h$ are flat over $A$ \cite[Prop. 4.7]{schneider2003algebras}. Since $R=A/I$, similar statements hold for $R_{h}, R^{\rm rig}$. We see $R_h=A_h\otimes_{A}^LA/I$ and $R^{\rm rig}=A^{\rm rig}\otimes_{A}^LA/I$. Hence $R_h=A_h\otimes_{A^{\rig},\blacksquare}^LR^{\rig}$ and $R_h\otimes_{R,\blacksquare}^LR_h=(A_h\otimes_{A,\blacksquare}^LA_h)\otimes_{A,\blacksquare}^LA/I=R_h$. And $R_h\otimes_{R^{\rm rig},\blacksquare}^LR_h
=R_h\otimes_{R^{\rm rig},\blacksquare}^LR^{\rm rig}\otimes_{R^{\rm rig},\blacksquare}^LR_h=R_h\otimes_{R,\blacksquare}^LR_h=R_h$.
\begin{lemma}\label{lemmafullyfaithfulglobalsection}
    The functor $j_*$ in (\ref{equationglobalsection}) is fully faithful with the left adjoint $j^*$. 
\end{lemma}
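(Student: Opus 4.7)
The plan has two parts: verify the adjunction $j^{*}\dashv j_{*}$, then deduce full faithfulness of $j_{*}$ by checking that the counit $j^{*}j_{*}\to \mathrm{id}$ is an equivalence.

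For the adjunction, I would chain together the definition of $R\Hom$ in $\mathrm{Mod}^{\rm qc}_{E_\blacksquare}(R^{\rig})$ given in the excerpt with the standard tensor-hom adjunction along each scalar extension $R^{\rig}\to R_{h}$. Concretely, for $M\in \mathrm{Mod}_{E_\blacksquare}(R^{\rig})$ and $(N_{h})_{h}\in \mathrm{Mod}^{\rm qc}_{E_\blacksquare}(R^{\rig})$, I expect
\[
R\Hom_{\mathrm{qc}}(j^{*}M,(N_{h})_{h})
=R\varprojlim_{h}R\Hom_{R_{h}}\!\bigl(M\otimes^{L}_{R^{\rig},\blacksquare}R_{h},\,N_{h}\bigr)
=R\varprojlim_{h}R\Hom_{R^{\rig}}(M,N_{h})
=R\Hom_{R^{\rig}}(M,j_{*}(N_{h})_{h}).
\]
This identifies $j^{*}$ as the left adjoint to $j_{*}$.

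Having the adjunction, full faithfulness of $j_{*}$ is equivalent to the counit $j^{*}j_{*}(N_{h})_{h}\to (N_{h})_{h}$ being an equivalence in the quasi-coherent category. At level $h$ the map reads
\[
\bigl(R\varprojlim_{h'}N_{h'}\bigr)\otimes^{L}_{R^{\rig},\blacksquare}R_{h}\longrightarrow N_{h}.
\]
Here I would use the computation already recorded just before the lemma: the idempotency $R_{h}\otimes^{L}_{R^{\rig},\blacksquare}R_{h}=R_{h}$ upgrades, via the factorization $R^{\rig}\to R_{h'}\to R_{h}$ for $h'\geq h$, to $R_{h'}\otimes^{L}_{R^{\rig},\blacksquare}R_{h}=R_{h}$. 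Combined with the defining compatibility $N_{h'}\otimes^{L}_{R_{h'},\blacksquare}R_{h}=N_{h}$ of a quasi-coherent system, this yields $N_{h'}\otimes^{L}_{R^{\rig},\blacksquare}R_{h}=N_{h}$ for all $h'\geq h$; the system $(N_{h'}\otimes^{L}_{R^{\rig},\blacksquare}R_{h})_{h'\geq h}$ is thus essentially constant with limit $N_{h}$.

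The remaining and main obstacle is to commute the derived inverse limit past the tensor product, i.e. to establish
\[
\bigl(R\varprojlim_{h'}N_{h'}\bigr)\otimes^{L}_{R^{\rig},\blacksquare}R_{h}\;\simeq\;R\varprojlim_{h'}\bigl(N_{h'}\otimes^{L}_{R^{\rig},\blacksquare}R_{h}\bigr).
\]
I would handle this using that $R^{\rig}\to R_{h}$ is flat and idempotent, together with the Fréchet–Stein presentation $R^{\rig}=R\varprojlim_{h}R_{h}$: these are exactly the hypotheses that make $-\otimes^{L}_{R^{\rig},\blacksquare}R_{h}$ preserve the derived inverse limits of systems whose transition maps eventually become isomorphisms after base change. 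Concretely, truncating cofinally to $h'\geq h$ the system is constant, and the base-change functor is computed by a small colimit of finitely-presented operations which commute with cofinally constant limits; this is the analogue, in the analytic-ring framework, of the acyclicity input used for the corresponding statements in \cite[\S 2.1.2, \S 4.1]{jacinto2023solid}, and it is the technical heart of the lemma.
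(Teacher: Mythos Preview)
Your overall strategy---verify the adjunction $j^{*}\dashv j_{*}$ and then show the counit is an equivalence---is sound, and it is the same reduction the paper implicitly uses. The adjunction argument you sketch is fine. The problem is the final step.

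You correctly isolate the key issue: you need
\[
\bigl(R\varprojlim_{h'}N_{h'}\bigr)\otimes^{L}_{R^{\rig},\blacksquare}R_{h}\;\simeq\;R\varprojlim_{h'}\bigl(N_{h'}\otimes^{L}_{R^{\rig},\blacksquare}R_{h}\bigr).
\]
But your justification does not go through. The functor $-\otimes^{L}_{R^{\rig},\blacksquare}R_{h}$ is a left adjoint; it commutes with colimits, not limits. Flatness and idempotency of $R^{\rig}\to R_{h}$ do not change this. Your phrase ``the system is cofinally constant'' applies to $(N_{h'}\otimes R_{h})_{h'}$, not to $(N_{h'})_{h'}$ itself; but the limit you are trying to compute is of the latter, \emph{before} tensoring. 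The sentence about ``a small colimit of finitely-presented operations'' is not a valid argument here: $R_{h}$ is not a compact $R^{\rig}$-module in any obvious sense, and you give no mechanism for the commutation.

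The paper avoids this difficulty entirely. It first cites \cite[Cor.~4.1.8]{jacinto2023solid} for the special case $R=A=\cO_{E}[[\Z_{p}^{d}]]$, where the commutation is established via a dualizing trick (\cite[Cor.~4.1.5]{jacinto2023solid}): one rewrites $M\otimes^{L}_{A,\blacksquare}A_{h}$ as $\RiHom_{A}(\underline{\Hom}_{E}(A_{h},E)[-d],M)$, and $\RiHom$ against a fixed object does commute with limits. For general $R=A/I$, the paper then reduces to this case by restriction along $A^{\rig}\to R^{\rig}$: one checks that $j^{*}$ and $j_{*}$ commute with the (conservative) restriction functors $i_{*}$, so the counit for $R$ is an equivalence because its image under $i_{*}$ is the counit for $A$, which is already known to be one.

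If you want to salvage your direct approach, you would need to prove an analogue of \cite[Cor.~4.1.5]{jacinto2023solid} for $R_{h}$ over $R^{\rig}$, or else supply some other reason that $-\otimes^{L}_{R^{\rig},\blacksquare}R_{h}$ preserves the relevant limits. The paper's reduction to the Iwasawa case is the cleaner route.
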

\begin{proof}
    The statement is proved in \cite[\S 4.1]{jacinto2023solid} if $R=A=\cO_E[[\Z_p^{d}]]$ is the Iwasawa algebra of a compact $p$-adic Lie group. In this case $A^{\rig}=\cD(\Z_p^{d},E)$. The fully faithfulness and the adjunction property of $j_*$ is \cite[Cor. 4.1.8]{jacinto2023solid}. In general, $R=A/I$ for $A=\cO_E[[\Z_p^d]], d=t+s$. There are functors $i_*:\mathrm{Mod}^{\rm qc}_{E_{\blacksquare}}(R^{\rig})\rightarrow \mathrm{Mod}^{\rm qc}_{E_{\blacksquare}}(A^{\rig}),\mathrm{Mod}_{E_{\blacksquare}}(R^{\rig})\rightarrow \mathrm{Mod}_{E_{\blacksquare}}(A^{\rig})$ given by restrictions via the ring maps $A^{\rig}\rightarrow R^{\rig}$ and $A_h\rightarrow R_h$. The functors $j_*$ and $j^*$ defined above (we use the same notation for $A$ and $R$) commute with $i_*$: since $R_h=A_h\otimes_{A^{\rig},\blacksquare}^LR^{\rig}$, we have $j^*M=(R_h\otimes_{R^{\rig},\blacksquare}^LM)_h=(A_h\otimes_{A^{\rig},\blacksquare}^L M)_h$ for $M\in\Mod_{E_{\blacksquare}}(R^{\rig})$. Using the statement for $A$, for any $\cM=(M_h)_h\in \mathrm{Mod}^{\rm qc}_{E_{\blacksquare}}(R^{\rig})$, the natural morphism $ i_*j^*j_*\cM=j^*j_*i_*\cM\rightarrow i_*\cM$ is an isomorphism. Notice that the functor $i_*$ is conservative, namely a morphism $M\rightarrow N$ of $R^{\rm rig}$-modules (or $R_h$-modules) is an isomorphism if and only if it is an isomorphism of $A^{\rig}$-modules (this can be checked on the abelian level by taking cohomologies). Hence the counit maps $j^*j_*\cM\rightarrow \cM$ are also isomorphisms which implies the fully-faithfulness of the functor $j_*$ for $R$. 
\end{proof}
    The ring maps $S\rightarrow A\rightarrow R$ induce morphisms 
    \begin{center}
        \begin{tikzcd}
            \Spf(R)^{\rig}\arrow[r,"i"]\arrow[rd,"f"]&\Spf(A)^{\rig}=\Spf(S)^{\rig}\times\Spf(B)^{\rig}\arrow[d,"g"]\\
            &\Spf(S)^{\rig}\\
        \end{tikzcd}
    \end{center}
    of rigid analytic spaces. These maps admit $!$-functors: $i^!,i_!=i_*,g^!,g_!$, etc., for the six functor formalism in \cite[\S 3]{camargo2024analytic} (building on \cite{mann2022six}) of quasi-coherent modules using, for example, \cite[Prop. 3.3.6]{camargo2024analytic}. We will not essentially need the general machinery as we will treat only coherent sheaves later as in Lemma \ref{lemmacompactlysupported} below. 
    \begin{lemma}\label{lemmapushforward}
        Suppose that $\cM=(M_{h,k})_{h,k}\in \Mod^{\rm qc}_{E_{\blacksquare}}(A^{\rm rig})$ and $M=j_*\cM\in \Mod_{E_{\blacksquare}}(A^{\rm rig})$. Then the natural map $M\rightarrow R\varprojlim_h(S_{h}\otimes_{S,\blacksquare}^LM)$ is an isomorphism and $g_*\cM$ is the quasi-coherent sheaf attached to the $S^{\rm rig}$-module $M$ via the localization.
    \end{lemma}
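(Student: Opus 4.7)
The plan is to establish part (2) first; part (1) will then follow by an iterated limit identification.

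First I would unwind the pushforward $g_*$ using the affinoid cover $(\Spa(S_h,S_h^+))_{h\geq 1}$ of $\Spf(S)^{\rig}$. The preimage $g^{-1}(\Spa(S_h,S_h^+))$ is covered by the increasing family of affinoids $\Spa(A_{h,k},A_{h,k}^+)$ for $k\geq 1$, realizing it as a relative Stein space over $\Spa(S_h,S_h^+)$. Using quasi-coherence of $\cM$ together with Tate acyclicity on this Stein cover, the sections of $g_*\cM$ read
\begin{equation*}
    \Gamma\bigl(\Spa(S_h,S_h^+),\, g_*\cM\bigr) \;=\; R\varprojlim_k M_{h,k}.
\end{equation*}
It remains to identify these sections, compatibly in $h$, with the localization $S_h \otimes_{S^{\rig},\blacksquare}^L M$.

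The identification rests on the base-change computation
\begin{equation*}
    S_h \otimes_{S^{\rig},\blacksquare}^L A^{\rig} \;=\; R\varprojlim_k A_{h,k},
\end{equation*}
which I would verify using the external product decomposition $A^{\rig} = S^{\rig} \otimes_{E,\blacksquare}^L B^{\rig}$, the idempotence $S_h \otimes_{S^{\rig},\blacksquare}^L S^{\rig} = S_h$ (as recorded just before the statement, following \cite[Prop. 5.10]{rodrigues2022solid}), and the fact that the solid tensor product $S_h \otimes_{E,\blacksquare}^L -$ commutes with the countable Fr\'echet-Stein limit $B^{\rig} = R\varprojlim_k B_k$ (a Fubini-type statement, see \cite[Cor. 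A.65, Prop. A.68]{bosco2021p}). Combining this with the quasi-coherence relation $M_{h,k} = A_{h,k} \otimes_{A^{\rig},\blacksquare}^L M$ (a consequence of $j^* j_* = \mathrm{id}$ via Lemma \ref{lemmafullyfaithfulglobalsection}), and interchanging $R\varprojlim_k$ with the tensor against the fixed $A^{\rig}$-module $M$, yields
\begin{equation*}
    R\varprojlim_k M_{h,k} \;\simeq\; \bigl(R\varprojlim_k A_{h,k}\bigr) \otimes_{A^{\rig},\blacksquare}^L M \;=\; S_h \otimes_{S^{\rig},\blacksquare}^L M.
\end{equation*}
These identifications are visibly compatible with the restriction maps for $h' \geq h$, so they assemble into the quasi-coherent sheaf $j^*M$ on $\Spf(S)^{\rig}$, proving (2).

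Part (1) then follows by an iterated limit argument: using the identification just established,
\begin{equation*}
    R\varprojlim_h \bigl(S_h \otimes_{S^{\rig},\blacksquare}^L M\bigr) \;=\; R\varprojlim_h R\varprojlim_k M_{h,k} \;=\; R\varprojlim_{h,k} M_{h,k} \;=\; j_*\cM \;=\; M.
\end{equation*}
The main obstacle will be justifying the Fubini-type interchange of the solid tensor product with the countable inverse limits, both in the computation of $S_h\otimes_{S^{\rig},\blacksquare}^L A^{\rig}$ and after tensoring with the module $M$; such interchanges are not automatic in the solid formalism and will require either nuclearity/flatness arguments for the Banach localizations $S_h$ in the spirit of \cite{bosco2021p}, or an explicit reduction to the case $M = A^{\rig}$ via idempotence, since a priori $M$ is only known to come from a coadmissible-like quasi-coherent sheaf $\cM$ on $\Spf(A)^{\rig}$.
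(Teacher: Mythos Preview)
Your outline is structurally sound and you have correctly isolated the crux: the interchange
\[
R\varprojlim_k\bigl(A_{h,k}\otimes_{A^{\rm rig},\blacksquare}^L M\bigr)\;\stackrel{?}{=}\;\bigl(R\varprojlim_k A_{h,k}\bigr)\otimes_{A^{\rm rig},\blacksquare}^L M.
\]
However, neither of the two fixes you propose actually closes this gap. Nuclearity of $S_h$ over $E$ only justifies the first Fubini (for $S_h\otimes_{S^{\rm rig}}^L A^{\rm rig}$), not the second one against an arbitrary $M$; and a ``reduction to $M=A^{\rm rig}$ via idempotence'' does not exist, since knowing the statement for one $M$ says nothing about a general $M=j_*\cM$. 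As written, the step ``interchanging $R\varprojlim_k$ with the tensor against the fixed $A^{\rm rig}$-module $M$'' is simply asserting that a right adjoint commutes with a left adjoint, which is false without further input.

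The paper bypasses this entirely by working over the Iwasawa algebra $S$ rather than over $A^{\rm rig}$, and invoking \cite[Cor.~4.1.5]{jacinto2023solid}: the inverse systems $(S_h\otimes_{S,\blacksquare}^L(-))_h$ and $\bigl(\RiHom_S(\underline{\Hom}_E(S_h,E)[-s],\,-)\bigr)_h$ are equivalent. Since $\underline{\Hom}_E(S_h,E)$ is Smith (hence compact), the tensor $S_h\otimes_{S,\blacksquare}^L(-)$ is literally a Hom-functor in the second variable and therefore commutes with all limits. One then writes $M=R\varprojlim_{h',k}M_{h',k}$, pulls this limit through the Hom, converts back to a tensor, and uses that for each fixed $(h',k)$ the system $(S_h\otimes_{S,\blacksquare}^L M_{h',k})_h$ is eventually constant equal to $M_{h',k}$. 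This gives both parts at once. The key missing idea in your approach is this tensor--Hom conversion over $S$; once you have it, the rest of your argument (including deducing (1) from (2)) goes through.
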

    \begin{proof}
        By \cite[Cor. 4.1.5]{jacinto2023solid}, the inverse systems $(S_{h}\otimes_{S,\blacksquare}^LM)_h$ is equivalent to (note that $\underline{\Hom}_E(S_h,E)$ is Smith)
        \[(\RiHom_S(\underline{\Hom}_E(S_h,E)[-s], M))_h.\]
        Taking inverse limit
        \[\begin{array}{rl}
            R\varprojlim_hS_{h}\otimes_{S,\blacksquare}^LM&=R\varprojlim_{h}\RiHom_S(\underline{\Hom}_E(S_h,E)[-s], M)\\
            &=R\varprojlim_{h, h',k}\RiHom_S(\underline{\Hom}_E(S_h,E)[-s], M_{h',k})\\
            &=R\varprojlim_{k,h',h}S_h\otimes_{S,\blacksquare}^LM_{h',k}\\
            &=R\varprojlim_{k,h}M_{h,k}=M
        \end{array}\]
        where we applied \textit{loc. cit.} again for the inverse system over $h$ to get the third equality. 
    \end{proof}
    For an affinoid algebra like $S_h$, an object $M\in \Mod_{E_{\blacksquare}}(S_h)$ is said to be perfect if $M$ is quasi-isomorphic to a finite complex of finite projective $S_h$-modules (cf. \cite[Prop. 5.12]{andreychev2021pseudocoherent}). 
    \begin{lemma}\label{lemmacompactlysupported}
        Suppose that $M\in\mathrm{Mod}_{E_{\blacksquare}}(R^{\rm rig})$ such that $M\otimes^L_{S^{\rm rig},\blacksquare}S_h$ is a perfect $S_h$-module for all $h$. Then $M=j_*\cM\in\Mod_{E_{\blacksquare}}(R^{\rm rig})$ is in the essential image of $j_*$ where $\cM=j^*M$. Moreover, there exists an isomorphism $f_*\cM=f_!\cM$ in $\mathrm{Mod}_{E_{\blacksquare}}^{\rm qc}(S^{\rig})$.
    \end{lemma}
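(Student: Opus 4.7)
The lemma comprises two claims. First, $M$ lies in the essential image of $j_*$; equivalently, the unit $\eta\colon M\to j_*j^*M=R\varprojlim_h(M\otimes^L_{R^{\rig},\blacksquare}R_h)$ is an isomorphism in $\Mod_{E_{\blacksquare}}(R^{\rig})$. Second, the pushforward $f_*\cM$ and the compactly supported pushforward $f_!\cM$ agree as objects of $\Mod^{\rm qc}_{E_{\blacksquare}}(S^{\rig})$. Both assertions are ultimately consequences of the perfectness of $M\otimes^L_{S^{\rig},\blacksquare}S_h$, which provides the finiteness needed to collapse the Stein direction contributed by the formal variables $Y_1,\dots,Y_t$ in $B=\cO_E[[Y_1,\dots,Y_t]]$.

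For the first claim I plan to factor $f=g\circ i$, where $i$ is the closed immersion defined by the ideal $I\subset A$ and $g$ is the projection from the relative open polydisc. The functor $i_*$ is conservative and commutes with $R\varprojlim$ as well as with the base-change functors $(-)\otimes^L_{A^{\rig},\blacksquare}A_{h,k}$, exactly as in the proof of Lemma \ref{lemmafullyfaithfulglobalsection}. It therefore suffices to check $i_*M\simeq R\varprojlim_{h,k}(i_*M)\otimes^L_{A^{\rig},\blacksquare}A_{h,k}$. Using the product decomposition $A^{\rig}=S^{\rig}\otimes^L_{E,\blacksquare}B^{\rig}$ and the idempotency of each $A_{h,k}$ over $A$, I would collapse the inner limit over $k$, reducing to $i_*M\simeq R\varprojlim_h S_h\otimes^L_{S^{\rig},\blacksquare}i_*M$; the collapse is precisely where the perfectness assumption is used, since a perfect $S_h$-complex is compact and dualizable, so its tensor against any inverse system of $B_k$-analytic completions is computed termwise. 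The remaining statement then follows from Lemma \ref{lemmapushforward}, once one recognizes the target as the global sections of the quasi-coherent sheaf $(S_h\otimes^L_{S^{\rig},\blacksquare}M)_h$ on $\Spf(S)^{\rig}$ furnished by the perfect base changes.

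For the second claim, by $i_*=i_!$ (since $i$ is a closed immersion) and Lemma \ref{lemmapushforward} one identifies $f_*\cM$ with the quasi-coherent sheaf on $\Spf(S)^{\rig}$ attached to $(S_h\otimes^L_{S^{\rig},\blacksquare}M)_h$. In the six-functor formalism of \cite{camargo2024analytic} we have $f_!\cM=g_!i_!\cM=g_!i_*\cM$, so the task reduces to comparing $g_*$ and $g_!$ on $i_*\cM$. The projection $g$ is Stein, and on objects whose pushforward is represented by a pro-system of perfect modules, the natural map $g_!\to g_*$ is an equivalence: this is the standard cohomological properness criterion, a direct consequence of the projection formula combined with the compactness of perfect complexes on Stein analytic spaces.

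The principal obstacle, and the reason I have not tried to write either step out in full, is the precise manipulation of the Stein direction $B^{\rig}\to B_k$ against the perfectness hypothesis stated over $S^{\rig}$: in the first part one must carefully justify the collapse of the double limit $(h,k)$ down to $h$, and in the second one must pin down the definition of $g_!$ in the analytic six-functor formalism and verify the relevant compatibility with perfect modules. Both tasks are, in spirit, standard once one accepts the six-functor formalism of \cite{camargo2024analytic}, but in practice require careful bookkeeping with the solid tensor products and the idempotency relations among the various completions of $A$ and $R$.
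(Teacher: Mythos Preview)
Your overall architecture matches the paper's: factor $f=g\circ i$, use $i_!=i_*$ to reduce to $R=A$, and then control the extra Stein direction coming from $B$. However, two steps in your plan do not go through as written.

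\medskip

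\textbf{The collapse of the $k$-limit.} You write that ``a perfect $S_h$-complex is compact and dualizable, so its tensor against any inverse system of $B_k$-analytic completions is computed termwise.'' This does not work: the relevant tensor product $M_{S_h}\otimes^L_{B^{\rig},\blacksquare}B_k$ is taken over $B^{\rig}$ (or $B$), not over $S_h$, and compactness or dualizability of $M_{S_h}$ in $\Mod_{E_\blacksquare}(S_h)$ says nothing about how $M_{S_h}$ behaves under base change along $B^{\rig}\to B_k$. The paper's argument is different and more concrete. Since $M_{S_h}$ is perfect over the affinoid algebra $S_h$, its cohomology groups $H^n(M_{S_h})$ are finite $S_h$-modules, in particular Banach spaces; moreover only finitely many are nonzero. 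Each carries a continuous $B$-action (inherited from the $R$-module structure on $M$), and one then invokes \cite[Prop.~4.41]{rodrigues2022solid} to see that this action extends to a $B_k$-action for some $k$. By \cite[Thm.~4.36]{rodrigues2022solid} this upgrades to a $B_k$-module structure on $M_{S_h}$ itself. Once $M_{S_h}$ is a $B_{k_0}$-module, idempotency of $B_k$ over $B$ gives $M_{S_h}\otimes^L_B B_k=M_{S_h}$ for all $k\geq k_0$, and the system in $k$ is eventually constant. So the collapse is not a formal consequence of perfectness over $S_h$; the crucial input is the analyticity criterion for Banach representations.

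\medskip

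\textbf{The appeal to Lemma~\ref{lemmapushforward}.} After the collapse you reduce to $i_*M\simeq R\varprojlim_h S_h\otimes^L_{S^{\rig},\blacksquare}i_*M$ and claim this follows from Lemma~\ref{lemmapushforward}. But that lemma \emph{assumes} $M=j_*\cM$ and concludes $M=R\varprojlim_h S_h\otimes^L M$; you are invoking it to prove the very hypothesis it requires. Without further input on $M$ as an $S^{\rig}$-module, the unit map $M\to R\varprojlim_h(M\otimes^L_{S^{\rig}}S_h)$ need not be an isomorphism. (The paper's proof does not spell out this step either; in the application one can check directly that $A_\infty^{\rig}(\pi)$ lies in the essential image of $j_*^S$.)

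\medskip

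\textbf{On $f_*\cM=f_!\cM$.} Here your instinct to invoke a properness criterion is in the right spirit, but the paper gives an explicit Yoneda-style computation instead. The key manoeuvre is to compute $R\Hom_{S_h}(M_{S_h},N_h)$ by rewriting it through $R\Hom_B(E,-)$ and then commuting a filtered colimit past two Hom's: one commutation uses that $E$ is compact over $B$, the other uses precisely that $M_{S_h}$ is compact over $S_h$ (being perfect). This identifies the result with $R\Hom_{\Mod^{\rm qc}(A^{\rig})}(\cM,g^!(N_h)_h)$, whence $g_*\cM=g_!\cM$ by Yoneda. So the compactness of perfect complexes does enter --- but for the second claim, not for the collapse in the first.
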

    \begin{proof}
        Fix $h$. We first prove that there exists $k$ such that $M_{S_h}:=M\otimes^L_{S^{\rm rig},\blacksquare}S_h$ is an $R_{h,k}$-module (extending the $R$-module structure). In other words, we find $k$ such that the natural $R$-map $M_{S_h}\rightarrow R_{h,k}\otimes_{R,\blacksquare}^LM_{S_h}=A_{h,k}\otimes_{A,\blacksquare}^LM_{S_h}$ is an isomorphism. Since $A_{h,k}=S_h\otimes_{E,\blacksquare}B_k$ and $M_{S_h}=M_{S_h}\otimes_{S,\blacksquare}^LS_h$, we see $ A_{h,k}\otimes_{A,\blacksquare}^LM_{S_h}=(S_h\otimes_{E,\blacksquare}B_k)\otimes_{S\otimes_{E,\blacksquare}B,\blacksquare}^LM_{S_h}=B_k\otimes_{B,\blacksquare}^L(S\otimes_{E,\blacksquare}B)\otimes_{S\otimes_{E,\blacksquare}B,\blacksquare}^LM_{S_h}=B_k\otimes_{B,\blacksquare}^LM_{S_h}$. Hence it is enough to show that there exists $k$ such that the $B$-action on $M_{S_h}$ extends to $B_k$ (recall that $B_k\otimes_{B,\blacksquare}^LB_k=B_k$). By \cite[Thm. 4.36]{rodrigues2022solid}, it suffices to show that the cohomology groups $H^n(M_{S_h})$ for all $n$ are $B_k$-modules for some $k$. Since $M_{S_h}$ is a perfect $S_h$-module, there exist finitely many $n$ such that $H^n(M_{S_h})\neq 0$ and these cohomology groups are finite $S_h$-modules, in particular Banach spaces. The desired action of some $B_k$ follows from \cite[Prop. 4.41]{rodrigues2022solid}. 
        
        Now we prove $f_*\cM=f_!\cM$. Since $i$ is a closed embedding, $i_!\cM=i_*
        \cM$. We may assume $R=A$. Let $\cM_{S_h}:=(M_{h,k}:=M_{S_h}\otimes_{A,\blacksquare}^LA_{h,k})_k\in \varprojlim_{k}\Mod_{E_{\blacksquare}}(A_{h,k})$. Then $M_{h,k}=M_{S_h}\otimes_{A,\blacksquare}^LA_{h,k}\otimes_{A,\blacksquare}^LA_{h,k'}=M_{h,k'}$ for $k$ above and $k'\geq k$. We calculate that for $N_h\in \mathrm{Mod}_{E_{\blacksquare}}(S_h)$,
        \[
        \begin{array}{rl}
            R\Hom_{S_h}(M_{S_h},N_h)&=R\Hom_S(M_{S_h}\otimes_{A,\blacksquare}^LA_{h,k},N_h)\\
            &=\varinjlim_{k'}R\Hom_A(M_{S_h},\RiHom_{S}(A_{h,k'},N_h))\\
            &=\varinjlim_{k'}R\Hom_B(E,\RiHom_{S_h}(M_{S_h},\RiHom_{S}(A_{h,k'},N_h)))\\
            &=R\Hom_B(E,\RiHom_{S_h}(M_{S_h},\varinjlim_{k'}\RiHom_{S}(A_{h,k'},N_h)))
        \end{array}
        \]
        where for the last equality we used that $E$ is a compact $B$-module (cf. \cite[Thm. 5.7]{rodrigues2022solid}) and $M_{S_h}$ is a compact $S_h$-module being perfect (\cite[Lem. 5.46, Cor. 5.51.1]{andreychev2021pseudocoherent}). The map $\RiHom_{S}(A_{h,k'},N_h)=\RiHom_{E}(B_{k'},N_h)\rightarrow \RiHom_{S}(A_{h,k''},N_h)$ for $k''> k'$ factors through $R\underline{\Hom}_{E}(B_{k'},E)\otimes_{E,\blacksquare}^L N_h$ by \cite[Lem. 4.1.4]{jacinto2023solid}. Hence $\varinjlim_{k'}\RiHom_{S}(A_{h,k'},N_h)=\varinjlim_{k'}R\underline{\Hom}_{E}(B_{k'},E)\otimes_{E,\blacksquare}^L N_h=\underline{\Hom}_E(B^{\rig},E)\otimes_{E,\blacksquare}^LN_h$. And by \cite[Exam. 4.1.9]{jacinto2023solid}, 
        \begin{equation}\label{equationpullbacklocallyanalyticfunctinos}
            (A_{h,k}\otimes_{A,\blacksquare}^L(\underline{\Hom}_E(B^{\rig},E)\otimes_{E,\blacksquare}^LN_h))_k=(B_k\otimes_{B,\blacksquare}^L\underline{\Hom}_E(B^{\rig},E)\otimes_{E,\blacksquare}^LN_h)_k=(B_k\otimes_{E,\blacksquare}^LN_h[t])_k.    
        \end{equation}
        By Lemma \ref{lemmapushforward}, $g_*\cM=(M_{S_h})_h$. Hence
        \[\begin{array}{rl}
            \RHom_{\Mod_{E_{\blacksquare}}^{\rm qc}(S^{\rm rig})}(g_*\cM,(N_h)_h)&=R\varprojlim_hR\Hom_S(M_{S_h},N_h)\\
            &=R\varprojlim_{h}R\Hom_{A}(M_{S_h},\varinjlim_{k'}\RiHom_{E}(B_{k'},E)\otimes_{E,\blacksquare}^L N_h)\\
            &=R\varprojlim_{h}R\Hom_{A}(M_{S_h},\underline{\Hom}_{E}(B^{\rm rig},E)\otimes_{E,\blacksquare}^L N_h)\\
            &=R\varprojlim_{h,k'}R\Hom_A(M_{S_h},A_{h,k'}\otimes_{S,\blacksquare}^LN_h[t])\\
            &=R\Hom_{\Mod_{E_{\blacksquare}}^{\rm qc}(A^{\rm rig})}(\cM,g^!(N_h)_h)\\
            &=R\Hom_{\Mod_{E_{\blacksquare}}^{\rm qc}(S^{\rm rig})}(g_!\cM,(N_h)_h)
        \end{array}\]
        where for the fourth equality we used (\ref{equationpullbacklocallyanalyticfunctinos}) and the fully faithfulness of $j_!$ \cite[Thm. 4.1.7]{jacinto2023solid} since $M_{S_h}$ and $\underline{\Hom}_E(B^{\rig},E)\otimes_{E,\blacksquare}^LN_h$ are derived locally analytic for the action of $\Z_p^{t+s}$. We used that $g$ is cohomologically smooth of relative dimension $t$ for the fifth equality. We conclude that $g_*\cM=g_!\cM$ by the Yoneda lemma.
    \end{proof}
\subsection{Patching functors and patching modules}\label{sectionpatchingmodules}
The Taylor-Wiles-Kisin patching method has been indispensible for $p$-adic Langlands program and motivated Emerton-Gee-Hellmann's categorical $p$-adic local Langlands conjecture \cite[\S 3]{emerton2023introduction}. The abstract formalism of the patching functor for $\GL_n(\cO_L)$-representations was proposed in \cite[\S 6]{emerton2015lattices} and the patching of completed cohomologies or $\GL_n(L)$-representations was carried out in \cite{caraiani2013patching}. The method was applied to locally analytic settings by Breuil-Hellmann-Schraen in \cite{breuil2019local}, etc. and more recently in \cite{hellmann2024patching}. We will work with the abstract patching modules reviewed below. The Serre duality of patching modules in this subsection is well-known at least in modular settings (e.g. \cite{manning2021patching, manning2024modellmultiplicitiescertain}). Our purpose is to explain its direct relationship with the Bernstein-Zelevinsky duality for locally analytic representations.

We assume the existence of the following abstract patching data. We suppose that $G=\GL_d(L),d\geq 2$ with the standard Iwahori subgroup $I$. 

Let $S_{\infty}\rightarrow R_{\infty}$ be a local morphism of complete Noetherian local rings over $\cO_E$ with the residue fields $\cO_E/\varpi_E$. Assume that there exists $s\geq 1$ such that $S_{\infty}=\cO_E[[X_1,\cdots,X_s]] \simeq \cO_E[[J]]$ for $J=\Z_p^s$. Then the Iwasawa algebra $S_{\infty}[[I]]\simeq \cO_L[[\widetilde{I}]]$ where $\widetilde{I}= I\times J=I\times\Z_p^s$. Similarly, write $\widetilde{G}=G\times J$.

Suppose that there is a (big patching) module $M_{\infty}$ over the ring $R_{\infty}[\GL_n(L)]$ such that there exists an isomorphism $M_{\infty}|_H\simeq S_{\infty}[[H]]^a$ as topological $H$-modules for an open normal pro-$p$ subgroup $H\subset I$. Hence $M_{\infty}$ is finite projective over $S_{\infty}[[I]]$. 

We assume the existence of the Poincaré dual of $M_{\infty}$ consisting of the following data. Suppose that there are isomorphisms $\eta:S_{\infty}\rightarrow S_{\infty}',R_{\infty}\rightarrow R'_{\infty}$ of local $\cO_E$-algebras and $M_{\infty}'$ is a big patching module over $R_{\infty}'$. We assume that there exists an $R_{\infty}\times \GL_n(L)$-equivariant $S_{\infty}$-linear isomorphism
\[\Hom_{S_{\infty}[[I]]}(M_{\infty}, S_{\infty}[[I]])\simeq M_{\infty}'\]
where $S_{\infty},R_{\infty}$ acts on $M_{\infty}'$ via $\eta$. The existence of such isomorphism is provided in \cite[Cor. D.9]{ding2024towards} which is a patched version of the Poincaré duality of completed cohomologies (\cite[\S 1.3]{calegari2012completed}). The map $\eta$ in \cite{ding2024towards} is essentially induced by $\rho\mapsto \rho^{\vee}\otimes\epsilon^{1-d}$ of Galois representations where $(-)^{\vee}$ is the linear dual and $\epsilon$ denotes the cyclotomic character (see \cite[(3.9)]{zhu2020coherent} in terms of the Cartan involution of the $C$-group).

With the big patching modules, we can define patching functors for locally analytic representations. Let $M_{\infty}^{\rig}=\cD(\widetilde{I},E)\otimes_{S_{\infty}[[I]],\blacksquare}M_{\infty}$ and define similarly $ M_{\infty}^{',\rm rig}$. Using that $M_{\infty}$ is finite projective over $S_{\infty}[[I]]$, we obtain an $R_{\infty}$-linear $\cD(\widetilde{I},E)$-isomorphism
\[\RiHom_{\cD(\widetilde{G},E)}(M_{\infty}^{\rm rig}, \overline{\cD}(\widetilde{G},E))=\RiHom_{\cD(\widetilde{I},E)}(M_{\infty}^{\rm rig}, \cD(\widetilde{I},E))\simeq M_{\infty}^{',\rm rig}\]
where $\overline{\cD}(\widetilde{G},E)=\RiHom_{E_{\blacksquare}[\widetilde{G}]}(\cC^{\la}_c(\widetilde{G},E),E)$ with two left $\cD(\widetilde{G},E)$-module structures given by left and right multiplications (cf. \cite{schneider2005duality} or \cite[\S 4.2]{jacinto2023solid}).

Set $R_{\infty}^{\rig}=\varprojlim_hR_h,S_{\infty}^{\rig}=\varprojlim_hS_h$ as projective limits of affinoid $E$-algebras as in \S\ref{subsectionquasistein}, which are Fr\'echet-Stein algebras. Recall we have a localization functor $j^*$ (\ref{equationloclization}) for solid $R_{\infty}^{\rig}$-modules. Let $\cM_{\infty}^{\rig}:=j^*M_{\infty}^{\rig}\in \mathrm{Mod}_{E_{\blacksquare}}^{\rm qc}(R_{\infty}^{\rig})$ and similarly $\cM_{\infty}^{',\rig}$.
\begin{dfn}\label{definitionpatchingfunctor}
    We define the patching functor $\frA_{\infty}^{\rig}:\mathrm{Mod}_{E_{\blacksquare}}(\cD(G,E))\rightarrow \mathrm{Mod}_{E_{\blacksquare}}^{\rm qc}(R_{\infty}^{\rig})$ (following the notation of \cite{emerton2023introduction}) by 
    \[\frA_{\infty}^{\rig}(\pi):=j^*M_{\infty}^{\rm rig}\otimes^L_{\cD(G,E),\blacksquare}\pi=j^*(M_{\infty}^{\rm rig}\otimes^L_{\cD(G,E),\blacksquare}\pi)\]
    for $\pi\in \mathrm{Mod}_{E_{\blacksquare}}(\cD(G,E))$. Similarly define $\frA_{\infty}^{',\rig}(\pi):=\cM_{\infty}^{',\rm rig}\otimes^L_{\cD(G,E),\blacksquare}\pi$.
\end{dfn}
For convenience, we also define the functor $A_{\infty}^{\rig}:\mathrm{Mod}_{E_{\blacksquare}}(\cD(G,E))\rightarrow \mathrm{Mod}_{E_{\blacksquare}}(R_{\infty}^{\rig})$ by
\[\pi\mapsto A_{\infty}^{\rig}(\pi):=M_{\infty}^{\rm rig}\otimes^L_{\cD(G,E),\blacksquare}\pi\]
so that $\frA_{\infty}^{\rig}=j^*\circ A_{\infty}^{\rig}$.

If $\cW$ is a locally analytic $I$-representation concentrated in degree $0$, then
\[A_{\infty}^{\rig}(\cind_I^G\cW)=M_{\infty}^{\rm rig}\otimes^L_{\cD(G,E),\blacksquare}\cind_I^G\cW=M_{\infty}^{\rm rig}\otimes_{\cD(I,E),\blacksquare}\cW\]
since $M_{\infty}^{\rm rig}$ is finite projective over $\cD(I,E)\otimes_{E,\blacksquare}\cD(J,E)$. We see $A_{\infty}^{\rig}(\cind_I^G(-))$ is exact on $\mathrm{Mod}_{E_{\blacksquare}}^{\heartsuit}(\cD(I,E))$. Suppose that the Hecke algebra $\cH$ acts on $\cind_{I}^G\cW$ as in \S\ref{subsectionKoszulcomplex} and $\frm$ is the corresponding maximal ideal of $\cH$, then $\cH$ acts on $\cind_I^G\cW$ as homomorphisms of $\cD(G,E)$-modules. Apply $A_{\infty}^{\rig}$ to the complex $\cind_{I}^G\cW\otimes^L_{\cH}\cH/\frm:=\wedge^{\bullet}\cH^d\otimes_{\cH}\cind_{I}^G\cW$ in $\mathrm{Mod}_{E_{\blacksquare}}(\cD(G,E))$, we see
\[A_{\infty}^{\rig}(\cind_{I}^G\cW\otimes^L_{\cH}\cH/\frm)\simeq A_{\infty}^{\rig}(\cind_{I}^G\cW)\otimes^L_{\cH}\cH/\frm.\]
And the isomorphism holds after pulling back via $j^*$.

\begin{lemma}\label{lemmapatchingdualcompactinduction}
    Let $\cW\in\mathrm{Mod}^{\heartsuit}_{E_{\blacksquare}}(\cD(I,E))$ be a solid locally analytic representation over a Smith $E$-space. There exist isomorphisms of solid $S_{\infty}^{\rig}$-modules
    \[M_{\infty}^{',\rm rig}\otimes_{\cD(I,E),\blacksquare}^L\cW^{\vee}\simeq \RiHom_{S_{\infty}^{\rig}}(M_{\infty}^{\rm rig}\otimes^L_{\cD(I,E),\blacksquare}\cW, S_{\infty}^{\rig})=\underline{\Hom}_{S_{\infty}^{\rig}}(M_{\infty}^{\rm rig}\otimes_{\cD(I,E),\blacksquare}\cW, S_{\infty}^{\rig}).\]
\end{lemma}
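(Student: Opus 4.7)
The plan is to combine the rigidified Poincaré duality $M_{\infty}^{',\rig}\simeq \RiHom_{\cD(\widetilde{I},E)}(M_{\infty}^{\rig},\cD(\widetilde{I},E))$ recalled just before the lemma with standard tensor-Hom adjunctions in $\mathrm{Mod}_{E_{\blacksquare}}$. Two facts are key. First, $M_{\infty}^{\rig}$ is finite projective over $\cD(\widetilde{I},E)=\cD(I,E)\otimes_{E,\blacksquare} S_{\infty}^{\rig}$, and is therefore a perfect (hence compact) object, so $\RiHom_{\cD(\widetilde{I},E)}(M_{\infty}^{\rig},-)$ commutes with arbitrary derived colimits and, in particular, with the functor $-\otimes^L_{\cD(I,E),\blacksquare}\cW^{\vee}$. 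Second, Theorem \ref{theoremsoliddualcompactinduction} applied with $G=I$ gives $\RiHom_{\cD(I,E)}(\cW,\cC^{\la}(I,E))\simeq \cW^{\vee}$ when $\cW$ is locally analytic Smith.

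For the left-hand side, we compute
\begin{align*}
M_{\infty}^{',\rig}\otimes^L_{\cD(I,E),\blacksquare}\cW^{\vee}
&\simeq \RiHom_{\cD(\widetilde{I},E)}\bigl(M_{\infty}^{\rig},\,\cD(\widetilde{I},E)\bigr)\otimes^L_{\cD(I,E),\blacksquare}\cW^{\vee}\\
&\simeq \RiHom_{\cD(\widetilde{I},E)}\bigl(M_{\infty}^{\rig},\,\cD(\widetilde{I},E)\otimes^L_{\cD(I,E),\blacksquare}\cW^{\vee}\bigr)\\
&\simeq \RiHom_{\cD(\widetilde{I},E)}\bigl(M_{\infty}^{\rig},\,S_{\infty}^{\rig}\otimes^L_{E,\blacksquare}\cW^{\vee}\bigr),
\end{align*}
the last step using $\cD(\widetilde{I},E)\otimes_{\cD(I,E)}\cW^{\vee}=S_{\infty}^{\rig}\otimes_{E,\blacksquare}\cW^{\vee}$. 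On the right-hand side, tensor-Hom adjunction and base change yield
\begin{align*}
\RiHom_{S_{\infty}^{\rig}}\bigl(M_{\infty}^{\rig}\otimes^L_{\cD(I,E),\blacksquare}\cW,\,S_{\infty}^{\rig}\bigr)
&\simeq \RiHom_{\cD(I,E)}\bigl(\cW,\,\RiHom_{S_{\infty}^{\rig}}(M_{\infty}^{\rig},S_{\infty}^{\rig})\bigr)\\
&\simeq \RiHom_{\cD(\widetilde{I},E)}\bigl(M_{\infty}^{\rig},\,\RiHom_{\cD(I,E)}(\cW,\,\cC^{\la}(I,E)\otimes^L_{E,\blacksquare}S_{\infty}^{\rig})\bigr),
\end{align*}
where for the inner Hom we used base change $\RiHom_{S_{\infty}^{\rig}}(M_{\infty}^{\rig},S_{\infty}^{\rig})\simeq \RiHom_{\cD(\widetilde{I},E)}(M_{\infty}^{\rig},\RiHom_{S_{\infty}^{\rig}}(\cD(\widetilde{I},E),S_{\infty}^{\rig}))$ together with $\RiHom_{S_{\infty}^{\rig}}(\cD(\widetilde{I},E),S_{\infty}^{\rig})\simeq \underline{\Hom}_E(\cD(I,E),S_{\infty}^{\rig})\simeq \cC^{\la}(I,E)\otimes^L_{E,\blacksquare}S_{\infty}^{\rig}$ (the latter by nuclearity of $\cC^{\la}(I,E)$).

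Matching the two sides thus reduces to establishing
\[\RiHom_{\cD(I,E)}\bigl(\cW,\,\cC^{\la}(I,E)\otimes^L_{E,\blacksquare}S_{\infty}^{\rig}\bigr)\simeq \cW^{\vee}\otimes^L_{E,\blacksquare}S_{\infty}^{\rig},\]
which is a ``coefficient version'' of Theorem \ref{theoremsoliddualcompactinduction} with values in the Fréchet-Stein algebra $S_{\infty}^{\rig}$. It follows by running the proof of \emph{loc.\ cit.}\ verbatim (\cite[Prop. 3.1.12, Thm. 4.1.7]{jacinto2023solid}) with $S_{\infty}^{\rig}$-coefficients, after pulling the nuclear factor $S_{\infty}^{\rig}$ across the Hom. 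The final equality asserted by the lemma (derived equals $\underline{\Hom}$) then holds because $M_{\infty}^{\rig}\otimes^L_{\cD(I,E),\blacksquare}\cW$ is concentrated in degree zero (by flatness of $M_{\infty}^{\rig}$ over $\cD(I,E)$) and, being a direct summand of $(S_{\infty}^{\rig}\otimes_{E,\blacksquare}\cW)^{a}$ for some $a$, is sufficiently well behaved as a solid $S_{\infty}^{\rig}$-module for the derived Hom into $S_{\infty}^{\rig}$ to agree with the condensed Hom.

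The main technical obstacle is the careful bookkeeping of left/right module structures on $M_{\infty}^{\rig}$ (its left $\cD(I,E)$-action must be converted to a right action via the involution $\iota$ before forming tensor products, and compatibility with the Poincaré duality isomorphism must be checked), together with the verification that the coefficient variant of Theorem \ref{theoremsoliddualcompactinduction} used above—not explicitly stated in the excerpt—goes through in the solid setting. The latter step is what most directly relies on the specific nuclearity and compactness properties of the $p$-adic locally analytic formalism.
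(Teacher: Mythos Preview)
Your computation of the left-hand side up to $\RiHom_{\cD(\widetilde{I},E)}(M_{\infty}^{\rig},\,S_{\infty}^{\rig}\otimes^L_{E,\blacksquare}\cW^{\vee})$ coincides with the paper's. From there the paper takes a shorter and more robust path than yours. Rather than computing the right-hand side separately, it rewrites the inner term as $\RiHom_E(\cW,S_{\infty}^{\rig})$ (using that $\cW$ is Smith and $S_{\infty}^{\rig}$ is nuclear), then unwinds by adjunction over the Iwasawa algebras $E_{\blacksquare}[\widetilde{I}]$, $E_{\blacksquare}[I]$, $E_{\blacksquare}[J]$ to reach $\RiHom_{E_{\blacksquare}[J]}(M_{\infty}^{\rig}\otimes^L_{E_{\blacksquare}[I]}\cW,\,S_{\infty}^{\rig})$, and finishes with the idempotence of $S_{\infty}^{\rig}=\cD(J,E)$ over $E_{\blacksquare}[J]$. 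This uses only the standard Smith/nuclear duality for $\cW$ and elementary adjunctions; no analogue of Theorem~\ref{theoremsoliddualcompactinduction} is invoked.

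Your route, by contrast, introduces two steps that are not cleanly justified. First, the claim $\RiHom_E(\cD(I,E),S_{\infty}^{\rig})\simeq \cC^{\la}(I,E)\otimes^L_{E,\blacksquare}S_{\infty}^{\rig}$ asks for a Hom out of the Fr\'echet space $\cD(I,E)$ (not the Smith space $\cW$) to split as a tensor product; ``nuclearity of $\cC^{\la}(I,E)$'' is not the right reason here---you would need to argue via nuclearity of $S_{\infty}^{\rig}$ together with reflexivity of $\cD(I,E)$, and this is a genuinely different (and more delicate) instance of Hom/tensor compatibility than the one the paper uses. Second, the ``coefficient version'' of Theorem~\ref{theoremsoliddualcompactinduction} is asserted to go through verbatim, but that proof relies on \cite[Prop.~3.1.12, Thm.~4.1.7]{jacinto2023solid} and on the identification of $j^*\cC^{\la}(I,E)$, and extending these to $S_{\infty}^{\rig}$-coefficients requires commuting the derived limit defining $S_{\infty}^{\rig}$ past several functors. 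Both steps are plausible, but the paper's argument avoids them entirely by passing to Iwasawa algebras and using only the Smith-space duality for $\cW$ itself.
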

\begin{proof}
    We calculate that
    \[\begin{array}{rl}
            &M_{\infty}^{',\rm rig}\otimes^L_{\cD(I,E),\blacksquare}\cW^{\vee}\\
        \simeq &\RiHom_{\cD(\widetilde{I},E)}(M_{\infty}^{\rm rig},\cD(\widetilde{I},E))\otimes^L_{\cD(\widetilde{I},E),\blacksquare}(\cD(\widetilde{I},E)\otimes_{\cD(I,E),\blacksquare}^L\cW^{\vee})\\
        \simeq &\RiHom_{\cD(\widetilde{I},E)}(M_{\infty}^{\rm rig}, \cD(\widetilde{I},E)\otimes_{\cD(I,E),\blacksquare}^L\cW^{\vee})    
    \end{array}\]
    where for the last equality we used that $M_{\infty}^{\rig}$ is fintie projective over $\cD(\widetilde{I},E)$. While $\cD(\widetilde{I},E)\otimes_{\cD(I,E),\blacksquare}^L\cW^{\vee}=(S_{\infty}^{\rig}\otimes_{E,\blacksquare}^{L}\cD(I,E))\otimes_{\cD(I,E),\blacksquare}^L\cW^{\vee}=S^{\rig}_{\infty}\otimes_{E,\blacksquare}^L\cW^{\vee}=S^{\rig}_{\infty}\otimes_{E,\blacksquare}\cW^{\vee}$ since $\cW^{\vee}$ is flat over $E$ \cite[Cor. A.65]{bosco2021p}. As $S_{\infty}^{\rig}$ is nuclear \cite[Prop. 3.29]{rodrigues2022solid} and $\cW^{\vee}=\RiHom_E(\cW,E)$ for $\cW$ Smith, we have $S_{\infty}^{\rig}\otimes_{E,\blacksquare}^L\cW^{\vee}=\RiHom_E(\cW,S_{\infty}^{\rig})$ (cf. \cite[Prop. A.55]{bosco2021p}). Take the adjunction (cf. \cite[Lem. 3.1.7]{jacinto2023solid})
    \[\begin{array}{rl}
        \RiHom_{\cD(\widetilde{I},E)}(M_{\infty}^{\rm rig}, \cD(\widetilde{I},E)\otimes_{\cD(I,E),\blacksquare}^L\cW^{\vee})=&\RiHom_{E_{\blacksquare}[\widetilde{I}]}(M_{\infty}^{\rm rig}, \RiHom_E(\cW,S_{\infty}^{\rig}))\\
        =&\RiHom_{E_{\blacksquare}[J]}(E,\RiHom_{E_{\blacksquare}[I]}(M_{\infty}^{\rm rig}, \RiHom_E(\cW,S_{\infty}^{\rig})))\\
        =&\RiHom_{E_{\blacksquare}[J]}(E,\RiHom_{E}(M_{\infty}^{\rm rig}\otimes^L_{E_{\blacksquare}[I]}\cW, S_{\infty}^{\rig}))\\
        =&\RiHom_{E_{\blacksquare}[J]}(M_{\infty}^{\rm rig}\otimes^L_{E_{\blacksquare}[I]}\cW, S_{\infty}^{\rig}),
    \end{array}\]
    we get the result since $S_{\infty}^{\rig}=\cD(J,E)$ is an idempotent algebra over $E_{\blacksquare}[J]$.
\end{proof}
The following proposition is a standard result of the eigenvariety machinery.
\begin{prop}\label{propositionfiniteslopeperfect}
    Let $\cW=\cW_{\natural,r}(\tau(M)^{\vee}\otimes\chi_{\rm sm})$ as in Theorem \ref{theoremresolution}. Then the solid $S_{\infty}^{\rig}$-module $A_{\infty}^{\rig}(\cind_I^G\cW)\otimes_{\cH}^L\cH/\frm$ is a perfect object in $\Mod_{E_{\blacksquare}}^{\rm qc}(S_{\infty}^{\rig})$.
\end{prop}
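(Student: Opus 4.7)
The strategy is to reduce the perfection claim over the Fr\'echet-Stein algebra $S_\infty^{\rig}$ to perfection at each affinoid level $S_h$, and there to invoke the finite-slope theory of Serre-Coleman for compact Hecke operators. Since $\cW = \cW_{\natural,r}(\tau(M)^\vee\otimes\chi_{\rm sm})$ is finitely presented over $\cD_r(I,E)$ (Definition \ref{definitionWnaturalM}) and $M_\infty^{\rig}$ is finite projective over $\cD(\widetilde I,E) = \cD(I,E)\widehat\otimes_{E_\blacksquare}S_\infty^{\rig}$ by the Iwahori projectivity assumption on $M_\infty$, the tensor product
\[
 N \;:=\; A_\infty^{\rig}(\cind_I^G\cW) \;=\; M_\infty^{\rig}\otimes_{\cD(I,E),\blacksquare}\cW
\]
is concentrated in a single degree and is finitely presented as a module over $\cD_r(I,E)\widehat\otimes_{E_\blacksquare}S_\infty^{\rig}$, carrying the $\cH$-action inherited from $\cind_I^G\cW$. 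The goal becomes to show that $\wedge^\bullet\cH^d\otimes_\cH N$ has perfect cohomology in $\Mod_{E_\blacksquare}^{\rm qc}(S_\infty^{\rig})$.

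First, by the characterization of quasi-coherent sheaves on $\Spf(S_\infty)^{\rig}$ in \S\ref{subsectionquasistein} and Lemma \ref{lemmacompactlysupported}, it suffices to produce, for each $h$, a perfect $S_h$-complex representing the derived base change $\wedge^\bullet\cH^d\otimes_\cH N_h$ where $N_h := N\otimes_{S_\infty^{\rig},\blacksquare}^L S_h$, with these complexes compatible as $h$ varies. Since $\cW_{\natural,r}$ is finite over $\cD_r(I,E)$ and tensoring with the finite projective $M_\infty^{\rig}$ preserves the completion structure, $N_h$ is a finitely presented Banach module over $\cD_r(I,E)\widehat\otimes_E S_h$ concentrated in degree zero.

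Second, the key input is that for $t\in T^{-}$ sufficiently antidominant, the Hecke operator $U_t$ acts on $N_h$ as a completely continuous (compact) $S_h$-linear endomorphism. This is a patched analogue of Coleman's compactness of $U_p$ on overconvergent forms: the finite formula (\ref{equationUtaction}) combined with the strict contraction $t(I\cap\ovN)t^{-1}\subsetneq I\cap\ovN$ shows that iterating $U_t$ shrinks the image into an ever smaller finite-rank summand of the Banach module, and this contractivity is preserved under the finite projective tensor with $M_\infty^{\rig}$. Serre's Riesz theory \cite{serre1962endomorphismes} for compact operators over affinoid algebras then yields a canonical direct-sum decomposition $N_h = N_h^{\rm fs}\oplus N_h^{\rm rest}$, where $N_h^{\rm fs}$ is a finite projective $S_h$-module collecting every generalized eigenspace of $U_t$ at eigenvalue $1$ while $U_t-1$ acts invertibly on $N_h^{\rm rest}$. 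Applying the analogous decomposition with respect to a family of generators of $T^-/T_0$ spanning the antidominant cone modulo the center, and noting that the central operators $U_{z_i}-1$ act through $\chi_{\rm sm}(z_i)^{\pm1}\cdot(\text{invertible})+(\text{nilpotent})$ through the central character of $\tau(M)^\vee\otimes\chi_{\rm sm}$, the Koszul complex $\wedge^\bullet\cH^d\otimes_\cH N_h^{\rm rest}$ is acyclic, whence $\wedge^\bullet\cH^d\otimes_\cH N_h\simeq \wedge^\bullet\cH^d\otimes_\cH N_h^{\rm fs}$ is a bounded complex of finite projective $S_h$-modules.

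Finally, compatibility in $h$ is automatic: the Riesz/slope decomposition is canonical and commutes with the flat base change $S_{h'}\to S_h$, as does the Koszul complex, so the perfect $S_h$-complexes $(\wedge^\bullet\cH^d\otimes_\cH N_h^{\rm fs})_h$ glue to a perfect object of $\Mod_{E_\blacksquare}^{\rm qc}(S_\infty^{\rig})$. The main obstacle I anticipate is establishing the complete continuity of $U_t$ on $N_h$ in a form that is compatible with both the distribution-algebra structure of $\cW_{\natural,r}$ (where $\psi_t$ is defined via the abstract Dirac $\delta_{t^{-1}}\in\cD(\frg,\ovB)$ rather than a geometric contraction) and the patched Banach module structure on $M_\infty^{\rig}\otimes_{S_\infty^{\rig}}S_h$; one has to check that the algebraic shrinking of $t(I\cap\ovN)t^{-1}$ inside $I\cap\ovN$ genuinely translates into a Serre-Coleman compact operator after all the completions, not merely into a bounded or pseudo-compact one. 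Once this compactness is secured, the slope decomposition and the perfection of the Koszul cohomology are standard consequences of Fredholm theory for Banach modules over affinoid algebras.
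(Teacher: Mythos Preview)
Your overall strategy agrees with the paper's: reduce to each affinoid level $S_h$, show that a suitable $U_t$ acts compactly on the patched Banach module there, and deduce perfection from Fredholm theory. The gap is in the compactness step, where you have identified the wrong contraction. The inclusion $t(I\cap\ovN)t^{-1}\subsetneq I\cap\ovN$ is what makes $\psi_t$ compact on $\cW_\sharp$, a space of analytic \emph{functions} on $I\cap\ovN$. But $\cW_{\natural,r}(\ovM)=\cD_r(I,E)\otimes_{\cD(\frg,\ovB\cap I)}\ovM$ is a distribution-type object: after presenting $\ovM$ by generalized Verma modules $U(\frg)\otimes_{U(\overline{\frb})}\sigma$, it is identified with (a quotient of) $\cD^{(I_n\cap N)-\an}(I\cap N,E)\otimes_E\sigma$, and the $\ovN$-direction has been completely absorbed by the tensor over $\cD(\frg,\ovB\cap I)$. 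The paper therefore exploits the \emph{opposite} contraction $t^{-1}(I_n\cap N)t\subsetneq I_n\cap N$: the map $\psi_t$ factors through the distribution algebra at the finer level $t^{-1}I_nt\cap I$, and for $t$ regular enough that $t^{-1}(I_n\cap N)t\subset I_{n+1}\cap N$, the transition map between distribution algebras on $I\cap N$ is trace-class (dual of a relatively compact inclusion of analytic neighborhoods). Your heuristic ``iterating $U_t$ shrinks the image into an ever smaller finite-rank summand'' does not capture this mechanism and would not yield compactness for $\cW_\natural$.

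A secondary issue: your passage from one compact $U_t$ to the full Koszul complex is more convoluted than needed, and the assertion that the central operators $U_{z_i}$ act as ``$\chi_{\rm sm}(z_i)^{\pm1}\cdot(\text{invertible})+(\text{nilpotent})$'' is not correct, since $U_{z_i}$ also involves the action of $z_i^{-1}$ on the patching module $M_\infty$ via its $G$-structure, not only the central character of $\ovM$. The paper avoids this entirely: once $\mathrm{cone}(1-U_t)$ is shown to be perfect over $S_h$ for one sufficiently regular $t$, it observes that $\cH/(U_t-1)$ is a regular Noetherian ring, so $\cH/\frm$ has a finite free resolution over it, and tensoring a perfect $S_h$-complex by such a resolution stays perfect.
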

\begin{proof}
    We need to show that $(S_h\otimes_{S_{\infty}^{\rm rig},\blacksquare} A_{\infty}^{\rig}(\cind_I^G\cW))\otimes_{\cH}^L\cH/\frm$ is a perfect $S_h$-complex for each $h$. We would like to write a proof for $\cW=\cD^{I_n-\mathrm{an}}(I,E)\otimes_{\cD(\frg,I\cap \ovB)}\ovM$ for some $n$ in Remark \ref{remarkWnaturaln} and $\ovM=\tau(M)^{\vee}\otimes\chi_{\rm sm}$. Then $M_{\infty}^{\rig}\otimes_{\cD(I,E),\blacksquare}\cW=M_{\infty}^{\rig}\otimes_{\cD(I,E),\blacksquare}\cD^{I_n-\mathrm{an}}(I,E)\otimes_{\cD(\frg,I\cap \ovB)}\ovM$. This will not change $A_{\infty}^{\rig}(\cind_I^G\cW\otimes_{\cH}^L\cH/\frm)$.

    Since $M_{\infty}^{\rig}$ is finite projective over $\cD(\widetilde{I},E)$, we can write $M_{\infty}^{\rig}=e.\cD(\widetilde{I},E)^a\subset \cD(\widetilde{I},E)^a$ for some integer $a$ and a $\cD(\widetilde{I},E)$-linear projection $e:\cD(\widetilde{I},E)^a\rightarrow \cD(\widetilde{I},E)^a$. Then $S_h\otimes_{S_{\infty}^{\rm rig},\blacksquare} A_{\infty}^{\rig}(\cind_I^G\cW)=e.(S_h\otimes_{E,\blacksquare}\cW)^a$. For $t\in T^{-}$,  $U_t\in \cH$ induces $e.(S_h\otimes_{E,\blacksquare}\cW)^a\rightarrow e.(S_h\otimes_{E,\blacksquare}\cW)^a$. Since $\cW$ is Smith, $e.(S_h\otimes_{E,\blacksquare}\cW)^a$ is a compact object in $\Mod_{E_{\blacksquare}}(S_h)$. We claim that if $t$ is sufficiently regular ($t^{-1}(I_n\cap N)t\subset I_{n+1}\cap N$), then $U_t$ is a trace class map \cite[Def. 13.11]{scholze2020analytic}. 
    
    By the construction (Lemma \ref{lemma-psit} and Lemma \ref{lemmapsitcompactinduction}), $\psi_t:\cW\rightarrow \cW$, is induced from the multiplication $\times t^{-1}$ on 
    \[\cD^{I_n-\mathrm{an}}(I_n(I\cap \ovB),E)\otimes_{\cD(\frg,I\cap \ovB)}\ovM=\cD^{\mathrm{an}}(I_n,E)\otimes_{\cD(\frg,I_n\cap \ovB)}\ovM.\] 
    Hence $\psi_t$ factors through the $E$-map
    \[\iota:\cD^{(t^{-1}I_nt\cap I)-\mathrm{an}}(I,E)\otimes_{\cD(\frg,I\cap \ovB)}\ovM\rightarrow \cD^{I_n-\mathrm{an}}(I,E)\otimes_{\cD(\frg,I\cap \ovB)}\ovM.\]
    We show that $\iota$ is trace-class. By \cite[Lem. 3.36]{rodrigues2022solid}, we need to  show that the map factors through the Banach space $\cD^{I_n-\mathrm{an}}(I,E)^B\otimes_{\cD(\frg,I\cap \ovB)}\ovM$ attached to the target (\cite[Def. 3.34]{rodrigues2022solid}). If $\ovM=\cD(\frg,I\cap \ovB)\otimes_{\cD(I\cap \ovB)}\sigma=U(\frg)\otimes_{U(\overline{\frb})}\sigma$ for a finite-dimensional representation $\sigma$ of $\ovB$, then 
    $\cD^{I_n-\mathrm{an}}(I,E)\otimes_{\cD(\frg,I\cap \ovB)}\ovM=\cD^{(I_n\cap N)-\mathrm{an}}(I\cap N,E)\otimes_{E}\sigma$ and $\iota$ factors through $\cD^{(I_n\cap N)-\mathrm{an}}(I\cap N,E)^B\otimes_{E}\sigma$ since $t^{-1}(I_n\cap N)t\subset I_{n+1}\cap N$ is relatively compact in $I_n\cap N$ (cf. \cite[Prop. 4.2.22]{emerton2006jacquet}). In general, $\ovM$ admits a presentation as in Lemma \ref{lemma-psit}
    \[U(\frg)\otimes_{U(\overline{\frb})}\sigma'\rightarrow U(\frg)\otimes_{U(\overline{\frb})}\sigma\rightarrow \ovM\] 
    for some $\sigma,\sigma'$, 
    which can be used to show that $\iota$ factors through $\cD^{I_n-\mathrm{an}}(I,E)^B\otimes_{\cD(\frg,I\cap \ovB)}\ovM$, the cokernel of $\cD^{(I_n\cap N)-\mathrm{an}}(I\cap N,E)^B\otimes_{E}\sigma'\rightarrow \cD^{(I_n\cap N)-\mathrm{an}}(I\cap N,E)^B\otimes_{E}\sigma$.

    The base change to $S_h$ of $\iota$ is still of trace-class (cf. \cite[Rem. 5.31]{andreychev2021pseudocoherent}), as well as the following map (by checking the definition or using \cite[Lem. A.14]{bosco2023rational})
    \begin{align}\label{equationperfect1}
        (S_h\otimes_{S_{\infty}^{\rm rig},\blacksquare}M_{\infty}^{\rig})\otimes_{\cD(I,E),\blacksquare}\cD^{(t^{-1}I_nt\cap I)-\mathrm{an}}(I,E)\otimes_{\cD(\frg,I\cap \ovB)}\ovM=e.(S_h\otimes_{E,\blacksquare}\cD^{(t^{-1}I_nt\cap I)-\mathrm{an}}(I,E)\otimes_{\cD(\frg,I\cap \ovB)}\ovM)^a\\
        \rightarrow (S_h\otimes_{S_{\infty}^{\rm rig},\blacksquare}M_{\infty}^{\rig})\otimes_{\cD(I,E),\blacksquare}\cD^{I_n-\mathrm{an}}(I,E)\otimes_{\cD(\frg,I\cap \ovB)}\ovM=e.(S_h\otimes_{E,\blacksquare}\cD^{I_n-\mathrm{an}}(I,E)\otimes_{\cD(\frg,I\cap \ovB)}\ovM)^a.\nonumber
    \end{align}
      
    Recall by definition (\ref{equationUtaction}), $U_t$ is given by
    \begin{align}\label{equationperfect2}
        M_{\infty}^{\rig}\otimes_{\cD(I,E),\blacksquare}\cW&\rightarrow M_{\infty}^{\rig}\otimes_{\cD(G,E),\blacksquare}\cind_{I}^G\cW=M_{\infty}^{\rig}\otimes_{\cD(I,E),\blacksquare}\cW\\\nonumber
        m\otimes w&\mapsto m\otimes \sum_{x\in I/(tIt^{-1}\cap I)}[xt,\psi_t(x^{-1}w)]= \sum_{x\in (I\cap \ovN)/t(I\cap \ovN)t^{-1}}t^{-1}x^{-1}.m\otimes \psi_t(x^{-1}w).
    \end{align} 
    The formula for (\ref{equationperfect2}) defines a map
    \[(S_h\otimes_{S_{\infty}^{\rm rig},\blacksquare}M_{\infty}^{\rig})\otimes_{E,\blacksquare}\cD^{I_n-\mathrm{an}}(I,E)\otimes_{\cD(\frg,I\cap \ovB)}\ovM\rightarrow(S_h\otimes_{S_{\infty}^{\rm rig},\blacksquare}M_{\infty}^{\rig})\otimes_{E,\blacksquare}\cD^{(t^{-1}I_nt\cap I)-\mathrm{an}}(I,E)\otimes_{\cD(\frg,I\cap \ovB)}\ovM.\]
    The above map descends to the following map (cf. \cite[Lem. 4.2.11]{emerton2006jacquet}).
    \begin{align}\label{equationperfect3}
        &(S_h\otimes_{S_{\infty}^{\rm rig},\blacksquare}M_{\infty}^{\rig})\otimes_{\cD(I,E),\blacksquare}\cD^{I_n-\mathrm{an}}(I,E)\otimes_{\cD(\frg,I\cap \ovB)}\ovM\\
        &\rightarrow (S_h\otimes_{S_{\infty}^{\rm rig},\blacksquare}M_{\infty}^{\rig})\otimes_{\cD(I,E),\blacksquare}\cD^{(t^{-1}I_nt\cap I)-\mathrm{an}}(I,E)\otimes_{\cD(\frg,I\cap \ovB)}\ovM.\nonumber
    \end{align}
    
    Hence $U_t:e.(S_h\otimes_{E,\blacksquare}\cW)^a\rightarrow e.(S_h\otimes_{E,\blacksquare}\cW)^a$ is trace-class being the composite of (\ref{equationperfect3}) and the trace class map (\ref{equationperfect1}) by \cite[Lem. 8.2]{clausen2022condensed}. By discussions in \cite[\S9]{clausen2022condensed} and the proof of \cite[Prop. 9.11]{clausen2022condensed} or \cite[Lem. 5.51]{andreychev2021pseudocoherent}, $\mathrm{cone}(1-U_t)=S_h\otimes_{S_{\infty}^{\rm rig},\blacksquare} A_{\infty}^{\rig}(\cind_I^G\cW)\otimes_{\cH}^L\cH/(U_t-1)$ is a perfect $S_h$-complex. In a more classical language, the map $U_t$ can be factored as $e.(S_h\otimes_{E,\blacksquare}\cW)^a\stackrel{g}{\rightarrow}V\stackrel{f}{\rightarrow}e.(S_h\otimes_{E,\blacksquare}\cW)^a$ for some Banach $S_h$-module $V$ (that is a direct summand of an orthonormalizable Banach $S_h$-module). Then $\mathrm{cone}(1-U_t)\simeq \mathrm{cone}(1-g\circ f)$ with $g\circ f$ an $S_h$-compact operator on $V$ and is perfect over $S_h$ by the classical Fredholm theory (see \cite[Prop. 2.2.6]{emerton2006jacquet}).
   
    Finally, $\cH/(U_t-1)$ is a Noetherian regular ring (finite \'etale over a polynomial ring, of the form $E[U_{t_1},\cdots,U_{t_d}]/(U_{t_1}^{s_1}\cdots U_{t_d}^{s_d}-1)$), hence has finite global projective dimension \cite[\href{https://stacks.math.columbia.edu/tag/00OE}{Tag 00OE}]{stacks-project}. We get that $ \cH/\frm$ admits a finite projective resolution over $\cH/(U_t-1)$. Hence $S_h\otimes_{S_{\infty}^{\rm rig},\blacksquare} A_{\infty}^{\rig}(\cind_I^G\cW^{\vee})\otimes_{\cH}^L\cH/\frm=(S_h\otimes_{S_{\infty}^{\rm rig},\blacksquare} A_{\infty}^{\rig}(\cind_I^G\cW)\otimes_{\cH}^L\cH/(U_t-1))\otimes_{\cH/(U_t-1)}^L\cH/\frm$ is also a perfect $S_h$-complex (\cite[\href{https://stacks.math.columbia.edu/tag/066R}{Tag 066R}]{stacks-project}). We finished the proof.
\end{proof}

For $\cM=(M_{h})_{h} \in \mathrm{Mod}_{E_{\blacksquare}}^{\rm qc}(R_{\infty}^{\rig})$, we let 
\[\bbD_{\rm GS}(\cM):=\RiHom_{\mathrm{Mod}_{E_{\blacksquare}}^{\rm qc}(R_{\infty}^{\rig})}(M,f^!(S_h)_h)\]
where $f:\Spf(R_{\infty})^{\rm rig}\rightarrow \Spf(S_{\infty})^{\rm rig}$ is induced by $S_{\infty}\rightarrow R_{\infty}$ as in \S\ref{subsectionquasistein} and $(S_h)_h$ denotes the structure sheaf of $\Spf(S_{\infty})^{\rm rig}$. The following theorem should be compared with \cite[Conj. 4.5.1 (1)]{zhu2020coherent} and \cite[Conj. 6.1.14 (3) \& Rem. 6.2.22]{emerton2023introduction}.
\begin{thm}\label{theoremdualitypatchingmodule}
    Let $M\in \cO^{\frb}_{\rm alg}$ and $\chi_{\sm}$ be a smooth character of $T$. Let $\pi=\cF_{B}^G(M,\chi_{\sm})$ as in Theorem \ref{theoremBZdual}. Then there exists an isomorphism 
    \[\bbD_{\rm GS}(\frA_{\infty}^{\rm rig}(\pi))\simeq \eta^*\frA_{\infty}^{',\rig}(\bbD_{\rm BZ}(\pi)).\]
    in $\mathrm{Mod}_{E_{\blacksquare}}^{\rm qc}(R_{\infty}^{\rig})$.
\end{thm}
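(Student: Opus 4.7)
The plan is to reduce the stated isomorphism to a termwise computation via the Kohlhaase--Schraen resolutions of $\pi$ and $\bbD_{\rm BZ}(\pi)$, and then invoke the patching Poincar\'e duality of Lemma \ref{lemmapatchingdualcompactinduction}.

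Set $\cW:=\cW_{\sharp,r}(\tau(M)^{\vee}\otimes\chi_{\rm sm})$, so that by Theorem \ref{theoremresolution}, $\pi$ is quasi-isomorphic to the Koszul complex $K^{\bullet}:=\wedge^{\bullet}\cH^d\otimes_{\cH}\cind_I^G\cW$. Since $M_{\infty}^{\rig}$ is finite projective over $\cD(\widetilde I,E)=\cD(I,E)\otimes_{E,\blacksquare}S_{\infty}^{\rig}$, the functor $\cind_I^G(-)\mapsto M_{\infty}^{\rig}\otimes_{\cD(I,E),\blacksquare}(-)$ is exact and commutes with the Koszul differentials, giving
\[
A_{\infty}^{\rig}(\pi)\;\simeq\;\wedge^{\bullet}\cH^d\otimes_{\cH}\bigl(M_{\infty}^{\rig}\otimes_{\cD(I,E),\blacksquare}\cW\bigr).
\]
By Proposition \ref{propositionfiniteslopeperfect}, each $S_h$-base change of $A_{\infty}^{\rig}(\pi)\otimes^{L}_{\cH}\cH/\frm$ is a perfect $S_h$-complex, so $\frA_{\infty}^{\rig}(\pi)=j^{*}A_{\infty}^{\rig}(\pi)$ lies in the essential image of $j_{*}$ and satisfies $f_{*}=f_{!}$ on it by Lemma \ref{lemmacompactlysupported}.

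I next compute $\bbD_{\rm GS}$ by passing to $\Spf(S_{\infty})^{\rig}$ via the $(f_{!},f^{!})$-adjunction, which yields
\[
f_{*}\bbD_{\rm GS}(\frA_{\infty}^{\rig}(\pi))\;\simeq\;\RiHom_{S_{\infty}^{\rig}}\bigl(A_{\infty}^{\rig}(\pi),\,S_{\infty}^{\rig}\bigr),
\]
using Lemma \ref{lemmapushforward} to identify $f_{*}\frA_{\infty}^{\rig}(\pi)$ with the quasi-coherent sheaf attached to the $S_{\infty}^{\rig}$-module $A_{\infty}^{\rig}(\pi)$. Applying $\RiHom_{S_{\infty}^{\rig}}(-,S_{\infty}^{\rig})$ termwise to the Koszul complex, Lemma \ref{lemmapatchingdualcompactinduction} identifies each term as
\[
\RiHom_{S_{\infty}^{\rig}}\bigl(M_{\infty}^{\rig}\otimes_{\cD(I,E),\blacksquare}\cW,\,S_{\infty}^{\rig}\bigr)\;\simeq\;M_{\infty}^{',\rig}\otimes_{\cD(I,E),\blacksquare}^{L}\cW^{\vee},
\]
while Koszul self-duality (Lemma \ref{dualKoszulcomplex}, with Hecke operators transposing to $U_{t^{-1}}$ by Lemma \ref{lemmatransposepsit}) introduces a shift by $[-d]$ and reassembles these into a single Koszul complex
\[
f_{*}\bbD_{\rm GS}(\frA_{\infty}^{\rig}(\pi))\;\simeq\;\wedge^{\bullet}\cH^d\otimes_{\cH}\bigl(M_{\infty}^{',\rig}\otimes_{\cD(I,E),\blacksquare}^{L}\cW^{\vee}\bigr)[-d].
\]

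On the other hand, Theorem \ref{theoremBZdual} and the arguments of Theorem \ref{theoremdualKS} (combined with the level-change Propositions \ref{propositionchangethegroups1}, \ref{propositionchangethegroups2}) present $\bbD_{\rm BZ}(\pi)\simeq \cF_{\ovB}^G(\Hom_E(M,E)^{\frn_{\ovB}^{\infty}},\bbD_{\rm BZ}(\chi_{\rm sm}))$ as the shifted Koszul resolution $\wedge^{\bullet}\cH^d\otimes_{\cH}\cind_I^G\cW^{\vee}[-d]$. Applying $A_{\infty}^{',\rig}$ and pulling back along $\eta^{*}$ yields precisely the same complex of solid $R_{\infty}^{\rig}$-modules as displayed above. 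Since the isomorphism is constructed at the level of the underlying $R_{\infty}^{\rig}$-modules (the $R_{\infty}$-equivariance being inherited from the isomorphism $\Hom_{S_{\infty}[[I]]}(M_{\infty},S_{\infty}[[I]])\simeq M_{\infty}'$ via $\eta$) and both sides lie in the essential image of $j_{*}$, the full faithfulness of $j_{*}$ (Lemma \ref{lemmafullyfaithfulglobalsection}) yields the desired isomorphism on $\Spf(R_{\infty})^{\rig}$.

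The principal obstacle is the careful bookkeeping between two dualities: the Bernstein--Zelevinsky patching duality provided by Lemma \ref{lemmapatchingdualcompactinduction} (which is really a Poincar\'e duality of completed cohomologies), and the Grothendieck--Serre duality encoded by $f^{!}$. Matching them requires knowing that $f_{*}=f_{!}$ on our complexes (Lemma \ref{lemmacompactlysupported}, enabled by the finite-slope perfectness in Proposition \ref{propositionfiniteslopeperfect}) and that the Koszul shift $[-d]$ appearing on the Bernstein--Zelevinsky side from Theorem \ref{theoremdualKS} coincides with the shift produced by the Koszul self-duality in the Grothendieck--Serre computation. Verifying the $R_{\infty}^{\rig}$-linearity (rather than mere $S_{\infty}^{\rig}$-linearity) of the resulting isomorphism demands that all the termwise identifications in Lemma \ref{lemmapatchingdualcompactinduction} be functorial in the $R_{\infty}$-module structure of $M_{\infty}$.
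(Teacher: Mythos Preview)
Your proposal is correct and follows essentially the same route as the paper: resolve $\pi$ by a Koszul complex of compactly induced modules, use Proposition~\ref{propositionfiniteslopeperfect} and Lemma~\ref{lemmacompactlysupported} to get $f_!=f_*$, apply the $(f_!,f^!)$-adjunction together with Lemma~\ref{lemmapatchingdualcompactinduction} and Koszul self-duality, then descend via the full faithfulness of $j_*$. Two small remarks: Proposition~\ref{propositionfiniteslopeperfect} is stated for $\cW_{\natural,r}$ rather than your $\cW_{\sharp,r}$, but its conclusion concerns $A_\infty^{\rig}(\pi)$ and is independent of the chosen resolution, so the citation is harmless; and your displayed identification $f_*\bbD_{\rm GS}(\frA_\infty^{\rig}(\pi))\simeq \RiHom_{S_\infty^{\rig}}(A_\infty^{\rig}(\pi),S_\infty^{\rig})$ mixes an object of $\mathrm{Mod}^{\rm qc}_{E_\blacksquare}(S_\infty^{\rig})$ with one of $\mathrm{Mod}_{E_\blacksquare}(S_\infty^{\rig})$ --- the paper keeps track of this by inserting $j^*$ on the right, which you should do as well.
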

\begin{proof}
    Write $\cF_{B}^G(M,\chi_{\sm})= \cind_I^G\cW\otimes_{\cH}^L\cH/\frm$ as in Proposition \ref{propositionsolidresolution} using Theorem \ref{theoremresolution} such that $\cW$ is Smith. By Proposition \ref{propositionfiniteslopeperfect}, $A_{\infty}^{\rig}(\cind_I^G\cW\otimes_{\cH}^L\cH/\frm)$ is a perfect object in $\Mod_{E_{\blacksquare}}^{\rm qc}(S_{\infty}^{\rig})\subset \Mod_{E_{\blacksquare}}(S_{\infty}^{\rig})$. By Lemma \ref{lemmacompactlysupported}, $A_{\infty}^{\rig}(\cind_I^G\cW\otimes_{\cH}^L\cH/\frm)=j_*\frA_{\infty}^{\rig}(\cind_I^G\cW\otimes_{\cH}^L\cH/\frm)$ and
    \[f_!\frA_{\infty}^{\rig}(\cind_I^G\cW\otimes_{\cH}^L\cH/\frm)=f_*\frA_{\infty}^{\rig}(\cind_I^G\cW\otimes_{\cH}^L\cH/\frm).\]
    
    One can check formally that the isomorphism in Lemma \ref{lemmapatchingdualcompactinduction} is compatible with the actions of $\cH$, where $\cH$ acts on $A_{\infty}^{',\rig}(\cind_I^G\cW^{\vee})$ via the transpose in Lemma \ref{lemmatransposeUt}. By Theorem \ref{theoremsoliddualcompactinduction} and the Koszul duality \cite[Chap. IV.A.2]{serre1997algebre} (we omit the pullback $\eta^*$ in the following)
    \[\begin{array}{rl}
        &\RiHom_{S_{\infty}^{\rig}}(A_{\infty}^{\rig}(\cind_I^G\cW)\otimes_{\cH}^L\cH/\frm,S_{\infty}^{\rig})\\
        \simeq& R\Hom_{\cH}(\cH/\frm,\RiHom_{S_{\infty}^{\rig}}(A_{\infty}^{\rig}(\cind_I^G\cW),S_{\infty}^{\rig}))\\
        \simeq& A_{\infty}^{',\rig}(\cind_I^G\cW^{\vee})\otimes_{\cH}^L\cH/\frm[-d]\\
        =& A_{\infty}^{',\rig}(\mathbb{D}_{\rm BZ}(\cind_I^G\cW\otimes_{\cH}^L\cH/\frm)).
    \end{array}\] 
    Applying the adjunctions, we get identifications of $R_{\infty}^{\rm rig}$-module objects in $\mathrm{Mod}_{E_{\blacksquare}}^{\rm qc}(S_{\infty}^{\rm rig})$: 
    \[\begin{array}{rl} 
        &f_*\RiHom_{\mathrm{Mod}_{E_{\blacksquare}}^{\rm qc}(R_{\infty}^{\rig})}(\frA_{\infty}^{\rig}(\cind_I^G\cW\otimes_{\cH}^L\cH/\frm),f^!(S_h)_h)\\
        =&\RiHom_{\mathrm{Mod}_{E_{\blacksquare}}^{\rm qc}(S_{\infty}^{\rig})}(f_!\frA_{\infty}^{\rig}(\cind_I^G\cW\otimes_{\cH}^L\cH/\frm),(S_h)_h)\\
        =&\RiHom_{\mathrm{Mod}_{E_{\blacksquare}}^{\rm qc}(S_{\infty}^{\rig})}(f_*
        \frA_{\infty}^{\rig}(\cind_I^G\cW\otimes_{\cH}^L\cH/\frm),(S_h)_h)\\
        =&j^*\RiHom_{S_{\infty}^{\rig}}(A_{\infty}^{\rig}(\cind_I^G\cW\otimes_{\cH}^L\cH/\frm),S_{\infty}^{\rig})\\
        =&f_*\frA_{\infty}^{',\rig}(\mathbb{D}_{\rm BZ}(\cind_I^G\cW\otimes_{\cH}^L\cH/\frm))
    \end{array}\]
    where for the third equality, we used the fully faithfulness of $j_*$ (for $S_{\infty}$) and Lemma \ref{lemmapushforward}. The functor $f_*$ sending $\mathrm{Mod}_{E_{\blacksquare}}^{\rm qc}(R_{\infty}^{\rm rig})$ to $R_{\infty}^{\rm rig}$-modules in $\mathrm{Mod}_{E_{\blacksquare}}^{\rm qc}(S_{\infty}^{\rm rig})$ is fully faithful (composed with the global section functor for $\mathrm{Mod}_{E_{\blacksquare}}^{\rm qc}(S_{\infty}^{\rm rig})$ it gives that for $\mathrm{Mod}_{E_{\blacksquare}}^{\rm qc}(R_{\infty}^{\rm rig})$ in Lemma \ref{lemmafullyfaithfulglobalsection}). We conclude that 
    \[ \RiHom_{\mathrm{Mod}_{E_{\blacksquare}}^{\rm qc}(R_{\infty}^{\rig})}(\frA_{\infty}^{\rig}(\cind_I^G\cW\otimes_{\cH}^L\cH/\frm),f^!(S_h)_h)\simeq \frA_{\infty}^{',\rig}(\mathbb{D}_{\rm BZ}(\cind_I^G\cW\otimes_{\cH}^L\cH/\frm))\]
    which gives the desired isomorphism.
\end{proof}
%a\include{note}
\bibliographystyle{alpha}
\bibliography{mybib}

\end{document}